\theoremstyle{plain}
\newtheorem{thm}{Theorem}[section]
\newtheorem{thm*}{Theorem}[section]
\newtheorem{cor}[thm]{Corollary}
\newtheorem{prop}[thm]{Proposition}
\newtheorem{lemma}[thm]{Lemma}
\theoremstyle{definition}
\newtheorem{defn}[thm]{Definition}
\newtheorem{remark}[thm]{Remark}
\newtheorem{ex}[thm]{Example}
\newtheorem{notation}[thm]{Notation}
\newenvironment{changemargin}[2]{%
  \begin{list}{}{%
    \setlength{\topsep}{0pt}%
    \setlength{\leftmargin}{#1}%
    \setlength{\rightmargin}{#2}%
    \setlength{\listparindent}{\parindent}%
    \setlength{\itemindent}{\parindent}%
    \setlength{\parsep}{\parskip}%
  }%
  \item[]}{\end{list}}
\numberwithin{equation}{thm}
\newcommand{\cM}{\mathcal M}
\newcommand{\cN}{\mathcal N}
\newcommand{\cU}{\mathcal U}
\newcommand{\cH}{\mathcal H}
\newcommand{\cu}{\mathfrak u}
\def\JType{\operatorname{JType}\nolimits}
\def\Spec{\operatorname{Spec}\nolimits}
\def\Ker{\operatorname{Ker}\nolimits}
\def\Coker{\operatorname{Coker}\nolimits}
\def\rk{\operatorname{Rk}\nolimits}
\def\Im{\operatorname{Im}\nolimits}
\def\Proj{\operatorname{Proj}\nolimits}
\def\Span{\operatorname{Span}\nolimits}
\def\dim{\operatorname{dim}\nolimits}
\def\Ad{\operatorname{Ad}\nolimits}
\def\res{\operatorname{res}\nolimits}
\newcommand{\bG}{\mathbb G}
\newcommand{\cO}{\mathcal O}
\newcommand{\bA}{\mathbb A}
\newcommand{\bP}{\mathbb P}
\newcommand{\bZ}{\mathbb Z}
\newcommand{\bF}{\mathbb F}
\newcommand{\cp}{\mathcal P}
\newcommand{\cE}{\mathcal E}
\newcommand{\cX}{\mathcal X}
\newcommand{\cY}{\mathcal Y}
\newcommand{\m}{\mathfrak m}
\newcommand{\fG}{\mathfrak G}
\newcommand{\fg}{\mathfrak g}
\newcommand{\gl} {\mathfrak {gl}}
\newcommand{\p}{\mathfrak p}
\newcommand{\ol}{\overline}
\newcommand{\ul}{\underline}
\def\rk{\operatorname{rk}\nolimits}
\def\Supp{\operatorname{Supp}\nolimits}
\def\id{\operatorname{id}\nolimits}
\def\Spec{\operatorname{Spec}\nolimits}
\def\sl2{\operatorname{SL_{2(2)}}\nolimits}
\def\Ga2{\operatorname{\mathbb G_{\rm a(2)}}\nolimits}
\def\SL{\operatorname{SL}\nolimits}
\def\GL{\operatorname{GL}\nolimits}
\def\bH{\operatorname{H^\bu}\nolimits}
\def\HHH{\operatorname{H}\nolimits}
\def\Ext{\operatorname{Ext}\nolimits}
\def\End{\operatorname{End}\nolimits}
\def\Hom{\operatorname{Hom}\nolimits}
\def\ann{\operatorname{ann}\nolimits}
\def\PG{\operatorname{\mathbb P(G)}}
\def\proj{\operatorname{proj}\nolimits}
\def\St{\operatorname{St}\nolimits}
\newcommand{\wt}{\widetilde}
\newcommand{\Gar}{\mathbb G_{a(r)}}
\newcommand{\GarK}{\mathbb G_{a(r),K}}
\newcommand{\Gas}{\mathbb G_{a(s)}}
\newcommand{\Z}{\mathbb Z}
\newcommand{\E}{\mathcal E}
\newcommand{\bu}{\bullet}
\date\today
\begin{document}

 \title{Constructions for  infinitesimal group schemes}

\author[ Eric M. Friedlander and Julia Pevtsova]
{Eric M. Friedlander$^*$ and 
Julia Pevtsova$^{**}$}

\address {Department of Mathematics, University of Southern California,
Los Angeles, CA  90089-2532}
%%% Email address is optional.
\email{eric@math.northwestern.edu, \ efriedla@usc.edu}

\address {Department of Mathematics, University of Washington, 
Seattle, WA  98195-4350}
\email{julia@math.washington.edu}

\thanks{$^*$  partially supported by the NSF grant  \#0300525}
\thanks{$^{**}$  partially supported by the NSF grant  \#0629156}
\subjclass[2000]{16G10, 20C20, 20G10}

\keywords{}

\begin{abstract}

Let $G$ be an infinitesimal group scheme over a field $k$ of characteristic $p >0$.  We introduce 
the global $p$-nilpotent operator $\Theta_G: k[G] \to k[V(G)]$, where $V(G )$ is the 
scheme which represents 1-parameter subgroups of $G$.  This operator $\Theta_G$ applied to $M$ 
encodes the local Jordan type of $M$, and leads to computational insights into 
the representation theory of $G$.    For certain $kG$-modules $M$ 
(including those of constant Jordan type), 
we employ $\Theta_G$ to associate various algebraic vector bundles on $\bP(G)$, 
the projectivization of $V(G)$.  
These vector bundles not only distinguish certain representations with the same 
local Jordan type, but also provide a 
method of constructing algebraic vector bundles on $\bP(G)$.

\sloppy{

}

\end{abstract}

\maketitle

\section{Introduction}

In \cite{SFB1}, \cite{SFB2}, the foundations of a theory of support varieties were established 
for an infinitesimal group scheme $G$ over a field $k$ of characteristic $p > 0$, extending
earlier work for elementary abelian $p$-groups
and $p$-restricted finite dimensional Lie algebras (\cite{C}, \cite{FPa}).  
These foundations
relied upon cohomological calculations to identify cohomological support varieties and 
introduced 1-parameter subgroups to provide an alternate, representation-theoretic 
perspective.  
In contrast to the situation for finite groups, the cohomological variety for $G$ infinitesimal
is of considerable geometric complexity; partly for this reason, computations of explicit
examples are challenging.  
In this present paper, we build upon this earlier work as well as more recent work
of the authors (\cite{FP1}, \cite{FP2}) to initiate a more
detailed investigation of representations of $G$.  Although representations of 
infinitesimal group schemes  are less familiar than 
representations of finite groups, their importance is evident: for example, 
the representation theories of the family 
of all infinitesimal kernels $G = \fG_{(r)}$ of a smooth connected algebraic group $\fG$
is essentially equivalent to the rational representation theory of $\fG$.

An important structure associated to an infinitesimal group scheme $G$ of height $\leq r$
is the 
scheme $V(G)$ of one-parameter subgroups $\Gar \to G$ (see \cite{SFB1}). 
In this paper  we observe that the representability of $V(G)$ 
leads to a $p$-nilpotent element $\Theta_G$ in $kG \otimes k[V(G)]$, 
where $kG$ is the group algebra of $G$.    For any $kG$-module $M$, $\Theta_G$
determines a  {\it  global $p$-nilpotent operator} on $M$.  The operator 
$\Theta_G$ encodes the local Jordan type of a $kG$-module $M$ which in
turn determines the support variety of $M$.  Even though the scheme $V(G)$ was 
generalized to all finite group schemes in \cite{FP1}, \cite{FP2} via the notion of $\pi$-points, 
the construction of $\Theta_G$ does not appear to extend to arbitrary finite groups.

The homogeneity of our global operator  $\Theta_G$  enables us to associate to a $kG$-module 
$M$ of constant Jordan type a collection of
vector bundles on $\bP(G) = \Proj k[V(G)]$, which we view as a family of global invariants
of $M$.  Certain modules with the same local Jordan type can be distinguished by these global invariants.

The vector bundles that we investigate are constructed directly and explicitly from $kG$-modules. Hence, 
we offer an important new method to create interesting examples of algebraic vector bundles on varieties 
of the form $\bP(G)$.  Varieties of this form include projective spaces, weighted projective spaces, and
various singular varieties associated to algebraic groups.  For example, for $G = \GL_{n(r)}$,
$\bP(G)$ is the projectivization of the variety of $r$-tuples of pairwise commuting 
$p$-nilpotent matrices.  We expect that our technique of constructing algebraic 
vector bundles on such singular, geometrically interesting varieties will lead to insights
into their algebraic K-theory. 

The reader might find it instructive to contrast our 
use of representations of $G$ to construct  vector bundles on $\bP(G)$
with the Borel-Weil construction which employs bundles on flag varieties
for an algebraic group $\fG$ to construct representations of $\fG$.
Our construction of vector bundles plays a role in the
forthcoming papers by the first author, Jon Carlson, and Andrei Suslin \cite{CFS} and 
by the second author and David Benson \cite{BP}.

In this paper, we also attempt to address the lack of specific examples in the representation
theory of infinitesimal groups schemes (other than those of height 1).  Throughout this
paper, we work with the following four 
fundamental,  yet concrete, classes of examples.  

\vspace{0.1in}
i.) $p$-restricted Lie algebras, $\fg$; 

ii.) infinitesimal additive group schemes, $\mathbb G_{a(r)}$;

iii.) infinitesimal general 
linear groups, $\GL_{n(r)}$; 

iv.) the height 2, infinitesimal special linear group, 
$\SL_{2(2)}$.  

\vspace{0.1in}
\noindent
We consistently endeavor  to make our general results
more  concrete by applying them to our examples.

\vspace{0.1in}
In Section 1, we recall some of the highlights from \cite{SFB1}, \cite{SFB2} 
concerning the cohomology and theory
of supports of finite dimensional $kG$-modules for an infinitesimal group 
scheme $G$.  A key result summarized in Theorem \ref{iso} is the close relationship
between the spectrum $\Spec \HHH^\bu(G,k)$ of the cohomology of $G$ and the 
scheme $V(G)$ representing (infinitesimal) 1-parameter subgroups 
of an infinitesimal group scheme $G$.  

In the second section, we define
the global $p$-nilpotent operator 
$\Theta_G: k[G] \longrightarrow k[V(G)]$
for an infinitesimal group scheme $G$.   
For any finite dimensional $kG$-module $M$, 
$\Theta_G$ determines a $p$-nilpotent endomorphism
of the free $k[V(G)]$-module  $M\otimes k[V(G)]$. We establish in 
Proposition \ref{homog} that $\Theta_G$ is homogeneous,
where  $k[V(G)]$ is equipped with its natural grading.  We also verify
that $\Theta_G$ is natural with respect to change of group.

In the third section, we verify that specializations $\theta_v$ of $\Theta_G$
at points $v \in V(G)$  determine the local Jordan type of a finite dimensional 
$kG$-module $M$.   Theorem \ref{matrix} can be
viewed as providing an algorithm for obtaining the local Jordan type
 in terms of the representation $G \to \GL_N$ defining the $kG$-module $M$.
We utilize $\Theta_G$ and its specializations  to establish constraints 
for a $kG$-module $M$ to be of constant rank (and thus of constant Jordan type). 
We also  establish the
relationship  between the local Jordan type of a   module and 
its  Frobenius twists.   

We envision that some of our constructions for infinitesimal group
schemes may lead to analogues for a general finite group
scheme.  With this in mind, we begin the fourth section with a dictionary
between 1-parameter subgroups for infinitesimal group schemes and $\pi$-points
for general finite group schemes.  
Given a finite dimensional $kG$-module $M$, we consider the projectivization
of the operator $\Theta_G$, 
$$
\wt\Theta_G: M \otimes \cO_{\PG} \to 
 M \otimes \cO_{\PG}(p^{r-1}),
$$
a $p$-nilpotent operator on the free, coherent sheaf $ M \otimes \cO_{\PG} $
on $\PG$.
We verify in Proposition \ref{sec} that $\wt\Theta_G$ determines via base
change the local Jordan type of a $kG$-module $M$ at any 
1-parameter subgroup $\mu_v: \bG_{a(r),k(v)} \to G_{k(v)}$.
Theorem \ref{equiv} shows that the condition
that $M$ be of constant $j$-rank is equivalent to the condition that the
coherent sheaf $\Im \wt\Theta_G^j$ be locally free.

In the fifth section, we initiate an investigation of the algebraic vector bundles
$\Ker\{ \wt \Theta_G^j,\cM\}$,  $\Im\{ \wt \Theta_G^j,\cM\}$  
on $\PG$ associated to $kG$-modules 
of constant Jordan type and more generally of constant $j$-rank.  We give examples
of such $kG$-modules in each of our four representative examples, and investigate the 
associated vector bundles.   As we see, 
taking kernels of powers of the global $p$-nilpotent power operator
sends modules of constant Jordan type to vector bundles.   We also obtain vector bundles
by  taking kernels modulo images (as inspired by a 
construction of M. Duflo and V. Serganova for Lie superalgebras in \cite{DS}).
As an application, we prove in Proposition \ref{endo} a geometric characterization 
of endotrivial modules.

Finally, in the last section, we  provide numerous explicit examples.  These include   
the infinitesimal group scheme
$G = \bG_{a(1)} \times \bG_{a(1)}$, which has the same representation theory as
 the elementary abelian $p$-group  $\Z/p \times \Z/p$, as well as 
 the  first Frobenius kernel  of the reductive group $\SL_2$.  One intriguing comparison which we
 investigate in particularly simple examples is the relationship between the Grothendieck
 group of projective $kG$-modules and the Grothendieck group of
 algebraic vector bundles on $\bP(G)$.  Combined with our explicit calculations, Proposition
\ref{r-times} can be viewed both as a means to 
distinguish certain non-isomorphic projective $kG$-modules and as a means
of constructing non-isomorphic algebraic vector bundles on $\PG$.

Throughout, $k$ will denote an arbitrary field of characteristic $p > 0$.  Unless
explicit mention is made to the contrary,
$G$ will denote an infinitesimal group scheme over $k$.  If $M$ is a $kG$-module
and $K/k$ is a field extension, then we denote by $M_K$ the $KG$-module 
obtained by base extension.

The authors  are grateful to the University of Bielefeld and to MSRI for  their hospitality.   
We thank Dan Grayson for pointing out the occurrence of weighted projective
spaces and  Paul Smith for numerous useful conversations.
%his help with graded modules over graded algebras.

%%%%%%%%%%%%%%\Sec 1%%%%%%%%%%%%%

\section{Infinitesimal group schemes}

The purpose of this first section is to summarize the important role played 
by (infinitesimal) 1-parameter subgroups  of an infinitesimal group scheme 
as presented in \cite{SFB1}.  The four representative examples of Example \ref{four},
($\ul \fg, ~ \Gar, ~ \GL_{n(r)}, ~ \SL_{2(2)}$), and their associated schemes
of 1-parameter subgroups discussed in Example \ref{var} will serve as 
explicit models  to which we will frequently return.

\begin{defn}
A finite group scheme $G$ over $k$ is a group scheme over $k$ whose
coordinate algebra $k[G]$ is finite dimensional over $k$.

Equivalently, $G$ is a functor from commutative $k$-algebras to groups,
$R \mapsto G(R)$, represented by a finite dimensional commutative $k$-algebra,
the coordinate algebra $k[G]$ of $G$.

Associated to $G$, we have its group algebra $kG = \Hom_k(k[G],k)$; 
more generally, for any
commutative $k$-algebra $R$, we have the $R$-group algebra 
$RG = \Hom_k(k[G],R)$.
\end{defn}

\begin{notation}  If $f: G\to H$ is a map of finite group schemes, we denote by 
$$
f^*:k[H] \to k[G]\quad  \text{ and } \quad f_*:kG \to kH
$$ 
the induced maps on coordinate and group algebras respectively.
\end{notation}

Observe that the $R$-group algebra of $G$ consists of all $k$-linear
homomorphisms, whereas $G(R) = \Hom_{k-alg}(k[G],R)$ is
the subgroup of $RG^\times$ consisting of $k$-algebra homomorphisms.

\begin{defn}
Let $G$ be a finite group scheme over $k$ and $M$ a $k$-vector space.
Then a (left) $kG$-module structure on $M$ is given by one of the following
equivalent sets of data (see, for example, \cite{Jan}):  
\begin{itemize}
\item The structure $M \to M \otimes k[G]$ of a (right) $k[G]$-comodule on $M$.
\item The structure $kG \otimes M \to M$ of a $kG$-module on $M$.
\item A functorial (with respect to $R$) group action 
$G(R) \times (R\otimes M) \to (R\otimes M)$.
\end{itemize}
\end{defn}

For most of this paper we shall restrict our consideration to infinitesimal 
group schemes, a special class of finite group schemes which we now define.

\begin{defn}
An infinitesimal group scheme $G$ (over $k$) of height $\leq r$ is a finite group
scheme whose coordinate algebra $k[G]$  is a local algebra
with maximal ideal $\m$ such that $x^{p^r} = 0$ for all $x \in \m$.
\end{defn}

\begin{ex} 
\label{four}
We shall frequently consider the following four examples.

\vspace{0.1in}
(1) A finite dimensional $p$-restricted Lie algebra $\fg$ corresponds naturally with
a height 1 infinitesimal group scheme which we denote $\ul \fg$ (\cite[I.8.5]{Jan}).  The group
algebra of $\ul \fg$ is the restricted enveloping algebra $\cu(\fg)$ of $\fg$.  If $\fg$ is
the Lie algebra of a group scheme $\fG$, then the coordinate algebra of $\ul \fg$
is given by $k[\fG]/(x^p,x \in \m)$, where $\m$ is the maximal ideal of
 $k[\fG]$ at the identity of $\fG$.

\vspace{0.1in}
(2)
 Let $\bG_a$ denote the additive group, so that $k[\bG_a] = k[T]$ with 
 coproduct defined by $\nabla(T) = T\otimes 1 + 1 \otimes T$.  As a functor,
 $\bG_a: (comm \ k-alg) \to (grps)$ sends an algebra $R$ to its underlying
 abelian group.  For any $ r \geq 1$, we consider the $\rm r^{th}$ Frobenius
 kernel of $\bG_a$, 
 $$\bG_{a(r)} \ \equiv \Ker\{ F^r: \bG_a \to \bG_a \}.$$
 Here $F: \bG_a \to \bG_a$ is the (geometric) Frobenius specified by its map
 on coordinate algebras $k[T] \to k[T]$ given as the $k$-linear map sending 
 $T$ to $T^p$.
 The coordinate algebra of $\bG_{a(r)}$ is given by $k[\bG_{a(r)}] = k[T]/T^{p^r}$,
 whereas the group algebra of $\bG_{a(r)}$ is given by
 \begin{equation} 
 \label{Gar}
 k\Gar  \simeq k[\Gar]^\# \simeq k[u_0,\ldots,u_{r-1}]/(u_0^p, \ldots, u_{r-1}^p),
 \end{equation}
  where
 $u_i$ is a linear dual to $T^{p^i}$, $ 0 \leq i \leq r-1. $

\vspace{0.1in}
(3) Let $\GL_n$ denote the general linear group, the representable functor 
sending a commutative algebra $R$ to the group $\GL_n(R)$.  For any $ r \geq 1$, 
we consider the 
$\rm r^{th}$ Frobenius kernel of  $\GL_n$,  
$$\GL_{n(r)} \ \equiv \Ker\{ F^r:\GL_n \to \GL_n \},$$
where the geometric Frobenius $$F: \GL_n(R) \to \GL_n(R)$$ is defined by raising  
each matrix entry to the $\rm p^{th}$ power. 
The coordinate algebra of $\GL_{n(r)}$ is given by
$$k[\GL_{n(r)}] \ = \ \frac{k[X_{i,j}]}{(X_{i,j}^{p^r} - \delta_{i,j})}_{1 \leq i, j \leq n},$$
whereas the group algebra of $\GL_{n(r)}$ is given as
$$k\GL_{n(r)} \ = \ \Hom_k(k[\GL_{n(r)}],k),$$
the $k$-space of linear functionals
$k[\GL_{n(r)}]$ to $k$.   The coproduct
$$\nabla: k[\GL_{n(r)}] \  \to\  k[\GL_{n(r)}] \otimes k[\GL_{n(r)}] $$
is given by sending $X_{i,j}$ to $\sum_k X_{ik}\otimes X_{kj}$.
 
\vspace{0.1in}
(4)
 The height 2 infinitesimal group scheme 
 $\SL_{2(2)}$ is essentially a special case of $\GL_{n(r)}$.  This is once
 again defined as the kernel of the second iterate of Frobenius
 $$\SL_{2(2)} \ \equiv \Ker\{ F^2: \SL_2 \to \SL_2 \}.$$
 The coordinate algebra of $\SL_{2(2)}$ is given by
$$k[\SL_{2(2)}] = \frac{k[X_{1,1}, X_{1,2}, X_{2,1}, X_{2,2}]}
{(X_{1,1}X_{2,2} - X_{1,2}X_{2,1} - 1,X_{i,j}^{p^2} - \delta_{i,j})}$$  
  whereas the group algebra of $\SL_{2(2)}$ is given as
 $$k\SL_{2(2)} = k\langle e,f,h,e^{(p)},f^{(p)},h^{(p)} \rangle / \langle 
 relations \rangle $$ with
 $e,\ f,\ h,\ e^{(p)},\ f^{(p)},\ h^{(p)}$ the dual basis vectors  to  
 $X_{1,2}, \ X_{2,1},\ X_{1,1}-1, 
 \ X_{1,2}^p, \ X_{2,1}^p, \ (X_{1,1}-1)^p$ respectively.
 {\sloppy
 
 }

\end{ex}

We denote by $\mathbb G_{a(r),R}$ the base extension of $\Gar$ to a commutative $k$-algebra $R$.
\begin{defn}
A (infinitesimal) 1-parameter subgroup of height $r$ of an affine group scheme $G_R$
over a commutative $k$-algebra $R$ is 
a homomorphism of $R$-group schemes $\bG_{a(r),R} \to G_R$. 
\end{defn}

We recall the description of height $r$ 1-parameter subgroups of $\GL_n$ given
in \cite{SFB1}.

\begin{prop} 
\label{coact}
\cite[1.2]{SFB1}
If $G = \GL_n$ and if $R$ is a commutative $k$-algebra, 
then  a 1-parameter subgroup of  $\GL_{n,R}$ of height $r$, 
$f: \bG_{a(r),R} \to \GL_{n,R}$,
is naturally (with respect to $R$) equivalent to a comodule map 
$$\Delta_f: R^n \to R[T]/T^{p^r} \otimes_R R^n, \quad 
\Delta_f(v) = \sum_{j=0}^{p^{r-1}} T^j \otimes \beta_j(v), \quad \beta_j \in M_n(R)$$
satisfying the constraints of being counital and coassociative.  This in turn is
equivalent to specifying an $r$-tuple of matrices
$\alpha_0 = \beta_0, \alpha_1 = \beta_p,\ldots,\alpha_{r-1} = \beta_{p^{r-1}}$ in
$M_n(R)$ such that each $\alpha_i$ has $p^{th}$ power 0 and such that the
$\alpha_i$'s pairwise commute.  The other coefficient matrices $\beta_j$
are given by the formula 
\begin{equation}
\label{beta}
\beta_j = \frac{\alpha_0^{j_0}\cdots\alpha_{r-1}^{j_{r-1}}}{(j_0)!\cdots(j_{r-1})!} \in M_n(R),
\quad j = \sum_{i=0}^{r-1} j_ip^i {\text ~ with ~ } 0\leq j_i < p.
\end{equation}
\end{prop}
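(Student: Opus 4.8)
The plan is to translate the statement about maps of affine group schemes into a statement about Hopf algebra maps, and then to compute directly using the explicit presentation of $k[\bG_{a(r)}]$. A homomorphism $f \colon \bG_{a(r),R} \to \GL_{n,R}$ of $R$-group schemes is the same as an $R$-algebra, Hopf algebra map $f^* \colon R[\GL_n] \to R[T]/T^{p^r}$; since $\GL_{n,R}$ represents the functor $S \mapsto \GL_n(S)$, giving such a map is the same as giving an element of $\GL_n(R[T]/T^{p^r})$, i.e.\ an invertible matrix $B = \sum_{j=0}^{p^r-1} T^j\beta_j$ with $\beta_j \in M_n(R)$. Dually, the representation $R^n$ of $\GL_n$ being functorial in the group, $f$ corresponds to making $R^n$ an $R[T]/T^{p^r}$-comodule via $\Delta_f(v) = B\cdot v = \sum_j T^j\otimes\beta_j(v)$; the statement that $B$ defines a \emph{group} homomorphism (not just a pointed map) is exactly the statement that $\Delta_f$ is counital and coassociative. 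First I would record these equivalences, citing the standard comodule dictionary already quoted in the Definition of $kG$-modules.

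Next I would unwind the counitality and coassociativity constraints on $B$. Counitality $(\varepsilon\otimes\id)\circ\Delta_f = \id$ (where $\varepsilon \colon R[T]/T^{p^r}\to R$ sends $T\mapsto 0$) forces $\beta_0 = I$. Coassociativity reads $(\nabla\otimes\id)\circ\Delta_f = (\id\otimes\Delta_f)\circ\Delta_f$, which, using $\nabla(T) = T\otimes 1 + 1\otimes T$ in $R[T]/T^{p^r}$, becomes the identity of matrix-valued functions
\[
B(T_1+T_2) \;=\; B(T_1)\,B(T_2)
\]
in $M_n\bigl(R[T_1,T_2]/(T_1^{p^r},T_2^{p^r})\bigr)$; that is, $\sum_j (T_1+T_2)^j\beta_j = \bigl(\sum_i T_1^i\beta_i\bigr)\bigl(\sum_\ell T_2^\ell\beta_\ell\bigr)$. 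So $B$ is a one-parameter "exponential" over the truncated polynomial ring. Expanding the left side by the binomial theorem and matching coefficients of $T_1^iT_2^\ell$ yields $\binom{i+\ell}{i}\beta_{i+\ell} = \beta_i\beta_\ell$ for all $i,\ell$ with $i+\ell < p^r$.

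From these relations the structure falls out by the usual Frobenius-truncated exponential argument. Taking $i=\ell=1$ gives $2\beta_2 = \beta_1^2$; more generally the relations show $\beta_i$ is determined by $\beta_1$ for $i<p$ (namely $\beta_i = \beta_1^i/i!$), and that $\beta_1^p = p!\,\beta_p = 0$ since the relation at $i = p-1,\ell=1$ reads $p\,\beta_p = \beta_{p-1}\beta_1 = \beta_1^p$ and $p = 0$ in $k$. Setting $\alpha_0 = \beta_1$, one sees $\alpha_0^p = 0$. The key multiplicativity trick is to write, for $j = \sum j_ip^i$ with $0\le j_i<p$, the base-$p$ digits, and to show by induction on $r$ (peeling off the factor coming from the $T^{p^{r-1}}$-part) that $\beta_j = \prod_i \alpha_i^{j_i}/(j_i)!$ where $\alpha_i := \beta_{p^i}$; the relation $\binom{j'+j''}{j'}\beta_{j'+j''} = \beta_{j'}\beta_{j''}$ with $j',j''$ supported on disjoint sets of digits gives $\beta_{j'+j''} = \beta_{j'}\beta_{j''}$ (the binomial coefficient is a unit, in fact $1$, mod $p$ by Lucas), so $B$ factors as a product of the "blocks" $\sum_{m<p} T^{mp^i}\alpha_i^m/m!$, each of which is a one-parameter exponential in the single nilpotent $\alpha_i$ with $\alpha_i^p = 0$; and disjointness of supports forces $\alpha_i\alpha_{i'} = \alpha_{i'}\alpha_i$ and, comparing the block for index $i$ with the relation at $i'=i$, $\ell = p^i$, that $\alpha_i^p = 0$. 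Conversely, given pairwise commuting $\alpha_i$ with $\alpha_i^p = 0$, defining $\beta_j$ by \eqref{beta} produces a matrix $B$ satisfying $B(T_1+T_2) = B(T_1)B(T_2)$ and $B(0) = I$, hence an honest 1-parameter subgroup, and naturality in $R$ is manifest since every construction is polynomial in the entries. The main obstacle is purely bookkeeping: organizing the base-$p$ digit combinatorics so that the binomial coefficients $\binom{j'+j''}{j'}$ are correctly identified as units (or as $1$) modulo $p$ via Lucas's theorem, and checking that no "carrying" occurs in the exponents so that the factorization of $B$ into blocks is exact; there is no conceptual difficulty beyond this.
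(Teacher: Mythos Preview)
The paper does not give a proof of this proposition; it is simply quoted from \cite[1.2]{SFB1}. Your argument is correct and is essentially the standard one found there: reinterpret a group homomorphism $\bG_{a(r),R}\to\GL_{n,R}$ as an invertible matrix $B(T)=\sum_j T^j\beta_j$ over $R[T]/T^{p^r}$ satisfying the additive functional equation $B(T_1+T_2)=B(T_1)B(T_2)$ and $B(0)=I$, read off the relations $\binom{i+\ell}{i}\beta_{i+\ell}=\beta_i\beta_\ell$, and then use Lucas's theorem on base-$p$ digits to factor $B$ into commuting truncated exponentials $\exp(T^{p^i}\alpha_i)$ with $\alpha_i:=\beta_{p^i}$, $\alpha_i^p=0$.

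Two small remarks. First, the paper's displayed statement contains a typo: ``$\alpha_0=\beta_0$'' should read $\alpha_0=\beta_1=\beta_{p^0}$ (consistent with formula~(\ref{beta}), which forces $\beta_0=I$); you correctly set $\alpha_0=\beta_1$. Second, your sentence ``the statement that $B$ defines a group homomorphism \dots\ is exactly the statement that $\Delta_f$ is counital and coassociative'' is fine but slightly indirect; the cleanest phrasing is that $f$ is a homomorphism of group schemes iff $f^*$ is a Hopf algebra map, which for the defining representation $R^n$ of $\GL_n$ translates into counitality and coassociativity of the induced $R[T]/T^{p^r}$-comodule. None of this affects the substance of your proof.
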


As shown in \cite{SFB1}, Proposition \ref{coact} implies the following representability
of the functor of 1-parameter subgroups of height $r$.

\begin{thm}
\label{represent}
\cite[1.5]{SFB1}
For any affine group scheme $G$, the functor from commutative $k$-algebras
to sets
$$R \ \mapsto \ \Hom_{\rm grp \, sch}(\bG_{a(r),R},G_R)$$
is representable by an affine scheme $V_r(G) = \Spec k[V_r(G)]$.  Namely, this
functor is naturally isomorphic to the functor
$$R \ \mapsto \ \Hom_{\rm k-alg}(k[V_r(G)],R).$$
\end{thm}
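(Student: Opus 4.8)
The plan is to bootstrap from the known case $G = \GL_n$ in Proposition~\ref{coact} using the fact that every affine group scheme of finite type embeds as a closed subgroup scheme of some $\GL_n$. First I would fix a closed embedding $\iota: G \hookrightarrow \GL_n$, which on coordinate algebras corresponds to a surjection $\iota^*: k[\GL_n] \twoheadrightarrow k[G]$; more concretely, choosing a faithful representation of $G$ realizes $k[G]$ as $k[\GL_{n}]/I$ for the defining ideal $I$. The key observation is that a homomorphism $\bG_{a(r),R} \to \GL_{n,R}$ factors through $G_R$ if and only if the corresponding $k$-algebra map $\Delta_f$-adjoint satisfies the equations defining $I$; so the functor $R \mapsto \Hom_{\rm grp\,sch}(\bG_{a(r),R}, G_R)$ is a closed subfunctor of $R \mapsto \Hom_{\rm grp\,sch}(\bG_{a(r),R}, \GL_{n,R})$.

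Next I would make this precise by pulling back along the description in Proposition~\ref{coact}. That proposition identifies $\Hom_{\rm grp\,sch}(\bG_{a(r),R}, \GL_{n,R})$ with the set of $r$-tuples $(\alpha_0,\dots,\alpha_{r-1})$ of pairwise-commuting $p$-nilpotent matrices in $M_n(R)$; since the conditions ``$\alpha_i^p = 0$'' and ``$\alpha_i\alpha_j = \alpha_j\alpha_i$'' are polynomial with coefficients in $k$, this functor is represented by the closed subscheme $C_r(\GL_n) \subset \bA^{rn^2}$ cut out by these equations, and one can take $k[V_r(\GL_n)]$ to be its coordinate ring. Then the condition that the associated comodule map $\Delta_f: R^n \to R[T]/T^{p^r}\otimes R^n$ — whose coefficient matrices $\beta_j$ are the explicit polynomials in the $\alpha_i$ given by \eqref{beta} — lands inside $G_R$ translates, via the surjection $k[\GL_n] \to k[G]$, into a further system of polynomial equations in the entries of the $\alpha_i$ with coefficients in $k$. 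Adjoining these equations gives a closed subscheme $V_r(G) \subseteq V_r(\GL_n)$, and I would check that its functor of points is exactly $R \mapsto \Hom_{\rm grp\,sch}(\bG_{a(r),R}, G_R)$: an $R$-point of $V_r(G)$ is an $r$-tuple over $R$ satisfying both the $\GL_n$-relations and the $G$-relations, which by construction is the same as a 1-parameter subgroup of $G_R$.

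The naturality statement — that the representing functor is $R \mapsto \Hom_{\rm k-alg}(k[V_r(G)],R)$ — then follows formally, since any closed subscheme of an affine scheme is again affine and represents the corresponding subfunctor, and the Yoneda isomorphism is natural in $R$. I would also remark that the construction is independent of the chosen embedding $\iota$ up to canonical isomorphism, because the functor being represented does not refer to $\iota$; representing objects are unique up to unique isomorphism.

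The main obstacle is making rigorous the claim that ``a 1-parameter subgroup of $\GL_{n,R}$ factors through $G_R$ iff certain polynomial equations hold in the $\alpha_i$.'' The subtlety is that factoring through a closed subgroup scheme is a condition on the morphism of schemes $\bG_{a(r),R} \to \GL_{n,R}$, and one must verify that this condition is equivalent to a closed condition on the finitely many parameters $\alpha_i$. This works because $\bG_{a(r),R}$ is a finite flat $R$-scheme, so the image of the comodule structure map is determined by the images of a finite $R$-basis, and the requirement that these images satisfy the generators of $I$ is visibly polynomial in the $\beta_j$, hence — via \eqref{beta} — polynomial in the $\alpha_i$. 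Once this reduction is in hand, the rest is the standard fact that intersections of closed subfunctors of an affine scheme are representable.
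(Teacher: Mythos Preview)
The paper does not give its own proof of this theorem: it is stated with the citation \cite[1.5]{SFB1} and prefaced only by the remark that Proposition~\ref{coact} implies representability. Your proposal reconstructs exactly the argument one finds in \cite{SFB1}: first establish representability for $\GL_n$ by the explicit description of Proposition~\ref{coact} (commuting $p$-nilpotent $r$-tuples), then for a closed subgroup $G \hookrightarrow \GL_n$ observe that the factorization condition on a $1$-parameter subgroup is cut out by polynomial equations in the entries of the $\alpha_i$, yielding $V_r(G)$ as a closed subscheme of $V_r(\GL_n)$. Your handling of the ``main obstacle'' is correct: the map on coordinate algebras $R[\GL_n] \to R[T]/T^{p^r}$ is determined by where the $X_{i,j}$ go, namely $\sum_\ell (\beta_\ell)_{i,j}T^\ell$, so requiring that the generators of the ideal $I$ defining $G$ map to zero imposes polynomial conditions on the $(\beta_\ell)_{i,j}$, hence on the $(\alpha_i)_{i,j}$ via \eqref{beta}.

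One small point: the theorem is stated for \emph{any} affine group scheme $G$, not only those of finite type, whereas your first step (embedding $G$ into some $\GL_n$) requires $k[G]$ to be finitely generated. To cover the general case you would write $G$ as a filtered inverse limit of algebraic affine quotients $G_\alpha$; then $\Hom_{\rm grp\,sch}(\bG_{a(r),R}, G_R) \simeq \varprojlim_\alpha \Hom_{\rm grp\,sch}(\bG_{a(r),R}, (G_\alpha)_R)$, each factor is representable by the affine scheme $V_r(G_\alpha)$, and the inverse limit of affine schemes is affine. In the context of this paper the omission is harmless, since every group scheme actually used is infinitesimal (hence finite), but you should flag the reduction if you want the argument to match the generality of the statement.
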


By varying $r$, we can associate  a family  of affine schemes  to an affine group 
scheme $G$.  In the following remark we make explicit the relationship 
between  various $V_r(G)$ for the same $G$ and varying $r$'s.

\begin{remark}
\label{varying}  For $r > s \geq 1$, 
let $p_{r,s}: \Gar \to\Gas$ be the canonical projection given by the natural 
embedding of the coordinate algebras

\centerline
{
$\UseComputerModernTips
 \xymatrix{p_{r,s}^*: k[\Gas] = k[T]/T^{p^s} \ar[rr]^-{T \rightarrow 
T^{p^{r-s}}} && k[T]/T^{p^r} = k[\Gar].}$
}
\noindent
The corresponding map on group algebras 
$$
\UseComputerModernTips
 \xymatrix{k\Gar \simeq k[u_0, \ldots, u_{r-1}]/(u^p_0, \ldots, u^p_{r-1}) \ar[r]^{p_{r,s, *}} & 
k\Gas \simeq k[v_0, \ldots, v_{s-1}]/(v^p_0, \ldots, v^p_{s-1})}
$$ 
sends $\{u_0, \ldots, u_{r-s-1}\}$ to $\{ 0, \ldots, 0 \}$, and 
$\{ u_{r-s},  \ldots, u_{r-1} \}$ to $\{v_0, \ldots, v_{s-1} \}$.    

Precomposition with $p_{r,s}$ determines a canonical embedding 
of affine schemes  
$$\UseComputerModernTips
 \xymatrix{i_{s,r}: V_s(G) \ar@{^(->}[r] &V_r(G)},$$
where a one-parameter subgroup  $\mu:  \bG_{a(s),R} \to G_R$ of height $s$
is sent to the one-parameter subgroup $ \mu \circ p_{r,s} : \bG_{a(r),R} 
\to \bG_{a(s),R} \to  G_R$ of height $r$. 
The construction is transitive, that is, we have $i_{s,r} = i_{s^\prime, r} 
\circ \,i_{s, s^\prime}$ for $s \leq s^\prime \leq r$.  Hence, we have an inductive system 
$$V_1(G) \subset V_2(G) \subset \ldots \subset V_r(G) \subset \ldots$$
\end{remark}

Conversely, any one-parameter subgroup
$\bG_{a(s^\prime),R} \rightarrow G_R$ can be decomposed as
$$\UseComputerModernTips
 \xymatrix{\bG_{a(s^\prime),R} \ar[r]^-{p_{s^\prime,s}} & \bG_{a(s),R} \ar@{^(->}[r] & G_R}$$ 
for some $s \leq s^\prime$.  If $G$ is an infinitesimal group scheme of height 
$\leq r$ then,  we may choose $s\leq r$.  This justifies the following definition 

\begin{defn}
\label{limm}
Let $G$ be an infinitesimal group scheme.  Then  the 
closed immersion  $i_{r, r^\prime}: V_r(G) \hookrightarrow V_{r^\prime}(G)$ for $r^\prime >r$ 
is an isomorphism provided the height of $G$ is $\leq r$.  We denote by $V(G)$ the stable
value of $V_r(G)$,
$$V(G) \ \equiv  \varinjlim_r \ V_r(G).$$ 
\end{defn}

We next make explicit the construction of 1-parameter subgroups for $\GL_n$ as in
Proposition \ref{coact}.  
This construction can be applied to any affine group scheme 
of exponential type (see \cite[\S 1]{SFB1} and  
also \cite{Mc} for an extended list of groups  of exponential type).
We define the homomorphism $$\exp_{\ul \alpha}: \bG_{a(r),R} \to \GL_{n,R}$$ 
of $R$-group schemes corresponding to an $r$-tuple 
$\ul \alpha = (\alpha_0,\ldots,\alpha_{r-1}) \in M_n(R)^{\times r}$ 
of pairwise commuting $p$-nilpotent matrices  to be
the natural transformation of group-valued functors on commutative
$R$-algebras $S$ sending any $s\in S$ with $s^{p^r} = 0$ to  
\begin{equation}
\label{exp}
\exp(s\alpha_0)\cdot \exp(s^p \alpha_1)\cdot \cdots \cdot 
\exp(s^{p^{r-1}} \alpha_{r-1}) \in \GL_n(S),
\end{equation}
where for any $p$-nilpotent matrix $A \in \GL_n(S)$ we set
$$\exp(A) =  1 + A + \frac{A^2}{2} + \ldots + \frac{A^{p-1}}{(p-1)!}.$$

The following proposition proved in \cite{SFB1} identifies the functor of 1-parameter
subgroups in the case of infinitesimal general linear groups.

\begin{prop}
\label{tuples}
\cite[1.2]{SFB1}
The scheme of one-parameter subgroups $V_r(\GL_n)$ is isomorphic 
to the scheme of $r$-tuples of 
pairwise commuting $p$-nilpotent $n\times n$ matrices $N^{[r]}_p(gl_n)$; 
the identification is given by sending 
$\ul \alpha = (\alpha_0,\ldots,\alpha_{r-1}) \in N^{[r]}_p(gl_n)(R)$ to the  
one-parameter subgroup $\exp_{\ul \alpha}: \bG_{a(r),R} \to \GL_{n,R}$.
\end{prop}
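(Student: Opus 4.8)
The plan is to deduce Proposition \ref{tuples} directly from Proposition \ref{coact}, using the explicit formula \eqref{exp} for $\exp_{\ul\alpha}$ together with the classical fact that the exponential and logarithm are inverse bijections between $p$-nilpotent matrices and unipotent matrices whose $p$-th power is trivial. First I would observe that Proposition \ref{coact} already produces a natural bijection, functorial in $R$, between $\Hom_{\rm grp\,sch}(\bG_{a(r),R},\GL_{n,R})$ and the set of $r$-tuples $\ul\alpha=(\alpha_0,\dots,\alpha_{r-1})\in M_n(R)^{\times r}$ of pairwise commuting $p$-nilpotent matrices; by Theorem \ref{represent} the source is $\Hom_{\rm k\text{-}alg}(k[V_r(\GL_n)],R)$, and the target is exactly $N^{[r]}_p(gl_n)(R)$ by definition of the latter scheme. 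Hence there is an abstract isomorphism $V_r(\GL_n)\cong N^{[r]}_p(gl_n)$, and the only real content of the proposition is the assertion that, under this isomorphism, the $R$-point $\ul\alpha$ corresponds to the specific homomorphism $\exp_{\ul\alpha}$ of \eqref{exp}.

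The key step, then, is to check that the comodule map $\Delta_f$ attached to $f=\exp_{\ul\alpha}$ has coefficient matrices $\beta_j$ given by formula \eqref{beta}, i.e. that $\alpha_0=\beta_0,\ \alpha_1=\beta_p,\ \dots,\ \alpha_{r-1}=\beta_{p^{r-1}}$ for this particular $f$. To see this, I would take the universal $R$-algebra $S=R[s]/s^{p^r}$, feed the tautological element $s\in S$ into \eqref{exp}, and expand the product of exponentials. Since the $\alpha_i$ pairwise commute, $\exp(s\alpha_0)\cdots\exp(s^{p^{r-1}}\alpha_{r-1})$ multiplies out to $\sum_j s^j\,\dfrac{\alpha_0^{j_0}\cdots\alpha_{r-1}^{j_{r-1}}}{(j_0)!\cdots(j_{r-1})!}$, where $j=\sum_i j_ip^i$ is the base-$p$ expansion of $j$ with $0\le j_i<p$; the constraint $j_i<p$ is forced precisely because each individual factor $\exp(s^{p^i}\alpha_i)=\sum_{j_i=0}^{p-1}\dfrac{(s^{p^i}\alpha_i)^{j_i}}{(j_i)!}$ is truncated at degree $p-1$ by $p$-nilpotence of $\alpha_i$. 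Identifying this expansion with $\Delta_f(v)=\sum_j T^j\otimes\beta_j(v)$ under $T\leftrightarrow s$ yields exactly \eqref{beta}, and in particular $\beta_{p^i}=\alpha_i$ for each $i$. Thus the bijection of Proposition \ref{coact} sends the homomorphism $\exp_{\ul\alpha}$ to the tuple $\ul\alpha$, which is what we wanted.

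I expect the main technical obstacle to be bookkeeping in the expansion of the product of exponentials: one must verify that collecting terms by total $s$-degree reproduces the \emph{unique} base-$p$ multi-index $(j_0,\dots,j_{r-1})$ in the stated range and that no lower-order cancellation or carrying occurs modulo $s^{p^r}$. This is a matter of noting that for $0\le j<p^r$ the representation $j=\sum_{i=0}^{r-1}j_ip^i$ with $0\le j_i<p$ is unique, so each factor contributes precisely the $i$-th digit, and the denominators multiply correctly because the $\alpha_i$ commute. One should also confirm that the resulting $\Delta_f$ is counital and coassociative — but this is automatic, since it arises from an honest group-scheme homomorphism, or alternatively can be checked directly from the multiplicativity of $\exp$ on commuting nilpotents. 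Finally, naturality in $R$ is inherited from naturality of the constructions in Proposition \ref{coact} and from the fact that \eqref{exp} is defined functorially, so no separate argument is needed there.
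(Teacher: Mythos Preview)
The paper does not actually prove this proposition: it is quoted from \cite[1.2]{SFB1} with no argument supplied, just as Proposition~\ref{coact} is. Your proposal is the natural and correct deduction of Proposition~\ref{tuples} from Proposition~\ref{coact}: once the latter hands you a functorial bijection between height-$r$ one-parameter subgroups and $r$-tuples of commuting $p$-nilpotent matrices, Yoneda gives the scheme isomorphism, and your expansion of the product of truncated exponentials over $S=R[s]/s^{p^r}$ correctly identifies the coefficient of $s^j$ with the $\beta_j$ of \eqref{beta}, hence identifies the bijection with $\ul\alpha\mapsto\exp_{\ul\alpha}$.

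Two minor remarks. First, the ``exponential/logarithm bijection'' you mention in your plan is not actually needed and you rightly never use it: injectivity and surjectivity are already packaged in Proposition~\ref{coact}, so you only have to check one direction. Second, the paper's text of Proposition~\ref{coact} contains the evident typo $\alpha_0=\beta_0$ (counitality forces $\beta_0=\id$); your own computation correctly yields $\beta_{p^i}=\alpha_i$, in particular $\beta_1=\alpha_0$, so you should trust your expansion rather than the quoted line.
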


\begin{ex} 
\label{var}
 We describe $V(G)$ in each of the four examples of Example \ref{four}.

\vspace{0.1in}
\noindent 
(1) $V(\ul \fg) \simeq N_p(\fg)$, the closed subvariety of the affine space
underlying $\fg$ consisting of $p$-nilpotent elements $x\in \fg$ (that is, $x^{[p]} = 0$).
Let  $\fg_a$ be the  Lie algebra  of the additive group $\bG_a$.  Note that $\fg_a$ is a 
one-dimensional restricted Lie algebra with trivial $p$-restriction.
Each $p$-nilpotent element
$x \in \fg_R = \fg \otimes_k R$ determines a map of $p$-restricted Lie algebras 
over $R$ where $R$ is a commutative $k$-algebra: 
$\fg_{a,R} \to \fg_R$. The corresponding map
of height 1 infinitesimal group schemes $\bG_{a(1),R} \to \ul \fg_R$
is the associated 1-parameter subgroup of $\ul \fg$.

\vspace{0.1in}
\noindent
(2)  $V(\bG_{a(r)}) \simeq \bA^r$.   The $r$-tuple
$\ul a = (a_0,\ldots,a_{r-1}) \in R^{\times r} = \bA^r(R)$ corresponds to the 1-parameter 
subgroup $\mu_{\ul a}: \bG_{a(r),R} \to \bG_{a(r),R}$ whose map on 
coordinate algebras $R[T]/T^{p^r} \to  R[T]/T^{p^r}$ sends $T$ to
$\sum_i a_iT^{p^i}$ (\cite[1.10]{SFB1}).

\vspace{0.1in}
\noindent
(3) By Proposition \ref{tuples},  $V(\GL_{n(r)}) \ = \ N_p^{[r]}(gl_n)$, 
the variety of $r$-tuples of pairwise
commuting, $p$-nilpotent $n\times n$ matrices.     The embedding 
$i_{r, r+1}: V_r(\GL_n) \simeq N_p^{[r]}(gl_n) \subset 
V_{r+1}(\GL_n)\simeq N_p^{[r+1]}(gl_n)$  
described in Remark~\ref{varying}   is given by sending an $r$-tuple 
$(\alpha_0,\ldots,\alpha_{r-1})$ 
to the  $(r+1)$-tuple $(0, \alpha_0,\ldots,\alpha_{r-1})$.

Let $X_{i,j}$ be the coordinate functions of $R[\GL_{n(r)}] \simeq 
R[X_{i,j}]/(X_{i,j}^{p^{r}} - \delta_{i,j})$.
Then $\exp_{\ul \alpha}^*: R[\GL_{n(r)}] \to R[\Gar] 
$ is given by sending $X_{i,j}$ for some $1 \leq i,j \leq n$ to the
$(i,j)$-entry of the polynomial $p_{\ul \alpha}(t)$ with matrix coefficients
whose coefficient of $t^d$ is computed as the
multiple of $s^d$ in the $(i,j)$-entry of the matrix (\ref{exp}).

Upon performing the indicated multiplication in (\ref{exp}),
the coefficient of  $p_{\ul \alpha}(t)$ multiplying  $s^{p^\ell}$ is
$\alpha_\ell$ for $0 \leq \ell < r$, whereas coefficients of 
$p_{\ul \alpha}(t)$ multiplying $s^n$ for 
$n$ not a power of $p$ are determined as in formula (\ref{beta}).  
Consequently, we conclude that $\exp_{\ul \alpha}^*(X_{i,j}) $ is a 
polynomial in $t$ whose coefficient multiplying $T^{p^\ell}$ is
$(\alpha_\ell)_{i,j}$ for $0 \leq \ell < r$.

\vspace{0.1in}
\noindent
(4) Since $\SL_{2(2)}$ is a group scheme with an embedding of 
exponential type (see \cite[1.8]{SFB1}), 
its variety admits a description  similar to the one of $\GL_{n(r)}$. 
Namely, $V(\SL_{2(2)})$ is the variety of pairs of $p$-nilpotent 
proportional  $2\times 2$ matrices $\ul \alpha = (\alpha_0, \alpha_1)$. 
This variety is given explicitly as
the affine scheme with coordinate algebra
 $$k[V(\SL_{2(2)})] =k[x_0,y_0,z_0,x_1,y_1,z_1]/
(x_iy_i - z_i^2,x_0y_1 - x_1y_1, z_0y_1 - z_1y_0,x_0z_1 - x_1z_0).$$

\noindent
We give an explicit description of the map on coordinate algebras $$\exp^*_{\ul \alpha}: 
R[\SL_{2(2)}] \to R[\Ga2] \simeq R[T]/T^{p^2}$$ induced by the  one-parameter subgroup 
$\exp_{\ul \alpha}:   \bG_{a(2),R} \to \SL_{2(2),R}$. This description follows 
immediately from the general discussion in the previous example.  
Let $\ul \alpha =(\left [\begin{array}{cc} 
c_0 & a_0 \\
b_0 & -c_0
\end{array} \right ], \left [\begin{array}{cc} 
c_1 & a_1 \\
b_1 & -c_1
\end{array} \right ] ) \in  N^{[2]}(sl_2)$.  Then $\exp^*_{\ul \alpha}$ is 
determined by the formulae
$$X_{1,1} \mapsto 1 + c_0T + c_1T^p, \quad  X_{1,2} \mapsto  a_0T  + a_1T^p$$
$$X_{2,1} \mapsto b_0T + b_1T^p, \quad  X_{2,2} \mapsto 1 - c_0T  - c_1T^p, $$
where $X_{i,j}$ are the standard polynomial generators of 
$k[\SL_{2(2)}] 
\simeq \frac{k[X_{1,1}, X_{1,2}, X_{2,1}, X_{2,2}]}{(\det-1,
X_{i,j}^{p^2} - \delta_{i,j})}.$  

\end{ex}

\vskip .2in
\begin{remark}
\label{canon}
If $k(v)$ denotes the field of definition of the point $v \in V(G)$ for an 
infinitesimal group scheme $G$ (see \cite[p.743]{SFB2} for a 
discussion of the field of definition), then we have a 
naturally associated map $\Spec k(v) \to V(G)$  and, hence, 
an associated group scheme homomorphism over $k(v)$ (for $r$ sufficiently large):
$$
\UseComputerModernTips
 \xymatrix{\mu_v: \bG_{a(r), k(v)} \ar[r] & G_{k(v)}}.
$$
Note that if $K/k$ is a  field extension and $\mu: \bG_{a(r), K} \to G_K$ is a group scheme 
homomorphism, then this data defines a point $v \in V(G)$
and a field embedding $k(v) \hookrightarrow K$ such that $\mu$ is 
obtained from $\mu_v$ via scalar extension from $k(v)$ to $K$.
\end{remark}

We next recall the {\it rank variety} and {\it cohomological support variety}
of a $kG$-module of an infinitesimal group scheme.  We denote by 
$$
\bH(G,k) = 
\begin{cases}
\HHH^*(G,k), & \text{if $p = 2$,}\\
\HHH^{\rm ev}(G,k) & \text{if $p > 2$}. \\
\end{cases}
$$

\begin{defn}
\label{suppvar}
Let $G$ be a finite group scheme and $M$ a finite dimensional $kG$-module.
We define the cohomological support variety for $M$ to be
$$|G|_M \ \equiv \ V(\ann_{\bH(G,k)}\Ext_{kG}^*(M,M)),$$
the reduced closed subscheme
of $|G| = \Spec \bH(G,k)_{\rm red}$ given as the variety of the annihilator 
ideal of $\Ext_{kG}^*(M,M)$.  
\end{defn}

The map of $R$-algebras (but not of Hopf algebras
for $r>1$), 
\begin{equation}
\label{epsilon}
\UseComputerModernTips
 \xymatrix{ \epsilon: R[u]/u^p \ar[rr]^-{u \mapsto u_{r-1}} &
& R[u_0,\ldots,u_{r-1}]/(u_i^p) \ \simeq \ R\Gar},   
\end{equation}
makes its first appearance in the following definition and will recur
throughout this paper.

\begin{defn}
\label{rankvar}
Let  $G$ be an infinitesimal group scheme  and $M$ a finite 
dimensional $kG$-module.
We define the rank variety for $M$ to be the reduced closed subscheme $V(G)_M$
whose points are given as follows:
$$V(G)_M = \{ v \in V(G) : (\mu_{v,*} \circ \epsilon)^*(M_{k(v)}) 
\text{ is not free as a }  k[u]/u^p \text{ - module } \}. $$
\end{defn}
\vspace{0.1in}

Proposition \cite[6.2]{SFB2} asserts that $V(G)_M$ is a closed subvariety of $V(G)$.   
A key result of \cite{SFB2} is the following theorem relating the scheme of
1-parameter subgroups $V(G)$ 
to the  cohomology of $G$.

\begin{thm} (\cite[5.2, 6.8, 7.5]{SFB2}) 
\label{iso}  
Let $G$ be an infinitesimal group scheme of height $\leq r$.  There  is a natural 
homomorphism of $k$-algebras
$$\psi: \bH(G,k) \to k[V(G)]$$
with nilpotent kernel whose image contains the $p^r$-th  power  of each element 
of $k[V(G)]$.  Hence, the associated morphism of schemes 
$$\Psi: V(G) \to \Spec \bH(G,k)$$
is a $p$-isogeny.  

If $M$ is a finite dimensional $kG$-module, then $\Psi$ restricts to a homeomorphism 
$$\UseComputerModernTips
 \xymatrix{\Psi_M: V(G)_M \ar[r]^-{\sim} & |G|_M.}$$
Furthermore, every closed conical subspace of $V(G)$ is of the form $V(G)_M$ for some
finite dimensional $kG$-module $M$.
\end{thm}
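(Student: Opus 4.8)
The plan is to reduce the statement to the two computable cases $G=\bG_{a(r)}$ and $G=\GL_{n(r)}$, and then to propagate it to an arbitrary infinitesimal $G$ of height $\le r$ by means of a closed embedding $G\hookrightarrow\GL_{N(r)}$ together with a spectral-sequence d\'evissage; the two module assertions then follow by feeding naturality of $\psi$ into a Quillen-type stratification. First I would construct $\psi$. Since $V(G)=V_r(G)$ represents the functor of one-parameter subgroups of height $\le r$ (Theorem~\ref{represent}), there is a universal one-parameter subgroup $\nu^{\mathrm{univ}}\colon\bG_{a(r),k[V(G)]}\to G_{k[V(G)]}$, and I would take $\psi$ to be the composite of pullback along $\nu^{\mathrm{univ}}$, namely $(\nu^{\mathrm{univ}})^*\colon\bH(G,k)\otimes k[V(G)]\to\bH(\bG_{a(r),k[V(G)]},k[V(G)])$, with the canonical projection of $\bH(\bG_{a(r)},k)$ onto its distinguished polynomial subalgebra $k[y_1,\dots,y_r]\cong k[V(\bG_{a(r)})]$ detecting the reduced cohomology, followed by the multiplication $k[V(G)]\otimes_k k[V(G)]\to k[V(G)]$ coming from the identity section. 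That $\psi$ is a well-defined homomorphism of graded $k$-algebras, and that it is natural in $G$, is then a formal consequence of functoriality of cohomology and of $V(-)$.

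To see that $\Psi$ is a $p$-isogeny I would proceed in three steps, after which a $p$-isogeny is automatically a homeomorphism on spectra (nilpotent kernel and image containing $p^r$-th powers make $k[V(G)]_{\mathrm{red}}$ purely inseparable over the image of $\psi$). For $G=\bG_{a(r)}$ the claim --- $\psi$ injective with image containing all $p^r$-th powers --- is a direct computation from the known structure of $\bH(\bG_{a(r)},k)$, and it is precisely the Frobenius-twist phenomena visible there that force a $p$-isogeny rather than an isomorphism in general. For $G=\GL_{n(r)}$ I would combine the computation of $\bH(\GL_{n(r)},k)$ with the explicit exponential parametrization $V_r(\GL_n)=N_p^{[r]}(\gl_n)$ of Proposition~\ref{tuples}; more generally this handles all groups of exponential type. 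For a general infinitesimal $G$ of height $\le r$, fix a closed embedding $\iota\colon G\hookrightarrow\GL_{N(r)}$ (possible since a faithful $kG$-module gives $G\hookrightarrow\GL_N$ landing in the $r$-th Frobenius kernel). Surjectivity of $\psi_G$ up to $p^r$-th powers then follows by naturality: lift $f\in k[V(G)]$ along the surjection $k[V(\GL_{N(r)})]\twoheadrightarrow k[V(G)]$, write $\tilde f^{\,p^r}=\psi_{\GL_{N(r)}}(\eta)$, and deduce $f^{p^r}=\psi_G(\iota^*\eta)$. The genuinely hard point, which I expect to be the main obstacle, is that $\ker\psi_G$ is nilpotent; by naturality of $\psi$ along the maps $\mu_v$ and the $\bG_{a(r)}$ case this is equivalent to the detection-of-nilpotence statement: a class $\zeta\in\bH(G,k)$ is nilpotent if and only if $\mu_v^*(\zeta)$ is nilpotent for every $v\in V(G)$. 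This is essentially Quillen stratification for infinitesimal group schemes, the technical heart of \cite{SFB2}, proved there by a Lyndon--Hochschild--Serre/May spectral-sequence argument together with d\'evissage on the height, bootstrapping off the exponential-type case.

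For the statement about a module $M$, note first that by Definition~\ref{rankvar} a point $v$ lies in $V(G)_M$ exactly when $(\mu_{v,*}\circ\epsilon)^*(M_{k(v)})$ is non-free over $k(v)[u]/u^p$; by the elementary theory of cyclic shifted subgroups (that is, of $k[u]/u^p$-modules, equivalently of the one-dimensional restricted Lie algebra $\fg_a$) this is equivalent to $\mu_v^*(M_{k(v)})$ being cohomologically non-trivial over $\bG_{a(r),k(v)}$. Combining this with naturality of $\psi$ along the $\mu_v$ and with the stratification of the previous paragraph, one finds that $\psi$ carries $\sqrt{\ann_{\bH(G,k)}\Ext^*_{kG}(M,M)}$ onto the radical of the homogeneous ideal of $k[V(G)]$ defining $V(G)_M$; since $\Psi$ is already a homeomorphism it therefore restricts to a homeomorphism $\Psi_M\colon V(G)_M\xrightarrow{\sim}|G|_M$. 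The subtleties arising from non-closed points and imperfect $k$ are handled by base change to the fields of definition $k(v)$, as in Remark~\ref{canon} and \cite[6.2]{SFB2}.

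Finally, for realizability I would start with a closed conical subspace $W\subseteq V(G)$. Its image $\Psi(W)$ is a closed conical subspace of $|G|=\Spec\bH(G,k)_{\mathrm{red}}$, and because $\psi$ is onto up to $p^r$-th powers we have $W=\Psi^{-1}(\Psi(W))=Z(\zeta_1,\dots,\zeta_s)$ for finitely many homogeneous $\zeta_i\in\HHH^{2n_i}(G,k)$. For each $i$ I would form the Carlson module $L_{\zeta_i}=\Ker\bigl(\hat\zeta_i\colon\Omega^{2n_i}k\to k\bigr)$, whose cohomological support is the hypersurface $Z(\zeta_i)$, and set $M=L_{\zeta_1}\otimes_k\cdots\otimes_k L_{\zeta_s}$. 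By the tensor-product formula for supports over a finite group scheme one gets $|G|_M=\bigcap_i Z(\zeta_i)=\Psi(W)$, and hence $V(G)_M=\Psi^{-1}(|G|_M)=W$. All of this rests on the computations and detection theorems of \cite{SFB1,SFB2}.
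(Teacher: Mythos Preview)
The paper does not supply a proof of this theorem; it is stated with attribution to \cite[5.2, 6.8, 7.5]{SFB2} and used as background input for the rest of the paper. So there is no ``paper's own proof'' to compare your proposal against.

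That said, your sketch is a reasonable outline of the strategy actually carried out in \cite{SFB1,SFB2}: construct $\psi$ via the universal $1$-parameter subgroup and the known cohomology of $\bG_{a(r)}$; establish the $p$-isogeny first for $\GL_{n(r)}$ (or more generally groups of exponential type) by explicit computation, then transport to general $G$ via a closed embedding $G\hookrightarrow\GL_{N(r)}$; identify nilpotence of $\ker\psi$ with a detection-of-nilpotence theorem, which is indeed the technical core; and deduce realizability from Carlson modules and the tensor-product formula. You correctly flag the detection step as the genuine obstacle, and you correctly locate the module statement as a consequence of naturality plus the local projectivity criterion for $k[u]/u^p$-modules.

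One small caution: your description of $\psi$ as ``pullback along $\nu^{\mathrm{univ}}$ followed by projection to the polynomial part and then multiplication'' is morally right but not literally how \cite{SFB2} packages it; the actual construction there is somewhat more delicate (it goes through characteristic classes for strict polynomial functors in \cite{SFB1} and a careful identification of the target). If you intend this as more than a mnemonic, you would need to check that your composite really lands in $k[V(G)]$ with the correct grading and agrees with the map of \cite{SFB2}. But as a high-level roadmap of what \cite{SFB2} does, your proposal is accurate.
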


In the special  case of   of $G = \GL_{n(r)}$ the isogeny $\Psi$ has an explicitly 
constructed inverse.

\begin{thm} (\cite [5.2]{SFB1})
\label{gln}
There exists a  homomorphism of $k$-algebras 
$$\phi: k[V(\GL_{n(r)})] \to \bH(\GL_{n(r)},k)$$
such that $\psi \circ  \phi$ is the $r^{th}$ iterate of the $k$-linear Frobenius map.
Hence,   the associated  morphisms of schemes
$$\Psi: V(\GL_{n(r)}) \to \Spec \bH(\GL_{n(r)},k), \quad \Phi: \Spec \bH(\GL_{n(r)},k) \to V(\GL_{n(r)})$$
are mutually inverse homeomorphisms.

\end{thm}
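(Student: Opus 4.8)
The plan is to produce $\phi$ from the universal cohomology classes of $\GL_{n(r)}$, to pin down $\psi\circ\phi$ by restricting along the one-parameter subgroups $\exp_{\ul\alpha}$ (where the cohomology of $\bG_{a(r)}$ is a genuine polynomial ring), and then to read off the homeomorphism statement formally.

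\emph{Construction of $\phi$.} One recalls from \cite{SFB1} the universal classes attached to $\GL_{n(r)}$: for $1\le i\le r$ there is a class $e_i\in\HHH^{2p^{i-1}}(\GL_{n(r)},\mathfrak{gl}_n^{(i)})$ (in degree $p^{i-1}$ when $p=2$), where $\mathfrak{gl}_n^{(i)}$ is a Frobenius twist of $\mathfrak{gl}_n$. Composition of $e_i$ with a linear functional on its coefficient module gives, for each $i$, a $k$-linear map $\mathfrak{gl}_n^{(i)\#}\to\HHH^{2p^{i-1}}(\GL_{n(r)},k)\subseteq\bH(\GL_{n(r)},k)$. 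By Proposition \ref{tuples}, $k[V(\GL_{n(r)})]=k[N_p^{[r]}(\mathfrak{gl}_n)]$ is the quotient of the symmetric algebra $\Sym\bigl(\bigoplus_{i=1}^{r}\mathfrak{gl}_n^{(i)\#}\bigr)$, where the $i$-th summand is spanned by the entries of the matrix $\alpha_{i-1}$ placed in degree $p^{i-1}$, modulo the ideal generated by the entries of $\alpha_{i-1}^{p}$ and of $[\alpha_{i-1},\alpha_{j-1}]$. The maps above assemble into an algebra homomorphism $\wt\phi\colon\Sym\bigl(\bigoplus_i\mathfrak{gl}_n^{(i)\#}\bigr)\to\bH(\GL_{n(r)},k)$ (note $\bH(\GL_{n(r)},k)$ is commutative), and one must verify that $\wt\phi$ kills the defining ideal; the induced map on the quotient is $\phi$.

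\emph{Restriction to $\bG_{a(r)}$, well-definedness, and $\psi\circ\phi=F^r$.} Let $\ul\alpha$ be an $r$-tuple of pairwise commuting $p$-nilpotent matrices over a commutative $k$-algebra $R$. Naturality of the $e_i$, together with the known restriction of the universal classes to $\bG_{a(r)}$ as in \cite{SFB1}, shows that $\exp_{\ul\alpha}^*(e_i)$ equals, up to the Frobenius twist dictated by degree, the product of the matrix $\alpha_{i-1}$ with the polynomial generator $x_{i-1}$ of $\bH(\bG_{a(r)},k)=k[x_0,\dots,x_{r-1}]$; recall from Example \ref{var} that $\Spec k[x_0,\dots,x_{r-1}]=\bA^r=V(\bG_{a(r)})$. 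Hence, for $\lambda$ in the $i$-th block, $\exp_{\ul\alpha}^*(\wt\phi(\lambda))$ is a twist of $\langle\lambda,\alpha_{i-1}\rangle\,x_{i-1}$. In particular $\wt\phi$ carries the entries of $\alpha_{i-1}^{p}$ and of $[\alpha_{i-1},\alpha_{j-1}]$ to classes that restrict to $0$ along every $\exp_{\ul\alpha}$, hence — by the construction of $\psi$ in \cite{SFB2} (assembled from exactly these restrictions) together with the nilpotence of $\ker\psi$ from Theorem \ref{iso} — into $\ker\psi$; promoting this to genuine vanishing in $\bH(\GL_{n(r)},k)$ is the one step needing the finer cohomological input of \cite{SFB1}, and once granted, $\phi$ is well defined. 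Feeding the same restriction formula into the construction of $\psi$ — where $\psi(c)$ restricted to the point $\ul\alpha$ records the top coefficient of $\exp_{\ul\alpha}^*(c)$ and $x_{i-1}$ pulls back under the automorphism $\mu_{\ul a}$ of Example \ref{var} to a prescribed Frobenius twist of $a_{i-1}$ — one sees that $\psi(\phi(f))$ and $f^{p^r}$ restrict to the same function along every one-parameter subgroup; as $\psi$ is injective modulo nilpotents, this yields $\psi\circ\phi=F^r$, the $r$-th iterate of the $k$-linear Frobenius.

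\emph{Conclusion, and the main difficulty.} Since $F^r$ induces the $r$-fold geometric Frobenius on prime spectra, a universal homeomorphism, $\Phi\circ\Psi=\Spec(\psi\circ\phi)$ is a homeomorphism. Dually, $\psi\bigl(\phi(\psi(c))\bigr)=F^r(\psi(c))=\psi(c)^{p^r}=\psi(c^{p^r})$, and since $\ker\psi$ is nilpotent this forces $\phi\circ\psi$ to coincide with $F^r$ on $\bH(\GL_{n(r)},k)$ modulo the nilradical; as nilpotents do not affect the map on spectra, $\Psi\circ\Phi=\Spec(\phi\circ\psi)$ is a homeomorphism as well. A pair of continuous maps both of whose composites are homeomorphisms are themselves homeomorphisms, and here those composites are the $r$-fold Frobenius, which is the identity on underlying topological spaces; hence $\Psi$ and $\Phi$ are mutually inverse homeomorphisms. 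The genuine obstacle throughout is the well-definedness of $\phi$: one must know that the classes $\lambda\circ e_i$ satisfy the $p$-nilpotence and commutativity relations of $N_p^{[r]}(\mathfrak{gl}_n)$ on the nose in $\bH(\GL_{n(r)},k)$, not merely up to nilpotents after applying $\psi$ — and this is precisely where the detailed structure of the universal classes and of $\HHH^*(\GL_{n(1)},k)$ from \cite{SFB1} is indispensable. Once $\phi$ is available, the identity $\psi\circ\phi=F^r$ is the restriction computation above.
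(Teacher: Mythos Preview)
The paper does not prove this theorem; it is stated without proof as a citation of \cite[5.2]{SFB1}, serving only as background for the later discussion. So there is no ``paper's own proof'' to compare against.

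That said, your sketch is a faithful outline of the argument in \cite{SFB1}. The construction of $\phi$ via the universal classes $e_i\in\HHH^{2p^{i-1}}(\GL_{n(r)},\mathfrak{gl}_n^{(i)})$, the verification of $\psi\circ\phi=F^r$ by restriction along $\exp_{\ul\alpha}$ to $\bG_{a(r)}$ (where cohomology is polynomial and everything is computable), and the formal deduction that $\Spec(F^r)$ is the identity on prime spectra --- all of this is correct and is indeed how \cite{SFB1} proceeds. Your observation that $\phi\circ\psi$ agrees with $F^r$ modulo the nilradical, hence also induces the identity on topological spaces, is the right way to close the loop.

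You are also honest about the one genuine gap: the well-definedness of $\phi$ requires that the $p$-nilpotence and commutator relations defining $N_p^{[r]}(\mathfrak{gl}_n)$ hold \emph{exactly} in $\bH(\GL_{n(r)},k)$, not merely after applying $\psi$. Your restriction argument only gives the latter, and you correctly flag that the former needs the explicit cohomological calculations of \cite{SFB1} (in particular the structure of $\HHH^*(\GL_{n(1)},k)$ and the multiplicative behavior of the $e_i$). This is not a flaw in your write-up so much as an accurate acknowledgment that the heart of the theorem lies in that cited computation.
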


\begin{ex} 
\label{varM} 
We investigate $V(G)_M$ for the four examples of Example \ref{four}. 

\vspace{0.1in}
(1) 
Let $M$ be a $p$-restricted $\fg$-module of dimension $m$, given by the map
of $p$-restricted Lie algebras $\rho: \fg \to \End_k(M)\simeq \gl_m$.  Then
$V(\ul \fg)_M \subset V(\ul {\gl_m})$ consists of those $p$-nilpotent elements of $\fg$ whose
Jordan type (as an $m\times m$-matrix in $\gl_m$) has at 
least one block of size $< p$ (see \cite{FPa}).

\vspace{0.1in}
(2) 
 For $G = \Gar$, $kG \simeq kE$ where
$E$ is an elementary abelian $p$-group of rank $r$.  The rank variety of a 
$kE$-module was first investigated in \cite{C}.

We consider directly the rank variety $V(\Gar)_M$ of a finite dimensional
$k\Gar$-module $M$.  The data of such a module is the choice of $r$
$p$-nilpotent, pair-wise commuting
 endomorphisms $\wt u_0,\ldots,\wt u_{r-1} \in \End_k(M)$, given as
the image of the distinguished generators of $k\Gar$ as in (\ref{Gar}).
A 1-parameter subgroup of $\Gar$ has the form
$\mu_{\ul a}: \bG_{a(r),K} \to \bG_{a(r),K} $ for some 
$r$-tuple $\ul a = (a_0, \ldots, a_{r-1})$  of $K$-rational points
as in Example \ref{var}(2).  The condition that $\mu_{\ul a}$ be a point of
$V(\Gar)_M$ is the condition that 
$(\mu_{\ul a} \circ \epsilon)^*(M_K)$ is not free as a $K[u]/u^p$-module, which is equivalent to the condition that 
$M_K$ is not free as a $K[\wt u]/{\wt u}^p$-module where  
$\wt u = a_{r-1}\wt u_0 + a_{r-2}^p\wt u_1 \cdots + a_0^{p^{r-1}}\wt u_{r-1}
\in \End_K(M_K)$ (see \cite[6.5]{SFB2}).

\vspace{0.1in}
(3) 
Let $M$ be a finite dimensional $kG$-module with $G = \GL_{n(r)}$.
By Theorem \ref{iso},  $V(\GL_{n(r)})_M \subset V(\GL_{n(r)})$ is the closed subvariety 
whose set of points
in a field $K/k$ are 1-parameter subgroups $: \exp_{\ul \alpha}: \GarK \to \GL_{n(r),K}$ 
indexed by  $r$-tuples $\ul \alpha = (\alpha_0, \ldots,\alpha_{r-1}) 
\in M_n(K)$ of $p$-nilpotent, pairwise
commuting matrices such that $(\exp_{\ul \alpha, *} \circ \, \epsilon)^*(M_K)$ is not
a free as a $K[u]/u^p$-module.   The action of $u$ on $M_K$
is determined utilizing Example \ref{var}(3).
Namely, the action of $u$ is given by composing the
coproduct $M_K \to K[\GL_{n(r)}] \otimes M_K$ defining the $\GL_{n(r)}$-module 
structure on $M_K$ with the linear functional
$$\xymatrix{
u_{r-1} \circ \exp_{\ul \alpha}^*: \,\, K[\GL_{n(r)}]  \ar[rr]^-{\exp^*_{\ul \alpha}}&& K[\Gar] \ar[r]^-{u_{r-1}} & K}.
$$
In \S 3, we shall investigate this case  in more detail by considering some
concrete examples.

 \vspace{0.1in}
 (4)  A complete description of support varieties for simple  modules  
 for $\SL_{2(r)}$ can be found  in \cite[\S 7]{SFB2}.
We describe the situation for $G = \SL_{2(2)}$.  Let $S_\lambda$  be 
irreducible modules of
highest weight $\lambda$, where  $  
0 \leq \lambda \leq p^2-1$.  For $\lambda < p-1$, the  module $S_\lambda$ has 
dimension less than $p$
and thus $V(G)_{S_\lambda} = V(G)_{S^{(1)}_\lambda}= V(G)$. 
Here, $S_\lambda^{(i)}$ is the $i^{\rm th}$ Frobenius twist of $S_\lambda$.
For $\lambda = p-1$, the 
restriction of $S_{p-1}$ to $\SL_{2(1)} \subset \SL_{2(2)}$ is projective 
(the Steinberg module for $\SL_{2(1)}$) 
but $S_{p-1}$ is not itself projective. Hence, $V(G)_{S_{p-1}}$ is a proper non-trivial 
subvariety of $V(G)$. Using the notation introduced in  
Example~\ref{var}(4), we have 
$$V(G)_{S_{p-1}} = \{ (\alpha_0, 0 )  \, | \, \alpha_0 \in  N(sl_2) \} \subset V(G),$$ 
and
$$V(G)_{S_{p-1}^{(1)}} = \{ (0, \alpha_1)  \, | \, \alpha_1 \in  N(sl_2) \} \subset V(G)$$
(see \cite[6.10]{SFB2}).
$V(G)_{S_{p-1}}$ can be described 
as the subscheme  of $V(G)$ defined 
by the equations $x_1=y_1=z_1=0$.  For $\lambda = \lambda_0 + \lambda_1 p$ 
where $\lambda_0, \lambda_1 \leq p-1$ we have 
$S_\lambda \simeq S_{\lambda_0} \otimes S_{\lambda_1}^{(1)}$ by the Steinderg 
tensor product theorem.  Hence,   we can compute 
the support variety of $S_\lambda$ using the tensor product property of support varieties.  
For $\lambda = p^2-1$, $S_\lambda$ is the Steinberg module for $\SL_{2(2)}$, 
it is projective and, hence, 
$V(G)_{S_{p^2-1}} = \{ 0 \}$.
Overall, we get  
$$
V(G)_{S_\lambda} = 
\begin{cases}
N^{[2]}(sl_2), & \text{if $\lambda_0, \lambda_1 \not = p-1$,}\\
\{ (\alpha_0, 0 )  \, | \, \alpha_0 \in  N(sl_2) \} & \text{if 
$\lambda_0 = p-1, \lambda_1 \not = p-1$}, \\
\{ (0, \alpha_1 )  \, | \, \alpha_1 \in  N(sl_2) \} & \text{if 
$\lambda_0 \not = p-1, \lambda_1 = p-1$}, \\
0 & \text{if $\lambda = p^2-1 $.}\\
\end{cases}
$$
\end{ex}
\vskip .2in

%%%%%%%%%%%%%%%%%%%%%%%%%%%%%%%%%
%%%%%%%%%%%%%%%\Sec 2%%%%%%%%%%%%%%
%%%%%%%%%%%%%%%%%%%%%%%%%%%%%%%%

\section{Global $p$-nilpotent operators}
\label{univ-sec}

In this section, we introduce in Definition \ref{global}
and study the global $p$-nilpotent operator
$$\Theta_G: k[G]\  \to \ k[V(G)],$$ 
a $k$-linear but not multiplicative map 
defined for any infinitesimal group scheme $G$.  This operator, when viewed as an
element of $kG \otimes k[V(G)]$, encodes all 1-parameter subgroups of $G$:  any
1-parameter subgroup $\mu: \mathbb G_{a(r),K} \to G_K$ corresponds to a $K$-valued 
point of $V(G)$, and $\mu_{*}(u_{r-1})$ equals the specialization in $KG$ of 
$\Theta_G \in kG \otimes k[V(G)]$ at this point.

If $M$ is a rational $G$-module, then $\Theta_G$ determines the 
$k[V(G)]$-linear operator
$$\Theta_M: M \otimes k[V(G)] \ \to \ M\otimes k[V(G)]$$
as formalized in Definition \ref{action}.

Before giving definitions, we mention as motivation the example 
$G = \bG_{a(1)}^{\times 2}$.  In this case, the group algebra $kG$
equals $k[x,y]/(x^p,y^p)$, the scheme of 1-parameter subgroups equals
$V(G) = \bA^2$, and  $k[V(G)] = k[\bA^2] = k[s,t]$.  In this 
special case, $\Theta_G$ takes the form
$$\Theta_G \ = \ x\otimes s + y \otimes t \ \in \  
k[x,y]/(x^p,y^p) \otimes k[s,t].$$
If $M$ is a $kG$-module, then the $k[s,t]$-linear operator $\Theta_M$ is 
given by 
$$\Theta_M: M\otimes k[s,t] \to M\otimes k[s,t],$$
$$ m\otimes 1 \mapsto 
xm \otimes s + ym\otimes t.$$
 ``Specializing" $\Theta_M$ at some $(a,b) \in K^2$ for some field extension 
$K/k$ yields the action of $ax + by$ on $M_K$.

\vspace{0.2in}
To construct our global operator, we proceed as follows.
Let $G$ be an algebraic affine group scheme over $k$ (that is, $G$ is an 
affine group scheme such that the coordinate algebra $k[G]$ is finitely 
generated over $k$ (\cite[3.3]{W})) and  let $A = k[V_r(G)]$. The natural 
isomorphism of covariant functors on commutative $k$-algebras $R$
$$
\Hom_{\text{grp sch}}(\bG_{a(r),R}, G_R) \ \simeq  \Hom_{k-\text{alg}}(A,R),
$$
given in Theorem \ref{represent} implies the existence of a universal
1-parameter subgroup of height $r$
$$\UseComputerModernTips
\xymatrix{\cU_{G,r}: \bG_{a(r), A} \  \ar[r]& \ G_A,}$$
the subgroup corresponding to the identity map on $A$. 
The  subgroup $\cU_{G,r}$ induces a map on coordinate algebras 
$$\UseComputerModernTips
\xymatrix{\cU^*_{G,r}: A \otimes k[G]   \ar[r] &  A \otimes k[\bG_{a(r)}].}
$$
Recall that  $k\Gar \simeq k[u_0,\ldots,u_{r-1}]/(u_0^p, \ldots, u_{r-1}^p)$, where $u_{r-1}$, 
the dual element to $T^{p^{r-1}}$,   is a linear functional
$u_{r-1}: k[\Gar] \to k$.  

\begin{defn}
\label{global}
Let $G$ be an algebraic affine group scheme over $k$.  We define
$$
\UseComputerModernTips
\xymatrix{\Theta_{G,r}: k[G] \ \ar[r] & \  k[V_r(G)]}
$$
 to be the $k$-linear, $p$-nilpotent functional defined by   the composition 
\begin{equation}
\label{u-r}
\UseComputerModernTips
\xymatrix{k[G] \ar[r]^-{1 \otimes  \id} & k[V_r(G)] \otimes k[G] \ar[r]^-{ \cU^*_{G,r}} &  
k[V_r(G)] \otimes k[\bG_{a(r)}] \ar[rr]^-{\id \otimes u_{r-1}} && k[V_r(G)] .}
\end{equation}
\end{defn}

As an element of  $\Hom(k[G],k[V_r(G)]) \equiv kG \otimes k[V_r(G)]$, $\Theta_{G,r}$ can 
be equivalently defined as 
\begin{equation}
\label{Theta}
\Theta_{G,r}\ = \ \cU_{G,r,*}(u_{r-1}) \ \in \ kG \otimes k[V_r(G)],
%, \quad u_{r-1} \in  k\Gar.
\end{equation}
where 
$$\cU_{G,r,*}: k\Gar \otimes k[V_r(G)] \to kG \otimes k[V_r(G)].
$$
Thus, $\Theta_{G,r}$ as given in (\ref{u-r}) satisfies the property that its 
composition with $k[V_r(G)] \to K$ corresponding to the
$K$-rational point $\mu: \bG_{a(r),K} \to G_K$ equals $\mu_*(u_{r-1})$.
\vskip .in

The following proposition justifies using  the simplified notation
\begin{equation}
\label{simp}
\Theta_G: k[G] \to k[V(G)],
\end{equation}
where $V(G) = \varinjlim_r V_r(G)$ as in Definition \ref{limm}.
Namely, $\Theta_G$ is defined to be isomorphic to $\Theta_{G,r}$ provided that 
$G$ is infinitesimal of height $\leq r$.

\vspace{0.1in}
Recall the canonical projection $p_{r^\prime, r}: \bG_{a(r^\prime)} \twoheadrightarrow \Gar$, and the induced closed embedding $i_{r, r^\prime}: V_r(G) \hookrightarrow V_{r^\prime}(G)$, introduced in Remark~\ref{varying}, for $r^\prime \geq r$.

\begin{prop}
\label{indep}
Let $G$ be an an infinitesimal group scheme and  let $r^\prime \geq r$. Let 
$A_r = k[V_r(G)], \ A_{r^\prime} = k[V_{r^\prime}(G)]$, and  let 
$$\phi: A_{r^\prime} \twoheadrightarrow A_r $$ be the 
surjective homomorphism corresponding to the canonical embedding 
$i_{r, r^\prime}: V_r(G) \hookrightarrow V_{r^\prime}(G)$.     
Consider $A_{r}$ as an $A_{r^\prime}$-module via $\phi$.
Then
$$\Theta_{G,r} = \Theta_{G,r^\prime} \otimes_{A_{r^\prime}} 
1 \ \in \ kG \otimes A_{r^\prime} \otimes_{A_{r^\prime}} A_{r} \simeq kG \otimes A_{r}.$$

Moreover, if $G$ is an infinitesimal group scheme of height $\leq r$, then 
$\Theta_{G, r}$ is thereby naturally identified with $\Theta_{G, {r^\prime}}$. 
\end{prop}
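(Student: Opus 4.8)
The plan is to trace through the definition of $\Theta_{G,r}$ in equation (\ref{u-r}) and show that the surjection $\phi: A_{r'} \twoheadrightarrow A_r$ intertwines the universal one-parameter subgroups $\cU_{G,r'}$ and $\cU_{G,r}$ in the appropriate way. The key observation is that $i_{r,r'}: V_r(G) \hookrightarrow V_{r'}(G)$ is by construction (Remark \ref{varying}) the map ``precompose with $p_{r',r}$''; in functorial terms, the universal height-$r'$ subgroup $\cU_{G,r'}: \bG_{a(r'),A_{r'}} \to G_{A_{r'}}$, when base-changed along $\phi$ to $A_r$, becomes $\cU_{G,r} \circ p_{r',r}$, since $\cU_{G,r'}$ corresponds to $\id_{A_{r'}}$ and $\cU_{G,r}$ corresponds to $\id_{A_r}$ under the representability isomorphism of Theorem \ref{represent}. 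This is the identity I would establish first, purely at the level of group-scheme homomorphisms, by unwinding what $\phi$ does on $A_r$-points.

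Next I would apply $(-)^*$ and pass to coordinate algebras. Base-changing $\cU^*_{G,r'}: A_{r'} \otimes k[G] \to A_{r'} \otimes k[\bG_{a(r')}]$ along $\phi$ yields, by the previous step, the map $A_r \otimes k[G] \to A_r \otimes k[\bG_{a(r')}]$ equal to $(\id \otimes p_{r',r}^*) \circ \cU^*_{G,r}$. Then I would compose with $\id \otimes u_{r-1}$ on the $A_r$ side and compare with the formula (\ref{u-r}) for $\Theta_{G,r}$. The point where Remark \ref{varying} does its real work is the computation of $u_{r-1} \circ p_{r',r}^*: k[\bG_{a(r')}] \to k$ versus $u'_{r'-1}: k[\bG_{a(r')}] \to k$: since $p_{r',r}: \bG_{a(r')} \to \bG_{a(r)}$ dualizes on group algebras to the map sending $u'_{r'-1} \mapsto u_{r-1}$ (the top generators correspond; see the displayed description of $p_{r,s,*}$ in Remark \ref{varying}), we get that the functional $u'_{r'-1}$ on $k[\bG_{a(r')}]$ restricts, via $p_{r',r}^*$, to exactly $u_{r-1}$ on $k[\bG_{a(r)}]$. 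Stringing these together gives $\Theta_{G,r'} \otimes_{A_{r'}} 1 = \Theta_{G,r}$ under the identification $kG \otimes A_{r'} \otimes_{A_{r'}} A_r \simeq kG \otimes A_r$. Equivalently, in the form (\ref{Theta}): $\cU_{G,r',*}(u'_{r'-1})$ maps, under $kG \otimes A_{r'} \to kG \otimes A_r$, to $\cU_{G,r,*}(u_{r-1})$, because $\cU_{G,r'}$ pushes forward through $\cU_{G,r} \circ p_{r',r}$ and $p_{r',r,*}(u'_{r'-1}) = u_{r-1}$.

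For the final sentence: if $G$ has height $\leq r$, then by Definition \ref{limm} the embedding $i_{r,r'}$ is an isomorphism, so $\phi: A_{r'} \xrightarrow{\sim} A_r$ is an isomorphism of $k$-algebras, and the equation just proved becomes $\Theta_{G,r} = \phi_*(\Theta_{G,r'})$ under this isomorphism --- i.e., $\Theta_{G,r}$ and $\Theta_{G,r'}$ correspond under the canonical identification $kG \otimes A_r \simeq kG \otimes A_{r'}$, which is precisely what is asserted. This gives the well-definedness of the notation $\Theta_G$ in (\ref{simp}).

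I expect the main obstacle to be bookkeeping rather than anything conceptual: one has to be careful about which tensor factor carries which coordinate algebra, in what order the base-change along $\phi$ and the evaluation at $u_{r-1}$ are performed, and about the index shift implicit in $p_{r',r,*}$ sending the ``top'' dual generator to the ``top'' dual generator (it is $u'_{r'-1} \mapsto u_{r-1}$, not $u'_{r'-1} \mapsto u_{r'-1}$, precisely because $p_{r',r}^*$ raises $T$ to the $p^{r'-r}$ power). Getting the naturality of the universal subgroup under the projection $p_{r',r}$ stated correctly --- that base change of $\cU_{G,r'}$ along $\phi$ is $\cU_{G,r} \circ p_{r',r}$ --- is the crux, and it follows formally from Yoneda applied to the representability statement of Theorem \ref{represent} together with the defining property of $i_{r,r'}$ in Remark \ref{varying}.
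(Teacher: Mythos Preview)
Your proposal is correct and follows essentially the same argument as the paper: both identify the base change of $\cU_{G,r'}$ along $\phi$ with $\cU_{G,r}\circ p_{r',r}$ via the universal property (Yoneda), and then use $p_{r',r,*}(u_{r'-1})=u_{r-1}$ from Remark~\ref{varying} to conclude. The paper works directly in the group-algebra form (\ref{Theta}) rather than also tracing through the coordinate-algebra form (\ref{u-r}), but the content is the same.
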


\begin{proof}

Consider  the composition 
\begin{equation}
\UseComputerModernTips
 \xymatrix{ \bG_{a(r^\prime),A_{r}} \ar[dr]_{p_{r^\prime, r}} 
\ar[rr]^{\cU_{G,r} \circ p_{r^\prime, r}} &&  G_{A_{r}}\\
& \bG_{a(r),A_{r}}   \ar[ur]_{\cU_{G, r}} &  
.}
\end{equation}
Since $\cU_{G,r} \in V_r(G)(A_r)  \simeq \Hom(A_r, A_r)$  corresponds to the 
identity map on $A_r$, and  $p_{r^\prime,r}$ is the map that 
induces $\phi :A_{r^\prime} \to A_r$, we conclude that the composition 
$\cU_{G,r} \circ p_{r^\prime, r} \in V_{r^\prime}(G) \simeq 
\Hom(A_{r^\prime}, A_r)$ corresponds to $\phi$.  Hence,  the universality 
of $\cU_{G,r^\prime}$ implies that $\cU_{G,r} \circ p_{r^\prime, r}$ is 
obtained by pushing down  the universal  one-parameter 
subgroup $\cU_{G,r^\prime}$ via $\phi:A_{r^\prime} \to A_r$.   
Therefore, we conclude
\begin{equation}
\cU_{G,r} \circ p_{r^\prime, r} \ = \ \cU_{G,r^\prime}\otimes_{A_{r^\prime}} A_r
\end{equation}
which implies the equality of maps of group algebras
\begin{equation}
\label{rprime}
\cU_{G,r^\prime,*} \otimes_{A_{r^\prime}} A_r = \cU_{G,r,*} \circ p_{{r^\prime},r, *}: 
\ k\bG_{a(r^\prime)} \otimes A_r\to kG \otimes A_r.
\end{equation}
  Since $p_{r^\prime,r,*}(u_{r^\prime-1}) = u_{r-1} \in k\Gar$,   we conclude
  $(\cU_{G,r,*} \circ p_{{r^\prime},r, *})(u_{r^\prime -1}) = \cU_{G,r,*} (u_{r-1}) = 
\Theta_{G,r},$
  whereas 
  $ (\cU_{G,r^\prime, *} \otimes_{A_{r^\prime}} A_r)(u_{r^\prime-1}) =  
  \cU_{G,r^\prime, *} (u_{r^\prime-1})  \otimes_{A_{r^\prime-1}} 1 = 
  \Theta_{G,r^\prime} \otimes_{A_{r^\prime}} 1.$

The second statement follows immediately from the fact that for $G$ 
of height $\leq r$, the map $\phi:A_{r^\prime} \to A_r$ 
is an isomorphism as shown in Remark~\ref{varying}. 
\end{proof}

Let $G$ be an affine group scheme over $k$, $M$ be a $kG$-module, 
and $\nabla_M: M \to  M \otimes k[G]$ be 
the corresponding co-action. A $k$-linear functional
with values in a commutative $k$-algebra $A$, $\Theta: k[G] \ \to \ A$, determines
an action of $\Theta$ on $M \otimes A $ which  is the  $A$-linear extension 
\begin{equation}
\label{exten}
\Theta_M: M \otimes A  \to M \otimes A 
\end{equation}
of the map 
$$\UseComputerModernTips
\xymatrix{M  \ \ar[r]^-{\quad  \nabla_M \quad} & M \otimes k[G] 
\ \ar[r]^-{\, \id  \otimes \Theta_M \,} & M \otimes A  }.$$
If $G$ is finite, we may view $\Theta \in \Hom_k(k[G], A)$ as an element of 
$kG \otimes A$ (which we
also denote by $\Theta$).  From this point of view, the action (\ref{exten})
is simply multiplication by $\Theta$.

We now define the {\it global $p$-nilpotent operator} on a $G$-module $M$.

\begin{defn}
\label{action}
 Let $G$ be an infinitesimal group scheme.
For any $k[G]$-comodule $M$ with co-action $\nabla_M: M \to  M  \otimes k[G]$, 
we define the $p$-nilpotent operator 
$$\Theta_M: M  \otimes k[V(G)] \to M  \otimes k[V(G)]$$
to be the $k[V(G)]$-linear extension of the map 
$(\id_M \otimes \Theta_G) \circ \nabla_M: M \to M  \otimes k[V(G)].$
\end{defn}
\begin{remark}
The fact that $\Theta_M$ is $p$-nilpotent follows immediately from (\ref{Theta}) 
since $u_{r-1}^p=0$.
\end{remark}

Slightly abusing notation, we shall often refer to $\Theta_G$ itself as the global $p$-nilpotent operator.

We reformulate the pairing (\ref{exten}) in a more geometric fashion as follows.

\begin{prop}
\label{alg-act}
Let $G$ be a group scheme over $k$, $V$ be an affine $k$-scheme,
and let $M$ be a finite dimensional $G$-module.  Then a $k$-linear functional
$\Theta: k[G] \to k[V]$ determines the pairing of  $k$-schemes
\begin{equation}
\label{pair}
V \times M \to M.
\end{equation}
As a pairing of representable functors of commutative $k$-algebras $A$, this
pairing sends 
$(v: k[V] \to A, m = \sum_i a_i \otimes m_i)$ to 
$\sum_i a_i(\sum_j v(\Theta(f_{i,j})) m_j)$ where 
$\nabla(m_i) = \sum_j f_{i,j}\otimes m_j$.
\end{prop}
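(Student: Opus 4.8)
The plan is to realize the pairing \eqref{pair} as the composite of two honest morphisms of affine $k$-schemes: one that packages the operator of Definition \ref{action} as a $k[V]$-point of the endomorphism scheme of $M$, and the standard evaluation morphism of that scheme acting on $M$. Concretely, I would fix a $k$-basis $m_1,\dots,m_N$ of $M$ and record the coaction as $\nabla_M(m_i)=\sum_j f_{i,j}\otimes m_j$ with $f_{i,j}\in k[G]$ (the order of the tensor factors being merely a convention). By \eqref{exten}, $\Theta$ determines the $k[V]$-linear endomorphism $\Theta_M$ of the free module $M\otimes k[V]$ with $\Theta_M(m_i\otimes 1)=\sum_j m_j\otimes\Theta(f_{i,j})$. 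Since $M$ is finite dimensional, this is the same datum as the element $T_\Theta=\sum_{i,j}E_{ji}\otimes\Theta(f_{i,j})\in\End_k(M)\otimes_k k[V]$, where $E_{ji}\colon m_i\mapsto m_j$ are the elementary endomorphisms. The affine scheme $\bA(\End_k M)$ representing the functor $A\mapsto\End_k(M)\otimes_k A=\End_A(A\otimes_k M)$ therefore has $T_\Theta$ as a distinguished $k[V]$-point, i.e.\ a morphism $\tau_\Theta\colon V\to\bA(\End_k M)$. Composing with the evaluation morphism $\operatorname{ev}\colon\bA(\End_k M)\times M\to M$, which on $A$-points sends $(\varphi,m)$ to $\varphi(m)$ and is a morphism of schemes because it is given in coordinates by the (bilinear, hence polynomial) matrix--vector product, I obtain
\[
V\times M\ \xrightarrow{\ \tau_\Theta\times\id\ }\ \bA(\End_k M)\times M\ \xrightarrow{\ \operatorname{ev}\ }\ M,
\]
which I take as the definition of \eqref{pair}; functoriality in $A$ is then automatic, being a composite of morphisms of schemes.

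It remains to verify that this composite produces the formula in the statement, which is a routine unwinding. Given a $k$-algebra $A$ and $v\colon k[V]\to A$, the morphism $\tau_\Theta$ produces $v_*(T_\Theta)=\sum_{i,j}E_{ji}\otimes v(\Theta(f_{i,j}))\in\End_A(A\otimes_k M)$, equivalently the base change $\Theta_M\otimes_{k[V],v}A$ of $\Theta_M$ along $v$; it sends $1\otimes m_i$ to $\sum_j v(\Theta(f_{i,j}))\,m_j$. Applying $A$-linearity to $m=\sum_i a_i\otimes m_i$ then gives $\operatorname{ev}(\tau_\Theta(v),m)=\sum_i a_i\bigl(\sum_j v(\Theta(f_{i,j}))\,m_j\bigr)$, exactly as asserted; and since everything in sight is $A$-linear, this value is independent of the expression chosen for $m$.

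I do not expect a genuine obstacle: the argument is bookkeeping, and the only point that needs care is the compatibility of two identifications -- between a $k[V]$-linear endomorphism of the free module $M\otimes k[V]$ and an element of $\End_k(M)\otimes_k k[V]$, and between the base change of such an endomorphism along $v$ and its avatar $v_*(T_\Theta)$ -- together with the resulting basis-independence of the final formula. That compatibility is precisely what legitimizes the geometric reformulation, so it is the one step I would write out in full.
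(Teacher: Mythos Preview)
Your proposal is correct. The paper actually states this proposition without proof, treating it as an elementary geometric reformulation of the operator $\Theta_M$ of \eqref{exten}; the content is essentially definitional once one unwinds the coaction in a basis. Your factorization through $\bA(\End_k M)$ via the $k[V]$-point $T_\Theta$ and the evaluation morphism is a perfectly good way to make explicit why \eqref{pair} is a morphism of schemes (rather than merely a natural transformation of set-valued functors), and your verification of the formula on $A$-points is exactly the unwinding the paper leaves implicit. In short, you have supplied more detail than the paper does, and the approach is the natural one.
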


\begin{ex}
\label{p-univ}
We describe the global $p$-nilpotent operator $\Theta_G$ in each of
the four examples of Example \ref{four}.

\vspace{0.1in}
(1) Let $G = \GL_{m(1)} \equiv \ul \gl_m$, with group algebra $k \ul\gl_m = u(\gl_m). $
Then the composition of
$$\Theta_G: k[G] = k[X_{i,j}]/(X_{i,j}^p - \delta_{i,j}) \to k[N_p(\gl_m)]$$
with some $K$-rational point $x \in N_p(\gl_m)$ is the evaluation of the 
matrix coordinate functions $X_{i,j}$ on $x$.   In other words,
$$\Theta_{\ul \gl_m}    \ = \ \sum_{1\leq i,j \leq m} x_{i,j} \otimes \ol X_{i,j}  \ 
\in \ \cu(\gl_m)\otimes  k[N_p(\gl_m)],$$
where $\ol X_{i,j}$ is the image of the $i,j$-matrix coefficient on $\gl_m$.
 
For a general $p$-restricted Lie algebra $\fg$, we have
\begin{equation}
\label{lie}
\Theta_{\ul \fg}  \ = \ \sum x \otimes \ol X \ \in \ \cu(\fg)\otimes  k[N_p(\fg)] 
\end{equation}
where the sum is over  basis elements $X \in \fg^\#$ with
image $\ol X \in  k[N_p(\fg)] $ and with dual $x \in \fg$.

We record an explicit formula for the universal $p$-nilpotent operator  
in the case of $\fg=sl_2$ for future reference. 
We have $k[N_p(sl_2)] \simeq k[x,y,z]/(xy+z^2)$.  
Let $e,f,h$ be the standard basis of the  $p$-restricted Lie algebra $sl_2$.  Then 
$$\Theta_{\ul sl_2} = xe + yf + zh.$$
Observe that this formula agrees with  the presentation of a ``generic" 
$\pi$-point for $u(sl_2)$ as given in \cite[2.5]{FP2}.

\vspace{0.1in}
(2)
Take $G = \Gar$.  Then $k[\Gar] \simeq k[T]/T^{p^r}$, and 
$k[V(\Gar)] \simeq k[x_0,\ldots,x_{r-1}]$ is graded in such a 
way that $x_i$ has degree $p^i$ (see Proposition \ref{homog} below).  
We compute $\xymatrix{\Theta_{\Gar}: k[\Gar] \ar[r] & k[V(\Gar)] }$ explicitly 
in this case  (see also \cite[6.5.1]{SFB2}).    

One-parameter subgroups  of $\mathbb G_{a(r),K}$  are  in one-to-one 
correspondence  with the additive polynomials in 
$K[T]/T^{p^r}$, that is, polynomials of the  form $p(T) = a_0T + a_1T^p + \ldots + a_{r-1}T^{p^{r-1}}$  
(see \cite[1.10]{SFB1}).  The map on coordinate algebras  induced by the 
universal one-parameter subgroup 
$\cU: \mathbb G_{a(r),k[V(G)]} \to \mathbb G_{a(r),k[V(G)]}$ is given by the  
``generic" additive  polynomial:
$$\UseComputerModernTips
\xymatrix{\cU^*: k[x_0,\ldots,x_{r-1}][T]/T^{p^r} \ar[r] & k[x_0,\ldots,x_{r-1}][T]/T^{p^r}}.
$$
$$ T \mapsto x_0T + x_1T^p + \ldots + x_{r-1}T^{p^{r-1}}.$$
To determine the   linear functional 
$$\UseComputerModernTips
\xymatrix{\Theta_{\Gar} = u_{r-1} \circ \cU^* : k[T]/T^{p^r} \ar[r]& k[x_0. \ldots, x_{r-1}]},
$$
it suffices to  determine  the values of $\Theta_{\Gar}$ on the linear generators 
$\{ T^i \}, 0 \leq i \leq p^r-1$.
Since $u_{r-1}$ is the dual to $T^{p^{r-1}}$, this further reduces to determining 
the coefficient  by $T^{p^{r-1}}$  
in $\cU^*(T^i) = (x_0T + x_1T^p + \ldots + x_{r-1}T^{p^{r-1}})^i$.  Computing this 
coefficient, we conclude that $\Theta_{\Gar}$  is 
 given explicitly on the basis elements
of $k[\Gar] \simeq k[T]/T^{p^r}$ by
\begin{equation}
\label{expl_coor}
\quad T^i \mapsto 
\sum\limits_{\stackrel{i_0 + i_1 + \cdots + i_{r-1} = i} {i_0 + i_1p + 
\cdots + i_{r-1}p^{r-1} = p^{r-1}}} {i \choose {i_0, i_1, \ldots, i_{r-1}}} 
x_0^{i_0}\ldots x_{r-1}^{i_{r-1}},
\end{equation}
where ${i \choose {i_0, i_1, \ldots, i_{r-1}}} = \frac{i!}{i_0!i_1!\ldots i_{r-1}!}$ 
is the multinomial coefficient.  
Let $\{ v_0, \ldots, v_{p^r-1} \}$ be the  linear basis of $k\Gar$ dual to  
$\{ 1, T, \ldots, T^{p^{r}-1} \}$.  Dualizing  (\ref{expl_coor}), we  
conclude that $\Theta_{\Gar}$ as an element of $k\Gar \otimes k[V(\Gar)] $)  
has the following form:
\begin{equation}
\label{expl_group}
\Theta_{\Gar} = \sum\limits_{i=0}^{p^r-1} v_i\left[\sum\limits_{\stackrel{i_0 + i_1 + 
\cdots + i_{r-1} = i} {i_0 + i_1p + \cdots + i_{r-1}p^{r-1} = p^{r-1}}} 
{i \choose {i_0, i_1, \ldots, i_{r-1}}} 
x_0^{i_0}\ldots x_{r-1}^{i_{r-1}} \right] .
\end{equation} 
By \cite[1.4]{SFB1}, $v_i$ can expressed in 
terms  of the algebraic generators $u_j$ of $k\Gar$ via  
the following formulae 
$$
v_i=\frac{u_0^{j_0}\ldots u_{r-1}^{j_{r-1}}}
{j_0!\ldots j_{r-1}!}
,$$
where $i=j_0 +j_1p+\ldots +j_{r-1}p^{r-1}\quad
(0\le j_\ell \le p-1)$ is the $p$-adic expansion of $i$.    

Using these formulae, it is straightforward to calculate the term of $\Theta_{\Gar}$ which is linear with respect to $u_i$ 
(and homogeneous of degree $p^{r-1}$  with respect to the grading  of $k[V(\Gar)]$: 
$$u_0x_{r-1} + u_1x^p_{r-2} + \ldots + u_{r-1}x_0^{p^{r-1}}$$
The ``linear" term gives the entire operator $\Theta_{\Gar}$ for $r=1,2$, but for $r \geq 3$, the non-linear terms start to appear. 

\vspace{0.1in}
(3) Let $G = \GL_{n(r)}$. Recall that $V(\GL_{n(r)})$ is the $k$-scheme of
$r$-tuples of $p$-nilpotent, pair-wise commuting matrices.  For 
notational convenience, let $A$ denote $k[V(\GL_{n(r)})] = k[M_n^{\times r}]/I$,
a quotient of the coordinate algebra of the variety $M_n^{\times r}$ of 
$r$-tuples of $n\times n$ matrices.
Then 
$\cU_{\GL_{n(r)}}: \bG_{a(r),A} \to \GL_{n(r),A}$ is specified by the 
$A$-linear map on coordinate algebras 
\begin{equation}\label{lin-coord}
\cU_{\GL_{n(r)}}^*: A[\GL_{n(r)}] \ \to \ A[T]/T^{p^r}, \quad X_{i,j} 
\mapsto \sum_{\ell=0}^{p^r-1} (\beta_\ell)_{i,j}T^j
\end{equation}
where $\{ X_{i,j}; 1 \leq i,j \leq n \}$ are the matrix coordinate functions of $\GL_n$,
where $\beta_\ell$ is given as in formula (\ref{beta}) in terms 
of the matrices $\alpha_0,\ldots,\alpha_{r-1} \in M_n(A)$, and  
$\alpha_i = \beta_{p^i}$ have matrix coordinate functions which generate $A$.  
(Indeed, the $n^2r$
entries of $\alpha_0,\ldots,\alpha_{r-1}$ viewed as variables generate 
$A$, with relations given by the conditions that these matrices must be
$p$-nilpotent and pairwise commuting.)

The $p$-nilpotent operator 
$$\Theta_{\GL_{n(r)}}:  k[\GL_{n(r)}] \to k[V(\GL_{n(r)})]$$
is given by the $k$-linear functional sending a polynomial in the matrix
coefficients $P(X_{i,j}) \in k[\GL_{n(r)}]$ to the coefficient of $T^{p^{r-1}}$
of the sum of products corresponding to the polynomial $P$ 
given by replacing each $X_{i,j}$ by
$\sum_{\ell=0}^{p^r-1}(\beta_\ell)_{i,j}T^\ell$ (when taking products of matrix 
coefficients, one uses the usual rule for matrix multiplication);
$$P(X_{i,j}) \ \mapsto \ P(\sum_{\ell=0}^{p^r-1}(\beta_\ell)_{i,j}T^\ell) \ \mapsto \
\text{coeff \ of \ } T^{p^{r-1}}.$$

For example, the coaction $k^n \to   k[\GL_n] \otimes k^n$ corresponding to the 
natural representation of $\GL_n$ on $k^n$
determines an action of $\Hom_k(k[\GL_{n(r)}],A) \subset \Hom_k(k[\GL_n],A)$
on $A^n$, so that $\Theta_{GL_{n(r)}}: A^n \to A^n$ is  given in matrix form by 
$(\Theta_{GL_{n(r)}}(X_{i,j}))$.

\vspace{0.1in}
(4) We consider $G = \SL_{2(2)}$, and assume notations and conventions of Example~\ref{var}(4). Let $A = k[\SL_{2(2)}]$. Using the general discussion in (\ref{p-univ}(3)) above (also compare to  (\ref{var}(4))), one readily computes that the map   on coordinate algebras 
$\cU_{\SL_{2(2)}}^*: A[\SL_{2(2)}] \to A[\bG_{a(2)}]\simeq A[T]/T^{p^2}$
is given by
$$X_{1,1} \mapsto 1 + z_0T + z_1T^p, \quad X_{1,2} \mapsto x_0T + x_1T^p$$
$$X_{2,1} \mapsto y_0T + y_1T^p, \quad X_{2,2} \mapsto 1 - z_0 - z_1T^p.$$
By (\ref{Theta}), $\Theta_{\SL_{2(2)}} = \cU_{\SL_{2(2)}*}(u_1)$, the functional given by 
``reading off the coefficient of $T^p$".  

Let $e,f,h,e^{(p)},f^{(p)},h^{(p)} \in k\SL_{2(2)}$ denote respectively the linear duals of
the functions  $X_{1,2}, X_{2,1}, X_{1,1}-1, X_{1,2}^p,X_{2,1}^p, (X_{1,1}-1)^p$ on $\SL_{2(2)}$, and set 
$$e^{(i)} = \frac{e^i}{i!},\quad  f^{(i)} = \frac{f^i}{i!}, \quad 
{h\choose i} = \frac{h(h-1)(h-2)\ldots(h-i+1)}{i!}$$
for $i<p$.  Fix the linear basis of $k[\SL_{2(2)}]$ given by 
 powers of $X_{1,2}, X_{2,1}, X_{1,1}-1$ 
 (in this fixed order). Then the element of $k\sl2$  dual to 
 $X^i_{1,2} X^j_{2,1} (X_{1,1}-1)^{\ell}$ for $i + j + \ell \leq p$   
 is given by 
 $$(X^i_{1,2} X^j_{2,1} (X_{1,1}-1)^{\ell})^\# \ = \ e^{(i)} f^{(j)} {h\choose \ell} $$ 
 (where ${h\choose p}$ is identified with $ h^{(p)}$ by definition). \\
 With these conventions $\Theta_{\SL_{2(2)}} \in k\SL_{2(2)} \otimes k[V(\SL_{2(2)})]$
 equals
 \begin{equation}
 \label{exp-sl}
 x_1e + y_1f + z_1 h + x_0^pe^{(p)}  +  
y_0^pf^{(p)} + z_0^ph^{(p)} + \sum\limits_{\stackrel{i+j+\ell=p} {i, j, \ell <p}} 
x_0^iy_0^jz_0^\ell e^{(i)} f^{(j)} {h\choose \ell}.
\end{equation}
\end{ex}

\vskip .2in

Our motivational example for $G=\bG_{a(1)} \times \bG_{a(1)}$ from the beginning of this section is a special case of (\ref{p-univ}(1)).
\begin{ex}
 \label{exp-elem} Let $G = \bG_{a(1)}^{\times r}$. Then $G$ corresponds to the abelian $p$-nilpotent  Lie algebra $g_a^{\oplus r}$, and $kG  = k[u_0, \ldots, u_{r-1}]/(u_0^p, \ldots, u_{r-1}^p)$. We have $V(G) \simeq \mathbb A^r$, and $k[V(G)] \simeq k[X_0, \ldots, X_{r-1}]$ where all generators are in degree one.  Then $\Theta_G \in kG \otimes k[V(G)]$ is given by the simple formula 
$$\Theta_G = u_0X_0 + \cdots + u_{r-1}X_{r-1}.$$
It is useful to contrast this formula with a much more complicated result for $G = \Gar$ in (\ref{p-univ}(2)).
\end{ex}

\vskip .1in

To complement Example \ref{p-univ}, we make explicit the action of $\Theta_G$
on some representation of $G$  for each of the four types
of finite group schemes we have been considering in examples.

\begin{ex}
\label{ex-rep}
(1) Let $G = \ul \fg$ and let $M = \fg^{ad}$ denote the adjoint representation 
of the $p$-restricted Lie algebra $\fg$; let $\{ x_i \}$ be a basis for $\fg$.
We identify $\Theta_{\ul \fg}$ as the $k[N_p(\fg)]$-linear endomorphism 
$$\Theta_{\ul \fg}:  \fg^{ad} \otimes  k[N_p(\fg)]  \ \to \ \fg^{ad} \otimes  k[N_p(\fg)], \quad
x\otimes 1 \mapsto \ \sum_i  [x_i,x] \otimes X_i ,$$
where $X_i $ is the image  under the projection 
$S^*(\fg^\#) \to k[N_p(\fg)]$  of the dual basis element to $x_i$.

\vspace{0.1in}
(2)  Let $M$ denote the cyclic $k\bG_{a(r)}$-module 
$$M \ = \ k[u_0,\ldots,u_{r-1}]/(u_0, u_1^p, \ldots, u_{r-1}^p) \ \simeq \
k[u_1,\ldots,u_{r-1}]/(u_1^p, \ldots, u_{r-1}^p).$$  
 As recalled in Example \ref{p-univ}(2), $k[V(\bG_{a(r)})] = k[\bA^r] = 
 k[a_0,\ldots,a_{r-1}]$, $k\Gar = k[u_0,\ldots,u_{r-1}]/(u_i^p)$, 
 and 
$$\Theta_{\bG_{a(r)}} \in \ A[u_0,\ldots,u_{r-1}]/(u_i^p)$$
is given by the complicated,
but explicit formula (\ref{expl_group}).  We conclude that 
$$\Theta_{\bG_{a(r)}}: M \otimes A \ \to \ M \otimes A$$
is the $A$-linear endomorphism sending $u_i$ to $\ol \Theta_{\bG_{a(r)}} \cdot u_i$,
where $\ol \Theta_{\bG_{a(r)}}$ is the image of $\Theta_{\bG_{a(r)}}$ under the projection 
$A[u_0,\ldots,u_{r-1}]/(u_i^p) \to A[u_1,\ldots,u_{r-1}]/(u_i^p)$.

\vspace{0.1in}
(3)  Let $M$ be the restriction to $\GL_{n(r)}$ of the canonical $n$-dimensional 
rational $\GL_n$-module $V_n$.  By Example \ref{var}(3), $A = k[V(\GL_{n(r)}]$ is the
quotient of $k[\gl_n]^{\otimes r}$ by the ideal generated by the equations satisfied
by an $r$-tuple of $n\times n$-matrices with the property that each matrix is
$p$-nilpotent and that the matrices pair-wise commute.  
The complexity of the map
$$
\Theta_{\GL_{n(r)}}: V_n \otimes A \ \to \ V_n \otimes A
$$
is revealed even in the case $n=2$ which is worked out 
explicitly below.

\vspace{0.1in}
(4) Let $M$ be the restriction to $\SL_{2(2)}$ of the rational $\GL_2$--representation $V_2$.  Then Example \ref{var}(4) gives an explicit
description of $A = k[V(\SL_{2(2)})]$ as a quotient of $k[x_0,y_0,z_0,x_1,y_1,z_1]$
and (\ref{exp-sl}) gives $\Theta_{\SL_{2(2)}}$ explicitly.
Since $V_2$ is a homogeneous polynomial representation of $\GL_2$ of degree $1$, 
the divided powers $e^{(p)}$, 
$f^{(p)}$ and $h^{(p)}$ as well as  all products  of the form 
$e^{(i)}f^{(j)}{h \choose \ell}$ act trivially on $M$. Hence,  
the  map 
$$\Theta_{\SL_{2(2)}}: M \otimes A \ \to \ 
M\otimes A$$ is given by the matrix 
$$\UseComputerModernTips
 \xymatrix{ A^2 \ar[rr]^{\left [\begin{array}{cc} 
z_1 & x_1\\
y_1 & -z_1
\end{array} \right ]} && A^2. }
$$
\end{ex}
\vskip .2in

When viewing group schemes as functors, it is often convenient to think of
$G_{k[V(G)]}$ as $G \times V(G)$ (i.e., $G \times V(G) \ = \
\Spec  (k[V(G)] \otimes k[G]) $).  From this point of view, $\cU_G$ has the form
$$\UseComputerModernTips
 \xymatrix{ \cU_G: \Gar \times V(G) \ \ar[r]& \ G \times V(G).}$$
 
 The following naturality property of $\Theta_G$ will prove useful when 
we consider $M\otimes k[V(G)]$ as a free, coherent sheaf on $V(G)$
and restrict this sheaf to $V(H) \subset V(G)$ equipped with its action
of $H$.

\begin{prop}
\label{pull}
Choose $r > 0$ and consider a closed embedding $i: H \hookrightarrow G $ of algebraic affine group schemes
over $k$.  Let  $\phi: V_r(H) \to V_r(G)$ be the closed embedding of affine schemes induced by $i$, 
with associated surjective map $\phi^*: k[V_r(G)] \to k[V_r(H)]$ on coordinate algebras. 
Then the following square commutes
\begin{equation}
\label{pullback1}
\UseComputerModernTips
 \xymatrix{
\bG_{a(r)} \times V_r(H) \ar[d]_{id \times \phi} \ar[r]^-{\cU_{H,r}} & H \times V_r(H)
\ar[d]^{i \times \phi} \\
\bG_{a(r)} \times V_r(G) \ar[r]^-{\cU_{G,r}} & G \times V_r(G)
.}
\end{equation}
Consequently,
the following square of 
$k$-linear maps commutes:
\begin{equation}
\label{pullback2}
\UseComputerModernTips
\xymatrix{
k[G] \ar[d]^{i^*} \ar[rr]^-{\Theta_{G,r}} && k[V_r(G)] \ar[d]^{\phi^*} \\
k[H] \ar[rr]^-{\Theta_{H,r}} && k[V_r(H)]
.}
\end{equation}
Thus, for any rational $G$-module $M$ we have a compatibility 
of coactions on $M$:
\begin{equation}
\label{pullback3}
\UseComputerModernTips
\xymatrix{
M \ar[rr]^-{\nabla_M} \ar[rrd]_{\nabla_{M\downarrow_H}} &&  M  \otimes k[G] \ar[d]^{\id\otimes i^*} 
\ar[rr]^-{ \id\otimes\Theta_{G,r}} &&   M \otimes k[V_r(G)] \ar[d]^{\id\otimes\phi^* } \\
&& M  \otimes k[H]\ar[rr]^-{\id\otimes\Theta_{H,r}} &&   M \otimes k[V_r(H)]
.}
\end{equation}

\end{prop}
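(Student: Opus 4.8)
The plan is to derive all three diagrams from the universal property of $V_r(G)$ (Theorem~\ref{represent}), imitating the proof of Proposition~\ref{indep}; the decisive point is the commutativity of \eqref{pullback1}, and the passages to \eqref{pullback2} and then to \eqref{pullback3} are formal. First I would unwind the definition of $\phi$: by construction $\phi\colon V_r(H)\to V_r(G)$ is the morphism of representing schemes corresponding, under Theorem~\ref{represent}, to the natural transformation that sends a one-parameter subgroup $\mu\colon\bG_{a(r),R}\to H_R$ to $i_R\circ\mu\colon\bG_{a(r),R}\to G_R$. Applying this transformation to the universal one-parameter subgroup $\cU_{H,r}\in V_r(H)(k[V_r(H)])$ --- which corresponds to $\id_{k[V_r(H)]}$ --- shows that $i_{k[V_r(H)]}\circ\cU_{H,r}$ is the $k[V_r(H)]$-point of $V_r(G)$ given by the algebra homomorphism $\phi^*\colon k[V_r(G)]\to k[V_r(H)]$, while by the universal property of $V_r(G)$ that same point classifies the base change $\cU_{G,r}\otimes_{k[V_r(G)]}k[V_r(H)]$. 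Hence $i_{k[V_r(H)]}\circ\cU_{H,r}=\cU_{G,r}\otimes_{k[V_r(G)]}k[V_r(H)]$, and rewriting this identity of one-parameter subgroups over $k[V_r(H)]$ as a square of $k$-schemes is precisely the assertion that \eqref{pullback1} commutes. (Closedness of $i$, or of $\phi$, is not needed for this.)

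Next I would dualize \eqref{pullback1} to a commuting square of $k$-linear maps whose top row is $\cU^*_{G,r}\colon k[V_r(G)]\otimes k[G]\to k[V_r(G)]\otimes k[\bG_{a(r)}]$, bottom row $\cU^*_{H,r}$, left-hand arrow $\phi^*\otimes i^*$, and right-hand arrow $\phi^*\otimes\id$. Precomposing the two rows with the maps $f\mapsto 1\otimes f$ and postcomposing with $\id\otimes u_{r-1}$, and using both that $\phi^*\circ(\id\otimes u_{r-1})=(\id\otimes u_{r-1})\circ(\phi^*\otimes\id)$ and that $(\phi^*\otimes i^*)(1\otimes f)=1\otimes i^*(f)$, I would collapse the square to the identity $\phi^*\circ\Theta_{G,r}=\Theta_{H,r}\circ i^*$ of Definition~\ref{global}, which is exactly the commutativity of \eqref{pullback2}.

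Finally, for \eqref{pullback3} the lower triangle is just the statement that the comodule structure of $M$ restricted along $i$ is $(\id_M\otimes i^*)\circ\nabla_M$, namely the definition of restriction of $G$-modules to $H$; and the right-hand square is obtained from \eqref{pullback2} by tensoring with $\id_M$. Composing the right-hand square with $\nabla_M$ and invoking the triangle yields the full diagram. The only step that is more than bookkeeping is the identification in the first paragraph --- keeping the variances straight and checking that the transformation ``induced by $i$'' really carries the universal element of $V_r(H)$ to $\phi^*$ regarded as a point of $V_r(G)$ --- and once that is in hand I expect no genuine obstacle.
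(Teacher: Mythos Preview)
Your proposal is correct and follows essentially the same route as the paper: both arguments establish \eqref{pullback1} by using the universal property of $\cU_{G,r}$ to identify $i\circ\cU_{H,r}$ with the base change of $\cU_{G,r}$ along $\phi^*$, then derive \eqref{pullback2} by dualizing and pre/post-composing with $1\otimes(-)$ and $\id\otimes u_{r-1}$, and finally obtain \eqref{pullback3} formally. Your Yoneda-style identification of $\phi$ via the universal element is a slightly cleaner packaging of what the paper phrases as ``comparing maps on $R$-valued points,'' and your parenthetical remark that closedness of $i$ plays no role is accurate.
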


\begin{proof}
The fact that $\phi: V_r(H) \to V_r(G)$ induced by the closed embedding 
$i: H \hookrightarrow G $ is itself a closed embedding is given by \cite[1.5]{SFB1}.
By universality of $\cU_{G,r}$, the composition $(i \times id) \circ \cU_{H,r}:
\bG_{a(r)} \times V_r(H) \to G \times V_r(H)$ is 
obtained by pull-back of $\cU_{G,r}$ via some morphism 
$V_r(H) \to V_r(G)$.  By comparing maps on $R$-valued points, we verify that
this morphism must be $\phi$.  This implies the commutativity of (\ref{pullback1}).

The commutative square (\ref{pullback1})  gives a commutative square on coordinate algebras: 
\begin{equation}
\label{pullback5}
\xymatrix{
k[V_r(G)] \otimes k[G] \ar[d]^{\phi^* \otimes \, i^*} \ar[r]^-{\cU_{G,r}^*} & 
k[V_r(G)] \otimes k[\Gar] \ar[d]^{\phi^* \otimes \, \id} \\
k[V_r(H)] \otimes  k[H]  \ar[r]^-{\cU_{H,r}^*} & k[V_r(H)]  \otimes k[\Gar]
.}
\end{equation}
 Concatenating (\ref{pullback5}) on the right with the
commutative square of linear maps
$$
\xymatrix{
k[V_r(G)] \otimes k[\Gar] \ar[d]^-{\phi^* \otimes  \id} \ar[r]^-{u_{r-1}} & k[V_r(G)] \ar[d]^{\phi^*  } \\
k[V_r(H)] \otimes  k[\Gar]  \ar[r]^-{u_{r-1}} & k[V_r(H)] 
}
$$
and  with the inclusions $k[G] \to k[V_r(G)] \otimes k[G]$ and $k[H] \to k[V_r(H)] \otimes k[H]$ 
on the left, we obtain a commutative diagram: 
$$\xymatrix{
k[G] \ar[d]^{i^*} \ar@{^(->}[r] & k[V_r(G)] \otimes k[G] \ar[d]^{\phi^* \otimes \, i^*} \ar[r]^-{\cU_{G,r}^*} & 
k[V_r(G)] \otimes k[\Gar] \ar[d]^{\phi^* \otimes \, \id}  \ar[r]^-{u_{r-1}} & k[V_r(G)] \ar[d]^{\phi^*  }\\
k[H] \ar@{^(->}[r] & k[V_r(H)] \otimes  k[H]  \ar[r]^-{\cU_{H,r}^*} & k[V_r(H)]  
\otimes k[\Gar] \ar[r]^-{u_{r-1}} & k[V_r(H)]
.}
$$
Eliminating the middle square,  we obtain the square (\ref{pullback2}).  Hence, it is commutative.

Finally, the commutativity of (\ref{pullback3}) follows immediately from the
commutativity of (\ref{pullback2}).
\end{proof}

Pre-composition determines a natural action
$V_r(G) \times V_r(\Gar) \ \to \ V_r(G)$ for any algebraic affine group scheme $G$.  
Recall from \cite[1.10]{SFB1} that $ V_r(\Gar) \simeq \bA^r$:
morphisms of group schemes $\mathbb G_{a(r),A} \to \mathbb G_{a(r),A}$ over $A$ have
associated maps on coordinate algebras $A[T]/T^{p^r} \to A[T]/T^{p^r}$ given
by additive polynomials, that is polynomials of the form $a_0T + a_1T^p + \cdots a_{r-1}T^{p^{r-1}}$.
Restricting the action $V_r(G) \times V_r(\Gar) \ \to \ V_r(G)$ 
to {\it linear} polynomials $\bA^1 \subset  V_r(\Gar) \simeq  \bA^r$
determines a natural action 
\begin{equation}
\label{grading}
V_r(G) \times \bA^1 \ \to \ V_r(G),
\end{equation}
which is equivalent by \cite[1.11]{SFB1} to a functorial grading on $k[V_r(G)]$.

\begin{prop}
\label{grade}
Let $G$ be an algebraic affine group scheme over $k$.  Then the coordinate algebra $k[V_r(G)]$
of $V_r(G)$ is a graded algebra generated by homogeneous generators
of degrees $p^i, \ 0 \leq i < r$.
\end{prop}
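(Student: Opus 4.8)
The plan is to reduce to $G=\GL_n$ by choosing a faithful representation, to compute the $\bA^1$-action of (\ref{grading}) on $V_r(\GL_n)$ explicitly from the exponential formula (\ref{exp}), and then to transport the resulting grading back along a surjection of graded algebras.

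First I would unwind the dictionary of \cite[1.11]{SFB1}: the action $V_r(G)\times\bA^1\to V_r(G)$ of (\ref{grading}) sends, on $R$-points, a one-parameter subgroup $\mu\colon\bG_{a(r),R}\to G_R$ together with a scalar $s\in R$ to the composite of $\mu$ with the endomorphism $\sigma_s\colon T\mapsto sT$ of $\bG_{a(r),R}$; under \cite[1.11]{SFB1} this corresponds to a grading of $k[V_r(G)]$ in which a function is homogeneous of degree $d$ precisely when it is scaled by $s^d$ under the action of $s\in\bA^1$. Since $\sigma_s$ does not involve $G$, this action is natural in $G$: for a closed embedding $i\colon H\hookrightarrow G$ the induced map $\phi\colon V_r(H)\to V_r(G)$ — a closed embedding by \cite[1.5]{SFB1}, as already invoked in the proof of Proposition \ref{pull} — is $\bA^1$-equivariant, so $\phi^*\colon k[V_r(G)]\twoheadrightarrow k[V_r(H)]$ is a surjective homomorphism of graded algebras and hence carries a set of homogeneous algebra generators to one of the same degrees. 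Because every algebraic affine group scheme embeds as a closed subgroup scheme of some $\GL_n$ (\cite{W}), it therefore suffices to establish the proposition for $G=\GL_n$.

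For $G=\GL_n$, Proposition \ref{tuples} realizes $V_r(\GL_n)$ as the closed subscheme $N_p^{[r]}(gl_n)$ of $M_n^{\times r}$, so $k[V_r(\GL_n)]$ is generated by the matrix-entry functions $(\alpha_\ell)_{i,j}$ with $0\le\ell<r$. To find their degrees I would precompose $\exp_{\ul\alpha}$ as in (\ref{exp}) with $\sigma_s$: substituting $st$ for the parameter $t$ turns
$$\exp(st\,\alpha_0)\,\exp\!\big((st)^p\alpha_1\big)\cdots\exp\!\big((st)^{p^{r-1}}\alpha_{r-1}\big)$$
into
$$\exp\!\big(t\,(s\alpha_0)\big)\,\exp\!\big(t^p(s^p\alpha_1)\big)\cdots\exp\!\big(t^{p^{r-1}}(s^{p^{r-1}}\alpha_{r-1})\big),$$
so the $\bA^1$-action sends the tuple $(\alpha_0,\dots,\alpha_{r-1})$ to $(s\alpha_0,\,s^p\alpha_1,\dots,\,s^{p^{r-1}}\alpha_{r-1})$. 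Consequently $(\alpha_\ell)_{i,j}$ is scaled by $s^{p^\ell}$, hence is homogeneous of degree $p^\ell$; this exhibits the required homogeneous generators and finishes the case $G=\GL_n$.

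I expect the only genuinely delicate point to be this last computation, together with checking that the abstract grading furnished by \cite[1.11]{SFB1} is indeed the one in which $(\alpha_\ell)_{i,j}$ has degree $p^\ell$; the reduction step is formal manipulation of naturality and of surjections of graded rings. As a consistency check one can feed the additive embedding $\bG_{a(r)}\hookrightarrow\GL_2$ into the argument and recover the grading of $k[V(\bG_{a(r)})]=k[x_0,\dots,x_{r-1}]$ with $\deg x_i=p^i$ used in Example \ref{p-univ}(2).
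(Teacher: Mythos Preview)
Your proposal is correct and follows essentially the same approach as the paper: both reduce to $\GL_n$ via a closed embedding and the naturality of the $\bA^1$-action, then compute explicitly that this action sends $(\alpha_0,\ldots,\alpha_{r-1})$ to $(s\alpha_0,s^p\alpha_1,\ldots,s^{p^{r-1}}\alpha_{r-1})$ to read off the degrees of the matrix-entry generators. Your version spells out the exponential computation in slightly more detail, but the structure and key ideas are the same.
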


\begin{proof} The coordinate algebra $k[V_r(G)]$ is graded by \cite[1.12]{SFB1}.
If $G = \GL_N$, then an $R$-valued point of $V_r(\GL_N)$ is given by 
an $r$-tuple of $N\times N$ pair-wise commuting, $p$-nilpotent matrices
with entries in $R$, $(\alpha_0,\ldots,\alpha_{r-1})$.   
 The action of $c \in V(\bG_{a(1)})(R)$
on $(\alpha_0,\alpha_1,\ldots,\alpha_{r-1}) \in V(\GL_{N(r)})(R)$  is given by the formula
$$ (\alpha_0,\alpha_1,\ldots,\alpha_{r-1}) \times c = 
(c\alpha_0,c^p\alpha_1,\ldots,c^{p^{r-1}}\alpha_{r-1}).
$$ 
Hence, the coordinate functions 
of the matrix $\alpha_i$ have grading $p^i$ and, therefore, $k[V_r(GL_N)]$ is generated 
by homogeneous elements of degree $p^i, \ 0 \leq i < r$. 

Let $i: G \to GL_N$ be a closed embedding of a finite group
 scheme $G$ into some $\GL_N$.  The naturality of the grading (see \cite[1.12]{SFB1}) implies
 that the surjective map $\phi: k[V_r(\GL_N)] \to k[V_r(G)]$ is a map of
 graded algebras.
 \end{proof}

\begin{prop}
\label{homog}
For any algebraic affine group scheme $G$ and integer $r > 0$, the $k$-linear map
$$\Theta_{G,r}: k[G] \ \to \ k[V_r(G)]$$
has image contained in the homogeneous summand of $k[V_r(G)]$ of
degree $p^{r-1}$.  

If $G$ is infinitesimal, then this is equivalent to the following: 
$$\cU_{G,r*}:  k\Gar  \otimes k[V_r(G)]  \ \to \  kG \otimes k[V_r(G)] $$
sends $u_{r-1}  \otimes 1 \in  k\Gar \otimes k[V_r(G)]$ to some $\sum x_i \otimes a_i \in    kG \otimes k[V_r(G)]$ 
with each $a_i \in k[V_r(G)]$ homogeneous of  degree $p^{r-1}$. 
\end{prop}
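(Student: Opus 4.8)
The plan is to exploit the construction of the grading on $k[V_r(G)]$: by (\ref{grading}) it records the $\bA^1$-action on $V_r(G)$ that sends a $1$-parameter subgroup $\mu$ to its precomposition $\mu\circ\lambda_c$ with the linear reparametrization $\lambda_c\colon\bG_{a(r)}\to\bG_{a(r)}$, $\lambda_c^*(T)=cT$. The key point is that the universal $1$-parameter subgroup $\cU_{G,r}$ is equivariant for this action, with $G$ carrying the trivial action. So the first step is to record this equivariance. Since $\cU_{G,r}$ restricts along any $v\in V_r(G)(R)$ to the $1$-parameter subgroup $\mu_v$, and since by definition of the action the $1$-parameter subgroup classified by $v\cdot c$ is $\mu_v\circ\lambda_c$, one obtains the equality of morphisms $\bG_{a(r)}\times V_r(G)\times\bA^1\to G$, namely $\cU_{G,r}\circ(\mathrm{id}_{\bG_{a(r)}}\times a)=\cU_{G,r}\circ(\sigma\times\mathrm{id}_{V_r(G)})$ (up to reordering the factors), where $a\colon V_r(G)\times\bA^1\to V_r(G)$ is the grading action and $\sigma\colon\bG_{a(r)}\times\bA^1\to\bG_{a(r)}$ is scalar multiplication, so $\sigma^*(T)=T\otimes s$ with $s$ the coordinate on $\bA^1$.

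Next I would read this identity off on coordinate algebras. Expand $\cU^*_{G,r}(1\otimes f)=\sum_{d=0}^{p^r-1}a_d(f)\otimes T^d$ in the $k$-basis $1,T,\dots,T^{p^r-1}$ of $k[\bG_{a(r)}]=k[T]/T^{p^r}$, with $a_d(f)\in k[V_r(G)]$. Applying both sides of the equivariance identity to $f\in k[G]$ and comparing the coefficient of each $T^d$: the left side ($\mathrm{id}\times a$) contributes $a^*(a_d(f))$, where $a^*\colon k[V_r(G)]\to k[V_r(G)]\otimes k[s]$ is the coaction giving the grading, while the right side ($\sigma\times\mathrm{id}$, with $\sigma^*(T)=T\otimes s$) contributes $a_d(f)\otimes s^d$. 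Hence $a^*(a_d(f))=a_d(f)\otimes s^d$, i.e.\ $a_d(f)$ is homogeneous of degree $d$ for every $f$ and every $d$. Since $u_{r-1}$ is the linear dual of $T^{p^{r-1}}$, Definition \ref{global} identifies $\Theta_{G,r}(f)$ with $a_{p^{r-1}}(f)$, which is homogeneous of degree $p^{r-1}$; this proves the first assertion. For the equivalent reformulation in the infinitesimal case, I would note that under $\Hom_k(k[G],k[V_r(G)])=kG\otimes k[V_r(G)]$, formula (\ref{Theta}) presents $\Theta_{G,r}$ as $\cU_{G,r,*}(u_{r-1}\otimes 1)=\sum_i x_i\otimes a_i$, and, pairing against the $k[G]$-basis dual to $\{x_i\}$, the $a_i$ are exactly the values of $\Theta_{G,r}$ on $k[G]$; thus the image of $\Theta_{G,r}$ lies in degree $p^{r-1}$ iff every $a_i$ is homogeneous of degree $p^{r-1}$.

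A more computational alternative — in the spirit of Propositions \ref{pull} and \ref{grade} — is to reduce to $G=\GL_N$. A closed embedding $i\colon G\hookrightarrow\GL_N$ gives, via Proposition \ref{pull}, the identity $\phi^*\circ\Theta_{\GL_N,r}=\Theta_{G,r}\circ i^*$ with $i^*\colon k[\GL_N]\twoheadrightarrow k[G]$ surjective and $\phi^*\colon k[V_r(\GL_N)]\to k[V_r(G)]$ a surjection of graded algebras (proof of Proposition \ref{grade}), so $\Im\Theta_{G,r}=\phi^*(\Im\Theta_{\GL_N,r})$ and it suffices to treat $\GL_N$. There $\cU^*_{\GL_N,r}(X_{i,j})=\sum_\ell(\beta_\ell)_{i,j}T^\ell$ with $\beta_\ell$ given by (\ref{beta}) in terms of the generic pairwise-commuting $p$-nilpotent matrices $\alpha_0,\dots,\alpha_{r-1}$, whose entries have degrees $p^0,\dots,p^{r-1}$; hence $(\beta_\ell)_{i,j}$ is homogeneous of degree $\ell$. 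Since $\cU^*_{\GL_N,r}$ is a $k$-algebra map and the ``isobaric'' elements $\sum_\ell c_\ell\otimes T^\ell$ with $\deg c_\ell=\ell$ form a subring of $k[V_r(\GL_N)]\otimes k[\bG_{a(r)}]$ (so the argument also handles $\det\mapsto 1+\cdots$ and hence $\det^{-1}$), the coefficient of $T^{p^{r-1}}$ in $\cU^*_{\GL_N,r}(P)$ — which is $\Theta_{\GL_N,r}(P)$ — is homogeneous of degree $p^{r-1}$ for every $P\in k[\GL_N]$.

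The one place I expect to need real care is the bookkeeping in the second step: fixing the direction of the $\bA^1$-action and the weight of $T$ so that the coefficient of $T^d$ lands in degree exactly $d$ — equivalently, in the $\GL_N$ picture, that the gradings of the coefficient matrices match the powers of $T$, which is forced by the precise shape of (\ref{beta}) together with the action $(\alpha_0,\dots,\alpha_{r-1})\cdot c=(c\alpha_0,\dots,c^{p^{r-1}}\alpha_{r-1})$ from the proof of Proposition \ref{grade}. Everything else is routine.
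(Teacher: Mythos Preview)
Your proposal is correct and follows essentially the same strategy as the paper: establish that the universal $1$-parameter subgroup $\cU_{G,r}$ is $\bA^1$-equivariant (your identity $\cU_{G,r}\circ(\mathrm{id}\times a)=\cU_{G,r}\circ(\sigma\times\mathrm{id})$ is exactly the paper's diagram (\ref{compatible})), then expand $\cU_{G,r}^*$ in powers of $T$ and read off that the coefficient of $T^d$ lies in degree $d$.

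The one noteworthy difference is in how the equivariance itself is justified. You derive it directly from the universal property --- the action on $V_r(G)$ is \emph{defined} as precomposition with $\lambda_c$, so $\mu_{v\cdot c}=\mu_v\circ\lambda_c$ and the identity is immediate on $R$-points. The paper instead embeds $G\hookrightarrow\GL_N$, uses Proposition~\ref{pull} to reduce the commutativity of (\ref{compatible}) to the case $G=\GL_{N(r)}$, and then verifies it by the explicit computation $\exp_{a\circ\underline{\alpha}}(s)=\exp_{\underline{\alpha}}(as)$. Your argument is cleaner and more conceptual here; the paper's route has the compensating virtue of making the weights of the $\beta_\ell$ visible, which is what your ``alternative'' paragraph recovers. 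Either path suffices, and your bookkeeping concern about matching the direction of the $\bA^1$-action to the weight of $T$ is exactly the content of the paper's diagram chase in (\ref{actt}).
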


\begin{proof}
Let $A = k[V_r(G)]$. Since $\Theta_{G,r}$ factors through the $r^{\rm th}$ Frobenius kernel of $G$, we may assume $G$ is infinitesimal. 
Let $\langle \lambda_i \rangle$ be a set of   linear generators of $k[G]$, 
and  $\langle \check \lambda_i \rangle$  be the dual set of  linear 
generators  of $kG$. Then  $\cU_{G,r,*}(u_{r-1}) = 
\sum \check \lambda_i \otimes f_i$ if and  only if $u_{r-1}(\cU_G^*(\lambda_i)) = f_i$ if 
and only if $\cU_{G,r}^*(\lambda_i) = \ldots + f_i T^{p^{r-1}} + \ldots$.  Hence, 
the assertion that $\Theta_{G,r}$ is homogeneous of degree $p^{r-1}$ is equivalent 
to showing  that the  map $k[G] \to A$ defined by reading 
off the coefficient  of  
$$\cU_{G,r}^*: k[G] \ \to \ A \otimes k[G] \ \to \ A \otimes k[\Gar] \ \to \ A[T]/T^{p^r}$$ 
of the 
monomial $T^{p^{r-1}}$ is  homogeneous of degree $p^{r-1}$.  

The coordinate algebra $k[\Gar] \simeq k[T]/T^{p^r}$ has a natural grading 
with $T$ assigned degree 1.  This grading 
corresponds to the monoidal action of $\mathbb A^1$ on $\Gar$ by multiplication: 
$$\UseComputerModernTips
 \xymatrix{ \Gar \times \mathbb A^1  \ar[rr]^-{ s \times a \mapsto sa} && \Gar}.$$
We proceed to prove that this action is compatible with the action of $\mathbb A^1$ on 
$V_r(G)$ which defines the grading on $A$ in the 
sense that the following diagram commutes:
\begin{equation}
\label{compatible}
\UseComputerModernTips
 \xymatrix{ \Gar \times \mathbb A^1 \times V_r(G) 
\ar[d]^-{ {\rm action} \times \id}  \ar[rr]^-{\id \times {\rm action}}  && \Gar \times V_r(G)
\ar[r]^-{\cU_{G,r}} & G \times V_r(G) \ar[d]^{pr_{_G}} \\
\Gar \times V_r(G) \ar[rr]^-{\cU_{G,r}} && 
G\times V_r(G) \ar[r]^-{pr_{_G}} & G.}
\end{equation}

Commutativity of (\ref{compatible}) is equivalent to the commutativity of 
the corresponding
diagram of $S$-valued points for any choice of finitely generated commutative 
$k$-algebras $S$ and element $a \in S$:

\begin{equation}
\label{S-valued}
\UseComputerModernTips
 \xymatrix{ \Gar(S)  \times V_r(G)(S) \ar[d]^-{ a \times 1}  \ar[rr]^-{1 \times a}  
&& \Gar(S) \times V_r(G)(S)
\ar[r]^-{\cU_{G,r}(S)} & G(S) \times V_r(G)(S) \ar[d]^{pr_{_G}} \\
\Gar(S) \times V_r(G)(S) \ar[rr]^-{\cU_{G,r}(S)} && 
G(S)\times V_r(G)(S) \ar[r]^-{pr_{_G}} & G(S).}
\end{equation}

Choose an embedding of $G$ into some $\GL_{N(r)}$.
Using Proposition \ref{pull} and the naturality with respect
to change of $G$ of the action of $\bA^1$ on $V_r(G)$, we can compare 
the diagram (\ref{S-valued}) for $G$ and for $\GL_{N(r)}$.  The injectivity
of $G(S) \to \GL_{N(r)}(S)$ implies that it 
suffices to assume that $G = \GL_{N(r)}$.
Let $s \in \Gar(S)$, 
$\underline \alpha = (\alpha_0, \ldots, \alpha_{r-1}) \in V_r(\GL_{N})(S)$. 
Then $a \circ \underline \alpha =  
(a\alpha_0, a^p\alpha_1, \ldots, a^{p^{r-1}}\alpha_{r-1})$, so
$\exp_{\underline \alpha}(s) = 
\exp(s\alpha_0)\exp(s^p\alpha_1) \ldots \exp(s^{p^{r-1}}\alpha_{p-1}) \in \GL_{N(r)}(S)$
by (\ref{exp}). 
Thus, restricted to the   point $(s, \underline  \alpha) \in (\Gar \times V_r(\GL_{N}))(S)$,
(\ref{S-valued}) becomes
\begin{equation}
\label{s-alpha}
\UseComputerModernTips
 \xymatrix{ (s, \underline \alpha) \ar[d]^-{ a \times 1}  \ar[rr]^-{1 \times a}  && 
(s, a \circ \underline \alpha)
\ar[r]^-{\cU_{G,r}}&   (\exp_{a\circ \underline \alpha}(s),  a \circ \underline \alpha  ) \ar[d]\\
(as,  \underline \alpha )\ar[rr]^-{\cU_{G,r}} && 
(\exp_{\underline \alpha}(as),  \underline \alpha  ) \ar[r]& \exp_{\underline \alpha}(as).}
\end{equation}
Commutativity of (\ref{s-alpha}) is implied by the evident
equality $\exp_{a\circ \underline \alpha}(s) = \exp_{\underline \alpha}(as)$. 
%which follows immediately  by direct inspection of the formulas  
%in \cite[p.9]{SFB1}

Consequently, we have a commutative diagram  on coordinate algebras 
corresponding  to (\ref{compatible}): 
\begin{equation}
\label{actt}
\UseComputerModernTips
 \xymatrix{ A \otimes k[t] \otimes k[\Gar]     && 
A \otimes k[\Gar] \ar[ll]_-{  {\rm act}^* \otimes \id}
 & A \otimes k[G] \ar[l]_-{\cU_{G,r}^*}\\
A \otimes k[\Gar] \ar[u]_-{ \id \otimes  {\rm act}^*}  && 
A \otimes k[G] \ar[ll]_-{\cU_{G,r}^*} & k[G].\ar@{_(->}[u] \ar@{_(->}[l]}
\end{equation}
The map $\UseComputerModernTips
 \xymatrix{{\rm act}^*: A \ar[r] & A \otimes k[t] =  A  \otimes k[\mathbb A^1]}$  of the upper horizontal arrow 
is the map on coordinate algebras  which corresponds to the 
grading  on $A$.  The left vertical map corresponds to the grading 
on $k[\Gar] \simeq k[T]/T^{p^r}$ and  is given explicitly by $ T \mapsto  t \otimes T$.      

For $\lambda \in k[G]$, write
$\cU_{G,r}^*(1 \otimes \lambda) = \sum f_i \otimes c_iT^i  \in A \otimes k[\Gar]$. 
The composition of the lower horizontal and left vertical
maps of (\ref{actt}) sends $\lambda$ to  $\sum f_i \otimes t^i \otimes c_iT^i$.   
On the other hand, the composition of the right vertical and upper
horizontal maps of (\ref{actt}) sends $\lambda$ to 
$\sum  {\rm act}^*(f_i)  \otimes c_iT^i $. 
We conclude that
$$f_i \otimes t^i =  {\rm act}^*(f_i),$$
so that $f_i$ is homogeneous of degree $i$.
\end{proof}

As a corollary (of the proof of) Proposition \ref{homog}, we see why
for $G$ infinitesimal of height $\leq r$  
the homogeneous degree of $\Theta_{G,r} \in  kG \otimes k[V_r(G)] $ is 
$p^{r-1}$ whereas the homogeneous degree of 
$\Theta_{G,r+1} \in kG \otimes k[V_{r+1}(G)]$  is $p^r$.

\begin{cor}
Let $G$ be an infinitesimal group of height $\leq r$.  Then the map
$i^*: k[V_{r+1}(G)] \to k[V_r(G)]$ of Proposition \ref{indep} is a graded isomorphism
which divides degrees by $p$.
\end{cor}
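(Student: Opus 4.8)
The plan is to split the assertion into two parts that are each easy once set up correctly: that $i^{*}=\phi\colon k[V_{r+1}(G)]\to k[V_{r}(G)]$ is an isomorphism of $k$-algebras, and that it carries the grading on $k[V_{r+1}(G)]$ onto the grading on $k[V_{r}(G)]$ \emph{after dividing every degree by $p$}. The first part is immediate: by Definition~\ref{limm}, since $G$ has height $\leq r$ the closed immersion $i=i_{r,r+1}\colon V_{r}(G)\hookrightarrow V_{r+1}(G)$ is an isomorphism of schemes, so $i^{*}$ is an algebra isomorphism. Everything therefore reduces to the effect of $i^{*}$ on homogeneous components, and here the point is that the canonical projection $p_{r+1,r}\colon \bG_{a(r+1)}\twoheadrightarrow \bG_{a(r)}$ hides a Frobenius, its comorphism being $T\mapsto T^{p}$.

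Recall from the discussion preceding Proposition~\ref{grade} that the grading on $k[V_{s}(G)]$ is attached to the action $\rho_{s}\colon V_{s}(G)\times\bA^{1}\to V_{s}(G)$ obtained by precomposing a $1$-parameter subgroup with the linear scaling $\lambda_{c}\colon\bG_{a(s)}\to\bG_{a(s)}$, $\lambda_{c}^{*}(T)=cT$, and recall from Remark~\ref{varying} that $i$ is precomposition with $p_{r+1,r}$. I would first check the compatibility
\[
i\circ\rho_{r}\circ(\mathrm{id}\times F)\;=\;\rho_{r+1}\circ(i\times\mathrm{id})\colon\ V_{r}(G)\times\bA^{1}\longrightarrow V_{r+1}(G),
\]
where $F\colon\bA^{1}\to\bA^{1}$ is the Frobenius, $F^{*}(t)=t^{p}$. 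On $R$-valued points this says that for a $1$-parameter subgroup $\mu\colon\bG_{a(r),R}\to G_{R}$ and $c\in R$ one has $(\mu\circ\lambda_{c^{p}})\circ p_{r+1,r}=(\mu\circ p_{r+1,r})\circ\lambda'_{c}$, where $\lambda'_{c}$ is the scaling on $\bG_{a(r+1)}$, and the only thing to verify is $\lambda_{c^{p}}\circ p_{r+1,r}=p_{r+1,r}\circ\lambda'_{c}$ as morphisms $\bG_{a(r+1),R}\to\bG_{a(r),R}$, which on coordinate algebras amounts to the identity $T\mapsto c^{p}T^{p}$ for both composites. Passing to coordinate algebras and writing $\sigma_{s}\colon k[V_{s}(G)]\to k[V_{s}(G)]\otimes k[t]$ for the coaction defining the grading, the displayed identity becomes $(\phi\otimes\mathrm{id})\circ\sigma_{r+1}=(\mathrm{id}\otimes F^{*})\circ\sigma_{r}\circ\phi$. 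Evaluating both sides on a homogeneous $x\in k[V_{r+1}(G)]$ of degree $e$ and comparing coefficients of powers of $t$ (they are $k[V_{r}(G)]$-linearly independent) forces $\phi(x)$ to be homogeneous of degree $e/p$ when $p\mid e$, and $\phi(x)=0$ when $p\nmid e$.

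Finally I would assemble the two parts. Since $\phi$ is injective, the vanishing $\phi(x)=0$ for homogeneous $x$ of degree prime to $p$ shows $k[V_{r+1}(G)]$ is concentrated in degrees divisible by $p$. Since $\phi$ is surjective and sends $k[V_{r+1}(G)]_{pd}$ into $k[V_{r}(G)]_{d}$, any homogeneous $b\in k[V_{r}(G)]_{d}$ equals $\phi(a)$ for some $a$ all of whose homogeneous components of degree $\neq pd$ must then be killed, hence vanish by injectivity; so $\phi$ maps $k[V_{r+1}(G)]_{pd}$ onto $k[V_{r}(G)]_{d}$, and injectivity on this piece is inherited. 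Thus $i^{*}$ restricts to isomorphisms $k[V_{r+1}(G)]_{pd}\xrightarrow{\ \sim\ }k[V_{r}(G)]_{d}$ for every $d\geq 0$, which is precisely the statement that $i^{*}$ is a graded isomorphism dividing degrees by $p$.

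I expect the only real work to be the bookkeeping in the middle paragraph: getting the Frobenius onto the correct factor in the $\bA^{1}$-action identity and carrying out the translation to coactions without an index slip. As a cross-check one can instead pick a closed embedding $G\hookrightarrow\GL_{N}$ and use the explicit description of $i_{r,r+1}$ for $\GL_{N}$, namely $(\alpha_{0},\dots,\alpha_{r-1})\mapsto(0,\alpha_{0},\dots,\alpha_{r-1})$, together with the fact (from the proof of Proposition~\ref{grade}) that the coordinate functions of the $\ell$-th matrix slot have degree $p^{\ell}$: then $i_{r,r+1}^{*}$ visibly sends degree-$p^{\ell}$ generators to degree-$p^{\ell-1}$ generators and kills the degree-$1$ generators, and naturality of the grading transports this to $G$.
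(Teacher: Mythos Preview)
Your proof is correct. Both your argument and the paper's reduce to the observation that the canonical projection $p_{r+1,r}\colon \bG_{a(r+1)}\to\bG_{a(r)}$ has comorphism $T\mapsto T^{p}$, hence interacts with the scaling $\bA^{1}$-action by inserting a Frobenius on the $\bA^{1}$ factor.

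The routes to that observation differ slightly. The paper works through the inverse $\pi^{*}$ and invokes the already-established diagram~(\ref{compatible}) of Proposition~\ref{homog}, which ties the grading on $k[V_{s}(G)]$ to the grading on $k[\bG_{a(s)}]$ via the universal $1$-parameter subgroup $\cU_{G,s}$; this lets the paper transfer the degree computation from the $k[V_{s}(G)]$ side to the $k[\bG_{a(s)}]$ side in one line. You instead establish directly the compatibility $i\circ\rho_{r}\circ(\id\times F)=\rho_{r+1}\circ(i\times\id)$ between the $\bA^{1}$-actions on $V_{r}(G)$ and $V_{r+1}(G)$, then read off the effect on homogeneous components from the resulting coaction identity. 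Your approach is self-contained and does not require~(\ref{compatible}); the paper's is terser precisely because it reuses that earlier work. Your $\GL_{N}$ cross-check at the end is also a perfectly good independent proof by naturality, and matches the explicit description in Example~\ref{var}(3).
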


\begin{proof}
Let $\pi^*: k[V_r(G)] \to k[V_{r+1}(G)]$ be the inverse of $i^*$.
The commutativity of (\ref{compatible}) implies that we may compute the 
effect on degree of $\pi^*$ by identifying the effect on degree of the map
$p^*: k[\bG_{a(r)}] = k[t]/t^{p^r} \ \to \ k[t]/t^{p^{r+1}} = k[\bG_{a(r+1)}]$.
Yet this map clearly multiplies degree by $p$.
\end{proof}

%%%%%%%%%%%%%%%%%%%%%%%%%%%%
%%%%%%%%%Section 3%%%%%%%%%%%%%%%

\section{$\theta_v$ and local Jordan type}
\label{Jordan}

The purpose of this section is to exploit our universal $p$-nilpotent operator
$\Theta_G$ to investigate the local Jordan type of a finite dimensional
$kG$-module $M$.   The local Jordan type of $M$ gives much more detailed
information about a $kG$-module $M$ than the information which can be 
obtained from  the support variety (or, rank variety) of $M$.
In this section,  we work through
various examples, give an algorithm for computing local Jordan types,
and understand the effect of Frobenius twists.  Moreover, we establish 
restrictions on the rank and dimension of $kG$-modules of constant
Jordan type.

\begin{defn}
\label{local}
Let $G$ be an infinitesimal group scheme and $v \in V(G)$.  Let $k(v)$ denote
the residue field of $V(G)$ at $v$, and let 
$$\mu_v =   \cU_G \otimes_{k[V(G)]} k(v): \bG_{a(r),k(v)} \ \to \ G_{k(v)}$$
be the associated 1-parameter subgroup (for $r \geq \rm ht(G)$).    We define the
{\it local $p$-nilpotent operator at $v$}, $\theta_v$ , to be 
$$\theta_v =  \Theta_G \otimes_{k[V(G)]} k(v) = \ \mu_{v*} (u_{r-1}) \ \in \ k(v)G.$$
Equivalently,  for a $k(v)$-rational  point $v: \Spec k(v)\to V(G)$,  
we define $\theta_v = \Theta_G(v)$,
the evaluation of $\Theta_G$ at $v$: 
$$\xymatrix{\theta_v: k[G] \ar[r]^-{\Theta_G}& k[V(G)] \ar[r]& k(v)}$$ where 
the second map corresponds to the point $v$.
\end{defn}

In the special case that $G = \GL_{n(r)}$ 
for some $n > 0$, we use the alternate notation $\theta_{\ul \alpha}$ for
the local $p$-nilpotent operator at  $ \ul \alpha = (\alpha_0,
\ldots,\alpha_{r-1}) \in V(\GL_{n(r)}) \simeq N_p^{[r]}(gl_n)$:
\begin{equation}
\label{local-f}
\theta_{\ul \alpha} \ = \ \exp_{\ul \alpha, *} (u_{r-1})
\ \in \ k(\ul \alpha)\GL_{n(r)},
\end{equation}
where $ k(\ul \alpha)$ is the residue field of $\ul \alpha \in V(\GL_{n(r)})$.

Let $K$ be a field.  Then a finite dimensional $K[u]/u^p$-module $M$ is a direct
sum of cyclic modules of dimension ranging from 1 to $p$.  We may thus write
$M \simeq a_p[p] + \cdots + a_1[1]$, where $[i]$ is the cyclic $K[u]/u^p$-module
$K[u]/u^i$ of dimension $i$.  We refer to the $p$-tuple
\begin{equation}
\label{jtype}
\JType(M,u) \ = \ (a_p,\ldots,a_1)
\end{equation}
as the {\it Jordan type} of the $K[u]/u^p$-module $M$.  We also refer to $\JType(M,u)$
as the Jordan type of the $p$-nilpotent operator $u$ on $M$.

For simplicity, we introduce the following notation.

\begin{defn}
\label{local-jordan}
With notation as in Definition \ref{local}, we set
$$\JType(M,\theta_v) \ \equiv \ \JType((\mu_{v,*})^*(M_{k(v)}),u_{r-1}).$$
We refer to this Jordan type as the {\it local Jordan type} of $M$ at $v \in V(G)$.
\end{defn}

\begin{remark}
\label{rk}
Essentially by definition, the rank variety $V(G)_M$ of a finite dimensional $kG$-module
for an infinitesimal group scheme $G$ is the closed, reduced subscheme of $V(G)$
consisting of those points $v \in V(G)$ at which the local Jordan type of $M$ has some 
Jordan block of size $< p$; in other words, those $v \in V(G)$ for which
$\JType(M,\theta_v) \ \not= \ \frac{\dim M}{p} [p].$
\end{remark}

The following proposition will enable us to make more concrete and explicit the 
local Jordan type of a $kG$-module $M$ at a given 1-parameter subgroup of $G$.

\begin{prop}
\label{repr} 
Let $ \ul \alpha = (\alpha_0,\ldots,\alpha_{r-1})\in V(\GL_{n(r)})$  be an $r$-tuple of 
$p$-nilpotent pair-wise commuting matrices. Let $M$ be a $k\GL_{(r)}$-module of 
dimension $N$, and  let  
$\rho: \GL_{m(r)} \to \GL_N$ be the associated structure map.  
The $(i,j)$-matrix entry of the action of 
the local $p$-nilpotent operator 
$\theta_{\ul \alpha} \in k(\ul \alpha)\GL_{n(r)}$ of (\ref{local-f}) on $M$ equals 
the coefficient of $T^{p^{r-1}}$ of 
$$ (\exp_{\ul \alpha})^*(\rho^*X_{i,j})  \ \in \ k(\ul \alpha)[\Gar]\simeq k(\ul \alpha)[T]/T^{p^r},$$ 
where $\{ X_{i,j}, 1 \leq i,j \leq N \}$ are the matrix coordinate functions of $\GL_N$.
\end{prop}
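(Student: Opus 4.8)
The plan is to trace through the definitions of $\theta_{\ul\alpha}$ and the $\GL_{n(r)}$-module structure on $M$ via the structure map $\rho$, reducing the claim to the explicit description of $\Theta_{\GL_{n(r)}}$ and the universal one-parameter subgroup $\exp_{\ul\alpha}$ already established in Example~\ref{p-univ}(3) and Proposition~\ref{repr}'s hypotheses. First I would recall that by Definition~\ref{local} (in the $\GL_{n(r)}$ case, formula~(\ref{local-f})), $\theta_{\ul\alpha} = \exp_{\ul\alpha,*}(u_{r-1}) \in k(\ul\alpha)\GL_{n(r)}$, and that the action of this element on $M$ is obtained from the comodule structure $\nabla_M : M \to M \otimes k[\GL_{n(r)}]$ composed with the functional $\theta_{\ul\alpha} : k[\GL_{n(r)}] \to k(\ul\alpha)$, exactly as in~(\ref{exten}). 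The $kG$-module structure on $M$ is $\rho^*$ of the standard comodule structure on $k[\GL_N]$, so $\nabla_M$ factors as $M \to M \otimes k[\GL_N] \xrightarrow{\id \otimes \rho^*} M \otimes k[\GL_{n(r)}]$, where the first map sends a basis vector $m_j$ to $\sum_i m_i \otimes X_{i,j}$.

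Next I would combine these: the $(i,j)$-entry of $\theta_{\ul\alpha}$ acting on $M$ is precisely $\theta_{\ul\alpha}(\rho^*X_{i,j}) = u_{r-1}\big((\exp_{\ul\alpha})^*(\rho^*X_{i,j})\big)$, using the defining property of $\Theta_{G,r}$ recorded after Definition~\ref{global}, namely that evaluating $\Theta_{G,r}$ at the point corresponding to $\mu$ gives $\mu_*(u_{r-1})$, together with the fact that $\exp_{\ul\alpha} = \cU_{\GL_{n(r)}} \otimes_{k[V(\GL_{n(r)})]} k(\ul\alpha)$. Since $u_{r-1}$ is by definition the linear functional on $k[\Gar] \simeq k[T]/T^{p^r}$ dual to the basis element $T^{p^{r-1}}$, applying $u_{r-1}$ to a polynomial in $T$ just reads off its coefficient of $T^{p^{r-1}}$. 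This yields the asserted formula.

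The only genuine bookkeeping step—and the one I expect to require the most care—is keeping the two ``sides'' of the comodule/module dictionary consistent: namely checking that the $(i,j)$-entry convention for the matrix of an operator on $M$ matches the convention by which $\nabla_M(m_j) = \sum_i m_i \otimes X_{i,j}$ encodes the $\GL_N$-action (so that indices are not transposed), and that pulling back along $\rho$ commutes with everything as claimed. This is routine once one fixes conventions, and it is implicit in the discussion of Example~\ref{p-univ}(3), where $\Theta_{\GL_{n(r)}}$ acting on the natural module $V_n$ is identified with the matrix $(\Theta_{\GL_{n(r)}}(X_{i,j}))$; the present proposition is the ``pulled-back'' version of that computation for an arbitrary $kG$-module $M$ via its defining representation $\rho$. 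I would therefore organize the proof as: (1) unwind $\theta_{\ul\alpha}$ acting on $M$ as a functional applied to $\rho^*X_{i,j}$; (2) use the universality property to rewrite this functional as $u_{r-1} \circ (\exp_{\ul\alpha})^*$; (3) interpret $u_{r-1}$ as extraction of the $T^{p^{r-1}}$-coefficient, citing Example~\ref{p-univ}(3) for the explicit form of $(\exp_{\ul\alpha})^*$ on the generators $\rho^*X_{i,j}$.
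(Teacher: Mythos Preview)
Your proposal is correct and follows essentially the same route as the paper's proof: both write the comodule structure on $M$ as $m_j \mapsto \sum_i m_i \otimes \rho^*X_{i,j}$, push it forward along $\exp_{\ul\alpha}^*$ to a $k(\ul\alpha)[\Gar]$-comodule, and then observe that applying $u_{r-1}$ amounts to reading off the coefficient of $T^{p^{r-1}}$. The paper's version is terser and does not dwell on the index-convention bookkeeping you flag, but the argument is the same.
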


\begin{proof}  Let $\langle m_i \rangle_{1 \leq i \leq N} $  be the basis of $M$ corresponding to the structure map $\rho$. 
The structure of $M$ as a comodule for $k[\GL_{n(r)}]$ is given by
$$
M \to M \otimes k[\GL_{n(r)}], \quad m_j \mapsto \sum_i m_i \otimes \rho^*X_{i,j},
$$
and thus the comodule structure of  $M_{k(\ul \alpha)}$ for 
$k(\ul \alpha)[\bG_{a(r)}]$ is given by 
$$
M \to M \otimes k(\ul \alpha)[\bG_{a(r)}], \quad m_j \mapsto 
\sum_i m_i \otimes \exp_{\ul \alpha}^*(\rho^*X_{i,j}).
$$
The proposition follows from the fact that $u_{r-1}: k(\ul \alpha)[\bG_{a(r)}] 
\to k(\ul \alpha)$ is given by reading off the coefficient of $T^{p^{r-1}} \in 
k(\ul \alpha)[\bG_{a(r)}]$.
\end{proof}

\begin{ex}
\label{jordan}
\vskip .1in
We investigate the local Jordan type  of the various representations 
considered in Example \ref{ex-rep}.
\vskip .1in
(1)  Consider the adjoint representation $M = \fg^{\rm  ad}$ of a $p$-restricted 
Lie algebra $\fg$ and a 1-parameter subgroup 
$$\mu_X: \bG_{a(1),K} \ \to \ \ul\fg_K, \quad \text{~ inducing ~} \ K[u]/u^p \ \to \ \cu(\fg_K)$$
sending $u$ to some $p$-nilpotent $X \in \fg_K$.   The local Jordan type of $\fg^{\rm  ad}$
at $\mu_X$ is simply the Jordan 
type of the endomorphism ${\rm ad}_X: \fg^{ad}_K \to \fg^{\rm  ad}_K$,
$$\JType(\fg^{\rm ad},\theta_X) \ = \ \JType(X).$$

\vskip .1in
(2)  Let $M =  k[\Gar]/(u_0) \simeq k[u_1, \ldots,u_{r-1}]/(u_1^p, \ldots, u_{p-1}^p)$ be a cyclic 
$k\bG_{a(r)} = k[u_0,\ldots,u_{r-1}]/(u_0^p, \ldots, u_{r-1}^p)$-module, 
and let  $\mu_{\ul a}: \Gar \to \Gar$ be a 1-parameter subgroup
for some $\ul a$ of $V(\Gar) = \bA^r$.
Then 
$$
\JType(M,\theta_{\ul a})   = \
\begin{cases}
\ p^{r-2} [p], \quad  \exists i > 0, \ a_i \not=0 \\
\ p^{r-1}[1], \quad {\rm otherwise}. \\
\end{cases}
$$

\vskip .1in
(3)  Let $G = \GL_{n(r)}$, and  let $V_n$ be the canonical $n$-dimensional rational representation of   
$\GL_{n(r)}$.  We apply Proposition \ref{repr}, observing that $\rho$ 
for $V_n$ is simply the natural inclusion $\GL_{n(r)} \subset \GL_n$.
Since \begin{equation}
\label{beta1}
 \exp_{\ul \alpha}^*(X_{i,j}) = \sum\limits_{\ell =0}
^{p^r -1} [\beta_\ell]_{i,j}t^\ell,
\end{equation}
where $\beta_\ell$ are matrices determined by $\alpha_i$ as in Proposition~\ref{coact},
we conclude 
$$\JType(V_n,\theta_{\ul \alpha}) \ = \ \JType(\alpha_{r-1}).
$$
Specializing to $r=2$, 
$$
\JType(V_n,\theta_{(\alpha_0,\alpha_1)}) \ = \ \alpha_1.
$$

\vskip .1in
(4) ``Specializing"  to $G = \SL_{2(2)}$, consider 
$\ul \alpha =(\left [\begin{array}{cc} 
c_0 & a_0 \\
b_0 & -c_0
\end{array} \right ], \left [\begin{array}{cc} 
c_1 & a_1 \\
b_1 & -c_1
\end{array} \right ] ).$  Then 
$\JType(V_2,\theta_{\ul \alpha})$ equals the Jordan type of the matrix 
$\left [\begin{array}{cc} 
c_1 & a_1 \\
b_1 & -c_1
\end{array} \right ]$.  
\end{ex}
\vskip .1 in

We extend Example \ref{jordan}(3) by considering tensor powers $V_n^{\otimes d}$ 
of the canonical rational representation of $\GL_n$ restricted to $\GL_{n(2)}$.  In this 
example, the role of both 
entries of the pair $\ul \alpha = (\alpha_0,\alpha_1)$ is non-trivial.
 
\begin{ex}
\label{two}
Consider the $N = n^d$-dimensional rational $\GL_n$-module $M= V_n^{\otimes d}$
where $V_n$ is the canonical $n$-dimensional rational $\GL_n$-module.  Let
$\rho: \GL_{n(r)} \to \GL_N$ be the representation of $M$ restricted to $\GL_{n(r)}$.
A basis of $M$ is $\{ e_{i_1} \otimes \cdots \otimes e_{i_d}; \ 1 \leq i_j \leq n \}$, where
$\{ e_i; 1 \leq i \leq n \}$ is a basis for $V_n$.   Let 
$\{ X_{i_1,j_1;\ldots,i_d,j_d}, \ 1 \leq i_t,j_t \leq n \}$
denote the matrix coordinate functions on $\GL_N$, and let $\{ Y_{s,t}, \ 1 \leq s,t \leq n \}$
denote the matrix coordinate functions of $\GL_n$.

	Then $\rho^*: k[\GL_N] \to k[\GL_{n(r)}]$ is given by  
$$X_{i_1,j_1;\ldots;i_d,j_d} \mapsto 
 \ Y_{i_1,j_1}\cdots Y_{i_d,j_d}.$$ 
Thus, 
$$(\exp_{\ul \alpha})^*(\rho^*(X_{i_1,j_1;\ldots,i_d,j_d}))  \ = \  (\exp_{ \ul \alpha})^*(Y_{i_1,j_1})
\cdots (\exp_{\ul \alpha})^*(Y_{i_d,j_d}).$$
Now, specialize to $r=2$ so that we can make this more explicit.  Then the 
coefficient of $T^p$ of $(\exp_{(\alpha_0,\alpha_1)})^*(\rho^*(X_{i_1,j_1;\ldots,i_d,j_d})) $
is 
\begin{equation}
\label{r=2}
 \sum_{k=1}^d (\alpha_1)_{i_k,j_k} + \sum\limits_{
\stackrel{
0 \leq f_k <p}
{f_1+\cdots+f_d=p}
}\frac{1}{f_1!} \cdots \frac{1}{f_d!}
((\alpha_0)^{f_1})_{i_1,j_1}\cdots ((\alpha_0)^{f_d})_{i_d,j_d}.
\end{equation}
This gives the action of $\theta_{(\alpha_0,\alpha_1)}$ on $M$.

To simplify matters even further, consider
the special case $(\alpha_0)^2 = 0$. For $1 \leq d < p$,  
 $\theta_{(\alpha_0,\alpha_1)}$ on $M$ is given by the $N \times N$-matrix  
$$\big({i_1,j_1;\ldots;i_d,j_d}\big)  \mapsto (\sum_{k=1}^d (\alpha_1)_{i_k,j_k}\big).$$
For $d = p$, the action of $\theta_{(\alpha_0,\alpha_1)}$ on $M$ is
given by the $N\times N$-matrix 
$$\big({i_1,j_1;\ldots;i_p,j_p}\big)  \mapsto (\sum_{k=1}^p (\alpha_1)_{i_k,j_k}
+ (\alpha_0)_{i_1,j_1}\cdots (\alpha_0)_{i_p,j_p}\big).$$

An analogous calculation applies to the the $d$-fold symmetric
product $S^d(V_n)$ and $d$-fold exterior product $\Lambda^d(V_n)$ of
the canonical $n$-dimensional rational $\GL_n$-module $V_n$.
\end{ex}
\vskip .1in

The proof of Proposition \ref{repr} applies equally well to prove
 the following straight-forward generalization, which
one may view as an algorithmic method of computing the ``local Jordan type"
of a $kG$-module $M$ of dimension $N$.  The required input is an
explicit description of the map on coordinate algebras $\rho^*$ given
by $\rho: G \to \GL_N$ determining the $kG$-module $M$.

\begin{thm}
\label{matrix}
Let $G$ be an infinitesimal group scheme of height $\leq r$, 
and  let $\rho: G \to \GL_N$ be a 
representation  of $G$ on a vector space $M$ of dimension $N$.   Consider
some $v \in V(G)$, and let $\mu_v: \mathbb G_{a(r), k(v)} \to G_{k(v)}$ 
be the corresponding  1-parameter subgroup of height $r$.
Then the $(i,j)$-matrix entry of the action of 
$\theta_v \in k(v)G$ on $M$ equals the coefficient of
$T^{p^{r-1}}$ of 
$$ (\mu_v)^*(\rho^*X_{i,j})  \ \in \ k(v)[\Gar],$$ 
where $\{ X_{i,j}, 1 \leq i,j \leq N \}$ are the matrix coordinate functions of $\GL_N$.
\end{thm}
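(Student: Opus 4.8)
The plan is to run the proof of Proposition \ref{repr} verbatim, simply replacing the explicit one-parameter subgroup $\exp_{\ul\alpha}$ by the abstract $\mu_v$. The only preliminary task is to unwind what ``the action of $\theta_v$ on $M$'' means. By Definition \ref{local}, $\theta_v = \mu_{v,*}(u_{r-1}) \in k(v)G$; by Definition \ref{action} and the remark immediately following it (for finite $G$, an element of $kG \otimes A$ acts by multiplication), this action agrees with the composite $(\id_M \otimes u_{r-1}) \circ \nabla$, where $\nabla \colon M_{k(v)} \to M_{k(v)} \otimes k(v)[\Gar]$ is the $\Gar$-comodule structure obtained by pushing the $G$-comodule structure of $M$ forward along $\mu_v^* \colon k[G] \otimes k(v) \to k(v)[\Gar]$. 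This is precisely the $k(v)[\Gar]$-module whose Jordan type with respect to $u_{r-1}$ is, by definition, $\JType(M,\theta_v)$ (Definition \ref{local-jordan}).

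Next I would write $\nabla$ down in coordinates. Fix the basis $\langle m_i \rangle_{1\le i\le N}$ of $M$ attached to $\rho \colon G \to \GL_N$; then the $G$-comodule structure is $m_j \mapsto \sum_i m_i \otimes \rho^*(X_{i,j})$, so after base change to $k(v)$ and composition with $\mu_v^*$ one gets $\nabla(m_j) = \sum_i m_i \otimes \mu_v^*(\rho^* X_{i,j})$ in $M_{k(v)} \otimes k(v)[\Gar]$. Applying $\id_M \otimes u_{r-1}$ and recalling that $u_{r-1}$ is by definition the linear functional on $k[\Gar] \simeq k[T]/T^{p^r}$ dual to $T^{p^{r-1}}$, the $(i,j)$-entry of $\theta_v$ acting on $M$ equals $u_{r-1}\big(\mu_v^*(\rho^* X_{i,j})\big)$, i.e.\ the coefficient of $T^{p^{r-1}}$ in $\mu_v^*(\rho^* X_{i,j}) \in k(v)[T]/T^{p^r}$, which is the assertion of the theorem.

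There is essentially no substantive obstacle; the care required is purely in the bookkeeping that reconciles the three descriptions of the action of $\theta_v$ on $M_{k(v)}$: (a) left multiplication by $\mu_{v,*}(u_{r-1})$, (b) the specialization $\Theta_G \otimes_{k[V(G)]} k(v)$ of the global operator of Section \ref{univ-sec}, and (c) the comodule formula $(\id \otimes u_{r-1})\circ\nabla$. The equality of (a) and (b) is Definition \ref{local} together with the naturality of specialization of $\Theta_G$; the equality of (a) and (c) is the duality between the $kG$-module and $k[G]$-comodule structures, encoded in \eqref{exten}. One should also note that, because $G$ has height $\le r$ and $\bG_{a(r)}$ has height $\le r$, the composite $\rho\circ\mu_v$ automatically factors through $\GL_{N(r),k(v)}$, so it is legitimate to extract the coefficient of $T^{p^{r-1}}$ inside $k(v)[T]/T^{p^r}$; this is what makes the statement well posed and is the only place where the height hypothesis on $G$ enters.
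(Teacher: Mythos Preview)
Your proposal is correct and follows exactly the approach the paper takes: the paper does not even write out a separate proof, stating only that ``the proof of Proposition \ref{repr} applies equally well'' with $\mu_v$ in place of $\exp_{\ul\alpha}$, which is precisely what you have done. Your additional bookkeeping reconciling descriptions (a), (b), (c) of the $\theta_v$-action is sound and more explicit than what the paper provides.
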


As a simple corollary of Theorem \ref{matrix}, we give a  criterion for 
the local Jordan type of the $kG$-module $M$ to be trivial (i.e., equal
to $(\dim M) [1]$) at a 1-parameter subgroup $\mu_v, \ v \in V(G)$.

\begin{cor}
\label{useful}
With the hypotheses and notation of Theorem \ref{matrix}, 
$$\JType(M,\theta_v) \ = \ \JType((\mu_{v,*})^*(M_{k(v)}),u_{r-1})
\ = \ (\dim M) [1]$$
if $\deg~ (\rho \circ \mu_v)^*(X_{i,j}) < p^{r-1}$ for all $1 \leq i,j \leq N$.
\end{cor}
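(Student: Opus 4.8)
The plan is to deduce Corollary~\ref{useful} directly from Theorem~\ref{matrix} by a short degree-counting argument. By Theorem~\ref{matrix}, the matrix of $\theta_v$ acting on $M_{k(v)}$ (in the basis determined by $\rho$) has $(i,j)$-entry equal to the coefficient of $T^{p^{r-1}}$ in the polynomial $(\rho\circ\mu_v)^*(X_{i,j}) \in k(v)[\Gar] \simeq k(v)[T]/T^{p^r}$. The hypothesis is precisely that each such polynomial has degree strictly less than $p^{r-1}$, so its coefficient of $T^{p^{r-1}}$ vanishes for every pair $(i,j)$. Hence the matrix of $\theta_v$ on $M_{k(v)}$ is the zero matrix, i.e.\ $\theta_v$ acts as $0$ on $M_{k(v)}$.

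Next I would translate ``$\theta_v = 0$ on $M$'' into the claimed Jordan type. Recall from Definition~\ref{local-jordan} that $\JType(M,\theta_v)$ is by definition the Jordan type of the $p$-nilpotent operator $u_{r-1}$ acting on $(\mu_{v,*})^*(M_{k(v)})$, and that by Definition~\ref{local} the operator $\theta_v$ is exactly the image $\mu_{v,*}(u_{r-1})$. Thus $\theta_v$ acting on $M_{k(v)}$ \emph{is} the operator $u_{r-1}$ acting on $(\mu_{v,*})^*(M_{k(v)})$; saying this operator is zero is saying that every Jordan block of $u_{r-1}$ on $M_{k(v)}$ has size $1$, which is precisely the statement $\JType(M,\theta_v) = (\dim M)[1]$. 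Since Jordan type is unchanged under the field extension $k(v)/k$ used to define it, this completes the argument.

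I do not anticipate a genuine obstacle here: the corollary is an immediate specialization of Theorem~\ref{matrix}, the only content being the observation that if a $p$-nilpotent operator has vanishing matrix then all its Jordan blocks are trivial. The one small point worth making explicit in the writeup is the identification, via Definitions~\ref{local} and \ref{local-jordan}, of the operator whose matrix Theorem~\ref{matrix} computes with the operator $u_{r-1}$ on $(\mu_{v,*})^*(M_{k(v)})$ whose Jordan type defines $\JType(M,\theta_v)$; once that bookkeeping is in place the proof is one line.

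\begin{proof}
By Theorem~\ref{matrix}, in the basis of $M_{k(v)}$ determined by $\rho$ the $(i,j)$-entry of the matrix of $\theta_v$ is the coefficient of $T^{p^{r-1}}$ in $(\rho\circ\mu_v)^*(X_{i,j}) \in k(v)[\Gar]$. If $\deg (\rho\circ\mu_v)^*(X_{i,j}) < p^{r-1}$ for all $1 \leq i,j \leq N$, then this coefficient is $0$ for every $(i,j)$, so $\theta_v$ acts as the zero endomorphism on $M_{k(v)}$. By Definitions~\ref{local} and \ref{local-jordan}, $\theta_v$ acting on $M_{k(v)}$ is the operator $u_{r-1}$ acting on $(\mu_{v,*})^*(M_{k(v)})$, and its vanishing means every Jordan block of $u_{r-1}$ has size $1$. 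Hence $\JType(M,\theta_v) = \JType((\mu_{v,*})^*(M_{k(v)}),u_{r-1}) = (\dim M)[1]$.
\end{proof}
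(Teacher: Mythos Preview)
Your proof is correct and is exactly the argument the paper has in mind; indeed, the paper gives no explicit proof of this corollary, introducing it only as ``a simple corollary of Theorem~\ref{matrix}'', and your degree-counting argument is precisely the intended one-line deduction.
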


One means of constructing $kG$-modules is by applying Frobenius twists to 
known $kG$-modules.  Our next objective is to establish (in Proposition \ref{twist1})
 a simple relationship between the  $p$-nilpotent operator 
 $\theta_{\ul \alpha}$ 
 on a $k\GL_{n(r)}$-module
$M$ and  $\theta_{\ul \alpha}$ on the $s$-th Frobenius twist $M^{(s)}$ of $M$ 
for any
$0 \not= v \in V(\GL_{n(r)})$.

 Before formulating this relationship, we make explicit the definition of
 the Frobenius map for an arbitrary affine group scheme over $k$.
Let $G$ be an affine  group scheme over $k$ and define for any $s > 0$ the
$s^{th}$ Frobenius map $F^s: G \to G^{(s)}$ given by the $k$-linear
algebra homomorphism 
\begin{equation}
\label{fs}
F^{s*}: k[G^{(s)}] = k \otimes_{p^s}  k[G]  \ \to \ k[G],
\quad a \otimes f \mapsto a \cdot f^{p^s},
\end{equation}
where $k \otimes_{p^s}  k[G] $ is the base change of $k[G]$ along the $p^s$-power
map $k \to k$ (an isomorphism only for $k$ perfect).   If $G$ is defined over
$\bF_{p^s}$ (for example, if $G = \GL_n$), then we have a natural isomorphism 
$$\UseComputerModernTips
 \xymatrix{k[G] = k \otimes_{\bF_{p^s}} \bF_{p^s}[G] \ar[r]^-{\sim} & 
 k \otimes_{p^s}  k \otimes_{\bF_{p^s}} \bF_{p^s}[G] =  k[G^{(s)}]} 
 $$
so that $F^s$ can be viewed as a self-map of $G$.

\begin{defn}
\label{ft}
If $M$ is a $k[G]$-comodule, then the $s^{th}$ Frobenius twist $M^{(s)}$
of $M$ is the $k$-vector space $k \otimes_{p^s} M$ equipped with the comodule
structure 
$$F^{s*} \circ (k\otimes_{p^s} \nabla_M): M^{(s)} \to M^{(s)} \otimes k[G^{(s)}] \to M^{(s)} \otimes k[G].$$
If $G$ is a finite group scheme, then we shall  view $M^{(s)}$
as a $kG$-module via the map $F_*^s: kG \to kG^{(s)}$ dual to (\ref{fs}).
\end{defn}

To be more explicit, suppose the $N$-dimensional $kG$-module $M$ is given
by $\rho: G \to \GL_N$ (so that $M = \rho^*(V_N)$, where $V_N$ is the canonical
$N$-dimensional $\GL_N$-module) and assume that $G$ is defined over $\bF_{p^s}$.  
Let $\mu_v: \bG_{a(r),K} \to G_K$ be a 1-parameter subgroup, corresponding to some $v \in V(G)$.
Then the identification of $M^{(s)}$ with $(\rho \circ F^s)^*(V_N)$ implies that
\begin{equation}
\label{jt}
\JType(M^{(s)},\theta_v) \ = \ \JType(M,\theta_{F^s(v)})
\end{equation}
where $\theta_{F^s(v)} = (F^s \circ \mu_v)_* (u_{r-1})$. 

\vspace{0.2in}
Let $G = \GL_{n(r)}$, and  let $R$ be a  finitely generated commutative $k$-algebra. 
The Frobenius self-map  is  given explicitly  on the $R$-values  of $\GL_{n(r)}$ by the formula 
$$F: \alpha  \mapsto \phi(\alpha),$$
where $\phi$ applied to $\alpha \in M_n(R)$ raises each entry of $\alpha$ to the $p$-th power.
For $t$ in $\Gar(R)$, we compute  
$$
(F \circ \exp_{(\alpha_0,\ldots,\alpha_{r-1})})(t) = 
F (\exp(t\alpha_0) \exp(t^p\alpha_1) \ldots \exp(t^{p^{r-1}}\alpha_{r-1})) = 
$$
$$
\exp(t^p \phi(\alpha_0)) \exp(t^{p^2}\phi(\alpha_1)) \ldots \exp(t^{p^{r-1}} \phi(\alpha_{r-2})) =  
\exp_{(0, \phi(\alpha_0), \ldots, \phi(\alpha_{r-2}))}(t).
$$
Iterating $s$ times, we obtain the following formula for $G = \GL_{n(r)}$:
\begin{equation}
\label{frob-shift}
F^s \circ \exp_{(\alpha_0,\ldots,\alpha_{r-1})} \ = \
\exp_{(0, 0, \ldots, 0, \phi^s(\alpha_0), \ldots, \phi^s(\alpha_{r-1-s}))}
\end{equation}
where the first non-zero entry on the right happens at the $(s+1)$-st place.
 
For $G = \Gar$,  the Frobenius map $F: \Gar \to \Gar $ is given by 
raising an element 
$a \in \Gar(R)$ to the $p$-th power.   Let $\ul a = (a_0,\ldots,a_{r-1})$ be a  point in  
$V(\Gar) \simeq \mathbb A^r$, and 
let $\mu_{\ul a} :\Gar \to \Gar$ be the corresponding  1-parameter subgroup. 
For $t  \in \Gar(R)$, we have  
$\mu(t) = a_0 +a_1 t + \cdots + a_{r-1}t^{p^{r-1}}$ (see \cite[\S 1]{SFB1}). 
The following formula  is now immediate:
\begin{equation}
\label{frob-shift2}
F^s \circ \mu_{(a_0,\ldots,a_{r-1})} \ = \
\mu_{(0, \ldots, 0, a^{p^s}_0,\ldots, a^{p^s}_{r-1-s})}.
\end{equation}

Combining (\ref{jt}) and (\ref{frob-shift}), we derive the following proposition.

\begin{prop}
\label{twist1}
Let $M$ be a finite dimensional representation of $\GL_{n(r)}$ and  let 
$\ul \alpha = (\alpha_0,\ldots,\alpha_{r-1})$ be a  point   in  $V(\GL_{n(r)})$.  
Then
$$\JType(M^{(s)},\theta_{\ul \alpha}) \ = \  \JType(M,\theta_{F^s \circ \ul \alpha}),$$
where $F^s \circ \ul \alpha = (0,\ldots,0,\phi^s(\alpha_1),\ldots,\phi^s(\alpha_{r-1-s}))$.

If $M$ is a finite dimensional $k\Gar$-module, and 
$\ul a = (a_0, \ldots, a_{r-1})$ is a  point   
in $V(\Gar) \simeq \mathbb A^r$, then 
$$\JType(M^{(s)},\theta_{\ul a}) \ = \  \JType(M,\theta_{F^s \circ \ul a}),$$
where $F^s \circ \ul a = (0, \ldots, 0, a^{p^s}_0,\ldots, a^{p^s}_{r-1-s})$.
\end{prop}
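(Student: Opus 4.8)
The plan is to assemble two ingredients already in place just above the statement: the identity (\ref{jt}), which expresses the local Jordan type of a Frobenius twist $M^{(s)}$ at a point $v$ as the local Jordan type of $M$ at the ``twisted'' point $F^s(v)$, and the explicit Frobenius-shift formulas (\ref{frob-shift}) and (\ref{frob-shift2}), which identify $F^s\circ\mu_v$ as a one-parameter subgroup of $\GL_{n(r)}$, respectively $\Gar$, attached to an explicitly described point of $V(G)$.

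First I would note that $\GL_{n(r)}$ and $\Gar$ are defined over $\bF_p$, hence over $\bF_{p^s}$ for all $s$, so the running hypothesis behind (\ref{jt}) holds; the substantive input there is the identification (recorded just before (\ref{jt})) of $M^{(s)}$ with $(\rho\circ F^s)^*(V_N)$ when $M=\rho^*(V_N)$, together with the fact that Jordan type is unchanged under field extension. Thus for every $v\in V(G)$ with associated one-parameter subgroup $\mu_v$ one has $\JType(M^{(s)},\theta_v)=\JType(M,\theta_{F^s(v)})$ with $\theta_{F^s(v)}=(F^s\circ\mu_v)_*(u_{r-1})$.

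Next, for $G=\GL_{n(r)}$ and $v=\ul\alpha$, I would use Proposition \ref{tuples} to write $\mu_v=\exp_{\ul\alpha}$ and then substitute (\ref{frob-shift}), $F^s\circ\exp_{\ul\alpha}=\exp_{F^s\circ\ul\alpha}$, so that $\theta_{F^s(v)}$ coincides with the operator $\theta_{F^s\circ\ul\alpha}$ of (\ref{local-f}); this is the first displayed formula. For $G=\Gar$ the same two-line argument applies, now with (\ref{frob-shift2}) in place of (\ref{frob-shift}), giving the second formula. There is essentially no obstacle here --- the content is carried entirely by (\ref{jt}), (\ref{frob-shift}), and (\ref{frob-shift2}). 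The only care required is bookkeeping: checking that the indices in the Frobenius shift come out as stated, and observing that $F^s\circ\ul\alpha$ (resp. $F^s\circ\ul a$) is defined over a subfield of $k(\ul\alpha)$ (resp. $k(\ul a)$), so that both sides of each identity can be compared after base change to a common extension field without affecting the Jordan type.
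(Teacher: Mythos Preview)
Your proposal is correct and follows exactly the paper's approach: the paper's proof consists of the single sentence ``Combining (\ref{jt}) and (\ref{frob-shift}), we derive the following proposition,'' and your argument spells out precisely this combination (together with (\ref{frob-shift2}) for the $\Gar$ case). Your additional remarks about $\GL_{n(r)}$ and $\Gar$ being defined over $\bF_p$ and about field-of-definition bookkeeping are accurate elaborations of points the paper leaves implicit.
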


Proposition \ref{twist1} has the following immediate corollary (see Remark \ref{rk}).

\begin{cor}
\label{twist2}
Let $M$ be a finite dimensional representation of $\GL_{n(r)}$.  Then
$\ul \alpha = (\alpha_0, \ldots, \alpha_{r-1}) \in V(\GL_{n(r)})_M$ if and only if
$F^s \circ \alpha = (0,\ldots,0,\phi^s(\alpha_0),\ldots,\phi^s(\alpha_{r-1-s}))  \in V(\GL_{n(r)})_{M^{(s)}}$.
\sloppy{

}
\end{cor}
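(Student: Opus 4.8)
The plan is to obtain the corollary as a purely formal consequence of Proposition~\ref{twist1} together with the Jordan-type description of the rank variety recorded in Remark~\ref{rk}; no new geometric input is needed.

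First I would translate both sides of the asserted equivalence into statements about local Jordan types. By Remark~\ref{rk}, for any finite dimensional $k\GL_{n(r)}$-module $N$ and any point $v\in V(\GL_{n(r)})$, one has $v\in V(\GL_{n(r)})_N$ if and only if $\JType(N,\theta_v)\neq\tfrac{\dim N}{p}\,[p]$ --- equivalently, if and only if the pullback $(\mu_{v,*})^*(N_{k(v)})$ of $N$ along the one-parameter subgroup $\mu_v$ fails to be free over $k(v)[u]/u^p$, i.e.\ $\theta_v$ acting on $N_{k(v)}$ has a Jordan block of size strictly less than $p$. Applying this with $(N,v)=(M,\ul\alpha)$ rewrites the left-hand side, and applying it with $(N,v)=(M^{(s)},F^s\circ\ul\alpha)$ rewrites the right-hand side; thus it suffices to prove that $\theta_{\ul\alpha}$ acts non-freely on $M_{k(\ul\alpha)}$ exactly when $\theta_{F^s\circ\ul\alpha}$ acts non-freely on $(M^{(s)})_{k(F^s\circ\ul\alpha)}$.

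Next I would bring in Proposition~\ref{twist1}, which computes the local Jordan type of the Frobenius twist $M^{(s)}$ in terms of a local Jordan type of $M$ after the explicit Frobenius shift of (\ref{frob-shift}), namely $\ul\alpha\mapsto F^s\circ\ul\alpha=(0,\dots,0,\phi^s(\alpha_0),\dots,\phi^s(\alpha_{r-1-s}))$. The upshot is that passing to the $s$-th Frobenius twist is matched, on Jordan types, precisely by this shift of the one-parameter subgroup, so that the property ``has a Jordan block of size $<p$'' --- i.e.\ non-freeness over $k(-)[u]/u^p$ --- is carried from one side to the other. Inserting this into the reduction of the previous paragraph identifies the two non-freeness conditions, and the corollary follows.

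Because Proposition~\ref{twist1} carries all the weight, the argument is short and essentially bookkeeping. The one place that genuinely demands care --- and where an index slip would hide --- is tracking the Frobenius shift: one must keep straight whether the $s$-fold Frobenius is applied to the representation or to the parameter $\ul\alpha$, and record that $F^s\circ\ul\alpha$ lies in the image of the closed immersion $i_{r-s,r}\colon V_{r-s}(\GL_n)\hookrightarrow V_r(\GL_n)$ from Remark~\ref{varying}, i.e.\ among the $r$-tuples whose first $s$ entries vanish. Once this is set up correctly the two sides of the equivalence line up, and the same argument with (\ref{frob-shift2}) in place of (\ref{frob-shift}) yields the analogous statement for $\Gar$-modules.
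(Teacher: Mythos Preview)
Your strategy---Remark~\ref{rk} plus Proposition~\ref{twist1}---is exactly the paper's, and your first-paragraph reduction is correct. The gap is in your second paragraph, and it is precisely the index slip you warn yourself about. Proposition~\ref{twist1} says
\[
\JType(M^{(s)},\theta_{\ul\alpha}) \;=\; \JType(M,\theta_{F^s\circ\ul\alpha}),
\]
which relates $M^{(s)}$ at $\ul\alpha$ to $M$ at $F^s\circ\ul\alpha$, \emph{not} $M$ at $\ul\alpha$ to $M^{(s)}$ at $F^s\circ\ul\alpha$. Combined with Remark~\ref{rk} this yields $\ul\alpha\in V(\GL_{n(r)})_{M^{(s)}}\Longleftrightarrow F^s\circ\ul\alpha\in V(\GL_{n(r)})_{M}$, i.e.\ the statement with the roles of $M$ and $M^{(s)}$ interchanged.

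The equivalence you actually need for the corollary as printed---$\theta_{\ul\alpha}$ non-free on $M$ iff $\theta_{F^s\circ\ul\alpha}$ non-free on $M^{(s)}$---does not follow from Proposition~\ref{twist1} and is in fact false. Take $r=2$, $s=1$, $M=V_p$ the standard representation of $\GL_{p(2)}$, and $\ul\alpha=(J_p,J_p)$ with $J_p$ a single nilpotent $p\times p$ Jordan block: Example~\ref{jordan}(3) gives $\JType(V_p,\theta_{\ul\alpha})=\JType(J_p)=[p]$, so $\ul\alpha\notin V(\GL_{p(2)})_{V_p}$; but $F\circ\ul\alpha=(0,J_p)$ and, applying Proposition~\ref{twist1} once more, $\JType(V_p^{(1)},\theta_{(0,J_p)})=\JType(V_p,\theta_{(0,0)})=p[1]$, so $F\circ\ul\alpha\in V(\GL_{p(2)})_{V_p^{(1)}}$. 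Thus the printed statement has $M$ and $M^{(s)}$ swapped; what your argument, carried out carefully, actually establishes is the swapped version.
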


The following definition introduces interesting classes of $kG$-modules which 
have special local behavior.

\begin{defn}
\label{constant}
Let $G$ be an infinitesimal group scheme and $j$ a positive 
integer less than $p$. 
A finite dimensional $kG$-module $M$ is said to be of constant $j$-rank if and 
only if 
$$ \rk(M,\theta_v^j) \ \equiv \ \rk\{\theta_v^j: M_{k(v)} \to M_{k(v)} \}$$
is independent of $v \in V(G) - \{ 0 \}$, where $\theta_v$ is the local $p$-nilpotent
operator at $v$ as introduced in Definition \ref{local}.

$M$ is said to be of constant Jordan type if and only if it is of
constant $j$-rank for all $j, \ 1 \leq j < p$.  $M$ is said to be
of constant rank if it is of constant 1-rank.
\end{defn}

\vspace{0.1in}

As we see in the following example, one can have rational 
$\GL_n$-modules of constant Jordan type when restricted to 
$\GL_{n(r)}$ of arbitrarily high degree $d$.  This should be 
contrasted with Corollary \ref{sl-deg}.

\begin{ex}
\label{det}
Consider the rational $\GL_n$-module $M = \det^{\otimes d}$, the $d^{th}$ power of the
determinant representation for some $d > 0$.  This is a polynomial representation of degree $n^d$.
The restriction of $M$ to any Frobenius kernel
$\GL_{n(r)}$ has (trivial) constant Jordan type, for the further restriction of $M$ to
any abelian unipotent subgroup of $\GL_n$ has trivial action.
\end{ex}

We shall see below that $kG$-modules of constant $j$-rank lead to interesting
constructions of vector bundles (see Theorem \ref{bundle}).  We conclude this 
section by establishing two constraints, Propositions \ref{deg-bound}
and \ref{dim-bound}, on $kG$-modules to be modules of constant rank.

We first need the following elementary lemma.

\begin{lemma}
\label{trivial}
Let $M$ be a $\Gar$-module such that  the local Jordan type at every 
$v \in V(\Gar)$ is trivial.   Then $M$  is trivial as a $k\Gar$-module. 
\end{lemma}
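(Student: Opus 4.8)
Recall that a $k\Gar$-module structure on $M$ amounts to a choice of $r$ pairwise commuting, $p$-nilpotent operators $\widetilde u_0,\ldots,\widetilde u_{r-1} \in \End_k(M)$ (the actions of the distinguished generators $u_0,\ldots,u_{r-1}$ of $k\Gar \simeq k[u_0,\ldots,u_{r-1}]/(u_0^p,\ldots,u_{r-1}^p)$ from (\ref{Gar})), and that $M$ is the trivial $k\Gar$-module precisely when $\widetilde u_0 = \cdots = \widetilde u_{r-1} = 0$. So the plan is to exhibit, for each index $i$, a $k$-rational point $v \in V(\Gar)$ at which triviality of the local Jordan type forces $\widetilde u_i = 0$.

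First I would single out, for each $\ell$ with $0 \le \ell \le r-1$, the $k$-rational point $v_\ell \in V(\Gar) \simeq \bA^r$ whose $\ell$-th coordinate is $1$ and whose remaining coordinates vanish. By Example \ref{var}(2), $v_\ell$ corresponds to the 1-parameter subgroup $\mu_{v_\ell}: \Gar \to \Gar$ whose comorphism $k[T]/T^{p^r} \to k[T]/T^{p^r}$ sends $T$ to $T^{p^\ell}$. Next I would compute $\theta_{v_\ell} = \mu_{v_\ell,*}(u_{r-1}) \in k\Gar$ (Definition \ref{local}) by viewing it as a linear functional on $k[\Gar] = k[T]/T^{p^r}$: it sends $T^j$ to $u_{r-1}\bigl(\mu_{v_\ell}^*(T^j)\bigr) = u_{r-1}(T^{jp^\ell})$, which is $1$ when $j = p^{r-1-\ell}$ and $0$ otherwise (note $jp^\ell \ge p^{r}$ kills $T^{jp^\ell}$, so there is no collision). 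Hence $\theta_{v_\ell}$ is the functional dual to $T^{p^{r-1-\ell}}$, i.e. $\theta_{v_\ell} = u_{r-1-\ell}$.

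Finally, the hypothesis that the local Jordan type of $M$ at $v_\ell$ is trivial means (since $k(v_\ell) = k$) that $\theta_{v_\ell} = u_{r-1-\ell}$ acts as the zero operator on $M$; that is, $\widetilde u_{r-1-\ell} = 0$. Letting $\ell$ run over $0,1,\ldots,r-1$ yields $\widetilde u_0 = \widetilde u_1 = \cdots = \widetilde u_{r-1} = 0$, so $M$ is trivial as a $k\Gar$-module, as claimed. There is no serious obstacle here; the only step that needs care is the index bookkeeping in identifying $\theta_{v_\ell}$ with $u_{r-1-\ell}$, which reduces to the fact that $u_i$ is dual to $T^{p^i}$.
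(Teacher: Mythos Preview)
Your proof is correct and follows essentially the same approach as the paper: both arguments specialize to the coordinate points $v_\ell = (0,\ldots,1,\ldots,0)$ of $V(\Gar)\simeq\bA^r$ to see that each generator $u_i$ acts trivially on $M$. Your direct computation of $\theta_{v_\ell} = u_{r-1-\ell}$ via the comorphism $T\mapsto T^{p^\ell}$ is in fact a bit cleaner than the paper's appeal to the general formula for $\Theta_{\Gar}$ in Example~\ref{p-univ}(2), and your index bookkeeping is correct (the paper's phrasing ``$\JType(M,\tilde u_i)$ at the $i$-th spot'' glosses over the $\ell\mapsto r-1-\ell$ reindexing, which is harmless since one runs over all indices anyway).
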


\begin{proof}
The action of $\Gar$ on $M$ is given by the action of $r$ commuting
$p$-nilpotent operators $\tilde u_i, \ 0 \leq i < p$ on $M$.  Moreover, for 
$\ul a = (0,\ldots,1,0,\ldots,0)$, with $1$ at the $i^{\rm th}$ spot, 
$$\JType(M,\theta_{\ul a}) \ = \ \JType(M, \tilde u_i),$$
as follows from the explicit description of $\Theta_{\Gar}$ in Example \ref{p-univ}(2). Thus, if the local Jordan type of $M$ 
is trivial at each $\ul a = (0,\ldots,1,0,\ldots,0)$,
then each $\tilde u_i$ must act trivially on $M$ and $M$ is therefore a trivial
$\Gar$-module.
\end{proof}

\begin{prop}
\label{deg-bound}
Let $\fG$ be an algebraic group generated by 1-parameter subgroups
$i: \bG_a \subset \fG$.  Let $\rho: \fG \to \GL_N$ determine a non-trivial rational representation
$M$ of $\fG$.  Let $D_i$ be the minimum of  the degrees of 
$(\rho \circ i)^*(X_{s,t}) \in k[\bG_a] = k[T]$ as $X_{s,t}$ ranges over 
the matrix coordinate functions of $\GL_N$.  Let $D = \max\{ D_i, i: \bG_a \subset \fG \}$.

If $r   >  \log_p D + 1$, then $M$ is not of constant Jordan type as a $\fG_{(r)}$-module.
\end{prop}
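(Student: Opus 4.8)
The plan is to reduce the statement to the additive group $\bG_a$ and then to $\Gar$, where the explicit formula for $\Theta_{\Gar}$ and Lemma~\ref{trivial} do the work. The idea is that if $M$ were of constant Jordan type as a $\fG_{(r)}$-module, then in particular its local Jordan type would be the \emph{same nonzero} type at every $0 \neq v \in V(\fG_{(r)})$; but I will exhibit a 1-parameter subgroup $\mu_v$ at which the local Jordan type is forced to be trivial by a degree count, while exhibiting another at which it is nontrivial, giving a contradiction.

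First I would fix a 1-parameter subgroup $i: \bG_a \hookrightarrow \fG$ realizing the maximum $D = D_i$, and consider the composite height-$r$ 1-parameter subgroup obtained by restricting $i$ to $\bG_{a(r)}$ and using the embedding $V(\bG_{a(r)}) \hookrightarrow V(\fG_{(r)})$ of Remark~\ref{varying}. Points of $V(\bG_{a(r)}) \simeq \bA^r$ are $r$-tuples $\ul a = (a_0,\ldots,a_{r-1})$, and composing $i_{(r)}$ with $\mu_{\ul a}: \bG_{a(r)} \to \bG_{a(r)}$ gives a family of 1-parameter subgroups of $\fG_{(r)}$. The associated $kG$-module, after restriction along these, is governed by the $k\bG_{a(r)}$-module $i^*M$ (restriction of $M$ along $i_{(r)}$). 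By Proposition~\ref{homog} / Theorem~\ref{matrix}, the local Jordan type of $M$ at such a point is computed from the coefficient of $T^{p^{r-1}}$ in $(\mu_{\ul a})^* (i^* \rho^* X_{s,t})$. Now $i^*\rho^* X_{s,t} \in k[\bG_a] = k[T]$ has degree at most $D$ in $T$; under $\mu_{\ul a}$ one substitutes $T \mapsto a_0 T + a_1 T^p + \cdots + a_{r-1} T^{p^{r-1}}$, and if $r > \log_p D + 1$, i.e. $p^{r-1} > D$, then choosing $\ul a = (1,0,\ldots,0)$ (the subgroup $i_{(r)}$ itself, with no reparametrization) makes $(\mu_{\ul a})^*(i^*\rho^* X_{s,t}) = i^*\rho^* X_{s,t}$, a polynomial of degree $\le D < p^{r-1}$, so its $T^{p^{r-1}}$-coefficient vanishes for all $s,t$. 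By Corollary~\ref{useful}, the local Jordan type of $M$ at this point is trivial.

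Next, to get the contradiction I need a point of $V(\fG_{(r)})$ at which the local Jordan type of $M$ is \emph{not} trivial. Here I would argue: since $M$ is a nontrivial representation of $\fG$ and $\fG$ is generated by the 1-parameter subgroups $i$, the restriction of $M$ to at least one such $\bG_a \subset \fG$ is nontrivial, hence $D_i \ge 1$ for that $i$, and in particular $i^*M$ is a nontrivial $\bG_a$-module; restricting further to $\bG_{a(1)} \subset \bG_{a(r)}$ (which sits in $V(\fG_{(r)})$ via the $i_{1,r}$-type embedding), Lemma~\ref{trivial} applied to $i^*M$ as a $\bG_{a(r)}$-module shows that its local Jordan type cannot be trivial at every point of $V(\bG_{a(r)})$, so there is some $0 \neq v \in V(\fG_{(r)})$ with $\JType(M, \theta_v) \neq (\dim M)[1]$. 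Comparing this $v$ with the point found in the previous paragraph, the local Jordan types differ (one is trivial, the other is not), so $M$ is not of constant Jordan type as a $\fG_{(r)}$-module.

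\textbf{The main obstacle} I anticipate is not the degree bookkeeping — that is routine given Theorem~\ref{matrix} and the substitution $T \mapsto \sum a_i T^{p^i}$ — but making precise the passage from "$M$ nontrivial as a $\fG$-module and $\fG$ generated by the $i$'s" to "$i^*M$ nontrivial for some $i$, and hence its rank variety is nonempty." The first implication is essentially the definition of being generated by subgroups together with the fact that a representation trivial on a generating set of subgroups is trivial; the second requires knowing that a nontrivial $\bG_a$-module has nonempty rank variety, which follows because a nontrivial action of $\bG_a$ restricts nontrivially to some $\bG_{a(s)}$ (Remark~\ref{varying} and the definition of $\Theta$). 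One should take care that the two chosen points genuinely lie in $V(\fG_{(r)})$ via compatible embeddings $V(\bG_{a(r)}) \hookrightarrow V(\fG_{(r)})$ coming from $i$, which is exactly the naturality of $V_r(-)$ with respect to the closed embedding $i: \bG_{a(r)} \hookrightarrow \fG_{(r)}$ used in Proposition~\ref{pull}.
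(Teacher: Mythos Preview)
Your approach is essentially identical to the paper's: both arguments pick a 1-parameter subgroup $i$ with $i^*M$ nontrivial, use Corollary~\ref{useful} together with the degree bound $p^{r-1} > D \geq D_i$ to see that the local Jordan type at the identity 1-parameter subgroup of $\Gar$ is trivial, and then invoke Lemma~\ref{trivial} to produce a point of $V(\Gar) \subset V(\fG_{(r)})$ with nontrivial local Jordan type.

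The one place you should tighten is the step where you pass from ``$i^*M$ is a nontrivial $\bG_a$-module'' to ``Lemma~\ref{trivial} applies to $i^*M$ as a $\Gar$-module.'' Lemma~\ref{trivial} requires $i^*M$ to be nontrivial \emph{as a $k\Gar$-module}, and a nontrivial $\bG_a$-module can become trivial upon restriction to $\Gar$ precisely when it is $r$-twisted (all nonconstant terms in the coaction involve $T^{p^r}$ or higher powers). The paper dispatches this by observing that $r > \log_p D$ forces $i^*M$ not to be $r$-twisted; your remark that ``a nontrivial action of $\bG_a$ restricts nontrivially to some $\bG_{a(s)}$'' is correct but does not pin down $s \leq r$, and the aside about $\bG_{a(1)} \subset \Gar$ does not do the needed work. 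Once you insert this ``not $r$-twisted'' observation, your argument coincides with the paper's.
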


\begin{proof}
Because $M$ is non-trivial and $\fG$ is generated by its 1-parameter subgroups, we
conclude that $i^*M$ is a non-trivial rational $\bG_a$ representation for some 
1-parameter subgroup $i: \bG_a \subset \fG$.  The condition $r > \log_pD \geq \log_p D_i$
 implies that $i^*M$ is not $r$-twisted (i.e., of the form $N^{(r)})$.
Lemma~\ref{trivial} implies that the local Jordan type of $i^*M$ at some 1-parameter 
subgroup $\mu_v: \bG_{a(r),k(v)} \to G_{k(v)}$ is non-trivial.  On the other hand, 
Corollary~\ref{useful}  implies that the Jordan type of $i^*M$ at the identity
1-parameter subgroup $\id: \Gar \to \Gar$ is trivial provided that $r-1 > \log_p D$. 
\end{proof}

As an immediate corollary, we conclude the following.

\begin{cor}
Let $\fG$ be an algebraic group generated by 1-parameter subgroups and $M$ a 
non-trivial rational representation of $\fG$.  Then for $r >> 0$, $M$ is not of 
constant Jordan type as a $k\fG_{(r)}$-module.
\end{cor}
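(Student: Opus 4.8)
The plan is to read this off from Proposition~\ref{deg-bound}, with one observation about quantifiers. In Proposition~\ref{deg-bound} the constant $D$ is a maximum taken over \emph{all} one-parameter subgroups $i\colon\bG_a\subset\fG$, and for most $\fG$ this maximum is infinite — already the unipotent homomorphisms $\bG_a\to\GL_2$ given by additive polynomials in a single off-diagonal entry have unbounded degree. So ``for $r\gg 0$'' must be read as: there is an $r_0$, depending on $M$, beyond which the conclusion holds. To produce such an $r_0$ it is enough to run the argument of Proposition~\ref{deg-bound} for one well-chosen one-parameter subgroup rather than for all of them.

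First I would assume $\dim_k M<\infty$, since constant Jordan type is defined only for finite-dimensional modules (Definition~\ref{constant}), and fix a representation $\rho\colon\fG\to\GL_N$ with $N=\dim M$. Because $\fG$ is generated by its one-parameter subgroups, $\ker\rho$ cannot contain the image of every such subgroup (otherwise $\ker\rho=\fG$ and $M$ is trivial); hence there is a one-parameter subgroup $i\colon\bG_a\subset\fG$ with $i^*M$ a non-trivial rational $\bG_a$-module. Let $D_i$ be the largest degree occurring among the polynomials $(\rho\circ i)^*(X_{s,t})\in k[\bG_a]=k[T]$ as $X_{s,t}$ ranges over the matrix coordinate functions of $\GL_N$; since $i^*M$ is non-trivial, $D_i$ is a finite integer $\ge 1$. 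Set $r_0=\lfloor\log_p D_i\rfloor+2$, so that $p^{r-1}>D_i$ for every $r>r_0$. The claim is that $M$ fails to have constant $1$-rank — a fortiori, fails to have constant Jordan type — as a $k\fG_{(r)}$-module whenever $r>r_0$.

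The argument is that of Proposition~\ref{deg-bound}, specialized to this $i$. For $r>r_0$ the restriction of $i^*M$ to $\Gar$ remains non-trivial, because its $k[T]$-comodule structure involves only polynomials of degree $\le D_i<p^r$ and so is not killed by reducing mod $T^{p^r}$; by Lemma~\ref{trivial}, its local Jordan type is non-trivial at some $v\in V(\Gar)\setminus\{0\}$. At the point of $V(\Gar)\simeq\bA^r$ given by $(1,0,\dots,0)$, which corresponds to the identity one-parameter subgroup of $\Gar$ (Example~\ref{var}(2)), Corollary~\ref{useful} instead gives trivial local Jordan type, since the relevant coordinate functions pull back to polynomials of degree $\le D_i<p^{r-1}$. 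Now $i$ restricts to a closed embedding $\Gar\hookrightarrow\fG_{(r)}$, hence to a closed immersion $V(\Gar)\hookrightarrow V(\fG_{(r)})$ by \cite[1.5]{SFB1}; naturality of $\Theta_G$ along this inclusion (Proposition~\ref{pull}), together with Definition~\ref{local-jordan}, shows that the local Jordan type of $M$ as a $k\fG_{(r)}$-module at the images of $v$ and of $(1,0,\dots,0)$ equals the two values just computed. Those images are nonzero points of $V(\fG_{(r)})$ — each is the point of a non-trivial one-parameter subgroup, namely $i\circ\mu_v$ respectively $i\circ\id$ — and they are distinct because the two Jordan types differ. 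Hence $M$, viewed over $\fG_{(r)}$, has non-constant $1$-rank.

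The step I expect to need the most care is this last bit of bookkeeping rather than any isolated computation: verifying that restriction to a large enough Frobenius kernel does not trivialize $i^*M$; that the identity one-parameter subgroup of $\Gar$ corresponds to a genuinely nonzero point of $V(\Gar)$ and is carried to a nonzero point of $V(\fG_{(r)})$; and that local Jordan type really is transported along $\Gar\hookrightarrow\fG_{(r)}$, so that a variation of Jordan type seen over $V(\Gar)$ is one seen over $V(\fG_{(r)})$. Once these are nailed down, the corollary is just Proposition~\ref{deg-bound} applied to the chosen $i$.
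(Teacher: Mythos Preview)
Your argument is correct and is precisely the content the paper intends by calling this ``an immediate corollary'' of Proposition~\ref{deg-bound}: one runs the proof of that proposition for a single one-parameter subgroup $i$ with $i^*M$ non-trivial, obtaining a finite bound $r_0$ depending on $M$.

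Your opening observation is a genuine catch. If one reads $D_i$ in Proposition~\ref{deg-bound} as the \emph{maximum} of the degrees (which is what the proof there actually requires, despite the word ``minimum'' in the statement), then $D=\max_i D_i$ is typically infinite, and the corollary does not follow by plugging into the stated inequality $r>\log_p D+1$. Your fix---observing that the proof of Proposition~\ref{deg-bound} only ever uses one chosen $i$, so that the relevant bound is the finite number $D_i$ for that $i$---is exactly right, and the bookkeeping you flag at the end (non-triviality survives passage to $\Gar$, the identity subgroup gives a nonzero point, and Jordan type transports along $V(\Gar)\hookrightarrow V(\fG_{(r)})$ via Proposition~\ref{pull}) is handled correctly.
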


As an explicit example of Proposition \ref{deg-bound}, we 
obtain the following corollary (which should be contrasted 
with Example \ref{det}).

\begin{cor}
\label{sl-deg}
Let $M$ be a non-trivial polynomial representation of $\SL_n$ of degree $D$.  If $r > \log_pD +1$,
then $M$ is not a $k\SL_{n(r)}$-module of constant rank.
\end{cor}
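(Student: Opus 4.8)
The plan is to deduce the statement from Proposition~\ref{deg-bound} applied to $\fG = \SL_n$, and to read off from the \emph{proof} of that proposition the slightly stronger conclusion about constant $1$-rank.

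First I would check the hypothesis of Proposition~\ref{deg-bound}: the group $\SL_n$ is generated by $1$-parameter subgroups $\bG_a \hookrightarrow \SL_n$ --- for instance the root subgroups $i_\alpha \colon \bG_a \hookrightarrow \SL_n$ sending the coordinate $T$ of $\bG_a$ to $1 + T E_{s,t}$ for $s \ne t$, each of which is a closed embedding. Then I would bound the invariant ``$D$'' of Proposition~\ref{deg-bound} by the polynomial degree $D$ of $M$. Realizing $M$ as the restriction to $\SL_n$ of a polynomial $\GL_n$-representation $\rho \colon \GL_n \to \GL_N$ homogeneous of degree $D$, each $\rho^*(X_{s,t})$ is a homogeneous polynomial of degree $D$ in the matrix coordinates $Y_{a,b}$ of $\GL_n$; composing with a root subgroup $i_\alpha$ substitutes for each $Y_{a,b}$ a polynomial of degree $\le 1$ in $T$, so every $(\rho\circ i_\alpha)^*(X_{s,t}) \in k[T] = k[\bG_a]$ has degree $\le D$. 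Hence the quantities $D_i$ and their maximum that appear in Proposition~\ref{deg-bound} are all $\le D$, so $r > \log_p D + 1$ puts us under the hypothesis of that proposition, which already yields that $M$ is not of constant Jordan type as a $k\SL_{n(r)}$-module.

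To upgrade ``not of constant Jordan type'' to ``not of constant rank'', I would re-run the argument of Proposition~\ref{deg-bound} while tracking the two comparison points it produces. Fix a root subgroup $i_\alpha$ for which $i_\alpha^* M$ is nontrivial (such an $\alpha$ exists, since otherwise $\rho$ would be trivial on the subgroup generated by all root subgroups, i.e.\ on all of $\SL_n$). At the point $v_0 \in V(\SL_{n(r)})$ corresponding to the $1$-parameter subgroup $i_\alpha|_{\Gar} \colon \Gar \to \SL_n$, every pullback $(\rho\circ i_\alpha)^*(X_{s,t})$ has degree $\le D < p^{r-1}$, so Corollary~\ref{useful} gives $\JType(M,\theta_{v_0}) = (\dim M)[1]$; thus $\theta_{v_0} = 0$ and $\rk(M,\theta_{v_0}) = 0$. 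On the other hand $i_\alpha^* M$ is a polynomial $\bG_a$-representation of degree $\le D < p^r$, so its restriction to $\Gar$ is still nontrivial, and Lemma~\ref{trivial} produces a nonzero point $v_1 \in V(\Gar)$ (a coordinate point will do) at which the local Jordan type of $i_\alpha^* M$ is nontrivial, whence $\rk(M,\theta_{v_1}) \ge 1$. By naturality of $\Theta_G$ along the closed embedding $i_\alpha$ (Proposition~\ref{pull}, together with the fact from \cite[1.5]{SFB1} that $V(\Gar) \hookrightarrow V(\SL_{n(r)})$ is a closed embedding), the image of $v_1$ in $V(\SL_{n(r)})$ is again nonzero and has the same local Jordan type. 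Since $v_0$ and the image of $v_1$ both lie in $V(\SL_{n(r)}) \setminus \{0\}$ and have $1$-ranks $0$ and $\ge 1$ respectively, $M$ is not of constant rank.

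The one genuinely delicate point is this last piece of bookkeeping --- making sure the two test points are nonzero and that (non)triviality of the local Jordan type transports correctly from $V(\Gar)$ to $V(\SL_{n(r)})$ under the map induced by $i_\alpha$. Both are taken care of by the naturality of the global operator (Proposition~\ref{pull}) and by the fact that $V(-)$ sends closed embeddings of affine group schemes to closed embeddings.
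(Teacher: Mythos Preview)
Your proposal is correct and follows the paper's intended approach: the paper presents this corollary without proof, simply as ``an explicit example of Proposition~\ref{deg-bound}'' applied to $\SL_n$ with its root subgroups. You carry out exactly this application, and you are right to notice that Proposition~\ref{deg-bound} as stated only concludes ``not of constant Jordan type,'' whereas the corollary asserts the stronger ``not of constant rank.'' Your second paragraph correctly closes this gap by observing that the \emph{proof} of Proposition~\ref{deg-bound} already exhibits two nonzero points of $V(\fG_{(r)})$ at which the local $1$-rank is $0$ (trivial Jordan type, via Corollary~\ref{useful}) and $\geq 1$ (nontrivial Jordan type, via Lemma~\ref{trivial}) respectively; the transport along $V(\Gar) \hookrightarrow V(\SL_{n(r)})$ via Proposition~\ref{pull} is exactly the right justification. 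So your argument is both the paper's argument and a more honest accounting of why the corollary's stronger conclusion is warranted.
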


The following lemma, which is a straightforward application of the Generalized 
Principal Ideal Theorem (see \cite[10.9]{Eis}), 
shows that the dimension of a non-trivial module of constant rank 
of $\Gar$ cannot be ``too small" compared to $r$. 
\begin{lemma}
\label{pit}
Let $M$ be a finite dimensional $\Gar$-module. If $M$ is a non-trivial 
$\Gar$-module of constant rank, 
then the following inequality holds:
\begin{equation}
\label{ineqq}
\dim_k M \geq \sqrt{r}
\end{equation}

\end{lemma}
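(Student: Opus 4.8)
The plan is to reformulate the constant-rank hypothesis as a lower bound on the height of a determinantal ideal and then apply the Generalized Principal Ideal Theorem.

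Write $N = \dim_k M$ and set $R = k[V(\Gar)] = k[x_0,\dots,x_{r-1}]$, so that the operator $\Theta_M\colon M\otimes R \to M\otimes R$ of Definition \ref{action} is represented by an $N\times N$ matrix $\Theta$ over $R$, and for a point $v\in V(\Gar)$ the local operator $\theta_v$ on $M_{k(v)}$ is the specialization $\Theta|_{v}$. Let $c$ be the rank of $\Theta$ over the fraction field of $R$. First I would check that $c\geq 1$: if $c=0$ then $\Theta\equiv 0$, so $\theta_v=0$ for every $v$; but evaluating at the standard basis vectors $e_0,\dots,e_{r-1}$ of $\bA^r=V(\Gar)$ and using the explicit formula for $\Theta_{\Gar}$ in Example \ref{p-univ}(2) (exactly as in the proof of Lemma \ref{trivial}) shows that each distinguished generator $u_j$ of $k\Gar$ acts as $0$ on $M$, so $M$ is the trivial module --- contrary to hypothesis.

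Next I would pass from ``constant rank'' to a statement about heights. Since $M$ has constant rank, $\rk\{\theta_v\colon M_{k(v)}\to M_{k(v)}\}$ is the same for every $v\neq 0$; comparing with the generic point shows this common value is $c$. Hence the determinantal ideal $I_c\subseteq R$ generated by the $c\times c$ minors of $\Theta$ vanishes at no point of $\bA^r\smallsetminus\{0\}$ (at such a point some $c\times c$ minor of $\theta_v$ is nonzero), while $I_c\neq 0$ because $\Theta$ has generic rank $c$, and $I_c\neq R$ because every entry of $\Theta$ lies in the homogeneous summand of $R$ of the positive degree $p^{r-1}$ (Proposition \ref{homog}) and hence has zero constant term, so every minor vanishes at the origin. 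Therefore the zero locus of $I_c$ is exactly $\{0\}$; equivalently $\sqrt{I_c}=(x_0,\dots,x_{r-1})$, and consequently $\operatorname{height} I_c = r$.

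Finally I would invoke the Generalized Principal Ideal Theorem for ideals of minors (\cite[10.9]{Eis}): for an $N\times N$ matrix over a Noetherian ring, if the ideal $I_c$ of its $c\times c$ minors is proper, then $\operatorname{height} I_c \leq (N-c+1)^2$. Combined with $\operatorname{height} I_c = r$ and $c\geq 1$ this gives $r \leq (N-c+1)^2 \leq N^2$, hence $\dim_k M = N \geq \sqrt{r}$. The one step that genuinely uses the hypothesis --- and which I expect to be the only delicate point --- is the identification of the common rank of $\theta_v$ over $V(\Gar)\smallsetminus\{0\}$ with the generic rank $c$, which is what forces $V(I_c)\subseteq\{0\}$; everything else is formal. (The case $r=1$ is vacuous since a non-trivial module is nonzero, so one may assume $r\geq 2$, although the argument is insensitive to this.)
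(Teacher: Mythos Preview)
Your proof is correct and follows essentially the same strategy as the paper's: bound the height of a determinantal ideal of an $N\times N$ matrix over $k[V(\Gar)]$ via \cite[10.9]{Eis}. The paper instead applies the bound to the matrix of the linearized operator $s_0u_0+\cdots+s_{r-1}u_{r-1}$, argues by contradiction, and uses only the ideal $I_1$ of entries; this requires (implicitly) identifying the Jordan type at the resulting linear $\pi$-point with that of some $\theta_v$. Your choice to work directly with $\Theta_G$ and with $I_c$ for the generic rank $c$ avoids that identification and is a little more self-contained, at the cost of invoking Proposition~\ref{homog} to see that $I_c$ is proper.
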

\begin{proof} By extending scalars if necessary we may assume that $k$ is 
algebraically closed.  Let $m = \dim_k M$.
Let $k\Gar = k[u_0, \dots. u_{r-1}]/(u_0^p, \dots, u_{r-1}^p)$,  
let $K = k(s_0, \dots, s_{r-1})$ where $s_i$ are independent variables, 
and let $\alpha_K: K[t]/t^p \to K\Gar$ be 
 a map of $K$--algebras defined by $\alpha_K(t) = s_0u_0 + \dots + s_{r-1}u_{r-1}$. 
 Choose a $k$-linear basis of $M$, and let 
 $A(s_0, \ldots, s_{r-1})$ be a nilpotent matrix in $M_m(k[s_0, \ldots, s_{r-1}])$ 
 representing the action of $\alpha_K(t)$ on $M_K$.  
 Let $I_n(A(s_0, \ldots, s_{r-1}))$ denote the ideal generated by all $n\times n$ 
 minors of $A(s_0, \ldots, s_{r-1})$.   By \cite [10.9]{Eis}, 
 the codimension of any minimal prime over $I_n(A(s_0, \ldots, s_{r-1}))$ is at 
 most $(m-n+1)^2$. 
 
 Assume that (\ref{ineqq}) does not hold, that is, $m < \sqrt r$. 
 Hence, $(m-n+1)^2 < r $ for  any $1\leq n \leq m$. 
 The variety of $I_n(A(s_0, \ldots, s_{r-1}))$ is a subvariety 
 inside $\Spec k[s_0, \ldots, s_{r-1}] \simeq \mathbb A^r$ which has dimension $r$. 
 Since the codimension  of the variety of $I_n(A(s_0, \ldots, s_{r-1}))$ 
 is at most $(m-n+1)^2$, we conlude that the dimension is at 
 least $r - (m-n+1)^2 \geq 1$.
 Hence, the minors of dimension 
 $n \times n$ have a common non-trivial zero.    Taking $n = 1$, we 
 conclude that $A(b_0,\ldots,b_{r-1})$  is a zero  matrix for some non-zero specialization 
 $b_0,\ldots,b_{r-1}$ of $s_0,\ldots,s_{r-1}$.  Consequently,
 $M$ is trivial at the $\pi$-point of $\Gar$ corresponding to  $b_0,\ldots,b_{r-1}$.
 Since $M$ is non-trivial, 
 Lemma \ref{trivial} implies that  $M$ is not a module of constant rank. 
 \end{proof}

\noindent
As an immediate corollary, we provide an additional necessary condition
for a $k\fG_{(r)}$-module to have constant rank. 

\begin{prop}
\label{dim-bound}
Let $\fG$ be a (reduced) affine algebraic group  and  $M$  be a rational
representation of $\fG$.   
Assume that $\fG$ admits a 1-parameter subgroup 
$\mu: \bG_{a(r)} \to  \fG$    such that $\mu^*(M)$ 
is a non-trivial $k\Gar$-module. 
If $r \ \geq \ (\dim M)^2 +1$, then
$M$ is not a $k\fG_{(r)}$-module of constant rank.
\end{prop}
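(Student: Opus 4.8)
The plan is to pull the statement back along $\mu$ to the additive group scheme $\Gar$, where Lemma~\ref{pit} is available. After extending scalars we may assume $k$ is algebraically closed. Suppose, for contradiction, that $M$ is a $k\fG_{(r)}$-module of constant rank. Since $\Gar$ has height $r$, the $1$-parameter subgroup $\mu$ factors through $\fG_{(r)}$, so $\mu^*(M) = \mu^*(M\downarrow_{\fG_{(r)}})$, and by hypothesis $\mu^*(M)$ is a non-trivial $k\Gar$-module with $\dim_k\mu^*(M) = \dim_k M$. Since $r \geq (\dim M)^2 + 1$ we have $\dim_k M < \sqrt r$, so by Lemma~\ref{pit} a non-trivial $k\Gar$-module of dimension $<\sqrt r$ cannot have constant rank; concretely, as in the proof of that lemma, the Generalized Principal Ideal Theorem produces a point $\ul b \in V(\Gar) - \{0\}$ at which $\mu^*(M)$ has trivial local Jordan type, while non-triviality of $\mu^*(M)$ together with Lemma~\ref{trivial} produces a point of $V(\Gar) - \{0\}$ at which the local Jordan type is non-trivial.

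I would then transport this failure of constant rank back to $\fG_{(r)}$ via naturality. For $\ul a \in V(\Gar)$, let $w(\ul a) \in V(\fG_{(r)})$ be the point corresponding to $\mu \circ \mu_{\ul a}\colon \Gar \to \Gar \to \fG$; Proposition~\ref{pull} (naturality of $\Theta$ under change of group) gives $\JType(\mu^*(M),\theta_{\ul a}) = \JType(M,\theta_{w(\ul a)})$, hence $\rk(\mu^*(M),\theta_{\ul a}) = \rk(M,\theta_{w(\ul a)})$. If $\mu$ is a closed immersion --- equivalently $\Ker\mu$ is trivial --- then $w$ is a closed immersion sending $0 \mapsto 0$ and non-zero points to non-zero points (\cite[1.5]{SFB1}), so constant rank of $M$ over $\fG_{(r)}$ forces $\mu^*(M)$ to have constant rank, contradicting the previous paragraph and finishing the proof. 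In general, the closed subgroup schemes of $\Gar$ are exactly the $\{T^{p^j}=0\} \simeq \mathbb G_{a(j)}$, so $\Ker\mu \simeq \mathbb G_{a(j)}$ for some $j$ and $\mu$ factors as $\Gar \xrightarrow{p_{r,r'}} \mathbb G_{a(r')} \xrightarrow{\mu'} \fG$ with $r' = r-j$ and $\mu'$ a closed immersion; moreover constant rank of $M$ over $\fG_{(r)}$ descends to constant rank over $\fG_{(r')}$, since under $i_{r',r}\colon V_{r'}(\fG) \hookrightarrow V_r(\fG)$ the operator $\theta_v$ computed for $\fG_{(r')}$ agrees with $\theta_{i_{r',r}(v)}$ computed for $\fG_{(r)}$ (because $p_{r,r',*}(u_{r-1}) = u_{r'-1}$). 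Thus the closed-immersion argument, now applied to $\mu'$ and to the non-trivial $k\mathbb G_{a(r')}$-module $N := (\mu')^*(M)$, again yields a contradiction provided $r' \geq (\dim M)^2 + 1$.

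The step I expect to be the main obstacle is the residual case in which $\Ker\mu$ is so large that $r' = r-j \leq (\dim M)^2$, where Lemma~\ref{pit} on $\mathbb G_{a(r')}$ no longer delivers a contradiction. Here $\mu^*(M) = p_{r,r'}^*(N)$ has $\tilde u_0 = \cdots = \tilde u_{r-r'-1} = 0$, so its trivial locus in $V(\Gar)$ automatically contains the linear subspace $\{w=0\} = \{\ul b : b_0 = \cdots = b_{r-r'-1} = 0\}$, and the obvious trivial points of $\mu^*(M)$ carry no information about $M$. The plan for this case is a dimension count: the trivial locus of $\mu^*(M)$ in $V(\Gar)$ is a cone of dimension $\geq r - (\dim M)^2 \geq 1$ (again by the Generalized Principal Ideal Theorem, as in Lemma~\ref{pit}), while $\{w=0\}$ is a proper subspace; transporting a point of the difference along $w$ yields a non-zero point of $V(\fG_{(r)})$ at which $M$ has trivial local Jordan type, which together with the non-zero point of non-trivial type from the first paragraph contradicts constant rank of $M$. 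Making this comparison rigorous --- in particular verifying that the trivial locus of $\mu^*(M)$ genuinely exceeds $\{w=0\}$ under the standing hypotheses --- is the technical heart of the argument.
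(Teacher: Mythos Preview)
Your core argument---pull back along $\mu$ and invoke Lemma~\ref{pit}---is exactly the paper's; the proposition is introduced there as an ``immediate corollary'' of that lemma with no further proof. The phrase ``$\fG$ admits a 1-parameter subgroup $\mu: \Gar \to \fG$'' should be read, in parallel with ``$i: \bG_a \subset \fG$'' in Proposition~\ref{deg-bound}, as a closed immersion. With that reading your first two paragraphs are already a complete proof: $\mu$ induces a closed immersion $V(\Gar) \hookrightarrow V(\fG_{(r)})$ sending non-zero points to non-zero points, so constant rank of $M$ on $\fG_{(r)}$ forces constant rank of $\mu^*(M)$ on $\Gar$, and since $\mu^*(M)$ is non-trivial with $\dim M < \sqrt{r}$ this contradicts Lemma~\ref{pit}.

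Your third paragraph, treating the possibility that $\mu$ has non-trivial kernel, is therefore unnecessary, and the dimension count you sketch there does not close: the trivial locus of $\mu^*(M)$ in $V(\Gar)$ has dimension $\geq r - (\dim M)^2$, but under your residual hypothesis $r' \leq (\dim M)^2$ the locus $\{w = 0\}$ has dimension $r - r' \geq r - (\dim M)^2$ as well, so dimension alone cannot show the trivial locus strictly exceeds $\{w = 0\}$. In fact, passing to the closed-immersion factor $\mu': \mathbb G_{a(r')} \hookrightarrow \fG$ and applying Lemma~\ref{pit} there yields only $r' \leq (\dim M)^2$, not a bound on $r$; this is exactly why the statement is formulated with $\mu$ an embedding of $\Gar$ itself, and why your residual case is not part of what needs to be proved.
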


%%%%%%%%%%%%%%%%%%%%%%%%%%
%%%%%%%%section 4 %%%%%%%%%%%%%%%
\section{$\pi$-points and $\PG$}
\label{proj} 
In a series of earlier papers, we have considered $\pi$-points for a finite 
group scheme $G$ (as recalled in Definition \ref{pi}) and investigated 
finite dimensional $kG$-modules $M$ using the ``Jordan type of $M$"
at various $\pi$-points.  
In particular, in \cite{FPS}, we verified that
this Jordan type is independent of the equivalence class of the $\pi$-point
provided that either the $\pi$-point is generic or the Jordan type of $M$
at some representative of the equivalence class is maximal.

As we recall below, whenever $G$ is an
infinitesimal group scheme, then  the $\pi$-point 
space $\Pi(G)$ of equivalence classes of $\pi$-points is essentially the projectivization
of $V(G)$.   The purpose  of the first half of this section is to relate the discussion
of the previous section concerning the local Jordan type of a finite $kG$-module
to our earlier work formulated in terms of $\pi$-points for general finite
group schemes.

One special aspect of an infinitesimal group scheme $G$ is that equivalence
classes of $\pi$-points of $G$ have canonical (up to scalar multiple) 
representatives.

Unless otherwise specified (as in Definition \ref{pi} immediately below), $G$
will denote an infinitesimal group scheme over $k$, and $V(G)$ will denote $V_r(G)$
for some $r \geq \rm ht(G)$.  Throughout this section we  assume  that $\dim V(G)\geq 1$, 
and  work with $\PG = \Proj k[V(G)]$. We note that if $\dim V(G) = 0$, then Theorem~\ref{iso} implies that the projective resolution of the trivial module $k$ is finite. Since $kG$ is self-injective, this further implies that $k$ is 
projective. Hence, $kG$ is semi-simple, and does not have any $\pi$-points (see, for example, \cite[\S 2]{FP1}).   

\begin{defn}
\label{pi}
(see \cite{FP2})
Let $G$ be a finite group scheme.  

\begin{enumerate}
\item
A $\pi$-point of $G$ is a (left)
flat map of $K$-algebras $\alpha_K: K[t]/t^p \to KG$ for some field extension 
$K/k$ with the property that there exists a unipotent abelian closed subgroup scheme
$i: C_K \subset G_K$ defined over $K$ such that $\alpha_K$ factors through 
$i_*: KC_K \to KG_K = KG$.
\item
If $\beta_L: L[t]/t^p \to LG$ is another $\pi$-point of $G$, then $\alpha_K$ 
is said to be a {\it specialization} of $\beta_L$ , written 
$\beta_L \downarrow \alpha_K$, provided that 
for any finite dimensional $kG$-module $M$, $\alpha_K^*(M_K)$ being free  as $K[t]/t^p$-module 
implies that $\beta^*_L(M_L)$ is free as $L[t]/t^p$-module.
\item
Two $\pi$-points $\alpha_K: K[t]/t^p \to KG, ~ \beta_L: L[t]/t^p \to LG$ 
are said to be {\it equivalent},  written $\alpha_K \sim \beta_L$, if 
$\alpha_K \downarrow \beta_L$ and $\beta_L \downarrow \alpha_K$.
\item
A $\pi$-point of $G$, $\alpha_K: K[t]/t^p \to KG$, is said to be {\it generic} if there
does not exist another $\pi$-point $\beta_L: L[t]/t^p \to LG$ which specializes to $\alpha_K$  but is not
equivalent to $\alpha_K$.
\item
If $M$ is a finite dimensional $kG$-module and $\alpha_K: K[t]/t^p \to KG$
a $\pi$-point of $G$, then the Jordan type of $M$ at $\alpha_K$ is by
definition the Jordan type of $\alpha_K^*(M_K)$ as $K[t]/t^p$-module.
\end{enumerate}
\end{defn}

Because the group algebra of a finite group scheme is always faithfully
flat over the group algebra of a subgroup scheme (see \cite[14.1]{W}),
the condition on a flat map $\alpha_K:K[t]/t^p \to KG$ is equivalent to the existence of a factorization  
$i_* \circ \alpha_K^\prime$ with $\alpha_K^\prime: K[t]/t^p \to KC_K$ flat.

\begin{defn}
Let $G$ be an infinitesimal group scheme, and let $v \in V(G)$ be the point associated to
the 1-parameter subgroup $\mu_v: \bG_{a(r),k(v)} \to G_{k(v)}$.  Then
the $\pi$-point of $G$ associated to $v$ is  
$$
\mu_{v,*} \circ \epsilon: k(v)[u]/u^p \ \to \ k(v)G,
$$
where $\epsilon: k(v)[u]/u^p \to k(v)\bG_{a(r),k(v)}$ is as defined in (\ref{epsilon}).
\end{defn}

The following theorem is a complement to Theorem \ref{iso}, revealing that
spaces of (equivalence) classes of $\pi$-points are
very closely related to (cohomological) support varieties. 

\begin{thm}(\cite[7.5]{FP2})
\label{PI} 
Let $G$ be an finite group scheme.  Then the set of equivalence classes
of $\pi$-points, denoted $\Pi(G)$, can be given a scheme structure, which is 
defined in terms of the representation theory
of $G$.   Moreover, there is an
% natural 
isomorphism of schemes
$$
\Proj \,\bH(G,k) ~ \simeq ~ \Pi(G).
$$

If $G$ is an infinitesimal group scheme so that $\bH(G,k)$
is related to $k[V(G)]$ as in Theorem \ref{iso}, then the resulting homeomorphism
\begin{equation}
\label{relate}
\UseComputerModernTips
 \xymatrix{\bP(G) \equiv \Proj k[V(G)] \ \ar[r] & \ \Proj \HHH^{\bu}(G,k) \ \simeq \ \ \Pi(G)}
\end{equation}
is given on points by sending $ x \in \bP(G)$ to 
the equivalence class of the $\pi$-point $ \mu_{v,*} \circ \epsilon$
for any
$v \in V(G) \backslash \{ 0 \}$ projecting to $x$.  In particular, 
equivalence classes of {\it generic} $\pi$-points of $G$ are represented by  
$(\mu_{v,*} \circ \epsilon)$ as $v \in V(G)$ runs through the (scheme-theoretic) 
generic points of $V(G)$.

Furthermore, for any finite dimensional $kG$-module $M$, (\ref{relate})
restricts to a homeomorphism of subvarieties
$$
\bP(G)_M \ \simeq \ \Pi(G)_M,
$$

where $\bP(G)_M = \Proj (k[V(G)_M])$, and $\Pi(G)_M$ consists of those equivalence classes of $\pi$-points 
$\alpha_K$ of $G$ such that $\alpha_K^*(M_K)$ is not free (as a $K[u]/u^p$-module).
\end{thm}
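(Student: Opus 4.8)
The plan is to obtain the statement by combining the purely representation-theoretic results of \cite{FP2} with the comparison between cohomology and one-parameter subgroups recorded in Theorem \ref{iso}. The existence of a scheme structure on $\Pi(G)$, the isomorphism $\Proj \bH(G,k) \simeq \Pi(G)$, and the homeomorphism between (the $\Proj$ of) the cohomological support $|G|_M$ and the $\pi$-point support $\Pi(G)_M$ are all supplied by \cite[7.5]{FP2}, and I would use these as black boxes. What genuinely remains in the infinitesimal setting is: (a) to assemble the homeomorphism (\ref{relate}) from these inputs; (b) to identify it on points in terms of the canonical $\pi$-points $\mu_{v,*}\circ\epsilon$; (c) to transport the support-variety assertion across that identification.

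For (a), Theorem \ref{iso} provides the $p$-isogeny $\Psi\colon V(G)\to \Spec \bH(G,k)$ dual to $\psi$. Since $\ker\psi$ is nilpotent and the image of $\psi$ contains the $p^r$-th power of every element, $\mathrm{Spec}\,\psi$ is a finite universal homeomorphism; and since $\psi$ is a homomorphism of graded algebras (for the natural gradings, after the usual rescaling relating cohomological degree to the degree on $k[V(G)]$), $\Psi$ is equivariant for the $\mathbb G_m$-actions defining the cone structures and hence descends to a homeomorphism $\bP(\Psi)\colon \bP(G)=\Proj k[V(G)] \to \Proj \bH(G,k)$ — note that no locus is lost on passing to $\Proj$ precisely because $\psi$ hits the $p^r$-th powers, so the ideal generated by $\psi(\bH(G,k)_+)$ is irrelevant. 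Composing $\bP(\Psi)$ with the isomorphism $\Proj \bH(G,k)\simeq \Pi(G)$ of \cite{FP2} defines (\ref{relate}).

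For (b), recall that the isomorphism of \cite{FP2} sends a $\pi$-point $\alpha_K\colon K[t]/t^p\to KG$ to the homogeneous prime $\ker(\alpha_K^*)$, where $\alpha_K^*\colon \bH(G,k)\to \HHH^{\bu}(K[t]/t^p,K)$ is the map induced on cohomology. Given $x\in\bP(G)$, choose a representative $v\in V(G)\setminus\{0\}$ with associated one-parameter subgroup $\mu_v\colon \bG_{a(r),k(v)}\to G_{k(v)}$ and $\pi$-point $\alpha_v=\mu_{v,*}\circ\epsilon$. The point to verify is that $\ker(\alpha_v^*)$ corresponds under $\Psi$ to $v$. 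But $\psi$ is constructed in \cite{SFB2} exactly so that its value at $v$ — equivalently, the composite $\bH(G,k)\xrightarrow{\mu_v^*}\bH(\bG_{a(r)},k(v))\xrightarrow{\epsilon^*}\HHH^{\bu}(k(v)[u]/u^p,k(v))$ — agrees with evaluation at $v$, up to the Frobenius twist accounting for the $p$-isogeny; since a Frobenius twist does not move underlying points, $\ker(\alpha_v^*)$ is the point $v$. This matching of the construction of $\psi$ in \cite{SFB2} with the construction of the scheme structure on $\Pi(G)$ in \cite{FP2} is the only step of the argument that is not purely formal, and I would expect it to be where the care is needed. Granting it, the statement about generic $\pi$-points follows at once: by \cite{FP2} the generic $\pi$-points represent the generic points of the irreducible components of $\Pi(G)$, and under (\ref{relate}) these correspond to the scheme-theoretic generic points of the components of $V(G)$, each represented by $\mu_{v,*}\circ\epsilon$.

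Finally, for (c), Theorem \ref{iso} already asserts that $\Psi$ restricts to a homeomorphism $V(G)_M\to |G|_M$; projectivizing gives $\bP(G)_M=\Proj k[V(G)_M]\simeq \Proj |G|_M$, and \cite{FP2} identifies the latter with $\Pi(G)_M$. To see that this restriction of (\ref{relate}) is the asserted one, I would invoke the pointwise description of (b) together with Definition \ref{rankvar}: for $x\in\bP(G)$ with representative $v$, one has $v\in V(G)_M$ iff $(\mu_{v,*}\circ\epsilon)^*(M_{k(v)})$ fails to be free over $k(v)[u]/u^p$, which is exactly the condition that the class of $\alpha_v$ lies in $\Pi(G)_M$. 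Hence (\ref{relate}) carries $\bP(G)_M$ homeomorphically onto $\Pi(G)_M$, which completes the proof.
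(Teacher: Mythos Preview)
The paper does not supply its own proof of this theorem: it is stated as a recollection, with the citation \cite[7.5]{FP2} attached directly to the theorem header, and the text proceeds immediately to the next proposition without any argument. So there is nothing in the paper to compare your proposal against beyond the bare citation.

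That said, your reconstruction is the right one and is essentially how the result is assembled from the cited sources. The first block (scheme structure on $\Pi(G)$, the isomorphism with $\Proj\bH(G,k)$, and the identification of $\Pi(G)_M$) is exactly \cite[7.5]{FP2}; the passage to $\bP(G)$ in the infinitesimal case is Theorem~\ref{iso} projectivized; and the pointwise description of (\ref{relate}) is the content of \cite[\S4]{FP2} combined with the construction of $\psi$ in \cite{SFB2}. Your caveat that the only non-formal step is matching the construction of $\psi$ from \cite{SFB2} with the cohomological description of points of $\Pi(G)$ in \cite{FP2} is well placed; this is precisely what is verified in \cite{FP2} for infinitesimal $G$. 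One small point of care: $\psi$ is not a degree-preserving graded map but rather multiplies degrees (cohomological degree $2i$ goes to degree $i$, with a further Frobenius twist), so ``after the usual rescaling'' is doing real work in your sentence; it would be cleaner to say that $\psi$ respects the $\bA^1$-actions defining the cone structures and hence induces a map on $\Proj$, which is the form in which \cite{SFB2} states it.
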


Generic $\pi$-points are particularly important when developing invariants of 
representations.  
The following corollary of Theorem \ref{PI} gives an explicit
set of representatives of equivalence classes of generic $\pi$-points of $G$.

\begin{prop} 
\label{gen-list}
Let $G$ be an infinitesimal group scheme with
universal 1-parameter subgroup $\UseComputerModernTips
 \xymatrix{\cU_G : \bG_{a(r),k[V(G)]} \ \ar[r] & \  G_{k[V(G)]}}$. 
For each minimal prime ideal $\cp_i$ of $k[V(G)]$,  let $K_i$ denote the field of 
fractions of $k[V(G)]/\cp_i$.  Then the compositions
$$(\cU_{G,*}  \otimes_{k[V(G)]} K_i )\circ \epsilon: 
K_i[u]/u^p \to K_iG$$
(sending $u$ to $\theta_{K_i}$)
are non-equivalent representatives of the equivalence classes of generic
$\pi$-points of $G$.  
\end{prop}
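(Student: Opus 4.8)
The plan is to recognize the maps in the statement as the $\pi$-points of $G$ attached to the scheme-theoretic generic points of $V(G)$, and then to read off the assertion from Theorem \ref{PI}. First I would observe that $\cU_G$ corresponds to the identity under $V_r(G)(k[V(G)]) \simeq \Hom_{k\text{-alg}}(k[V(G)],k[V(G)])$ (Theorem \ref{represent}), so that for any point $v \in V(G)$ with residue field $k(v)$ the base change $\cU_G \otimes_{k[V(G)]} k(v)$ is precisely the $1$-parameter subgroup $\mu_v \colon \bG_{a(r),k(v)} \to G_{k(v)}$ of Remark \ref{canon}, whence on group algebras $\cU_{G,*}\otimes_{k[V(G)]} k(v) = \mu_{v,*}$. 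Taking $v = \eta_i$, the generic point of the irreducible component of $V(G)$ cut out by the minimal prime $\cp_i$, we have $k(\eta_i) = \mathrm{Frac}(k[V(G)]/\cp_i) = K_i$, so that $(\cU_{G,*}\otimes_{k[V(G)]}K_i)\circ\epsilon$ is exactly $\mu_{\eta_i,*}\circ\epsilon$, the $\pi$-point of $G$ associated to $\eta_i$ in the sense of the definition preceding Theorem \ref{PI}. It is a genuine $\pi$-point, as recorded in that definition; note also that $\eta_i \neq 0$, since by Proposition \ref{grade} the algebra $k[V(G)]$ is graded with $k[V(G)]_0 = k$ and generated in positive degrees, so (being connected and $\geq 1$-dimensional) no minimal prime $\cp_i$ equals the irrelevant ideal, i.e.\ each $\eta_i$ is a non-closed point distinct from the vertex.

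Next I would quote the final assertions of Theorem \ref{PI}. The scheme-theoretic generic points of $V(G) = \Spec k[V(G)]$ are exactly the $\eta_i$, one for each minimal prime $\cp_i$, and Theorem \ref{PI} states that the $\pi$-points $\mu_{\eta_i,*}\circ\epsilon$ represent the equivalence classes of generic $\pi$-points of $G$. By the previous paragraph these are precisely the maps in the statement, so each $(\cU_{G,*}\otimes_{k[V(G)]}K_i)\circ\epsilon$ is a generic $\pi$-point and every generic $\pi$-point of $G$ is equivalent to one of them. Here one uses, as in Theorem \ref{PI}, that under the homeomorphism (\ref{relate}) the notion of a generic $\pi$-point in the sense of Definition \ref{pi}(4) translates into that of a generic point of the scheme $\Pi(G)$, equivalently, via Theorem \ref{iso}, of a generic point of $\bP(G)$.

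Finally I would check non-equivalence. Since $\dim V(G) \geq 1$, each minimal prime $\cp_i$ is homogeneous and does not contain the irrelevant ideal, so the $\cp_i$ biject with the generic points of $\bP(G) = \Proj k[V(G)]$; distinct $\cp_i$ therefore give distinct points of $\bP(G)$, hence, the map (\ref{relate}) being a bijection, distinct points of $\Pi(G)$, i.e.\ pairwise non-equivalent $\pi$-points. The only place demanding care is this last bookkeeping together with the precise invocation of Theorem \ref{PI}: once one is confident that equivalence classes of generic $\pi$-points correspond bijectively to the minimal primes of $k[V(G)]$, with $K_i$ recording the field of definition, everything else is a direct unwinding of the definitions of $\cU_G$, $\epsilon$, and $\theta_v$.
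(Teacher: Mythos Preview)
Your proposal is correct and follows exactly the approach the paper intends: the paper states this proposition as an immediate corollary of Theorem \ref{PI} without further proof, and your argument simply unpacks that deduction by identifying $(\cU_{G,*}\otimes_{k[V(G)]}K_i)\circ\epsilon$ with $\mu_{\eta_i,*}\circ\epsilon$ for $\eta_i$ the generic point of the component cut out by $\cp_i$, and then invoking the explicit description of generic $\pi$-points in Theorem \ref{PI}. Your care in noting that no minimal prime equals the irrelevant ideal under the standing hypothesis $\dim V(G)\geq 1$ is appropriate and justifies the bijection between minimal primes of $k[V(G)]$ and generic points of $\bP(G)$.
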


To obtain vector bundles, we require the following wll known, elementary observation about
commutative graded algebras.

\begin{lemma}
\label{twist}
Let $A$ be a finitely generated commutative, graded $k$-algebra with homogeneous 
generators whose degrees divide $d$ and let $X = \Proj A$. Then $\cO_X(d)$ is a
locally free sheaf of  rank 1 on $X$.
\end{lemma}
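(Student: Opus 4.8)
The plan is to trivialize $\cO_X(d)$ on the standard affine open cover of $X = \Proj A$. Write $A = k[x_1,\dots,x_m]$ with each $x_i$ homogeneous of degree $d_i$, where $d_i \mid d$ by hypothesis, and consider the basic open sets $D_+(x_i) = \Spec A_{(x_i)}$. First I would check that these cover $X$: since the $x_i$ generate $A$ over $k$, the irrelevant ideal $A_+ = \bigoplus_{n>0} A_n$ coincides with the ideal $(x_1,\dots,x_m)$, so any homogeneous prime of $\Proj A$ --- which by definition does not contain $A_+$ --- fails to contain at least one $x_i$; hence $X = \bigcup_{i=1}^m D_+(x_i)$.

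Next I would use the chart-wise description of the twisting sheaf: by construction of $\cO_X(d) = \widetilde{A(d)}$, its restriction to $D_+(x_i)$ is the quasi-coherent sheaf associated to the $A_{(x_i)}$-module $(A_{x_i})_d$, the degree-$d$ graded component of the $\bZ$-graded localization $A_{x_i}$, where $A_{(x_i)} = (A_{x_i})_0$. The key point is then immediate: because $d_i \mid d$, the element $x_i^{d/d_i}$ is homogeneous of degree exactly $d$ and is a unit in $A_{x_i}$, so multiplication by it defines an $A_{(x_i)}$-linear isomorphism $A_{(x_i)} = (A_{x_i})_0 \xrightarrow{\ \sim\ } (A_{x_i})_d$. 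Hence $\cO_X(d)|_{D_+(x_i)} \cong \cO_X|_{D_+(x_i)}$ for every $i$, and since the $D_+(x_i)$ form an open cover of $X$, the sheaf $\cO_X(d)$ is locally free of rank $1$.

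There is essentially no real obstacle here; the only subtlety worth flagging is that, since $A$ is not assumed to be generated in degree $1$, one cannot invoke the most standard textbook statement that all twists $\cO_X(n)$ are invertible --- indeed they generally are not, since $(A_{x_i})_n$ need not be a free $A_{(x_i)}$-module for arbitrary $n$. What the divisibility hypothesis $d_i \mid d$ provides is, uniformly over all charts $D_+(x_i)$, the degree-$d$ homogeneous unit $x_i^{d/d_i}$ that yields the trivialization. (One could instead pass to the $d$-th Veronese subalgebra $A^{(d)} = \bigoplus_{n\ge 0} A_{nd}$ and use $\Proj A^{(d)} \simeq \Proj A$ together with the identification of $\cO_{\Proj A^{(d)}}(1)$ with $\cO_X(d)$, but the direct chart-by-chart verification above is the shortest route.)
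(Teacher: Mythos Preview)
Your proof is correct and follows essentially the same approach as the paper: cover $X$ by the affine opens $D_+(x_i)$ associated to homogeneous generators $x_i$ of degree $d_i$, and observe that multiplication by $x_i^{d/d_i}$ trivializes $\cO_X(d)$ on each chart. Your version is slightly more detailed (you explicitly verify that the $D_+(x_i)$ cover $X$ and remark on why the divisibility hypothesis is genuinely needed), but the core argument is identical to the paper's.
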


\begin{proof} Let $\{ f_i \}$ be a finite set of homogeneous generators of $A$, set
$d_i$ equal to the degree of $f_i$, and choose $d$ such that each $d_i$ divides $d$.  
Set $U_i = X - Z(f_i)$,
an affine open subset of $X$ with coordinate algebra $(A_{f_i})_0$, the degree zero subalgebra
of the localization of $A$ obtained by inverting $f_i$.  Then multiplication by $f_i^{d/d_i}$
induces an isomorphism $(\cO_X)_{|U_i} \stackrel{\to}{\to} (\cO_X(d))_{|U_i}$.  Thus 
the restriction of $\cO_X(d)$ to each $U_i$ of the open covering $\{ U_i \}$ is free
of rank 1.
\end{proof}

Let $G$ be an infinitesimal group scheme of height $\leq r$ and recall from Proposition~\ref{homog} that
$\Theta_{G,r} \in kG \otimes k[V_r(G)]$ is homogeneous of degree $p^{r-1}$.  

\begin{defn}
Let $G$ be an infinitesimal group scheme of height $\leq r$ and let 
$M$ be a finite dimensional
$kG$-module.  Then we denote by 
\begin{equation}
\label{theta-tilde}
\UseComputerModernTips
\xymatrix{\wt\Theta_G: \cM \equiv  M  \otimes  \cO_{\PG} \ \ar[r] & \  
\cM(p^{r-1}) \equiv  M \otimes \cO_{\PG}(p^{r-1} )
}\end{equation}
the associated homomorphism of (locally free) coherent 
$\cO_{\PG}$-modules determined by  the action of $\Theta_{G,r}
\in kG \otimes k[V_r(G)] $. 

We denote by 
\begin{equation}
\label{theta-twist}
\UseComputerModernTips
\xymatrix{\wt \Theta_G(n): \cM(n) \ar[r]& \cM(p^{r-1}+n)
}\end{equation}
the map obtained by tensoring (\ref{theta-tilde}) with $\cO_{\bP(G)}(n)$.

For any point $x\in \PG$, we use the notation
$$M_{k(x)} =  \cM  \otimes_{\cO_{\bP(G)}} k(x)$$ 
for the fiber of the coherent sheaf $\cM$ at $x$.  Here,
we have identified $k(x)$ with the
residue field of the stalk $\cO_{\PG,x}$.

\end{defn}

 \begin{prop}
 \label{indep-s}
Let $G$ be an infinitesimal group scheme of height $\leq r$, and let
$M$ be a finite dimensional $kG$-module.
 For any $v, \ v^\prime \in V(G)$ projecting to the same $x \in 
 \PG$, we have $$ \Im \{\theta_v: M_{k(v)} \to M_{k(v)} \} \ \simeq \  
 \Im \{\theta_{v^\prime}: M_{k(v^\prime)} \to M_{k(v^\prime)}\}\ $$
 and similarly for kernels.
  \end{prop}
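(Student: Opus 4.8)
The plan is to reduce the statement to the trivial fact that multiplying a linear endomorphism of a vector space by a nonzero scalar alters neither its kernel nor its image, after observing that $\theta_v$ and $\theta_{v'}$ become proportional once they are compared over a common field.

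First I would choose a field $\Omega$ --- take it algebraically closed --- admitting embeddings of both $k(v)$ and $k(v')$ over $k(x)$; such an $\Omega$ exists since the $k(x)$-algebra $k(v)\otimes_{k(x)}k(v')$ is nonzero. Viewing $v$ and $v'$ as $\Omega$-points of $V(G)$, they have the same image $x$ in $\bP(G)(\Omega)$. Now $\bP(G)=\Proj k[V(G)]$ is, on $\Omega$-points, the orbit space of $(V(G)\smallsetminus\{0\})_\Omega$ under the $\bG_m$-action obtained by restricting the $\bA^1$-action (\ref{grading}) to units: this is the standard description of $\Proj$ of a graded ring as a geometric $\bG_m$-quotient of its punctured spectrum, and it remains valid here even though $k[V(G)]$ is not generated in degree $1$ but rather in the degrees $p^i$ (Proposition \ref{grade}). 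Hence $v$ and $v'$ lie in a single $\bG_m(\Omega)$-orbit, say $v'=c\cdot v$ with $c\in\Omega^\times$.

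Next I would invoke the compatibility of this grading action with the universal one-parameter subgroup $\cU_G$ established inside the proof of Proposition \ref{homog}, namely commutativity of diagram (\ref{compatible}); evaluated at the relevant points it yields $\exp_{c\cdot v}(s)=\exp_v(cs)$, that is, $\mu_{v'}=\mu_v\circ m_c$, where $m_c\colon\bG_{a(r),\Omega}\to\bG_{a(r),\Omega}$ is multiplication by $c$, so that $m_c^*(T)=cT$. Passing to group algebras and using that $u_{r-1}$ is linear dual to $T^{p^{r-1}}$, one gets $(m_c)_*(u_{r-1})=c^{p^{r-1}}u_{r-1}$, whence, over $\Omega$, $\theta_{v'}=(\mu_{v'})_*(u_{r-1})=c^{p^{r-1}}\,(\mu_v)_*(u_{r-1})=c^{p^{r-1}}\,\theta_v$. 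Since $c^{p^{r-1}}\in\Omega^\times$, the endomorphism $\theta_{v'}$ of $M_\Omega$ is a nonzero scalar multiple of $\theta_v$, and likewise $\theta_{v'}^{\,j}=c^{jp^{r-1}}\theta_v^{\,j}$ for every $j$; therefore $\Im\{\theta_{v'}^{\,j}\colon M_\Omega\to M_\Omega\}=\Im\{\theta_v^{\,j}\colon M_\Omega\to M_\Omega\}$ and the same holds for kernels.

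Finally I would descend along the extensions $k(v)\hookrightarrow\Omega$ and $k(v')\hookrightarrow\Omega$: formation of the kernel and image of a linear map commutes with field extension, so $\Im\{\theta_v^{\,j}\colon M_\Omega\to M_\Omega\}=\Im\{\theta_v^{\,j}\colon M_{k(v)}\to M_{k(v)}\}\otimes_{k(v)}\Omega$, and similarly for $v'$; comparing dimensions gives $\dim_{k(v)}\Im\{\theta_v^{\,j}\}=\dim_{k(v')}\Im\{\theta_{v'}^{\,j}\}$, together with the analogous equality for kernels, which is the asserted isomorphism. The only genuinely non-formal ingredients are the passage to the common field $\Omega$ and the identification of $\bP(G)$ with the weighted $\bG_m$-orbit space used to extract the scalar $c$; everything else is bookkeeping, so I do not expect a serious obstacle.
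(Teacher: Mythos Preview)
Your argument is correct and is essentially the content of \cite[6.1]{SFB2}, to which the paper simply defers for this proposition. The key idea is the same: two lifts $v,v'$ of $x$ differ (after passing to a large enough field) by the $\bG_m$-action coming from the grading, so that $\theta_{v'}$ is a nonzero scalar multiple $c^{p^{r-1}}\theta_v$ of $\theta_v$; your use of an algebraically closed $\Omega$ to guarantee that the fiber over $x$ is a single $\bG_m$-orbit (which could fail over smaller fields because of the weighted grading) is exactly the right way to handle this.
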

 
 \begin{proof}
 This is essentially proved in \cite[6.1]{SFB2}.
 \end{proof}

In the next section, we shall be particularly interested in 
kernels and images of $\wt \Theta_G$.
The following proposition relates the fibers of the kernel and
image of the global $p$-nilpotent operator $\wt \Theta_G$ at a point
$x \in \bP(G)$ with the kernel and image of the 
local $p$-nilpotent operator $\theta_v$ on $M\otimes k(v)$ for 
$v$ representing $x$.

\begin{prop}
\label{sec}
Let $G$ be an infinitesimal group scheme of height $\leq r$, let
$M$ be a finite dimensional $kG$-module, let $\cM = M \otimes \cO_{\PG}$, and let 
$s \in \Gamma(\PG,\cO_{\PG}(p^{r-1}))$ be a non-zero 
global section with zero locus $Z(s) \subset \PG$.
Set $U = \PG \backslash Z(s)$.
Then there is a well defined endomorphism (depending upon $s$)
$$\wt\Theta_G/s : \cM_{|U} \ \to \ \cM_{|U}.$$

Moreover, 
the image and kernel of the induced map $\theta_x/s: M_{k(x)} \to M_{k(x)}$ 
on fibers at $x \in U\subset \PG$  is independent of $s$ and satisfies 
\begin{equation}
\label{imm}
   \Im \{\theta_x/s: M_{k(x)} \to M_{k(x)} \} \simeq \Im \{\theta_v: M_{k(v)} \to M_{k(v)}\}
  \end{equation}
and
\begin{equation}
\label{kerr}
\Ker \{\theta_x/s: M_{k(x)} \to M_{k(x)}\} \simeq \Ker \{\theta_v: M_{k(v)} \to M_{k(v)}\}
\end{equation}
for any $v \in V(G)\backslash \{0 \}$ that projects  onto $x$.
\end{prop}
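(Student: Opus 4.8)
The plan is to reduce the statement to a computation on the standard affine charts of $\PG$, where everything in sight becomes multiplication by an element of $kG$ tensored with the structure sheaf, and then to exploit the fact that the ambiguities present — the choice of $s$, the choice of chart, and the passage from $x$ to a point $v$ lying over it — only rescale the operator by a nonzero scalar, hence change neither kernel nor image.

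First I would construct $\wt\Theta_G/s$. By Lemma~\ref{twist}, applied with $d=p^{r-1}$ (which is divisible by each degree $p^i$, $0\le i<r$, of a homogeneous generator of $k[V(G)]$, cf.\ Proposition~\ref{grade}), the sheaf $\cO_{\PG}(p^{r-1})$ is invertible; over $U=\PG\setminus Z(s)$ the section $s$ is nowhere vanishing, so multiplication by $s$ is an isomorphism $\cO_U\xrightarrow{\ \sim\ }\cO_{\PG}(p^{r-1})_{|U}$, and I define $\wt\Theta_G/s$ to be $\wt\Theta_G|_U$ followed by $s^{-1}$; its fiber at $x\in U$ is $\theta_x/s$. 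To describe this explicitly, choose (Proposition~\ref{grade}) a homogeneous generator $f$ of $k[V(G)]$, of degree $p^e$ with $e<r$, such that $f(x)\ne 0$, so $x$ lies in $D_+(f)=\Spec R$ with $R=(k[V(G)]_f)_0$, and $\cO_{\PG}(p^{r-1})$ is trivialized on $D_+(f)$ by $f^{p^{r-1-e}}$. Since $\Theta_{G,r}\in kG\otimes k[V(G)]$ is homogeneous of degree $p^{r-1}$ (Proposition~\ref{homog}), in this trivialization it becomes an element $\Theta_f\in kG\otimes R$ with $\Theta_{G,r}=f^{p^{r-1-e}}\Theta_f$, while $s$ becomes $s_f\cdot f^{p^{r-1-e}}$ with $s_f\in R$ and $s_f(x)\ne 0$. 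Thus $\wt\Theta_G/s$ is multiplication by $\Theta_f/s_f$ on $U\cap D_+(f)$, and $\theta_x/s=\Theta_f(x)/s_f(x)\in k(x)G$, acting on $M_{k(x)}=M\otimes_k k(x)$. Independence of $s$ is then immediate: a second such section $s'$ gives $\theta_x/s=(s'_f(x)/s_f(x))\cdot(\theta_x/s')$ with $s'_f(x)/s_f(x)\in k(x)^\times$, and rescaling by a nonzero scalar preserves kernel and image; the description does not depend on the auxiliary choice of $f$, since $\wt\Theta_G/s$ is a globally defined operator on $U$.

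Finally I would compare with $\theta_v$. Let $v\in V(G)\setminus\{0\}$ project onto $x$. Because $Z(f)$ is a cone and $f(x)\ne 0$, we have $f(v)\ne 0$; the projection induces a field extension $k(x)\hookrightarrow k(v)$, and $s_f(v)\ne 0$. By Definition~\ref{local}, $\theta_v$ is the evaluation of $\Theta_{G,r}$ at $v$; evaluating $\Theta_{G,r}=f^{p^{r-1-e}}\Theta_f$ at $v$ and comparing with the formula above gives $\theta_v=f(v)^{p^{r-1-e}}\,s_f(v)\cdot(\theta_x/s)_{k(v)}$, a nonzero scalar multiple of the base change of $\theta_x/s$. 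Since $-\otimes_{k(x)}k(v)$ is exact, the image (resp.\ kernel) of $\theta_v$ on $M_{k(v)}=M_{k(x)}\otimes_{k(x)}k(v)$ is the base change of the image (resp.\ kernel) of $\theta_x/s$, which is the content of (\ref{imm}) and (\ref{kerr}); alternatively, having verified these for one $v$, the general case follows from Proposition~\ref{indep-s}. I expect the main obstacle to be purely organizational: one must track the $\Proj$ formalism carefully — in particular that $\Gamma(\PG,\cO_{\PG}(p^{r-1}))$ need not equal $k[V(G)]_{p^{r-1}}$, so $s$ need not be a polynomial; that the trivializations and evaluations are compatible with the localization maps $R\to k(x)\hookrightarrow k(v)$; and that $f(v)^{p^{r-1-e}}s_f(v)$ is genuinely a unit — all routine, but it must be done with care.
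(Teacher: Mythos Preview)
Your proof is correct and follows essentially the same route as the paper: construct $\wt\Theta_G/s$ by dividing the global operator by the trivializing section, then show that on fibers $\theta_x/s$ and $\theta_v$ differ by a nonzero scalar, hence share kernel and image. The only notable difference is organizational: the paper treats $s$ itself as a homogeneous element of $A=k[V(G)]$ and works directly on the single affine $U=\Spec(A[1/s])_0$, obtaining $\theta_x/s=\theta_v/s(v)$ in one step, whereas you trivialize on a standard chart $D_+(f)$ and compute the scalar as $f(v)^{p^{r-1-e}}s_f(v)$ --- your version is slightly more careful about the possibility that $s$ is not literally a degree-$p^{r-1}$ element of $A$, a point the paper glosses over.
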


\begin{proof}  Let $X =\PG$. Let 
$\frac{1}{s} \in \cO_X(-p^{r-1})(U)$ satisfy 
$$s \otimes \frac{1}{s} = 1 \ \in \cO_X(p^{r-1})(U)  \otimes_{\cO_X(U)} \cO_X(-p^{r-1})(U) \ 
\simeq \ \cO_X(U).$$ 
Then we define 
$$\wt\Theta_G/s \equiv (\wt\Theta_{G})_{|U} \otimes \frac{1}{s}: \cM_{|U} \to 
 \cM(p^{r-1})_{|U}  \otimes \cO_X(-p^{r-1}))_{|U} \ \simeq \ \cM_{|U}.$$  

Let  $A = k[V(G)]$.   Then $U$ is an affine subscheme of $\PG$ determined by the $0$-degree  elements of  
$A\left[\frac{1}{s}\right]$, $U \simeq \Spec A\left[\frac{1}{s}\right]_{0}$. Via this identification, $\cM_{|U}$ 
corresponds to the free $A\left[\frac{1}{s}\right]_0$-module $M \otimes A\left[\frac{1}{s}\right]_0$. 
The fiber $M_{k(x)}$ of 
$\cM_{|U}$ at the point $x$ is naturally identified with the  fiber of  $M \otimes A\left[\frac{1}{s}\right]_0$ at $x$. Since $\Theta_G: M \otimes k[V(G)] \to M \otimes k[V(G)]$ is homogeneous of degree $p^{r-1}$ and $s \in k[V(G)]$ is homogeneous of the same degree, the operator  $\Theta_G \otimes \frac{1}{s}$ is well-defined on  
$M \otimes A\left[\frac{1}{s}\right]_0$ and corresponds to the operator $\wt\Theta_G/s$ on $\cM_{|U}$.  
Hence, $\theta_x/s: M_{k(x)} \to M_{k(x)}$ is identified with 
$$(\wt\Theta_G \otimes \frac{1}{s}) \otimes k(x) = \frac{\Theta_G}{s}(x): M_{k(x)}\to M_{k(x)}.$$
Let $v \in V(G)$ be any point projecting onto $x$. We have $k(x) = k(v)$. By Def.~\ref{local}, the map $\theta_v: M_{k(v)} \to M_{k(v)}$ is given by 
$$\Theta_G(v): M_{k(v)} \to  M_{k(v)}. $$ 
We observe that $\frac{\Theta_G}{s}(x) = \frac{\Theta_G(v)}{s(v)}$ for any $v$ projecting onto $x$ (and, in particular, is independent of the choice of $v$). 
Therefore,
\begin{equation}
\label{identt}
\theta_x/s \ = \  \frac{\theta_v}{s(v)}.
\end{equation}
The equalities(\ref{kerr}) and (\ref{imm}) now follow.
\end{proof}

\begin{remark}
For a finite group $G$, there is no natural choice of $\pi$-point representing a 
typical equivalence class $x \in \Pi(G) \ \simeq \ \Proj \,\bH(G,k)$ of $\pi$-points.   
As seen in elementary examples \cite[2.3]{FPS}, the Jordan type of a $kG$-module $M$ typically 
can be different for two equivalent $\pi$-points representing the same point
$x \in \Pi(G)$.
\end{remark}

\begin{remark} Proposition \ref{sec} immediately generalizes to  $\wt \Theta^j_G$ 
for any $1 \leq j \leq p-1$. Thus, we have  
the following isomorphisms  for  any $x \in X=\PG$, $v  \in V(G)$ projecting  onto $x$, and a global section 
$s$ of $\cO_X(jp^{r-1})$ such that $s(x) \not = 0$:
$$\Im\{ (\theta_x/s)^j: M_{k(x)} \to M_{k(x)} \} \simeq \Im\{ \theta_v^j: M_{k(v)} \to M_{k(v)} \} \simeq $$
$$\Im\{  \wt\Theta_G^j  \otimes_{\cO_X} k(x):  \cM \otimes_{\cO_X} k(x) \to  \cM(jp^{r-1})\otimes_{\cO_X} k(x) \},$$
and similarly  for kernels.
\end{remark}
\noindent
In what follows, we shall use the following abbreviations:
$$\Im \{ \wt\Theta_G^j,\cM \} \ \equiv \ \Im \{ \wt\Theta_G^j(-jp^{r-1}): 
\cM(-jp^{r-1})   \to \cM \},
$$
\begin{equation}
\label{image}
\Im\{ \theta_x^j,M_{k(x)} \} \ \equiv \ \Im\{ (\theta_x/s)^j: M_{k(x)} \to M_{k(x)} \},
\end{equation}
$$
\Ker \{ \wt\Theta_G^j,\cM \} \ \equiv \ \Ker \{ \wt\Theta_G^j: 
\cM \to \cM(jp^{r-1})  \},
$$
$$
\Ker \{ \theta_x^j ,M_{k(x)}\} \ \equiv \ \Ker \{ (\theta_x/s)^j: M_{k(x)} \to M_{k(x)}\}.
$$
$$
\Coker \{ \wt\Theta_G^j,\cM \} \ \equiv \ \cM / \Im \{ \wt\Theta_G^j,\cM \},
$$
$$
\Coker \{ \theta_x^j ,M_{k(x)}\} \ \equiv \ \Coker \{ (\theta_x/s)^j: M_{k(x)} \to M_{k(x)}\}.
$$
Note that both $\Ker$ and $\Im$  are subsheaves of the free sheaf $\cM$, and $\Coker$ is a quotient sheaf of $\cM$.

\vspace{0.1in}
\noindent
We shall verify in Theorem \ref{equiv} that a necessary and sufficient 
condition on a finite dimensional $kG$-module $M$  for 
$\Im \{ \wt\Theta_G^j,\cM \} $ (and thus $\Ker \{ \wt\Theta_G^j,\cM \} $) to be 
an algebraic vector bundle on $X$ is that $M$ be a module of constant $j$-type.

The following proposition is given in \cite[5 ex.5.8]{Har} without proof.

\begin{prop}
\label{test}
Let $X$ be a reduced scheme 
and $\wt M$ a coherent $\cO_X$-module. Then 
$\wt M$  is locally free if and only if $\dim_{k(x)}( \wt M \otimes_{\cO_X} k(x))$ 
depends only upon the connected component of $x$ in $\pi_0(X)$.
\end{prop}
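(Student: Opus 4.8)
The plan is to prove the standard criterion: a coherent sheaf on a reduced scheme is locally free exactly when its fiber dimension is locally constant. The claim is local, so I would reduce immediately to the affine case $X = \Spec R$ with $R$ reduced, and $\wt M = \widetilde N$ for a finitely generated $R$-module $N$; then $\wt M \otimes_{\cO_X} k(x)$ at a prime $\mathfrak p$ is $N \otimes_R \kappa(\mathfrak p) = N_{\mathfrak p}/\mathfrak p N_{\mathfrak p}$, and the hypothesis says the function $\mathfrak p \mapsto \dim_{\kappa(\mathfrak p)} N \otimes_R \kappa(\mathfrak p)$ is constant on each connected component of $\Spec R$. One direction is trivial: if $\wt M$ is locally free then its rank is locally constant in the Zariski topology, hence constant on connected components (using that the rank of a free module of finite rank over a nonzero ring is well-defined), so the fiber dimension depends only on $\pi_0(X)$.

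For the nontrivial direction, fix a prime $\mathfrak p$ and set $n = \dim_{\kappa(\mathfrak p)} N\otimes_R\kappa(\mathfrak p)$. By Nakayama, choose $m_1,\dots,m_n \in N$ whose images form a basis of $N\otimes_R \kappa(\mathfrak p)$; these generate $N_{\mathfrak p}$, and after shrinking to a basic open $D(f) \ni \mathfrak p$ they generate $N_f$ over $R_f$, giving a surjection $\phi: R_f^n \twoheadrightarrow N_f$. Let $K = \ker\phi$, so $0 \to K \to R_f^n \to N_f \to 0$ is exact with $R_f^n$ free (hence flat is not needed here) — the point is to show $K = 0$ after further localization. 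For any prime $\mathfrak q \in D(f)$, tensoring the short exact sequence with $\kappa(\mathfrak q)$ gives a right-exact sequence $K\otimes\kappa(\mathfrak q) \to \kappa(\mathfrak q)^n \to N\otimes\kappa(\mathfrak q) \to 0$; since by hypothesis (and constancy on the connected component through $\mathfrak p$, after shrinking $D(f)$ to lie in one component) $\dim N\otimes\kappa(\mathfrak q) = n$, the map $\kappa(\mathfrak q)^n \to N\otimes\kappa(\mathfrak q)$ is a surjection of $n$-dimensional spaces, hence an isomorphism, so $K\otimes_{R_f}\kappa(\mathfrak q) = 0$ for every $\mathfrak q \in D(f)$.

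Now I would invoke reducedness to conclude $K = 0$ on a neighborhood. Since $N$ is finitely generated and $R$ is Noetherian — or, in the general case, since $K$ is the kernel of a surjection from a finite free module onto a finitely presented module (finite presentation holds after the localization above because $R_f$ is Noetherian in our setting) — $K$ is finitely generated. If $K \neq 0$, some element $a \in K$ is nonzero at some point: there is $\mathfrak q \in D(f)$ with $a \notin \mathfrak q K_{\mathfrak q}$... more cleanly: $\operatorname{Supp}(K)$ is closed and nonempty, pick $\mathfrak q$ in it; then $K_{\mathfrak q} \neq 0$, and by Nakayama $K\otimes_{R_f}\kappa(\mathfrak q) = K_{\mathfrak q}/\mathfrak q K_{\mathfrak q} \neq 0$, contradicting the previous paragraph. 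Hence $K_f = 0$ for our chosen $f$, i.e. $N_f \cong R_f^n$ is free, so $\wt M$ is free on $D(f) \ni \mathfrak p$. Since $\mathfrak p$ was arbitrary, $\wt M$ is locally free.

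The main obstacle is the bookkeeping around finite presentation and the use of reducedness versus Noetherianness: in the fully general statement of \cite{Har}'s exercise one wants finite type over an arbitrary reduced base, and one must be a little careful that the kernel $K$ is finitely generated (so Nakayama applies) — in the applications in this paper $k[V(G)]$ and its localizations are Noetherian, so finite presentation is automatic and this subtlety disappears. The other mild subtlety is that the hypothesis is phrased in terms of connected components of $X$ rather than Zariski-locally constant rank; but on a scheme the connected components are open when $X$ is locally Noetherian (which holds here), so "depends only on $\pi_0(X)$" does give a Zariski-locally constant function, which is what the argument consumes. I would state the proof for $X$ Noetherian, which covers all uses in the sequel, and remark that the general case follows by the usual limit/finite-presentation arguments.
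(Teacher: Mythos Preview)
Your argument has a genuine gap at the step where you conclude $K\otimes_{R_f}\kappa(\mathfrak q)=0$. From the right-exact sequence $K\otimes\kappa(\mathfrak q)\to\kappa(\mathfrak q)^n\to N\otimes\kappa(\mathfrak q)\to 0$, the fact that the second map is an isomorphism only tells you that the \emph{image} of $K\otimes\kappa(\mathfrak q)$ in $\kappa(\mathfrak q)^n$ is zero; it does not say $K\otimes\kappa(\mathfrak q)=0$, because the first map need not be injective (there is a $\operatorname{Tor}_1(N_f,\kappa(\mathfrak q))$ obstruction). Consequently your Nakayama-on-$K$ step in the next paragraph --- ``$K_{\mathfrak q}\neq 0$ implies $K_{\mathfrak q}/\mathfrak q K_{\mathfrak q}\neq 0$, contradiction'' --- has nothing to contradict. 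Notice, too, that this Nakayama argument never touches reducedness, even though you announce you are about to invoke it.

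The correct deduction, which is exactly how the paper proceeds, is to use what you \emph{did} prove: for every prime $\mathfrak q$ the inclusion $K_{\mathfrak q}\hookrightarrow R_{f,\mathfrak q}^n$ lands inside $\mathfrak q R_{f,\mathfrak q}^n$. Hence for any $a=(a_1,\dots,a_n)\in K\subset R_f^n$, each coordinate $a_i$ lies in $\mathfrak q R_{f,\mathfrak q}\cap R_f$ for every prime $\mathfrak q$, i.e.\ $a_i\in\bigcap_{\mathfrak q}\mathfrak q=\operatorname{nil}(R_f)=0$ since $R_f$ is reduced. Thus $K=0$ and $N_f\cong R_f^n$. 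This is where reducedness is actually consumed; once you insert this step in place of the Nakayama-on-$K$ paragraph, your proof is complete and coincides with the paper's.
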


\begin{proof}   Assume that the function $x \mapsto \dim_{k(x)}(\wt M \otimes_{\cO_X} k(x))$ is constant  
on each connected component of $X$.   To prove that  $\wt M$ is locally  free it suffices to 
assume that $X$ is local so that  $X = \Spec R$  for some reduced local commutative ring,  
and that $M$ is a finite $R$-module (corresponding to the coherent sheaf $\wt M$) with the property that 
$\dim_{k(p)}( M \otimes_R k(p))$ is independent of the
prime $p \subset R$.  To prove that $M$ is free, we choose some surjective
$R$-module homomorphism  $g: Q \to M$  from a free $R$-module $Q \simeq R^n$
with the property that
$\ol g:  Q   \otimes_R R/\m\ \to \  M \otimes_R R/\m$ is an isomorphism where
$\m \subset R$ is the maximal ideal.  Then $g$ is surjective by Nakayama's
lemma.  By assumption, $g$  induces an isomorphism   after specialization to any prime $\p \subset R$: 
$Q \otimes_{R_\p} k(\p) \simeq M \otimes_{R_\p} k(\p)$.  Hence, 
$ Q_{\p}/\p Q_{\p} \simeq M_{\p}/\p M_{\p}$.  We conclude that if $ a \in \ker g$, then $a  \in \p Q_{\p} \cap Q$. 
Since this happens for any prime ideal, we further conclude that $\ker g \subset (\bigcap\limits_{\p \in \Spec R} \p Q_{\p}) \cap Q$. 
Recall that  $Q$  is a  free module so that $Q \simeq R^n$.  We  get   $(\bigcap \p Q_{\p}) \cap Q = (\bigcap \p R^n_{\p}) \cap R^n = 
((\bigcap \p R_{\p}) \cap R)^n = (\bigcap \p R)^n = 0$  since $R$  is reduced. 
\end{proof}

We shall find it convenient to ``localize" the notion of a $kG$-module of
constant $j$-rank given in Definition \ref{constant} as follows.

\begin{defn}
Let $G$ be an infinitesimal group scheme,
and let $M$ be a finite dimensional $kG$-module.
For any open subset $U \subset \PG$,
$M$ is said to be of constant $j$-rank when restricted to $U$
if $\rk _{k(x)}((\theta_x/s)^j: M_{k(x)} \to M_{k(x)})$ is independent of $x \in U$.
\end{defn}

Our next theorem emphasizes  the local nature of the concept of constant
$j$-rank.

\begin{thm}
\label{equiv}
Let $G$ be an infinitesimal group scheme, let $M$ be a finite 
dimensional $kG$-module, and  let $X = \bP(G)$.  
Let $U \subset X$ be a connected open subset, and 
$\wt\Theta_U^j : \cM_{|U} \to \cM(jp^{r-1})_{|U}$ be the restriction
to $U$
of the $j^{th}$  iterate of  $\wt\Theta_G$  on $\cM = M \otimes \cO_X $ as given in (\ref{theta-tilde}).
Then
the following are equivalent for some fixed $j, \ 1 \leq j < p$:
\begin{enumerate}
\item
$\Im \{\wt\Theta_U^j,\cM_{|U} \}$ is a locally free, coherent $\cO_U$-module.
\item
$  \Im\{ \wt\Theta_G^j,\cM\} \otimes_{\cO_X} k(x)$ has dimension independent of $x \in U$.
\item 
$\Im\{ \theta_x^j,M_{k(x)}\} \ \simeq \  \Im \{ \wt\Theta_G^j,\cM\} \otimes_{\cO_X} k(x) , 
\ \forall \ x \in U$
\item
$M$ has constant $j$-rank when restricted to $U$.
\end{enumerate}
Moreover, each of these conditions implies that 

\begin{enumerate}

\item[(5)] $\Coker  \{\wt\Theta_U^j,\cM_U \}$ is a 
locally free, coherent $\cO_U$-module.

\item[(6)] $\Coker \{ \theta_x^j,M_{k(x)}\} \ \simeq \  
\Coker\{ \wt\Theta_G^j,\cM\} \otimes_{\cO_X} k(x), \ \forall \ x \in U.$

\item[(7)] $\Ker  \{\wt\Theta_U^j,\cM_U \}$ is a 
locally free, coherent $\cO_U$-module.

\item[(8)] $\Ker \{ \theta_x^j,M_{k(x)}\} \ \simeq \  
\Ker\{ \wt\Theta_G^j,\cM\} \otimes_{\cO_X} k(x), \ \forall \ x \in U.$

\end{enumerate}

\end{thm}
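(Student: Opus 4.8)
The plan is to exploit that the assertion is local on $U$, reduce it via Proposition~\ref{sec} to a statement about honest $p$-nilpotent endomorphisms of free modules, and then carry out the comparison of fibres through the \emph{cokernel} — where $-\otimes_{\cO_X}k(x)$ is exact — before transporting local freeness back to the image and the kernel. First I would cover $U$ by affine opens $U_s=X\setminus Z(s)$ with $s$ a nonzero homogeneous section of $\cO_X(jp^{r-1})$; on each such open, Proposition~\ref{sec} (and the Remark following it) identifies $\wt\Theta_G^j$ with the honest $p$-nilpotent endomorphism $(\wt\Theta_G/s)^j$ of the free module $M\otimes\cO_X(U_s)$, and its fibre at $x\in U_s$ with $\theta_v^j\colon M_{k(v)}\to M_{k(v)}$, up to the nonzero scalar $s(x)$, for any $v\in V(G)$ lying over $x$. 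In particular the rank of the fibre of $\wt\Theta_G^j$ at $x$ equals $\rk(\theta_v^j)$, and the fibre kernel, image, and cokernel of $\wt\Theta_G^j$ at $x$ are canonically $\Ker\{\theta_x^j,M_{k(x)}\}$, $\Im\{\theta_x^j,M_{k(x)}\}$, $\Coker\{\theta_x^j,M_{k(x)}\}$ respectively.

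Since $-\otimes_{\cO_X}k(x)$ is right exact, the exact sequence $\cM(-jp^{r-1})\to\cM\to\Coker\{\wt\Theta_G^j,\cM\}\to0$ gives $\Coker\{\wt\Theta_G^j,\cM\}\otimes_{\cO_X}k(x)\cong\Coker\{\theta_x^j,M_{k(x)}\}$ for every $x$; this already proves (6) (unconditionally), and shows $\dim_{k(x)}\Coker\{\wt\Theta_G^j,\cM\}\otimes k(x)=\dim_k M-\rk(\theta_v^j)$. As $X$, hence $U$, is reduced, Proposition~\ref{test} then yields $(4)\Leftrightarrow(5)$: local freeness of $\Coker\{\wt\Theta_U^j,\cM_U\}$ on the connected $U$ is equivalent to constancy of $\rk(\theta_v^j)$ there.

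Assuming (5), I would run the two short exact sequences $0\to\Im\{\wt\Theta_U^j,\cM_U\}\to\cM_U\to\Coker\{\wt\Theta_U^j,\cM_U\}\to0$ and $0\to\Ker\{\wt\Theta_U^j,\cM_U\}\to\cM_U\to\Im\{\wt\Theta_G^j\colon\cM\to\cM(jp^{r-1})\}|_U\to0$. In the first, the right term is locally free hence flat, so $\Im\{\wt\Theta_U^j,\cM_U\}\otimes k(x)\hookrightarrow\cM\otimes k(x)$ with image exactly the fibre image $\Im\{\theta_x^j,M_{k(x)}\}$; this is (3), whence (2) via $(4)\Leftrightarrow(5)$ and (1) via Proposition~\ref{test}. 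Then the twist $\Im\{\wt\Theta_G^j\colon\cM\to\cM(jp^{r-1})\}$ is locally free as well, so the second sequence splits locally, giving (7) and, on fibres, (8). To close the equivalence of (1)--(4) one then notes $(1)\Rightarrow(2)$ (a locally free coherent sheaf on a connected scheme has constant fibre-rank), $(2)\Rightarrow(1)$ (Proposition~\ref{test}), and $(3)\Rightarrow(4)$: the function $x\mapsto\dim_{k(x)}\Im\{\wt\Theta_G^j,\cM\}\otimes k(x)$ is upper semicontinuous while $x\mapsto\rk(\theta_v^j)$ is lower semicontinuous, so the identification in (3) forces the common function to be locally constant, hence constant on the connected $U$.

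The one remaining implication — feeding local freeness of $\Im\{\wt\Theta_U^j,\cM_U\}$ (equivalently (1) or (2)) back into (4), or equivalently establishing that $\Coker$ is then locally free — is where I expect the real work to lie. The natural surjection $\Im\{\wt\Theta_G^j,\cM\}\otimes_{\cO_X}k(x)\twoheadrightarrow\Im\{\theta_x^j,M_{k(x)}\}$ need not be injective, and its kernel measures precisely the failure of constant $j$-rank, so mere coherence and local freeness of $\Im$ do not control it. The point where one must use that $\wt\Theta_G$ is genuinely $p$-nilpotent (the $u_{r-1}^p=0$ of Definition~\ref{action}) and homogeneous of degree $p^{r-1}$ (Proposition~\ref{homog}), rather than an arbitrary endomorphism of $\cM$, is exactly here: it is this structure that must be leveraged to rule out the image sheaf being locally free while its fibres jump, and thereby to force (5). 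I would arrange the proof so that this is the sole delicate step; the fibre identifications via Proposition~\ref{sec}, the two semicontinuity facts, and the splitting of short exact sequences of locally free sheaves are then routine verifications.
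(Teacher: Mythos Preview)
Your route through the cokernel is essentially the paper's, reordered: the paper likewise uses Proposition~\ref{test} for $(1)\Leftrightarrow(2)$, right exactness for (6), local splitting of the short exact sequences for (5), (7), (8), and the semicontinuity pair for $(3)\Rightarrow(2)$. Where you part company is at $(1)\Rightarrow(4)$. The paper does \emph{not} treat this as delicate; it dispatches it as $(1)\Rightarrow(3)$ in two lines, with no appeal to $p$-nilpotency: assuming $\Im$ is locally free, $0\to\Ker\to\cM\to\Im\to0$ is locally split, so locally $\wt\Theta_G^j$ ``is'' the projection $\Ker\oplus\Im\to\Im$, and base-changing this projection to $k(x)$ is claimed to identify $\Im\{\theta_x^j,M_{k(x)}\}$ with $\Im\otimes k(x)$.

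Your instinct that this step hides the real difficulty is correct---indeed more correct than the paper's own proof. The paper's argument conflates the surjection $\cM\twoheadrightarrow\Im$ with the endomorphism $\cM\to\cM$; the fibre of the latter is the composite $M_{k(x)}\twoheadrightarrow\Im\otimes k(x)\to M_{k(x)}$, and the second arrow can fail to inject even when $\Im$ is locally free (injectivity needs $\mathrm{Tor}_1(\Coker,k(x))=0$, i.e.\ condition (5), not (1)). Concretely, take $G=\bG_{a(1)}^{\times 2}$ and $M=k^2$ with $u_0$ acting by $\bigl(\begin{smallmatrix}0&1\\0&0\end{smallmatrix}\bigr)$ and $u_1$ by $0$. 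Then $\Theta_G$ acts by $\bigl(\begin{smallmatrix}0&s\\0&0\end{smallmatrix}\bigr)$, so $\Im\{\wt\Theta_G,\cM\}\cong\cO_{\bP^1}(-1)$ is a line bundle and (1) holds; yet $\theta_{[0:1]}=0$ has rank $0$ while $\theta_{[1:0]}$ has rank $1$, so (4) fails. Since $\Theta_G^2=0$ here, your proposed invocation of $p$-nilpotency and homogeneity cannot rescue the step either. What actually holds is $(3)\Leftrightarrow(4)\Rightarrow(1)\Leftrightarrow(2)$, with (4) implying (5)--(8); the reverse implication $(1)\Rightarrow(4)$ is false as stated.
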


\begin{proof}
Clearly,  (1) implies (2), whereas Proposition \ref{test} implies that
(2) implies (1).  

If we assume (1), we obtain a locally
split short exact sequence of coherent $\cO_U$-modules
\begin{equation}
\label{split}
0 \ \to \ \Ker\{ \wt\Theta_U^j,\cM_{|U} \} \ \to \  \cM_{|U} \ \to \ 
\Im\{ \wt\Theta_U^j,\cM_{|U} \} \to 0.
\end{equation}
In particular, $ \Ker\{ \wt\Theta_U^j,\cM_{|U} \}$ is a locally free, 
coherent $\cO_U$-module.
Locally on $U$,  $\wt\Theta_U^j$ on $\cM_{|U}$ is isomorphic to the projection
$$pr_2: \Ker\{ \wt\Theta_U^j,\cM_{|U} \}  \oplus  
\Im\{ \wt\Theta_U^j,,\cM_{|U} \}  \ 
\to \  \Im\{ \wt\Theta_U^j,,\cM_{|U} \} .$$
Since $\theta_x^j$ is the base change via $\cO_U \to k(x)$ of $\wt\Theta_U^j$,
$\theta_x^j$ can be identified with the base change of
this projection and thus we may conclude (3).

Let us now assume (3).  A simple argument using Nakayama's
Lemma as in the proof of Proposition \ref{test} implies that the function
$x \mapsto \Im \{\theta_x^j, M_{k(x)}\}$
is lower semi-continuous on $U$ whereas the function
$x \mapsto   \Im\{ \wt\Theta_G^j,\cM\} \otimes_{\cO_X} k(x)$
is upper semi-continuous
on $U$.  Thus, we conclude that each of these functions
is constant (since $U$ is connected), thereby concluding (2).  

Since $\rk\{(\theta_x/s)^j \} \ = \ \dim_{k(x)}(\Im\{ \theta_x^j,M_{k(x)} \})$, 
(2) and (3) imply (4).

Observe that 
if $f: V \to V$ is an endomorphism of a finite dimensional vector space
then $\dim\{\Coker f\} = \dim\{\Ker f \}$. The assumption that 
the $kG$-module $M$ has constant rank (i.e., (4)) implies that 
$$\dim_{k(x)}(\Coker\{ \theta_x^j,M_{k(x)} \})\  = \ 
\dim_{k(x)}(\Ker\{ \theta_x^j,M_{k(x)} \})$$ 
is independent of $x \in U$.  Hence, Proposition \ref{test} implies (5).
The right exactness of  $(-) \otimes_{\cO_X} k(x)$ applied to 
$$
\xymatrix{
\cM(-jp^{r-1}) \ar[rr]^-{\wt\Theta_G^j(-jp^{r-1})}  && \cM \ar[r]& 
\Coker\{ \wt\Theta_G^j,\cM  \} \ar[r] &0 }
$$
implies (6).

Under the assumption of (4),
we obtain a locally split short exact sequence of coherent $\cO_U$-modules
$$ 0 \to \Im\{  \wt\Theta_U^j,\cM_{|U}  \} \to 
\cM_{|U} \to \Coker\{ \wt\Theta_U^j,\cM_{|U}  \} \to 0, $$
so that $ \Im\{  \wt\Theta_U^j,\cM_{|U}  \}$ is a locally free, coherent $\cO_U$-module.
Now, using the short exact sequence  of coherent $\cO_U$-modules
$$0 \to \Ker \{ \wt\Theta_U^j,\cM_{|U}  \} \ \to  \ \cM_{|U} \ \to \
\Im\{  \wt\Theta_U^j,\cM_{|U}  \} \to 0,$$
we conclude that (4) implies (7) (i.e., that $\Ker \{ \wt\Theta_U^j,\cM_{|U}  \} $ is locally
free).  Since the short exact sequence
(\ref{split}) is locally split, applying $(-) \otimes_{\cO_X} k(x)$ to (\ref{split}) for any
$x \in U$ yields a short exact sequence, thereby implying (8).
\end{proof}

%%%%%%%%%%%%%%%%%%%%%%%%%%%%
%%%%%%%%%Section 5%%%%%%%%%%%%%%%

\section{Vector bundles for modules of constant $j$-rank}

In this section, we initiate the study of algebraic
 vector bundles associated to $kG$-modules 
of constant $j$-rank as defined in \ref{constant}.  
Our constructions have two immediate consequences.
The first is that certain $kG$-modules with the same ``local Jordan type" 
have non-isomorphic associated vector bundles, so that the isomorphism
classes of these vector bundles serve as a new invariant.  The second is
that our construction yields vector bundles on the highly non-trivial projective 
schemes $\PG$.

The reader will find formulas for the ranks of bundles considered, criteria
for non-triviality of bundles, a criterion for producing line bundles, 
a relationship to duality, and another test for the projectivity of
$kG$-modules.  We also investigate the dimension of global sections
of various bundles.

As in \S \ref{proj}, we assume that  $\dim V(G)\geq 1$ 
throughout this section. 

\vspace{0.2in}

The special case in which $U = \PG$  of Theorem \ref{equiv} is the following
assertion
that $kG$-modules of constant $j$-rank determine algebraic vector bundles over
$\PG$. 
\begin{thm}
\label{bundle}
Let $G$ be an infinitesimal group scheme, let $M$ be a finite 
dimensional $kG$-module, and  let $\cM = M \otimes \cO_{\PG}$ 
be a free coherent sheaf on $\PG$.
Then $M$ has constant $j$-rank if and only
if $\Im \{\wt\Theta_G^j,\cM \}$ is an 
algebraic vector bundle on $\PG$.

Consequently, if $M$ has constant $j$-rank, then 
$\Ker \{\wt\Theta^j_G, \cM \}$ also is an algebraic vector 
bundle on $\PG$.
\end{thm}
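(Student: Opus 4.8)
The plan is to deduce Theorem~\ref{bundle} directly from Theorem~\ref{equiv} by taking $U = X = \PG$. Note first that $\PG = \Proj k[V(G)]$ is connected: the coordinate algebra $k[V(G)]$ is graded with all generators in positive degree (Proposition~\ref{grade}), and $V(G)$ is a conical variety, irreducible or not, but in any case $\Proj$ of a graded ring generated in positive degrees with $\dim V(G) \geq 1$ has no disconnection forced by idempotents in degree zero beyond those of $\Spec$ of the degree-zero part, which is just $k$ here; more simply, $k[V(G)]_0 = k$, so $\PG$ is connected. Hence $U = \PG$ satisfies the hypothesis ``$U$ connected open'' of Theorem~\ref{equiv}.

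With that observation in hand, the first claim of Theorem~\ref{bundle} is precisely the equivalence of conditions (1) and (4) in Theorem~\ref{equiv} applied to $U = \PG$: $M$ has constant $j$-rank (condition (4), which by Definition~\ref{constant} is exactly constancy of $\rk(M,\theta_v^j)$ over $v \in V(G)\setminus\{0\}$, equivalently over $x \in \PG$) if and only if $\Im\{\wt\Theta_G^j, \cM\}$ is a locally free coherent $\cO_{\PG}$-module, i.e.\ an algebraic vector bundle. I would just cite this equivalence, perhaps spelling out that ``constant $j$-rank when restricted to $\PG$'' in the sense preceding Theorem~\ref{equiv} coincides with the global notion of Definition~\ref{constant} because every point of $V(G)\setminus\{0\}$ projects to a point of $\PG$ and, by Proposition~\ref{sec} (equation~(\ref{imm})), the rank of $\theta_x/s$ agrees with the rank of $\theta_v$ for any $v$ over $x$.

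For the second claim, that constant $j$-rank forces $\Ker\{\wt\Theta_G^j,\cM\}$ to be a vector bundle as well, I would invoke condition (7) of Theorem~\ref{equiv}, again with $U = \PG$: the theorem asserts that each of the equivalent conditions (1)--(4) implies (7), namely that $\Ker\{\wt\Theta_U^j, \cM_U\}$ is locally free coherent. Concretely this comes from the locally split short exact sequence (\ref{split})
$$0 \to \Ker\{\wt\Theta_G^j,\cM\} \to \cM \to \Im\{\wt\Theta_G^j,\cM\} \to 0,$$
in which $\cM$ is free and $\Im\{\wt\Theta_G^j,\cM\}$ is locally free by the first part, so that locally the sequence splits and exhibits $\Ker$ as a local direct summand of a free module, hence locally free.

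There is essentially no obstacle here: the content has all been established in Theorem~\ref{equiv}. The only point requiring a sentence of care is the connectedness of $\PG$, so that the ``independent of the connected component'' hypothesis of Proposition~\ref{test} (used inside the proof of Theorem~\ref{equiv}) becomes plain constancy; and the matching of the two notions of ``constant $j$-rank'' (the global one of Definition~\ref{constant} and the ``restricted to $U$'' one used in Theorem~\ref{equiv}), which is immediate from Proposition~\ref{sec}.
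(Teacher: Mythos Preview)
Your approach is exactly the paper's: Theorem~\ref{bundle} is stated as the special case $U = \PG$ of Theorem~\ref{equiv}, with no further argument given. You are actually more careful than the paper in flagging that connectedness of $\PG$ is needed to pass from the componentwise statement of Theorem~\ref{equiv} to the global one.

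One small gap: your justification of connectedness is not valid. The claim that $k[V(G)]_0 = k$ forces $\Proj k[V(G)]$ to be connected is false in general; for instance $\Proj k[x,y]/(xy)$ with the standard grading is two reduced points, yet the degree-zero part is $k$. Idempotents in $\Gamma(\Proj A, \cO)$ need not come from $A_0$. The paper handles this elsewhere (proof of Proposition~\ref{sub}) by citing \cite[3.4]{CFP} for connectedness of $\PG$; you should invoke that reference rather than the degree-zero argument. With that correction your proof is complete and matches the paper's.
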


\begin{remark}
\label{nont}
Unless $M$ is trivial as a $kG$-module, 
$\Ker \{ \Theta_G:  M \otimes k[V(G)]  \ \to \ M \otimes k[V(G)]\}$ is not projective
as a $k[V(G)]$-module, since the local $p$-nilpotent operator $\theta_0$
at $0 \in V(G)$ is the 0-map.
\end{remark}

We observe the following elementary functoriality of this construction.

\begin{prop}
\label{funct}
Let $i: H \to G$ be an embedding of infinitesimal group schemes, let $M$
be a finite dimensional $kG$-module, and let $N$ be the restriction of $M$
to $kH$.  Let $\cM =  M\otimes\cO_{\bP(G)} $, and $\cN =  N \otimes\cO_{\bP(H)}$. 
Then for any $j, 1 \leq j <p$, there are natural isomorphisms of coherent sheaves on $\bP(H)$,
where $f: \bP(H) \to \bP(G)$ is induced by $i$:
$$f^* \Im\{ \wt \Theta_G^j,\cM \} \ \simeq \ \Im\{ \wt \Theta_H^j,\cN \}$$
$$f^* \Ker\{ \wt \Theta_G^j,\cM \} \ \simeq \ \Ker\{ \wt \Theta_H^j,\cN \}.$$
\end{prop}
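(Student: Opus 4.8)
The plan is to reduce the proposition to the naturality of the global operator $\Theta_G$ already established in Proposition~\ref{pull}, and then to promote the naturality of $\wt\Theta$ itself to the asserted statements about its kernels and images by using the local splitting that is available for modules of constant $j$-rank.

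First I would pin down the geometry. The closed embedding $i\colon H\hookrightarrow G$ induces a closed embedding $\phi\colon V_r(H)\hookrightarrow V_r(G)$ by \cite[1.5]{SFB1}, and by Proposition~\ref{grade} together with naturality of the grading the resulting surjection $\phi^*\colon k[V_r(G)]\twoheadrightarrow k[V_r(H)]$ is a homomorphism of graded algebras. Hence $f=\Proj\phi^*\colon\bP(H)\to\bP(G)$ is a closed immersion with $f^*\cO_{\bP(G)}(n)\simeq\cO_{\bP(H)}(n)$ for all $n$, and $f^*\cM=f^*(M\otimes\cO_{\bP(G)})\simeq M\otimes\cO_{\bP(H)}=\cN$ once $M$ is regarded as a $kH$-module. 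Under these identifications the point to verify is that $f^*\wt\Theta_G=\wt\Theta_H$ as maps $\cN\to\cN(p^{r-1})$. Sheafifying, this is the statement that the base change along $\phi^*$ of the graded $k[V_r(G)]$-linear operator $\Theta_M$ on $M\otimes k[V_r(G)]$ is the graded $k[V_r(H)]$-linear operator $\Theta_N$ on $N\otimes k[V_r(H)]$, and that follows at once from the commuting diagram~(\ref{pullback3}) of Proposition~\ref{pull} applied to $M$: the two operators agree on elements $m\otimes 1$ by that diagram, and hence everywhere by $k[V_r(G)]$-linearity of $\Theta_M$ and $\phi^*$-semilinearity of $\id_M\otimes\phi^*$. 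Composing, $f^*(\wt\Theta_G^j)=\wt\Theta_H^j$ for every $j$.

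Now I would pass from $\wt\Theta$ to the kernels and images of its iterates. For a map between free sheaves, $f^*$ preserves cokernels automatically but need not preserve kernels or images, and the clean way past this is to use that $M$ is of constant $j$-rank, so that by Theorems~\ref{bundle} and~\ref{equiv} both $\Ker\{\wt\Theta_G^j,\cM\}$ and $\Im\{\wt\Theta_G^j,\cM\}$ are subbundles of $\cM$, i.e.\ the short exact sequences
\[
0\longrightarrow\Ker\{\wt\Theta_G^j,\cM\}\longrightarrow\cM\longrightarrow\cM/\Ker\{\wt\Theta_G^j,\cM\}\longrightarrow 0
\]
and the analogous one for $\Im$ are sequences of vector bundles, hence locally split. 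Applying $f^*$ to a locally split sequence keeps it exact, so $f^*$ carries these kernel and image subbundles of $\cM$ to the kernel and image subbundles of the pulled-back map $f^*(\wt\Theta_G^j)=\wt\Theta_H^j$ on $f^*\cM=\cN$. Since $N=M\downarrow_H$ is itself of constant $j$-rank over $H$ — for $v\in V(H)\subset V(G)$ the $\pi$-point $\mu_{v,*}\circ\epsilon$ of $H$, composed with $i_*$, is the $\pi$-point of $G$ at $v$, so $\theta_v^j$ has the same rank on $N_{k(v)}$ as on $M_{k(v)}$, and this rank is $v$-independent — the kernel and image subbundles of $\wt\Theta_H^j$ on $\cN$ are exactly $\Ker\{\wt\Theta_H^j,\cN\}$ and $\Im\{\wt\Theta_H^j,\cN\}$. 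Comparing the two descriptions gives the isomorphisms, which are natural because every identification used is. The main obstacle is precisely the failure of left exactness of $f^*$: the commutation of $\Ker$ and $\Im$ with the base change $f^*$ is what needs the constant $j$-rank hypothesis, since that is what forces the relevant sequences to split locally; the remaining work is routine bookkeeping with the Serre twists $\cO(-jp^{r-1})$ built into the definitions of $\Im\{-\}$ and $\Ker\{-\}$ and with the free-sheaf identification $f^*\cM\simeq\cN$ carrying the $\Theta$-actions.
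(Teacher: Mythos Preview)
Your approach matches the paper's: both reduce to the commuting square
\[
\xymatrix{
M\otimes\cO_{\bP(G)} \ar[r]^-{\wt\Theta_G^j} \ar[d]_{f^*} & M\otimes\cO_{\bP(G)}(jp^{r-1}) \ar[d]^{f^*} \\
N\otimes\cO_{\bP(H)} \ar[r]^-{\wt\Theta_H^j} & N\otimes\cO_{\bP(H)}(jp^{r-1})
}
\]
furnished by Proposition~\ref{pull}. The paper's proof stops there and asserts that the conclusion follows immediately; you go further and correctly observe that passing from ``$f^*$ intertwines the operators'' to ``$f^*$ intertwines their kernels and images'' is not automatic, since $f^*$ is only right exact, and you secure this step by invoking local splitting under the assumption that $M$ has constant $j$-rank.

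The gap is that this assumption is not among the hypotheses of the proposition as stated, and you slip it in mid-proof without comment. You should flag it explicitly, because without it the proposition is false. Take $G=\bG_{a(1)}\times\bG_{a(1)}$, $H$ the second factor, and $M=kG/(y)$: then $\bP(H)$ is the point $[0{:}1]\in\bP^1=\bP(G)$, the restriction $N=M\downarrow_H$ is trivial, and one checks that $f^*\Ker\{\wt\Theta_G,\cM\}$ is $1$-dimensional while $\Ker\{\wt\Theta_H,\cN\}=\cN$ is $p$-dimensional (and similarly the image sheaves have dimensions $p-1$ versus $0$). With the constant-$j$-rank hypothesis added---which is implicit from the section title and holds in every application the paper makes of this proposition---your argument is correct and supplies exactly the justification the paper's one-line proof omits. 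Your closing remark that $N$ inherits constant $j$-rank is true but not needed for the isomorphisms themselves.
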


\begin{proof}
The statement  follows immediately  from the commutativity of the diagram 
\begin{equation}\label{commutative}
\UseComputerModernTips
\xymatrix{ M  \otimes \cO_{\bP(G)}\ar[r]^-{\wt\Theta^j_G}\ar[d]_{f^*} &  
M \otimes \cO_{\bP(G)}(jp^{r-1}) \ar[d]_{f^*}\\
 N \otimes\cO_{\bP(H)}\ar[r]^-{\wt\Theta^j_H} & N \otimes \cO_{\bP(H)}(jp^{r-1}). 
}
\end{equation}
The diagram is commutative by Proposition \ref{pullback3}.

\end{proof}

The following Corollary will be used later in \S 6.

\begin{cor}
\label{ext-prod}
Let $G_1, \ G_2$ be  infinitesimal group schemes, let $G = G_1 \times G_2$, and let
$f: \bP(G_1) \to \bP(G)$ 
be the natural embedding of varieties induced by the embedding of group schemes $i: G_1 \hookrightarrow G$. Let $M_1, \ M_2$ be $kG_1, \ kG_2$ modules of dimensions 
$m_1, m_2$ respectively.   Then for any $j, \ 1 \leq j \leq p$, 

\begin{equation}
\label{i1}
f^*(\Ker\{\wt \Theta^j_{G}, \cM_1 \boxtimes \cM_2 \}) \ \simeq \ 
\Ker\{ \wt \Theta^j_{G_1}, \cM_1 \}^{\oplus m_2}
\end{equation}

\vspace{0.1in} 
\noindent
Here, $\cM_1 = M_1 \otimes \cO_{\bP(G_1)}, \ \cM_2 = M_2 \otimes \cO_{\bP(G_2)}$,
and $\cM_1 \boxtimes \cM_2 \simeq (M_1 \otimes M_2) \otimes \cO_{\bP(G)}$.
\end{cor}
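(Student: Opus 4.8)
The plan is to deduce this from the functoriality statement of Proposition~\ref{funct}, applied to the first-factor inclusion $i\colon G_1\hookrightarrow G=G_1\times G_2$, together with an explicit identification of the restriction of $M_1\otimes M_2$ to $kG_1$. First I would fix an integer $r$ at least as large as the height of $G$; since the heights of $G_1$ and $G_2$ are then also at most $r$, all the operators $\wt\Theta_G$ and $\wt\Theta_{G_1}$ in play are homogeneous of degree $p^{r-1}$, and Proposition~\ref{funct} is available for $i$. Writing $N$ for the $kG_1$-module obtained by restricting $M=M_1\otimes M_2$ along $i$, and $\cN=N\otimes\cO_{\bP(G_1)}$, and using the identification $\cM_1\boxtimes\cM_2\simeq(M_1\otimes M_2)\otimes\cO_{\bP(G)}$ recorded in the statement, Proposition~\ref{funct} supplies a natural isomorphism of coherent sheaves on $\bP(G_1)$
$$
f^*\bigl(\Ker\{\wt\Theta^j_G,\cM_1\boxtimes\cM_2\}\bigr)\ \simeq\ \Ker\{\wt\Theta^j_{G_1},\cN\}.
$$

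Next I would identify $N$ explicitly. On $R$-points the action of $G=G_1\times G_2$ on $M_1\otimes M_2$ sends $\bigl((g_1,g_2),\,m_1\otimes m_2\bigr)$ to $g_1m_1\otimes g_2m_2$, so after restriction along $g_1\mapsto(g_1,1)$ the second tensor factor becomes the trivial $kG_1$-module of dimension $m_2$. Hence $N\simeq M_1\otimes k^{m_2}\simeq M_1^{\oplus m_2}$ as $kG_1$-modules, so $\cN\simeq\cM_1^{\oplus m_2}$, and under this isomorphism $\wt\Theta_{G_1}$ on $\cN$ is the direct sum of $m_2$ copies of $\wt\Theta_{G_1}$ on $\cM_1=M_1\otimes\cO_{\bP(G_1)}$. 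The same then holds for the $j$-th iterate, so that
$$
\Ker\{\wt\Theta^j_{G_1},\cN\}\ \simeq\ \Ker\{\wt\Theta^j_{G_1},\cM_1\}^{\oplus m_2},
$$
and concatenating with the previous isomorphism yields (\ref{i1}).

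The argument is essentially formal, so I do not expect a genuine obstacle; the only points that need a little care are the bookkeeping of heights and of the Serre twist $\cO(jp^{r-1})$ so that Proposition~\ref{funct} applies verbatim to $i$, and the boundary case $j=p$, where $\wt\Theta^p$ is the zero map and both sides of (\ref{i1}) are simply free of rank $m_1m_2$, so the assertion holds trivially. If one preferred to avoid invoking Proposition~\ref{funct}, the same conclusion can be reached directly by computing $\wt\Theta_G$ restricted along $f$ via the compatibility diagram~(\ref{pullback3}) and then using the direct-sum decomposition of $N$.
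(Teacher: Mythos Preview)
Your proposal is correct and follows essentially the same approach as the paper: apply Proposition~\ref{funct} to the inclusion $i\colon G_1\hookrightarrow G$, then observe that $(M_1\otimes M_2)\!\downarrow_{G_1}\simeq M_1^{\oplus m_2}$ and that the kernel construction commutes with direct sums. The paper's proof is simply a terser version of what you wrote.
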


\begin{proof}
By Proposition~\ref{funct}, it suffices to observe that $(M_1 \otimes M_2)\downarrow_{G_1} \simeq M_1^{\oplus m_2}$, and that $f^*$ and $\wt \Theta^j$ commute with direct sums.   
\end{proof}

We have a duality for kernel and cokernel bundles. For a vector bundle $\E$ on a 
projective variety $X$, we denote by $\E^\vee = \mathcal Hom_{\cO_X}(\cE, \cO_X)$ the dual bundle.

\begin{prop}
\label{duality}
Let $M$ be a finite dimensional $kG$-module of constant $j$-rank. Let $\cN = M^\# \otimes \cO_{\PG}$, 
and $\cM = M \otimes \cO_{\PG}$. Then  
$$\Ker\{\wt\Theta_G^j, \cM\}^\vee \simeq \Coker\{ \wt \Theta_G^j, \cN \}$$
as vector bundles on $\cO_{\bP(G)}$. 
\end{prop}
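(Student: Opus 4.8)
The plan is to deduce the isomorphism from the elementary duality $(\Ker T)^\vee\simeq\Coker(T^\vee)$ for an endomorphism $T$ of a vector bundle whose kernel, image and cokernel are all locally free; the real work is to identify the operator $\wt\Theta_G$ acting on $\cN=M^\#\otimes\cO_{\PG}$ with the $\cO_{\PG}$-dual of the operator $\wt\Theta_G$ acting on $\cM=M\otimes\cO_{\PG}$.

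The first, and crucial, step is the identity $(S_{kG}\otimes\id)(\Theta_G)=-\Theta_G$ in $kG\otimes k[V_r(G)]$, where $S_{kG}$ is the antipode. Indeed, the antipode $S$ of $k\Gar$ is dual to the antipode $T\mapsto -T$ of $k[\Gar]=k[T]/T^{p^r}$, so it multiplies each element of the basis dual to $\{1,T,\dots,T^{p^r-1}\}$ by a sign, and in particular $S(u_{r-1})=(-1)^{p^{r-1}}u_{r-1}=-u_{r-1}$; since $\cU_{G,r,*}$ is a homomorphism of Hopf algebras over $k[V_r(G)]$ it commutes with antipodes, and applying it to $u_{r-1}\otimes 1$ gives the claim. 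Because the $kG$-module structure on $M^\#$ is contragredient to that of $M$ (with $\xi\in kG$ acting by the transpose of $S(\xi)$ acting on $M$), writing $\Theta_G=\sum_i\xi_i\otimes a_i$ this translates into $\Theta_{M^\#}=-(\Theta_M)^{\mathrm{tr}}$, where the transpose is taken over $k[V(G)]$ under $M^\#\otimes k[V(G)]=\Hom_{k[V(G)]}(M\otimes k[V(G)],k[V(G)])$. Passing to the associated coherent sheaves, and noting that both operators are homogeneous of degree $p^{r-1}$ by Proposition~\ref{homog}, this says that under the canonical identification $\cN\simeq\cM^\vee$ the map $\wt\Theta^j_G|_\cN\colon\cN\to\cN(jp^{r-1})$ agrees, up to the sign $(-1)^j$, with the twist by $jp^{r-1}$ of the $\cO_{\PG}$-dual of $\wt\Theta^j_G|_\cM\colon\cM\to\cM(jp^{r-1})$. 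I expect this step — reconciling the operator on the dual module with the honest sheaf-theoretic dual, while keeping track of the Serre twists — to be the only delicate point; everything after it is formal homological algebra of locally split sequences of bundles.

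Since $M$ has constant $j$-rank, Theorem~\ref{bundle} (equivalently Theorem~\ref{equiv} with $U=\PG$, which is connected because $V(G)$ is conical) shows that $\Ker\{\wt\Theta^j_G,\cM\}$, $\Im\{\wt\Theta^j_G,\cM\}$ and $\Coker\{\wt\Theta^j_G,\cM\}$ are vector bundles; the same holds for $\cN$, since $M^\#$ is of constant $j$-rank as well by the transpose identity above. Factor $\wt\Theta^j_G|_\cM$ as $\cM\xrightarrow{\pi}\cC\xrightarrow{\sigma}\cM(jp^{r-1})$, where $\cC:=\cM/\Ker\{\wt\Theta^j_G,\cM\}\simeq\Im\{\wt\Theta^j_G,\cM\}(jp^{r-1})$ is a vector bundle, $\pi$ is surjective, and $\sigma$ is a sub-bundle inclusion with locally free cokernel $\Coker\{\wt\Theta^j_G,\cM\}(jp^{r-1})$. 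Dualizing the (locally split) exact sequence $0\to\Ker\{\wt\Theta^j_G,\cM\}\to\cM\xrightarrow{\pi}\cC\to0$ gives $0\to\cC^\vee\xrightarrow{\pi^\vee}\cM^\vee\to\Ker\{\wt\Theta^j_G,\cM\}^\vee\to0$, so $\Ker\{\wt\Theta^j_G,\cM\}^\vee\simeq\cN/\pi^\vee(\cC^\vee)$. Finally, dualizing $\wt\Theta^j_G|_\cM=\sigma\circ\pi$, twisting by $jp^{r-1}$, and invoking Step~1 gives $\wt\Theta^j_G|_\cN(-jp^{r-1})=(-1)^j\,\pi^\vee\circ\sigma^\vee$ with $\sigma^\vee$ surjective, whence $\Im\{\wt\Theta^j_G,\cN\}=\pi^\vee(\cC^\vee)$ as subsheaves of $\cN$. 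Combining, $\Ker\{\wt\Theta^j_G,\cM\}^\vee\simeq\cN/\Im\{\wt\Theta^j_G,\cN\}=\Coker\{\wt\Theta^j_G,\cN\}$, as required.
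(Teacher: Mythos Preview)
Your proof is correct and follows essentially the same route as the paper: identify $\wt\Theta_G^j$ on $\cN$ with the $\cO_{\PG}$-dual of $\wt\Theta_G^j$ on $\cM$, then dualize the (locally split) exact sequence. The paper compresses your antipode computation into the sentence ``choosing dual bases for $M$ and $M^\#$ \ldots\ the $\cO_{\bP(G)}$-dual of the map $\wt\Theta_G^j$ is identified with $\wt\Theta_G^j(-jp^{r-1})$'' and then dualizes the single four-term sequence rather than splitting into two short exact sequences; your explicit verification that $S(u_{r-1})=-u_{r-1}$ and hence $\Theta_{M^\#}=-(\Theta_M)^{\mathrm{tr}}$ supplies the justification the paper leaves implicit.
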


\begin{proof}
Choosing dual bases for $M$ and $M^\#$, we get an isomorphism of  trivial bundles  
$\cM$ and $\cN$. Hence, we may identify the dual bundle $\cM(jp^{r-1})^\vee$
with $\cN(-jp^{r-1})$.  Under this identification, the $\cO_{\bP(G)}$-dual of the map 
$$\wt\Theta_G^j: \cM \to \cM(jp^{r-1})$$ is identified with $$\wt\Theta_G^j(-jp^{r-1}) : \cN(-jp^{r-1}) \to \cN.$$ 
Since $\Ext^1_{\cO_{\PG}}(-,\cO_{\PG})$ vanishes on
locally free sheaves, taking the $\cO_{\bP(G)}$-dual of a short exact sequence of vector bundles 
$$\UseComputerModernTips
\xymatrix{0 \ar[r] & \Ker\{\wt\Theta_G^j, \cM \} \ar[r] & \cM \ar[r]^-{\wt\Theta_G^j} & \cM(jp^{r-1})
        \ar[r]& \Coker\{\wt \Theta^j, \cM\}(jp^{r-1}) \ar[r]& 0,
}
$$
we get an exact sequence 

\noindent
$\UseComputerModernTips
\xymatrix{0& \Ker\{\wt\Theta_G^j, \cM \}^\vee \ar[l]& \cN \ar[l] &&& \cN(-jp^{r-1})
        \ar[lll]_-{\Theta_G^j(-jp^{r-1})}  &  \ar[l] 
}
$
$$\UseComputerModernTips
\xymatrix{  \quad \quad &\Coker\{\wt \Theta^j, \cM\}^\vee(-jp^{r-1}) & 0\ar[l]
.}
$$

\noindent
Therefore, $\Ker\{\wt\Theta_G^j,\cM \}^\vee \simeq \Coker\{ \wt \Theta_G^j, \cN \}$.

\end{proof}

\begin{ex}
\label{cJ}
For each of our four examples of infinitesimal group schemes 
(initially investigated in Example \ref{four}), we give examples of
 $kG$-modules of constant Jordan type taken from \cite{CFP}.
\vskip .1in
(1)
Let $\fg$ be a finite dimensional $p$-restricted Lie algebra of dimension
at least 2.   For any Tate cohomology class of negative dimension,
$\zeta \in \widehat H^n(\cu(\fg),k) \simeq \Ext^1_{\cu(\fg)}(\Omega^{n-1}(k), k)$, 
we consider the extension of $\cu(\fg)$-modules
$$
\UseComputerModernTips
 \xymatrix{0 \ar[r]& k \ar[r] & M \ar[r] & \Omega^{n-1}(k) \ar[r] & 0}
 $$
determined by $\zeta$.  By \cite[6.3]{CFP}, 
$M$ is a $\cu(\fg)$-module of constant Jordan type.  We verify
by inspection that the Jordan type of $M$ is $(a, 0, \ldots, 0, 2)$ 
for some $a > 0$ if $n$ is odd, and 
$(b, 1, 0, \ldots, 0, 1)$ for some $b > 0$ if $n$ is even (see (\ref{jtype})  for notation).

\vskip .1in 
(2)
Let  $G = \bG_{a(r)}$, and set  $I$ equal to  the augmentation ideal
of $kG \simeq k[u_0, \ldots, u_{p-1}]/(u^p_0, \ldots, u^p_{p-1})$.  As observed in \cite{CFP},
$I^i/I^t$ is a module of constant Jordan type for any $t > i$.
As proven in \cite{CFS}, the only ideals of $k\bG_{a(2)}$ which are of constant Jordan type 
are of the form $I^i$.
\sloppy
{

}

\vskip .1in 
(3)
As observed in \cite{CFP},  the
$n^{th}$ syzygy module $\Omega^n(k), \ n \in \bZ$, is a module of constant
Jordan type for any infinitesimal group scheme $G$.  
For $n$ even, $\Omega^n(k)$ has constant Jordan type $(a, 0, \ldots, 0, 1)$
for some $a > 0$; whereas for $n$ odd, $\Omega^n(k)$ has constant Jordan 
type $(b, p-1, 0, \ldots, 0)$ for some $b > 0$.
\vskip .1in 
(4)
For $G = \SL_{2(2)}$, we recall that the cohomology algebra $\bH(G,k)$
is generated modulo nilpotents by classes $\zeta_1,\zeta_2,\zeta_3 \in
\HHH^2(G,k)$ and classes $\xi_1,\xi_2,\xi_3 \in \HHH^{2p}(G,k)$ (\cite{FS}).  As in
\cite[6.8]{CFP}, the $kG$-module
$$M \ \equiv \ \Ker\{ \sum \zeta_i + \sum \xi_j: (\Omega^2(k))^{\oplus 3}
\oplus (\Omega^{2p}(k))^{\oplus 3} \ \to \ k \}$$
is a $kG$-module of constant Jordan type $(a, 0, \ldots, 0, 1)$ for some 
$a > 0$.
\end{ex}
\vskip .2in

We elaborate  on the  Example~\ref{cJ}(2), constructing    $\bG_{a(r)}$-modules   
 of constant $j$-rank for  
but not  of constant Jordan type. 
   
\begin{ex}
We start with the  following simple observation.   
Let $M_1\subset M_2 \subset M$  be a chain of $k$-vector spaces, and  let $\phi$  
be an endomorphism  of $M$ such that $\phi(M_1) \subset M_1$ and $\phi(M_2) \subset M_2$.  
If $\dim (\Ker \phi_{|_{M_1}}) = \dim (\Ker \phi)$, then 
$\dim (\Ker \phi_{|_{M_1}}) =\dim (\Ker \phi_{|_{M_2}}) = \dim (\Ker \phi)$.

Let $G = \bG_{a(r)}$, and set  $I$ equal to  the augmentation ideal
of $kG \simeq k[u_0, \ldots, u_{p-1}]/(u^p_0, \ldots, u^p_{p-1})$. Consider any ideal $J$ of $kG$ with the property
that $I^i \subset J$ for some $i, \ i\leq p-1$.  Note that  for any $\ul a \in \bA^r$, and  any 
$j \leq p-i$, 
\sloppy
{

}

\begin{equation}
\label{j-ker}
\dim (\Ker \{\theta_{\ul a}^j: I^i  \to I^i \}) = pj = \dim (\Ker \{\theta_{\ul a}^j: kG  \to kG \}).
\end{equation}
Indeed,  since  $I^i$ is a  module of constant  Jordan type,  it suffices to check the statement for  
$\theta_{\ul a} = u_0$  for which it   is straightforward.     The observation  in the previous paragraph together with (\ref{j-ker}) 
and the   inclusions $I^i \subset J  \subset kG$   imply
$$ \dim (\Ker \{\theta_{\ul a}^j: J \to J \}) =  pj$$
for any $j \leq p-i$  and  any $\ul a \in \bA^r$.  Hence, $J$  has  constant $j$-rank for $1 \leq  j  \leq p-i$. 
\end{ex}

In the following example, we offer a method applicable to almost all infinitesimal
group schemes $G$ of constructing $kG$-modules which are of constant rank 
but not constant Jordan type.

\begin{ex}
Let $G$ be an infinitesimal group scheme with the property that $V(G)$ has
dimension at least 2.  Assume that $p$ is odd, and let $n > 0$ be an odd
positive integer.  Let $\zeta \in \HHH^n(G,k)$ be a non-zero cohomology class
and let $M$ denote the kernel of $\zeta: \Omega^n(k) \to k$.  Then $M$
has constant rank but not constant Jordan type.  Namely, the local Jordan
type of $M$ at $0 \not= v \in V(G)$ is $(a, 0, 1, 0, \ldots, 0)$ if $\zeta(v) \not= 0$, and is
$(a-1, 2, 0, \ldots, 0)$ if $\zeta(v) = 0$.   These Jordan types have the same rank.
\end{ex}

For $G = \SL_{2(1)}$, the restriction of any rational $\SL_2$--module is a module of constant  
Jordan type (see \cite{CFP}).  Irreducible $\SL_2$-modules $S_\lambda$ are parameterized by  their
highest weight, a non-negative integer $\lambda$.  Irreducible $\SL_{2(1)}$ modules are the restrictions of $S_\lambda$ to
$\SL_{2(1)}$ for $\ 0 \leq \lambda \leq p-1$. 

Another important family of $\SL_2$-modules are
the $V_\lambda$ (also denoted $H^0(\lambda))$
defined as the subspace of $k[s,t]$ (i.e., the symmetric algebra on
the natural 2-dimensional representation for $\SL_2$) consisting of homogeneous vectors
of degree $\lambda$. For $0 \leq \lambda \leq p-1$, we have an isomorphism of $\SL_{2(1)}$--modules: $S_\lambda \simeq V_\lambda$.  

Recall that $V(G)$ is the nullcone  in $sl_2$, and, hence, $A = k[V(G)]\simeq k[x,y,z]/(xy+z^2)$. 
Let  
\begin{equation}
\label{conic}
i: \bP^1 \ \to  \ \bP(G)
\end{equation}
be the isomorphism given on homogeneous coordinates by 
$$\frac{k[x,y,z]}{(xy+z^2)} \to k[s,t] \quad \quad  (x, y, z) \mapsto (s^2, -t^2, st).$$
In the next proposition, we compute  explicitly the kernel bundles  associated to  the irreducible $\SL_{2(1)}$-modules, and for the induced modules $V_\lambda$ for $p \leq \lambda \leq 2p-2$.  For convenience,  we give the answer in terms of pull-backs to $\bP^1$ via the isomorphism $i$. 

\begin{prop}
\label{sl2}
Let $G = \SL_{2(1)}$, and let $i: \bP^1 \ \stackrel{\sim}{\to} \ \bP(G)$  be the isomorphism defined in (\ref{conic}).
\begin{enumerate}
  \item For $0 \leq \lambda \leq p-1$,
$$i^*(\Ker\{\wt\Theta_G, S_\lambda \otimes \cO_{\PG} \}) \simeq 
\cO_{\bP^1}(-\lambda).$$
\item For $p \leq \lambda \leq 2p-2$, 
$$i^*(\Ker\{\wt\Theta_G, V_\lambda \otimes \cO_{\PG} \}) \simeq 
\cO_{\bP^1}(-\lambda) \oplus \cO_{\bP^1}(\lambda-2(p-1)).$$
\end{enumerate}\end{prop}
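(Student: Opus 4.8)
The plan is to work entirely on $\bP^1$ after transporting everything via the isomorphism $i$, and to compute $\wt\Theta_G$ explicitly as a bundle map. For $G = \SL_{2(1)} = \ul{sl_2}$ we have from Example \ref{p-univ}(1) the formula $\Theta_{\ul{sl_2}} = xe + yf + zh$, acting on a module $M$ via the structure map. So the bundle map $\wt\Theta_G \colon M \otimes \cO_{\PG} \to M \otimes \cO_{\PG}(1)$ is, in coordinates, given by the matrix $x\rho(e) + y\rho(f) + z\rho(h)$ with entries linear in $x,y,z$. Pulling back along $i$ sends $(x,y,z) \mapsto (s^2, -t^2, st)$, so $i^*\wt\Theta_G$ becomes the matrix $s^2\rho(e) - t^2\rho(f) + st\,\rho(h)$, a map $M \otimes \cO_{\bP^1} \to M \otimes \cO_{\bP^1}(2)$ with entries homogeneous quadratic in $(s,t)$.

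For part (1), I would take the standard realization $S_\lambda = V_\lambda \subset k[u,v]$ of homogeneous degree-$\lambda$ polynomials, with $e = u\partial_v$, $f = v\partial_u$, $h = u\partial_u - v\partial_v$ acting as derivations. On a monomial $u^{\lambda - j}v^j$ one computes that $i^*\wt\Theta_G$ acts essentially as a twisted version of the operator $(su\partial_u' \ldots)$—more precisely one recognizes $s^2\partial_v \cdot u - t^2 \partial_u \cdot v + st(u\partial_u - v\partial_v)$, which factors through the substitution $u \mapsto su', v\mapsto tv'$ up to a global scalar and an overall power of a linear form. The upshot I expect is that the kernel of $i^*\wt\Theta_G$ is one-dimensional at each point, generated by $(su - tv)^{\lambda}$ suitably interpreted, so that $\Ker$ is a line bundle; identifying its degree is then a matter of tracking the weight of this generator as a section, giving $\cO_{\bP^1}(-\lambda)$. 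Alternatively, and perhaps more cleanly, one knows $S_\lambda$ has constant Jordan type $[\lambda+1]$ (a single Jordan block) for $0 \le \lambda \le p-1$, so $\Ker\{\wt\Theta_G, S_\lambda \otimes \cO_{\PG}\}$ is a line bundle by Theorem \ref{bundle}; its degree can then be pinned down by a Chern-class / Euler-characteristic computation, or by exhibiting one explicit nonvanishing local section on each of the two standard affine charts of $\bP^1$ and reading off the transition function.

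For part (2), the modules $V_\lambda$ with $p \le \lambda \le 2p-2$ restricted to $\SL_{2(1)}$ decompose (by the theory of tilting modules / the structure of $V_\lambda$ for $\SL_2$, or by direct linkage analysis) and the local Jordan type at a generic point has two Jordan blocks, hence $\Ker\{\wt\Theta_G, V_\lambda \otimes \cO_{\PG}\}$ is a rank-$2$ bundle. Since every vector bundle on $\bP^1$ splits as a sum of line bundles (Grothendieck), it remains to determine the two degrees. One constraint comes from the rank and the first Chern class: the determinant of the short exact sequence $0 \to \Ker \to V_\lambda \otimes \cO_{\bP^1} \to \Im \to 0$ together with $\Im \hookrightarrow V_\lambda \otimes \cO_{\bP^1}(2)$ lets one compute $c_1(\Ker)$, which should force the sum of the two degrees to be $-\lambda + (\lambda - 2(p-1)) = -2(p-1)$. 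A second constraint, to separate the individual degrees, comes from restricting to the sub/quotient structure: $V_\lambda$ has a submodule isomorphic to (a Frobenius-twist-free piece related to) $S_{2p-2-\lambda}$ and the corresponding quotient, and functoriality of the kernel construction under these maps, combined with part (1) applied to the relevant $S_\mu$'s, should identify one summand as $\cO_{\bP^1}(-\lambda)$. Then the other is forced to be $\cO_{\bP^1}(\lambda - 2(p-1))$.

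The main obstacle I anticipate is part (2): making the decomposition of the rank-$2$ kernel bundle honest rather than inferred. Knowing the two degrees add up correctly and that the bundle splits is not enough — I need a genuine sub-line-bundle of the correct degree inside it. I expect the cleanest route is to write down, on each standard affine chart of $\bP^1$, an explicit local section of $\Ker\{i^*\wt\Theta_G\}$ of each degree (for instance one coming from the highest-weight vector $u^\lambda$ of $V_\lambda$ and one coming from the ``$\Omega$-type'' socle element), verify these are nowhere vanishing on their charts and that they generate the kernel there, and compute the two transition functions directly. This reduces everything to a finite linear-algebra computation in the divided-power basis of $V_\lambda$, which is routine but must be done carefully for the prime $p$ (where $V_\lambda$ is no longer irreducible and divided powers matter). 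Part (1) I expect to be straightforward once the single-Jordan-block fact is in hand.
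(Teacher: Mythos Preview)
Your outline is correct and, for part (1), essentially matches the paper: one writes $\Theta_G = xe+yf+zh$ in a weight basis $v_0,\ldots,v_m$ of $V_m$ (here $m=\lambda$), pulls back along $i$ to obtain an explicit $(m{+}1)\times(m{+}1)$ matrix $B_m(s,t)$ with quadratic entries, and observes that the single vector $w_m=[t^m,-st^{m-1},\ldots,\pm s^m]$ (your $(su-tv)^\lambda$, up to sign and indexing) generates the rank-one kernel in degree $m$, giving $\cO_{\bP^1}(-m)$.

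For part (2) your route diverges from the paper's. The paper avoids Chern-class bookkeeping and Grothendieck splitting entirely: it shows that for $p\le m\le 2p-2$ the matrix $B_m(s,t)$ has a block form
\[
\begin{pmatrix} B_\mu(s,t) & 0 & 0\\ \ast & B_\lambda(t,s)^{T} & \ast\\ 0 & 0 & B_\mu(s,t)\end{pmatrix},\qquad \mu=m-p,\ \lambda'=2(p-1)-m,
\]
where the middle block sits over the socle $S_{\lambda'}\subset V_m$. This immediately produces a \emph{second} explicit kernel vector $w_m'=[0,\ldots,0,t^{\lambda'},-st^{\lambda'-1},\ldots,\pm s^{\lambda'},0,\ldots,0]$ of degree $\lambda'=2(p-1)-m$, and one checks by hand on the two affine charts that $w_m,w_m'$ generate the kernel. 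Thus both summands $\cO(-m)$ and $\cO(m-2(p-1))$ are exhibited directly, with no appeal to $c_1$ or abstract splitting.

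Two remarks on your plan. First, your $c_1$ argument as stated is incomplete: knowing only $\Im\hookrightarrow V_\lambda\otimes\cO(2)$ does not determine $c_1(\Im)$, so you cannot read off the sum of the degrees this way without further input (e.g.\ duality as in Proposition~\ref{duality}). Second, you have the roles reversed: it is the \emph{socle} $S_{2(p-1)-\lambda}$ that contributes the sub-line-bundle $\cO(\lambda-2(p-1))$, while the summand $\cO(-\lambda)$ comes from the ``diagonal'' vector $w_m$ spread across all of $V_\lambda$ and is not visible from any obvious submodule. Your final paragraph (explicit sections on charts) is therefore the right instinct and is exactly what the paper carries out, but the block decomposition of $B_m(s,t)$ is the organizing observation that makes the computation short.
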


\begin{proof}
We adopt the conventions of \cite[\S 1]{BO}; in particular, we replace
$\lambda$ by $m$.
Let $v_0, v_1, \ldots, v_m$ be a basis for $V_m$ such that the 
generators  $e, f$ and $h$ of $sl_2$ act as  follows:  \\[1pt]
$hv_i = (2i-m)v_i$, \\ 
$fv_i = (m-i+1)v_{i-1}$ for $i>0$, $fv_0 = 0$, \\ 
$ev_i = (i+1)v_{i+1}$ for $i<m$, $ev_m=0$\\[1pt]
(see \cite[7.2]{Hum} or \cite[\S 1]{BO}).  Recall that $$\Theta_G = xe+yf+zh$$  (see Example \ref{ex-rep}(1)). Hence,  the operator 
$$
\UseComputerModernTips
 \xymatrix{\Theta_G:  V_m \otimes A \simeq A^{m+1} \ar[r]&  V_m \otimes A  \simeq A^{m+1}}$$ is 
represented by the matrix 
\begin{equation}
\label{matrix2}
\begin{pmatrix}    -m z   &    my & 0 & \ldots & \ldots   \\       
x  & -(m-2) z & (m-1) y & 0& \ldots \\
0& 2x  & -(m-4) z & (m-2) y & 0& \ldots \\
0& 0& 3x  & -(m-6) z & (m-3) y & \ldots \\
\ldots & \ldots & \ldots & \ldots& \ldots  \\
\ldots & \ldots & \ldots & 0& mx  & m z\end{pmatrix}_.
\end{equation}
Substituting $(s^2, -t^2, st)$ for $( x,y,z)$, we  get a degree two operator on $k[s,t]^{m+1}$ given by
\begin{equation}
\label{matrix3}
B_m(s,t) =\begin{pmatrix}    -m st   &    -mt^2 & 0 & \ldots & \ldots   \\       
s^2  & (2-m) st & (1-m) t^2 & 0& \ldots \\
0& 2s^2  & (4-m) st & (2-m) t^2 & 0& \ldots \\
0& 0& 3s^2  & (6-m) st & (3-m) t^2 & \ldots \\
\ldots & \ldots & \ldots & \ldots& \ldots  \\
\ldots & \ldots & \ldots & 0& ms^2  & m st\end{pmatrix}_.
\end{equation}

One verifies easily that the vector 
$$w_m \ = \ [t^m, -st^{m-1}, s^2t^{m-2}, \ldots, \pm s^m]$$
is annihilated by $B_m(s,t)$.
For $0 \leq m \leq p-1$, the module $V_m $ is irreducible and the kernel bundle has rank
$1$ because the (constant) Jordan type of $V_m$ has a single block.  The vector $w_m$
 generates the kernel as a graded $k[s,t]$-module
 (no element of smaller degree lies in the kernel).  Because $w_m$ is homogeneous
 of degree $m$ in $k[s,t]^{m+1}$, we conclude  for $0 \leq m \leq p-1$ that 
$$i^*(\Ker\{\wt\Theta_G, V_m \otimes \cO_{\PG} \}) \simeq \cO_{\bP^1}(-m), \quad 0 \leq m \leq  p-1.$$

\vspace{0.1in}
For $p\leq m \leq 2p-2$, $V_m$ has a decomposition series which can be represented as follows: 
$$\UseComputerModernTips
 \xymatrix{
S_{\mu}\ar[dr] && S_{\mu}\ar[dl]\\ 
& S_\lambda &
}
$$
where $\lambda = 2(p-1) -m$, $\mu = m-p = p-2-\lambda$, and $S_\lambda$, the irreducible module of highest weight $\lambda$, is the socle of $V_m$ (\cite[\S 1]{BO}).    By \cite{CFP}, $V_m$ has constant Jordan type. Plugging $x=1$, $y=z=0$ in (\ref{matrix2}), we get that the Jordan type is $[p] + [\mu+1]$. In particular, the rank of the kernel bundle is $2$. 

Using the relations $\mu \equiv m \,({\rm mod} \, p)$ and $ -\lambda \equiv \mu +2 \,({\rm mod} \, p)$, 
we obtain that the matrix $B_m(s,t)$  has the following form: 
\begin{equation}
\label{matrix4}
B_m(s,t) = \left[\begin{array}{rrrrlrrllrr}    &&&| &&&& | &&&\\
&&B_\mu(s,t)&| &&&& | &&&\\
&&&|&0 &&& | &&&\\
\hline
&&& _{(\mu+1)s^2} | &&&& | &&&\\
&&&| &&B_{\lambda}(t,s)^T&& | &&&\\
 & &&| &&&& | _{(\lambda+1)t^2}&&&\\
\hline
&&&| &&& 0&| &&\\
&&&| &&&& | &B_\mu(s,t)&&\\
&&&| &&&& | &&&\\
\end{array}\right]
\end{equation}
Here, the top left and bottom right corners are of  size $\mu+1 \times \mu+1$, whereas the matrix in the center is of  size $\lambda+1 \times \lambda+1$.  The only non-zero entries  outside of these three square diagonal blocks are at $(\mu+2, \mu+1)$ and $(\mu + \lambda + 2, \mu + \lambda+3)$, equaled to $(\mu +1)s^2$ and $(\lambda+1)t^2$ respectively.  

In particular, the $\mu +1 \times \lambda +1$ blocks above and below the matrix $B_\lambda(t,s)^T$ in the middle are zero. This implies that the kernel of this operator contains a copy of the kernel of $B_\lambda(s,t)$ (since it coincides with the kernel of $B_\lambda(t,s)^T$). We therefore obtain the vector
$$w_m^\prime \ = \ [ 0, \ldots, 0, t^\lambda, -st^{\lambda - 1}, \ldots, \mp s^\lambda, 0, \ldots, 0]$$
in the kernel, where the non-zero entries are at the positions $(\mu+2, \ldots, \mu + \lambda+2)$.

One verifies that $\{ w_m, w_m^\prime \}$ generate the kernel of $B_m(s,t)$ as a graded $k[s,t]$-module.
For example, one can check this by restricting to the affine pieces $U(s \not = 0)$ and $U(t \not  = 0)$.  Hence, 
$$i^*(\Ker\{\wt\Theta_G, V_m \otimes \cO_{\PG} \}) \simeq 
\cO_{\bP^1}(-m) \oplus \cO_{\bP^1}(m-2(p-1)).$$
\end{proof}

One may readily determine the rank of various bundles of $\bP(G)$ associated to 
modules of constant Jordan type using the next proposition.

\begin{prop}
\label{sub-rk}
Let $G$ be an infinitesimal group scheme, let $M$ a $kG$-module of constant
Jordan type $\sum_{i=1}^p a_i[i]$, and  let $\cM=M \otimes \cO_{\PG}$.  
Then for any $j, 1 \leq j < p$, 
\begin{equation}
\label{rkk}
\rk(\Im\{ \wt\Theta_G^j,\cM \}) \ = \ \sum_{i = j+1}^p a_i(i-j).
\end{equation}
In particular,
$$\Ker\{ \wt\Theta_G,\cM \} \subset 
\Ker\{ \wt\Theta_G^2,\cM \} \subset \cdots
\subset \Ker\{ \wt\Theta_G^{p-1},\cM \} \subset \cM$$
is a chain of $\cO_{\PG}$-submodules with 
$\rk(\Ker\{ \wt\Theta_G^{j-1},\cM \}) \ <  \rk(\Ker\{ \wt\Theta_G^j,\cM \})$ 
if and only if $a_i \not= 0$ for some $1 \leq j \leq i \leq p$. \end{prop}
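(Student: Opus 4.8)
The plan is to compute the rank of $\Im\{\wt\Theta_G^j,\cM\}$ fiberwise and then deduce everything else by rank additivity. Since a $kG$-module of constant Jordan type has constant $j$-rank for every $j$ with $1\le j<p$ (Definition~\ref{constant}), Theorem~\ref{bundle} applies and tells us that $\Im\{\wt\Theta_G^j,\cM\}$ and $\Ker\{\wt\Theta_G^j,\cM\}$ are algebraic vector bundles on $\PG$; moreover, by Theorem~\ref{equiv}(3) the fiber of $\Im\{\wt\Theta_G^j,\cM\}$ at a point $x\in\PG$ is canonically $\Im\{\theta_x^j,M_{k(x)}\}$, and by Proposition~\ref{sec} (in the form extending to powers) this is isomorphic to $\Im\{\theta_v^j:M_{k(v)}\to M_{k(v)}\}$ for any $v\in V(G)\setminus\{0\}$ lying over $x$. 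Hence $\rk(\Im\{\wt\Theta_G^j,\cM\})$ equals the constant value $\rk(M,\theta_v^j)$.

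Next I would evaluate $\rk(M,\theta_v^j)$ directly from the Jordan type. For $v\ne 0$ the operator $\theta_v$ acts on $(\mu_{v,*})^*(M_{k(v)})$ as multiplication by $u$ on a $k(v)[u]/u^p$-module isomorphic to $\bigoplus_{i=1}^p [i]^{\oplus a_i}$, by Definition~\ref{local-jordan} and the constant Jordan type hypothesis. On a single block $[i]=k(v)[u]/u^i$ the image of $u^j$ is spanned by $u^j,u^{j+1},\dots,u^{i-1}$, hence has dimension $\max(i-j,0)$. Summing over all blocks gives $\rk(M,\theta_v^j)=\sum_{i=1}^p a_i\max(i-j,0)=\sum_{i=j+1}^p a_i(i-j)$, which is formula~(\ref{rkk}).

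For the ``in particular'' clause, note first that the inclusions $\Ker\{\wt\Theta_G^{j-1},\cM\}\subseteq\Ker\{\wt\Theta_G^j,\cM\}\subseteq\cM$ hold because $\wt\Theta_G^j$ factors (up to the twist $\cO_{\PG}(p^{r-1})$) through $\wt\Theta_G^{j-1}$, and all three are $\cO_{\PG}$-submodules of the free sheaf $\cM$. Twisting by a line bundle preserves ranks, so applying rank additivity to the short exact sequence $0\to\Ker\{\wt\Theta_G^j,\cM\}\to\cM\to\Im(\wt\Theta_G^j)\to 0$ yields $\rk(\Ker\{\wt\Theta_G^j,\cM\})=\dim M-\sum_{i=j+1}^p a_i(i-j)$, with the convention $\wt\Theta_G^0=\id$. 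Subtracting the formula for $j-1$ and telescoping gives $\rk(\Ker\{\wt\Theta_G^j,\cM\})-\rk(\Ker\{\wt\Theta_G^{j-1},\cM\})=\sum_{i=j}^p a_i$, which is strictly positive precisely when $a_i\ne 0$ for some $1\le j\le i\le p$. I do not anticipate a genuine obstacle here: the only thing requiring care is the bookkeeping of the $jp^{r-1}$-twists, together with the observation that it is exactly the constant-Jordan-type hypothesis, via Theorems~\ref{bundle} and~\ref{equiv}, that makes ``rank'' a well-defined constant across all of $\PG$.
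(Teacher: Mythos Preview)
Your proof is correct and follows essentially the same approach as the paper: compute the rank fiberwise via Theorem~\ref{equiv} by evaluating $\rk(u^j)$ on $\bigoplus_i[i]^{\oplus a_i}$. Your treatment is in fact more detailed than the paper's, which leaves the ``in particular'' clause implicit; your explicit computation of $\rk(\Ker\{\wt\Theta_G^j,\cM\})-\rk(\Ker\{\wt\Theta_G^{j-1},\cM\})=\sum_{i=j}^p a_i$ and the care taken with the twists are both appropriate.
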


\begin{proof}  The formula (\ref{rkk}) is the formula for the rank of $u^j$ on the
$k[u]/u^p$-module $\oplus_i (k[u]/u^i)^{\oplus a_i}$ of Jordan type 
$\sum_{i=1}^p a_i[i]$.    This is therefore the dimension of the image of 
$\theta_v, \ 0 \not= v \in V(G)$ on $M_{k(v)}$, and thus the rank of the 
vector bundle $\Im\{ \wt\Theta_G^j,\cM \}$ by Theorem~\ref{equiv}.  
\end{proof}

The following class of modules, of interest in its own right, is currently being 
studied by Jon Carlson and the authors.

\begin{defn}
Let $G$ be an infinitesimal group scheme, $M$ a finite dimensional $kG$-module, 
and $j < p$ a positive integer.  
We say that a $kG$-module $M$ has the {\it constant $j$-image property} 
if there exists a  subspace $I(j) \subset M$ 
such that for every $v \not  = 0 $ in $V(G)$, the image of $\theta_v^j: M_{k(v)} \to M_{k(v)}$ 
equals $I(j)_{k(v)}$. Similarly, we say that $M$ has {\it constant $j$-kernel property} 
if there exists some submodule $K(j) \subset M$ 
such that for every $v \not  = 0 $ in $V(G)$, the kernel of $\theta_v^j: M_{k(v)} \to M_{k(v)}$ 
equals $K(j)_{k(v)}$.
\end{defn}

We see that these modules are precisely those whose associated vector bundles are trivial
vector bundles.

\begin{prop}
\label{non-trivial}
Let $G$ be an infinitesimal group scheme, and let $M$ be a $kG$-module of constant $j$-rank.  
Then the algebraic vector bundle $\Im\{\wt \Theta_G^j,\cM\}$ is trivial (i.e., a free coherent sheaf) 
on $\PG$ if and only if $M$ has the constant $j$-image property. Similarly, $\ \Ker\{\wt \Theta_G^j,\cM\}$ is trivial if and only 
if $M$ has the constant $j$-kernel property.
\end{prop}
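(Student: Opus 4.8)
The plan is to prove both equivalences by showing that the module-theoretic "constant image (resp. kernel) property" is exactly the condition that forces the coherent sheaf $\Im\{\wt\Theta_G^j,\cM\}$ (resp. $\Ker\{\wt\Theta_G^j,\cM\}$) to coincide with a constant subsheaf of the trivial sheaf $\cM = M \otimes \cO_{\PG}$, and that a subsheaf of a trivial sheaf which is a subbundle and is "the same subspace at every point" is itself trivial.

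First I would prove the "if" directions. Suppose $M$ has the constant $j$-image property, witnessed by a subspace $I(j) \subset M$. By Proposition~\ref{sec} (and the Remark following it), for every $v \in V(G) \setminus \{0\}$ with image $x \in \PG$ one has $\Im\{\wt\Theta_G^j,\cM\} \otimes_{\cO_X} k(x) \simeq \Im\{\theta_v^j, M_{k(v)}\} = I(j)_{k(v)}$, where the last equality is the hypothesis. Since $M$ has constant $j$-rank, $\Im\{\wt\Theta_G^j,\cM\}$ is a vector bundle by Theorem~\ref{bundle}, and its fiber dimension is $\dim_k I(j)$ everywhere. Now I would argue that the inclusion of sheaves $I(j) \otimes \cO_{\PG} \hookrightarrow M \otimes \cO_{\PG} = \cM$ factors through $\Im\{\wt\Theta_G^j,\cM\}$: on the affine chart $U = \PG \setminus Z(s)$ for a section $s$ of $\cO_{\PG}(jp^{r-1})$, the subsheaf $\Im\{\wt\Theta_U^j,\cM_{|U}\}$ is the $A[1/s]_0$-submodule generated by the columns of the matrix of $\Theta_G/s$; by the constant image property applied at all primes of $A[1/s]_0$, together with the fact that over a reduced ring a submodule of a free module agreeing fiberwise with a given free submodule must contain it, we get $I(j)\otimes \cO_U \subseteq \Im\{\wt\Theta_U^j,\cM_{|U}\}$. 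As both are locally free of the same rank and the quotient is a submodule of the locally free $\cM_{|U}/I(j)$, the inclusion is an isomorphism on fibers, hence (by Nakayama, exactly as in the proof of Proposition~\ref{test}) an isomorphism. Therefore $\Im\{\wt\Theta_G^j,\cM\} \simeq I(j) \otimes \cO_{\PG}$ is free. The kernel case is dual: the constant $j$-kernel property gives a submodule $K(j) \subset M$; by Proposition~\ref{sec} the fiber of $\Ker\{\wt\Theta_G^j,\cM\}$ at $x$ is $K(j)_{k(v)}$, and the same reduced-ring/Nakayama argument identifies $\Ker\{\wt\Theta_G^j,\cM\}$ with $K(j)\otimes \cO_{\PG}$.

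For the "only if" directions, suppose $\Im\{\wt\Theta_G^j,\cM\}$ is a trivial bundle, say $\Im\{\wt\Theta_G^j,\cM\} \simeq \cO_{\PG}^{\oplus e}$ where $e = \rk(\Im\{\wt\Theta_G^j,\cM\})$. The key point is that a trivial subbundle of the trivial bundle $\cM$ must be $W \otimes \cO_{\PG}$ for a fixed subspace $W \subset M$: indeed, global sections of $\cO_{\PG}^{\oplus e}$, which has $\Gamma(\PG,\cO_{\PG}) = k$ (using $\dim V(G) \geq 1$, so $\PG$ is connected and projective with $H^0 = k$), map into $\Gamma(\PG,\cM) = M$; let $W$ be the image of this map on global sections. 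One checks $W$ has dimension $e$ and that the induced map $W \otimes \cO_{\PG} \to \Im\{\wt\Theta_G^j,\cM\}$ is an isomorphism (it is a surjection of vector bundles of the same rank that is an isomorphism on global sections, hence an isomorphism). Then for every $v \in V(G)\setminus\{0\}$ with image $x$, Proposition~\ref{sec} gives $\Im\{\theta_v^j,M_{k(v)}\} \simeq \Im\{\wt\Theta_G^j,\cM\}\otimes k(x) = W_{k(v)}$, so $W$ witnesses the constant $j$-image property. The kernel statement is entirely analogous, with $K(j) := \Gamma(\PG, \Ker\{\wt\Theta_G^j,\cM\})$, noting that $\Ker\{\wt\Theta_G^j,\cM\}$ is a genuine submodule of $\cM$ (not just a subquotient), so its global sections sit inside $M$.

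The main obstacle I anticipate is the bookkeeping needed to pass cleanly between the sheaf-theoretic picture on $\PG$ and the graded/affine picture over $A = k[V(G)]$: one has to be careful that the identifications of fibers in Proposition~\ref{sec} are genuinely natural in $x$, so that "the image is a constant subspace $I(j) \subset M$" really does glue to an inclusion of the \emph{trivial} subsheaf $I(j)\otimes\cO_{\PG}$ rather than merely a subsheaf that is abstractly trivial chart-by-chart. This is handled by working on one standard affine chart $U = \Spec A[1/s]_0$ at a time and using that the operator $\Theta_G/s$ is literally an $A[1/s]_0$-linear endomorphism of $M \otimes A[1/s]_0$ whose image/kernel, intersected fiberwise with the constant subspace, is forced to equal the constant subspace over the reduced ring $A[1/s]_0$ — the same reduced-ring argument already used in Proposition~\ref{test}. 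Everything else is a formal consequence of $\Gamma(\PG,\cO_{\PG}) = k$ and the fiberwise identifications from Proposition~\ref{sec} and Theorem~\ref{bundle}.
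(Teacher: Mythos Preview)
Your proposal is correct and follows essentially the same approach as the paper's proof, just with considerably more detail filled in. The paper's argument is a two-line sketch: for the ``if'' direction it simply asserts that $\Im\{\wt\Theta_G^j,\cM\}$ is the free $\cO_X$-module generated by $I(j)$, and for ``only if'' it takes $I(j)$ to be the global sections $\Gamma(X,\Im\{\wt\Theta_G^j,\cM\}) \subset \Gamma(X,\cM) = M$ and observes that these span each fiber; your reduced-ring/Nakayama bookkeeping and your use of $\Gamma(\PG,\cO_{\PG}) = k$ are exactly the justifications the paper leaves implicit.
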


\begin{proof}
If $M$ has a constant  $j$-image property then $\Im\{ \wt\Theta_G^j,\cM \}$   is a free $\cO_X$-module generated by $I(j)$.  
Conversely,  assume that  $\Im\{ \wt\Theta_G^j,\cM \}$  is a free $\cO_X$-module.  
Then there exists a subspace $I(j) \subset
M = \Gamma(X,\cM)$ which maps to and spans each fiber 
$\Im\{ \theta_v^j,M_{k(v)}\}$, for $\ 0 \not= v \in V(G)$.  The argument  for kernels  is similar. 
\end{proof} 

\begin{remark}  We  point  out the properties of  constant  j-image and  constant  j-kernel are independent  of each other.   
Consider  the module $M^\#$  of  Example~\ref{duals}.  As shown  in that example, $\Ker \{ \wt\Theta_G,\cM^\# \}$  is locally free of rank 2   
but  not free, since  the global sections have dimension one.   On the other hand, $\Im \{ \wt\Theta_G,\cM^\# \}$  is a free $\cO_X$-module generated by  
the global section $n_3$.   In particular,  $M^\#$  has constant  $1$-image  property but  not constant  $1$-kernel property.  

For the  module $M$ of Example~\ref{duals},  the sheaf $\Ker \{ \wt\Theta_G,\cM\}$  is free of rank $2$   
whereas $\Im \{ \wt\Theta_G,\cM \}$   is locally free of rank $1$  but  not free since
it does not have any global sections. 
Hence, $M$   has a constant  $1$-kernel  property but  not constant  $1$-image property. 
\end{remark}

We consider an analogue of the sheaf construction of Duflo-Serganova
for Lie superalgebras \cite{DS}.  This construction enables 
us to produce additional algebraic 
vector bundles on $\PG$.  We implicitly use the observation $\wt\Theta^p_G = 0$.

\begin{defn}
\label{bracket}
Let $G$ be an infinitesimal group scheme, and  let $M$ be a finite dimensional $kG$--module. Let $\cM = M \otimes \cO_{\PG}$.
For any $i, \ 1 \leq i \leq p-1$, we define coherent 
$\cO_{\PG}$-modules, subquotients of $\cM$:
$$\cM^{[i]}\  \equiv \  \Ker\{ \wt\Theta_G^i,\cM \}/\Im\{\wt\Theta_G^{p-i},\cM \}.$$
\end{defn}

The following simple lemma helps to motivate these subquotients.

\begin{lemma}
\label{simple}
Let $V$ be a finite dimensional $k[t]/t^p$-module, and  let $JType(V, t) = (a_p, \ldots, a_1)$ (using  the notation   introduced in (\ref{jtype})). 
Let $$
V^{[j]}= \Ker\{ t^j:V \to V \}/\Im\{ t^{p-j}: V \to V \}
$$ for 
$j \leq p-1$.  Then 
$$\dim(V^{[j]}) \ =  \ \sum_{1 \leq i \leq j} ia_i  +
\ \sum_{i > j} ja_i -  
\sum_{i+j >p} (i+j-p)a_i.$$
In particular, $V$ is projective as a $k[t]/t^p$-module
if and only if $V^{[1]} \ = \ 0$.

\vspace{0.1in}
\noindent
Furthermore, for $j\leq p-1$, $V^{[j]} \simeq V^{[p-j]}$ as $k[t]/t^p$-modules.
\end{lemma}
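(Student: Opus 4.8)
The plan is to reduce at once to cyclic modules and then to observe that the subquotient $V^{[j]}$ is itself cyclic, hence determined up to isomorphism by its dimension. First I would note that the formation of $\Ker$, $\Im$, and of quotients all commute with finite direct sums, and that every finite dimensional $k[t]/t^p$-module splits as $\bigoplus_{i=1}^{p}[i]^{\oplus a_i}$; therefore it suffices to establish $[i]^{[j]}\simeq [i]^{[p-j]}$ for each fixed $1\le i\le p$.

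Next, for the cyclic module $[i]=k[t]/t^i$ I would record that $\Ker\{t^j:[i]\to[i]\}=t^{\max(i-j,0)}[i]$, which is itself cyclic, generated by the class of $t^{\max(i-j,0)}$. Since $\max(i-j,0)\le p-j$ (as $i\le p$ and $j\le p-1$), one has $\Im\{t^{p-j}:[i]\to[i]\}\subset\Ker\{t^j:[i]\to[i]\}$, so the quotient $[i]^{[j]}=\Ker\{t^j\}/\Im\{t^{p-j}\}$ is a quotient of a cyclic module, hence a cyclic $k[t]/t^p$-module, and is therefore isomorphic to $[\,\dim_k [i]^{[j]}\,]$. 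Thus $[i]^{[j]}\simeq [i]^{[p-j]}$ reduces to the numerical identity $\dim_k [i]^{[j]}=\dim_k [i]^{[p-j]}$.

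Finally, I would deduce this numerical identity from the dimension formula already proved earlier in this lemma: specializing that formula to $V=[i]$ (so $a_i=1$ and $a_\ell=0$ for $\ell\neq i$) gives, after a brief case analysis comparing $i$ with $j$ and with $p-j$, the closed form $\dim_k[i]^{[j]}=\min(i,j,p-j,p-i)$, which is manifestly symmetric under $j\leftrightarrow p-j$. Summing over $i$ with the multiplicities $a_i$ then yields $V^{[j]}\simeq V^{[p-j]}$ in general.

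The only real work here is the elementary case analysis producing the closed form $\min(i,j,p-j,p-i)$; there is no substantive obstacle. Conceptually the isomorphism can also be obtained by duality: the evaluation map induces a perfect, $t$-equivariant pairing $V^{[j]}\times (V^{\#})^{[p-j]}\to k$ (well-definedness on both sides follows from $v\in\Ker\{t^j\}$ and $\phi\in\Ker\{(t^{\#})^{p-j}\}$, and perfectness from the standard annihilator identities $(\Ker f)^{\perp}=\Im f^{\#}$, $(\Im f)^{\perp}=\Ker f^{\#}$), so that $(V^{[j]})^{\#}\simeq (V^{\#})^{[p-j]}$; since every finite dimensional $k[t]/t^p$-module is self-dual (being a sum of self-dual cyclic modules), this gives $V^{[j]}\simeq (V^{[j]})^{\#}\simeq (V^{\#})^{[p-j]}\simeq V^{[p-j]}$. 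Along this route the one thing to check is that the pairing respects the $k[t]/t^p$-action, which is immediate from $\phi(tv)=(t\phi)(v)$.
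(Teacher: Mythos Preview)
The paper provides no proof of this lemma; it is presented as elementary and left to the reader. Your argument is correct and supplies precisely the details the paper omits. The reduction to cyclic summands, the observation that $[i]^{[j]}$ is cyclic (hence determined by its dimension), and the closed form $\dim_k[i]^{[j]}=\min(i,j,p-j,p-i)$ are all sound, and the symmetry under $j\leftrightarrow p-j$ is then immediate. The alternative duality argument you sketch at the end is also correct and in fact anticipates the global version the paper proves later in Proposition~\ref{dual}; either route suffices here.
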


As seen in the next proposition, these subquotients can provide additional
examples of algebraic vector bundles over $\PG$.

\begin{prop}
\label{bundle2}
Let $G$ be an infinitesimal group scheme and let 
$M$ be a finite dimensional $kG$-module which is of constant $j$-rank
and constant $(p-j)$-rank for some $j, \ 1 \leq j < p$.
Then $\cM^{[j]}$ is a locally free $\cO_{X}$-module and 
$  \cM^{[j]} \otimes_{\cO_X} k(x)  \ \to \ M_{k(x)}^{[j]} $ is an
isomorphism for all $x \in X \equiv \PG$.  
\end{prop}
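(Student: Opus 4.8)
The plan is to realize $\cM^{[j]}$ as the kernel term of a short exact sequence whose other two terms can be recognized as vector bundles via Theorem \ref{equiv}, deduce local freeness of $\cM^{[j]}$ from that sequence, and then compute its fibers by base change. Set $\cK = \Ker\{\wt\Theta_G^j,\cM\}$ and $\cI = \Im\{\wt\Theta_G^{p-j},\cM\}$, both regarded as subsheaves of $\cM = M\otimes\cO_X$ as in the abbreviations preceding Theorem \ref{equiv}. Since $\wt\Theta_G^p = 0$ we have $\cI \subseteq \cK$, and $\cM^{[j]} = \cK/\cI$ by Definition \ref{bracket}; hence there is a tautological short exact sequence of coherent $\cO_X$-modules
\[
0 \longrightarrow \cM^{[j]} \longrightarrow \cM/\cI \longrightarrow \cM/\cK \longrightarrow 0 .
\]

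First I would observe that $\cM/\cI = \Coker\{\wt\Theta_G^{p-j},\cM\}$ and that $\cM/\cK \simeq \Im\{\wt\Theta_G^j,\cM\}(jp^{r-1})$, via the canonical factorization of $\wt\Theta_G^j : \cM \to \cM(jp^{r-1})$ through its image (using that $\cO_X(jp^{r-1})$ is a line bundle by Lemma \ref{twist} and Proposition \ref{grade}). The global constant $j$-rank and constant $(p-j)$-rank hypotheses imply that $M$ has constant $j$-rank and constant $(p-j)$-rank when restricted to every connected component of $X$, so Theorem \ref{equiv} applies there: constant $(p-j)$-rank gives, by part (5), that $\cM/\cI$ is locally free; constant $j$-rank gives, by part (1), that $\Im\{\wt\Theta_G^j,\cM\}$, and therefore its Serre twist $\cM/\cK$, is locally free. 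In the displayed sequence the middle and right-hand terms are thus locally free; since $\cM/\cK$ is locally free the sequence splits locally, exhibiting $\cM^{[j]}$ as a local direct summand of a free $\cO_X$-module, hence locally free. (Alternatively, the fiber computation below gives $\dim_{k(x)}(\cM^{[j]}\otimes k(x)) = \dim_k M - \dim_{k(x)}\Im\{\theta_x^j,M_{k(x)}\} - \dim_{k(x)}\Im\{\theta_x^{p-j},M_{k(x)}\}$, which is locally constant on $X$ by the two constant-rank hypotheses, so Proposition \ref{test} applies directly.)

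For the fiber statement I would tensor the short exact sequence with $k(x)$. Because $\cM/\cK$ is locally free, $\operatorname{Tor}_1^{\cO_{X,x}}(\cM/\cK,k(x)) = 0$, so
\[
0 \to \cM^{[j]}\otimes_{\cO_X}k(x) \to (\cM/\cI)\otimes_{\cO_X}k(x) \to (\cM/\cK)\otimes_{\cO_X}k(x) \to 0
\]
is exact. Part (6) of Theorem \ref{equiv} identifies $(\cM/\cI)\otimes k(x)$ with $\Coker\{\theta_x^{p-j},M_{k(x)}\} = M_{k(x)}/\Im\{\theta_x^{p-j},M_{k(x)}\}$, and part (3) (the twist by $jp^{r-1}$ being a line bundle, hence not affecting the fiber) identifies $(\cM/\cK)\otimes k(x)$ with $\Im\{\theta_x^j,M_{k(x)}\} \simeq M_{k(x)}/\Ker\{\theta_x^j,M_{k(x)}\}$; under these identifications the map between them is the canonical surjection induced by the inclusion $\Im\{\theta_x^{p-j},M_{k(x)}\} \subseteq \Ker\{\theta_x^j,M_{k(x)}\}$. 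Its kernel is $\Ker\{\theta_x^j,M_{k(x)}\}/\Im\{\theta_x^{p-j},M_{k(x)}\}$, which is $M_{k(x)}^{[j]}$, so $\cM^{[j]}\otimes_{\cO_X}k(x) \simeq M_{k(x)}^{[j]}$; tracing the identifications back shows this is the natural comparison map built from the inclusions $\cI\otimes k(x) \hookrightarrow \cK\otimes k(x) \hookrightarrow M_{k(x)}$ supplied by parts (3) and (8) of Theorem \ref{equiv} together with the local splittings.

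The one delicate point is bookkeeping rather than any new idea: one must carry the Serre twist by $p^{r-1}$ (Proposition \ref{homog}) correctly through the isomorphism $\cM/\cK \simeq \Im\{\wt\Theta_G^j,\cM\}(jp^{r-1})$, and verify that the fiberwise identifications furnished by Theorem \ref{equiv} are compatible with the inclusions $\cI \subseteq \cK \subseteq \cM$ — so that on fibers $\cI\otimes k(x)$ lands exactly onto the subspace $\Im\{\theta_x^{p-j},M_{k(x)}\}$ of $\Ker\{\theta_x^j,M_{k(x)}\}$, making the quotient genuinely $M_{k(x)}^{[j]}$. This compatibility is already implicit in the local-splitting structure established in the proof of Theorem \ref{equiv}, so it is a matter of assembling those pieces rather than proving anything essentially new.
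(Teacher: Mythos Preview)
Your argument is correct, but it takes a different path from the paper's. The paper works directly with the defining right-exact sequence $\cI \to \cK \to \cM^{[j]} \to 0$: tensoring with $k(x)$ preserves right-exactness, and Theorem~\ref{equiv}(3) and~(8) identify $\cI\otimes k(x)$ and $\cK\otimes k(x)$ with $\Im\{\theta_x^{p-j},M_{k(x)}\}$ and $\Ker\{\theta_x^j,M_{k(x)}\}$ respectively; a five-lemma (really a right-exact-row diagram chase) then gives the fiber isomorphism. Local freeness is left implicit, presumably via constant fiber dimension and Proposition~\ref{test}. You instead dualize to the sequence $0 \to \cM^{[j]} \to \cM/\cI \to \cM/\cK \to 0$ and exploit local freeness of the quotient $\cM/\cK$ to get both a local splitting (hence local freeness of $\cM^{[j]}$ as a summand of the locally free $\cM/\cI$) and $\operatorname{Tor}_1$-vanishing (hence exactness on fibers). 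Your route makes local freeness explicit without the detour through Proposition~\ref{test}, at the cost of a little extra bookkeeping with the Serre twist; the paper's route is shorter and uses only the subsheaf side of Theorem~\ref{equiv}.
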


\begin{proof}
For any $x \in X$, consider the map of exact sequences
\[
\UseComputerModernTips
 \xymatrix{
 \Im\{\wt\Theta_G^{p-i},\cM \} \otimes_{\cO_X} k(x) \ar[d]\ar[r] 
&   \Ker\{ \wt\Theta_G^i,\cM \} \otimes_{\cO_X} k(x)
 \ar[d]\ar[r] &  \cM^{[j]} \otimes_{\cO_X} k(x)  \ar[d]\ar[r] & 0 \\
\Im\{ \theta^{p-i}_x,M_{k(x)} \} \ar[r] & \Ker \{\theta^{i}_x,M_{k(x)} \} \ar[r] &
M_{k(x)}^{[j]}  \ar[r] & 0.
}
 \]
The left and middle vertical maps are  isomorphisms by Theorem \ref{equiv}.
Thus, the 5-Lemma implies that the right vertical arrow is also an
isomorphism.
\end{proof}

We  give an application of  this $(-)^{[1]}$ construction to  endotrivial modules. 
An interested reader   can compare   our construction to  \cite{BBC}.  
Recall that a  module $M$   of  a finite group scheme  $G$ is endotrivial if 
$\End_k(M) \simeq k + \proj$.    It was shown in \cite[\S 5]{CFP} that  an endotrivial 
module is a  module of constant  Jordan type with possible types $[1] + \proj$ and $[p-1] + \proj$. 

\begin{prop}
\label{endo} Let $G$  be an infinitesimal group scheme, and assume that $G$ has a subgroup scheme isomorphic to $\mathbb G_{a(1)}\times\mathbb G_{a(1)}$ or $\mathbb G_{a(2)}$.
Let $M$  be a module of constant Jordan type, and  set 
$\cM = M \otimes \cO_{\bP(G)} $.  
Then $\cM^{[1]}$  is a   line bundle (i.e., an algebraic vector bundle of rank one) 
if and only  if $M$  is endotrivial.    
\end{prop}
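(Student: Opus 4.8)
The plan is to reduce the statement to a rank computation for the bundle $\cM^{[1]}$, together with the known constraints on the Jordan type of an endotrivial module and one structural fact about modules of constant Jordan type over the two small groups appearing in the hypothesis.

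Since $M$ is of constant Jordan type it is of constant $j$-rank for every $j$ with $1 \le j < p$, so Proposition~\ref{bundle2} (with $j = 1$) shows that $\cM^{[1]}$ is a locally free $\cO_{\bP(G)}$-module and that $\cM^{[1]} \otimes_{\cO_{\bP(G)}} k(x) \cong M^{[1]}_{k(x)}$ for every $x \in \bP(G)$. Thus $\cM^{[1]}$ is a line bundle if and only if $\dim_{k(x)} M^{[1]}_{k(x)} = 1$. Writing the constant Jordan type of $M$ as $\sum_{i=1}^{p} a_i[i]$, Lemma~\ref{simple} with $j = 1$ gives $\dim M^{[1]}_{k(x)} = \sum_{i=1}^{p-1} a_i$; hence $\cM^{[1]}$ is a line bundle if and only if the Jordan type of $M$ has a single non-projective block, i.e.\ equals $[j] + a_p[p]$ for a unique integer $j$ with $1 \le j \le p-1$. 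In particular, if $M$ is endotrivial then by \cite[\S 5]{CFP} its constant Jordan type is $[1] + \proj$ or $[p-1] + \proj$, so $\cM^{[1]}$ is a line bundle; this proves one implication.

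For the converse, assume $\cM^{[1]}$ is a line bundle, so $\JType(M,\theta_v) = [j] + a_p[p]$ for some $1 \le j \le p-1$. Choose a closed embedding $H \hookrightarrow G$ with $H \cong \mathbb{G}_{a(1)} \times \mathbb{G}_{a(1)}$ or $H \cong \mathbb{G}_{a(2)}$, and set $N = M\downarrow_H$. By naturality of the global operator (Proposition~\ref{pull}), for $v \in V(H) \subset V(G)$ the local operator $\theta_v$ on $N$ agrees with that on $M$, so $N$ is again of constant Jordan type $[j] + a_p[p]$. At this point one invokes the fact that over $k(\mathbb{G}_{a(1)} \times \mathbb{G}_{a(1)})$ and over $k\mathbb{G}_{a(2)}$ a module of constant Jordan type whose Jordan type has a single non-projective block must have that block of size $1$ or $p-1$; granting this, $j \in \{1, p-1\}$. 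Returning to $G$, the dual $M^\#$ has the same constant Jordan type as $M$, so by the tensor-product property of constant Jordan type (see \cite{CFP}) the $kG$-module $\End_k(M) \cong M^\# \otimes M$ is of constant Jordan type $\big([j] + a_p[p]\big) \otimes \big([j] + a_p[p]\big)$, which reduces modulo projective blocks to $[j] \otimes [j]$; by the Clebsch--Gordan rule for $k[t]/t^p$ this is $[1]$ when $j = 1$ and $[1] + (p-2)[p]$ when $j = p-1$, so $\End_k(M)$ has constant Jordan type $[1] + \proj$. Since $\dim_k M = j + a_p p \not\equiv 0 \pmod p$, the evaluation map $M^\# \otimes M \to k$ splits the coevaluation $k \to M^\# \otimes M$, giving $\End_k(M) \cong k \oplus X$ with $X$ of constant Jordan type $\proj$; hence $V(G)_X = \{0\}$ and $X$ is projective (Theorem~\ref{iso}), so $\End_k(M) \simeq k + \proj$ and $M$ is endotrivial.

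The main obstacle is the structural input about $\mathbb{G}_{a(1)} \times \mathbb{G}_{a(1)}$ and $\mathbb{G}_{a(2)}$ used above, namely that a single non-projective block in the constant Jordan type forces its size to be $1$ or $p-1$. Here $\bP(H)$ is a (possibly weighted) projective line, $\Theta_H$ is given explicitly by Example~\ref{p-univ}(2) and Example~\ref{exp-elem}, and the statement should follow by analysing the kernel and image bundles of $\wt\Theta_H$ on $\bP(H) \cong \bP^1$ via the classification of vector bundles on $\bP^1$ (or by quoting the corresponding result of \cite{CFP}). The remaining steps are routine bookkeeping with Proposition~\ref{bundle2}, Lemma~\ref{simple}, and elementary Jordan-block arithmetic.
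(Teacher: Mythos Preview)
Your rank computation via Proposition~\ref{bundle2} and Lemma~\ref{simple} matches the paper exactly, and you correctly isolate the crux: once $\cM^{[1]}$ has rank one, the constant Jordan type is $[j] + a_p[p]$, and the whole question is why $j \in \{1,p-1\}$. The paper dispatches this in one line by invoking a theorem of Benson \cite{Ben2}, which says precisely that a module of constant Jordan type with a single non-projective block must have that block of size $1$ or $p-1$ (the hypothesis on the subgroup $\mathbb G_{a(1)}\times\mathbb G_{a(1)}$ or $\mathbb G_{a(2)}$ is exactly what is needed for Benson's argument). You correctly flag this as the main obstacle, but your suggested route---analysing kernel and image bundles on $\bP(H)\simeq\bP^1$---is not how the result is obtained; Benson's proof is representation-theoretic, and the result is not in \cite{CFP}. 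So this is a genuine gap in your write-up, though one that is filled by a known theorem rather than a new idea.

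Your final paragraph, deducing endotriviality from $j\in\{1,p-1\}$ via the tensor-product behaviour of constant Jordan type, Clebsch--Gordan, and the evaluation/coevaluation splitting, is correct and more explicit than the paper, which simply cites \cite[\S5]{CFP} for the equivalence between endotriviality and having constant Jordan type $[1]+\proj$ or $[p-1]+\proj$. That extra work is sound but unnecessary once you are willing to quote \cite{CFP}; conversely, the restriction to $H$ is redundant once you cite Benson's theorem directly for $G$.
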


\begin{proof}  The sheaf $\cM^{[1]}$ is locally free by Lemma~\ref{simple}.  
Let $ \sum\limits_{i=1}^{p} a_i[i]$  be the  Jordan type of $M$.  Proposition 
\ref{bundle2}  implies that  the rank of the vector bundle 
$\cM^{[1]}$ equals $\sum\limits_{i=0}^{p-1} a_i$.  Hence, $\cM^{[1]}$ is a   line bundle
 if and only  if the Jordan type of $M$  has   only   
one  non-projective block.  A theorem of D. Benson \cite{Ben2}   states that modules of 
constant  Jordan type  with  unique non-projective 
block  must  be of type $[1] + \proj$ or $[p-1] + \proj$.  By \cite[\S 5]{CFP},  this happens  
if and only if $M$ is endotrivial.  
\end{proof}

We next give a global version of the observation in Lemma \ref{simple} that
$V^{[j]} \simeq V^{[p-j]}$ for $j \leq p-1$.   Recall that 
for a variety $X$, and a coherent sheaf $\E$, we denote by $\E^\vee = Hom_{\cO_X}( \E, \cO_X)$  the dual sheaf.

\begin{prop}
\label{dual}
Let $G$ be an infinitesimal group scheme, and let 
$M$ be a  $kG$-module which is of constant $j$-rank
and of constant $(p-j)$-rank 
for some $j, \ 1 \leq j < p$.  Let $\cM = M\otimes \cO_{\PG}$, $\cN = M^\#\otimes \cO_{\PG}$.
Then 
$$\cN^{[p-j]} \simeq (\cM^{[j]})^\vee
$$
as $\cO_{\PG}$--modules.
\end{prop}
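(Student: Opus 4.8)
The plan is to present $\cM^{[j]}$ as the kernel in a short exact sequence of \emph{locally free} sheaves, dualize, and recognize the result as the defining sequence of $\cN^{[p-j]}$, closely mirroring the proof of Proposition \ref{duality}. First I would record the hypotheses needed. Since the local $p$-nilpotent operator of $M^\#$ at $v$ is, up to sign, the transpose of that of $M$, we have $\rk(M^\#,\theta_v^i)=\rk(M,\theta_v^i)$, so $M^\#$ is again of constant $j$-rank and constant $(p-j)$-rank. Hence, by Theorem \ref{equiv} and Proposition \ref{bundle2}, every one of $\Ker\{\wt\Theta_G^i,\cdot\}$, $\Im\{\wt\Theta_G^i,\cdot\}$, $\Coker\{\wt\Theta_G^i,\cdot\}$ for $i\in\{j,p-j\}$, over both $\cM$ and $\cN$, is locally free, and so are $\cM^{[j]}$ and $\cN^{[p-j]}$. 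I would also recall two consequences of Proposition \ref{duality}: applying it to $M^\#$ with $p-j$ in place of $j$ and dualizing gives $\Coker\{\wt\Theta_G^{p-j},\cM\}^\vee\simeq\Ker\{\wt\Theta_G^{p-j},\cN\}$; and the proof of Proposition \ref{duality} furnishes the identification $\cN\simeq\cM^\vee$ (dual bases) under which $\wt\Theta_G^i\colon\cM\to\cM(ip^{r-1})$ is carried, up to sign, to $\wt\Theta_G^i(-ip^{r-1})\colon\cN(-ip^{r-1})\to\cN$.

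The key step. Since $\wt\Theta_G^{\,j}\circ\wt\Theta_G^{\,p-j}=\wt\Theta_G^{\,p}=0$, we have $\Im\{\wt\Theta_G^{p-j},\cM\}\subseteq\Ker\{\wt\Theta_G^{j},\cM\}$, and $\wt\Theta_G^{j}\colon\cM\to\cM(jp^{r-1})$ descends to a map $\Coker\{\wt\Theta_G^{p-j},\cM\}\to\Im\{\wt\Theta_G^{j},\cM\}(jp^{r-1})$ whose kernel is exactly $\cM^{[j]}$. This gives the short exact sequence of locally free $\cO_{\PG}$-modules
$$0\to\cM^{[j]}\to\Coker\{\wt\Theta_G^{p-j},\cM\}\to\Im\{\wt\Theta_G^{j},\cM\}(jp^{r-1})\to0.$$
Dualizing (legitimate since $\Ext^1_{\cO_{\PG}}(-,\cO_{\PG})$ vanishes on locally free sheaves) and using $\cN\simeq\cM^\vee$: the middle term becomes $\Ker\{\wt\Theta_G^{p-j},\cN\}$ by the remark above, and the outer term $\Im\{\wt\Theta_G^{j},\cM\}^\vee(-jp^{r-1})$ is computed, exactly as in the proof of Proposition \ref{duality}, by dualizing $0\to\Ker\{\wt\Theta_G^{j},\cM\}(-jp^{r-1})\to\cM(-jp^{r-1})\to\Im\{\wt\Theta_G^{j},\cM\}\to0$ and invoking Proposition \ref{duality}, which yields $\Im\{\wt\Theta_G^{j},\cN\}$. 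Thus the dualized sequence reads
$$0\to\Im\{\wt\Theta_G^{j},\cN\}\to\Ker\{\wt\Theta_G^{p-j},\cN\}\to(\cM^{[j]})^\vee\to0,$$
and this is precisely the defining sequence of $\cN^{[p-j]}=\Ker\{\wt\Theta_G^{p-j},\cN\}/\Im\{\wt\Theta_G^{j},\cN\}$ (note $p-(p-j)=j$), giving $(\cM^{[j]})^\vee\simeq\cN^{[p-j]}$.

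The main obstacle is checking that the two outer maps of the dualized sequence are the \emph{natural} inclusion and quotient — so that its middle cokernel really is $\cN^{[p-j]}$ rather than merely an abstract extension with the same sub and quotient — which is the same bookkeeping point that Proposition \ref{duality} leaves implicit. I would make it transparent by running an alternative, pairing-theoretic proof in parallel: the perfect pairing $\cM\otimes_{\cO_X}\cN\to\cO_X$ given by dual bases satisfies $\langle\wt\Theta_G^{i}m,n\rangle=\pm\langle m,\wt\Theta_G^{i}n\rangle$ (again by the proof of Proposition \ref{duality}), hence annihilates $\Im\{\wt\Theta_G^{p-j},\cM\}\otimes\Ker\{\wt\Theta_G^{p-j},\cN\}$ and $\Ker\{\wt\Theta_G^{j},\cM\}\otimes\Im\{\wt\Theta_G^{j},\cN\}$, and therefore descends to a pairing $\cM^{[j]}\otimes_{\cO_X}\cN^{[p-j]}\to\cO_X$, i.e.\ a map $\cN^{[p-j]}\to(\cM^{[j]})^\vee$ of locally free sheaves. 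By Proposition \ref{bundle2}, whose identifications are compatible with restriction of the pairing to fibers, the fiber of this map at $x$ is the descended pairing $M^\#_{k(x)}\to(M_{k(x)})^*$ of $M_{k(x)}\otimes M^\#_{k(x)}\to k(x)$. An elementary computation in the spirit of Lemma \ref{simple} — for a finite-dimensional $k[t]/t^p$-module $V$, the canonical pairing induces an isomorphism $(V^\#)^{[p-j]}\simeq(V^{[j]})^\#$, both sides being the subquotient $(\Im t^{p-j})^{\perp}/(\Ker t^{j})^{\perp}$ of $V^\#$ — shows the map is an isomorphism on every fiber, hence an isomorphism of vector bundles. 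This both confirms the conclusion and pins down the naturality left open by the dualization argument.
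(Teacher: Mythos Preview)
Your second, pairing-theoretic argument is exactly the paper's proof: construct the natural pairing $\cM^{[j]}\otimes\cN^{[p-j]}\to\cO_X$ from $\cM\otimes\cN\to\cO_X$, pass to the map $\cN^{[p-j]}\to(\cM^{[j]})^\vee$, use Proposition~\ref{bundle2} to identify fibers, and check the pairing is perfect over a field. The paper does not present your first approach (dualizing the short exact sequence $0\to\cM^{[j]}\to\Coker\{\wt\Theta_G^{p-j},\cM\}\to\Im\{\wt\Theta_G^j,\cM\}(jp^{r-1})\to0$); that route is valid, but as you yourself note it leaves the naturality of the dualized maps to be checked, and your resolution of that point \emph{is} the pairing argument --- so the paper simply runs the pairing argument from the outset.
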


\begin{proof} Let $X = \PG$.
As discussed in the proof of Proposition \ref{duality}, 
the $\cO_X$-linear dual of the complex of $\cO_X$-modules
$$\UseComputerModernTips
\xymatrix{\cM(-(p-j)p^{r-1}) \  \ar[rrr]^-{\wt\Theta_G^{p-j}(-(p-j)p^{r-1})} &&& \cM \  
\ar[r]^-{\wt\Theta_G^j}& \cM(jp^{r-1}) } $$
is the complex 
$$\UseComputerModernTips
\xymatrix{\cN((p-j)p^{r-1})  \  &   \cN \  
\ar[l]_-{\,\,\wt\Theta_G^{p-j}}&&  \cN(-jp^{r-1})\ar[ll]_-{\wt\Theta_G^j(-jp^{r-1})}}.$$
A similar statement
applies with $\theta_v$ in place of $\wt\Theta_G$.

For any scheme $Y$ and any complex of $\cO_Y$-modules
$$\UseComputerModernTips
\xymatrix{S_1 \ar[r]^-{f}& S_2 \ar[r]^-{g}& S_3}$$
with $\cO_Y$-linear dual
$$\UseComputerModernTips
\xymatrix{S_1^\vee  & S_2^\vee \ar[l]_-{f^\vee}&
S_3^\vee\ar[l]_-{g^\vee}},$$
there is a natural pairing
\begin{equation}
\label{pairing}
\UseComputerModernTips
\xymatrix{(\Ker\{ g \}/\Im \{ f \}) \otimes (\Ker\{ f^\vee \}/\Im\{ g^\vee \}) \ \ar[r] & \ \cO_Y,}
\end{equation}
induced by the evident pairing 
$$S_2 \otimes S_2^\vee \to \cO_Y.$$
In particular, we have a pairing 
$$\cM^{[j]} \otimes  \cN^{[p-j]} \to \cO_X,$$
and, hence, a map 
$$f: \cN^{[p-j]} \to (\cM^{[j]})^\vee.$$ 
By Proposition~\ref{bundle}, 
$$\cM^{[j]} \otimes_{\cO_X} k(x) \simeq  M_{k(x)}^{[j]}, \quad \cN^{[p-j]} \otimes_{\cO_X} k(x) \simeq  N_{k(x)}^{[p-j]}$$
for any $x \in X$. 
By naturality, the specialization  of $f$ at a point $x$ corresponds to the map $f_x: N_{k(x)}^{[p-j]} \to (M_{k(x)}^{[j]})^\#$ induced by the paring 
(\ref{pairing}) for $Y = \Spec k(x)$. 
One readily verifies that this is a perfect pairing if $\cO_Y$ is a field. Hence, $$f \otimes_{O_X} k(x): \cN^{[p-j]} \otimes_{\cO_X} k(x) \to (\cM^{[j]})^\vee  \otimes_{\cO_X} k(x)$$
is an isomorphism for any $x \in X$. Therefore,  $\cN^{[p-j]} \simeq (\cM^{[j]})^\vee$.
\end{proof}

Consideration of  $\cM^{[1]}$ leads to another characterization of projective 
$kG$-modules.

\begin{prop}
\label{char}
Let $G$ be an infinitesimal group scheme and let 
$M$ be a finite dimensional $kG$-module.  Then $M$ is projective if and
only if $M$ has constant rank, has constant $(p-1)$--rank, and satisfies 
$\cM^{[1]} \ = \  0$.
% (where $\cM^{[1]}$ is defined in \ref{bracket}).
\end{prop}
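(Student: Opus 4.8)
The plan is to reduce both implications to the pointwise characterization of projectivity: a finite dimensional $kG$-module $M$ is projective if and only if its local Jordan type $\JType(M,\theta_v)$ equals $\tfrac{\dim M}{p}[p]$ for every $v \in V(G)\setminus\{0\}$. This follows by combining Theorem \ref{iso} (which identifies the rank variety $V(G)_M$ with the cohomological support variety $|G|_M$) with the standard fact that over the self-injective algebra $kG$ a module has trivial support variety exactly when it is projective, together with Remark \ref{rk} and the identity $(\mu_{v,*}\circ\epsilon)(u)=\theta_v$ coming from Definition \ref{local}. By Lemma \ref{simple} applied with $j=1$, this criterion is in turn equivalent to the vanishing $M_{k(v)}^{[1]}=0$ for all $v\in V(G)\setminus\{0\}$, where $M_{k(v)}^{[1]}=\Ker\{\theta_v,M_{k(v)}\}/\Im\{\theta_v^{p-1},M_{k(v)}\}$. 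So the entire problem becomes: relate the vanishing of the sheaf $\cM^{[1]}$ on $X=\PG$ to the vanishing of all of its fibers $M_{k(v)}^{[1]}$.

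For the ``only if'' direction, suppose $M$ is projective. Then the local Jordan type is $\tfrac{\dim M}{p}[p]$ at every nonzero $v$, so $\rk\{\theta_v^j,M_{k(v)}\}$ is independent of $v\neq 0$ for every $j$; thus $M$ has constant $j$-rank for all $1\le j<p$, in particular constant rank and constant $(p-1)$-rank. Proposition \ref{bundle2} then applies with $j=1$: $\cM^{[1]}$ is locally free and $\cM^{[1]}\otimes_{\cO_X}k(x)\simeq M_{k(x)}^{[1]}$ for every $x\in X$. Since each $M_{k(x)}^{[1]}$ is identified with $M_{k(v)}^{[1]}=0$ for any $v$ projecting to $x$ (Proposition \ref{sec} and the remark following it, using a section $s$ with $s(x)\neq 0$), every fiber of the coherent sheaf $\cM^{[1]}$ vanishes, so $\cM^{[1]}=0$ by Nakayama's Lemma.

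For the ``if'' direction, suppose $M$ has constant rank, constant $(p-1)$-rank, and $\cM^{[1]}=0$. The two rank hypotheses are precisely what is needed to invoke Proposition \ref{bundle2} with $j=1$, giving again $\cM^{[1]}\otimes_{\cO_X}k(x)\simeq M_{k(x)}^{[1]}$ for all $x\in X$; hence $\cM^{[1]}=0$ forces $M_{k(x)}^{[1]}=0$ for every $x\in X$. Given any $v\in V(G)\setminus\{0\}$ with image $x\in\PG$, Proposition \ref{sec} identifies $M_{k(v)}^{[1]}$ with $M_{k(x)}^{[1]}=0$, so $M_{k(v)}^{[1]}=0$ for all nonzero $v$, and $M$ is projective by the criterion of the first paragraph.

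I do not expect a serious obstacle here: the argument is essentially bookkeeping within the machinery already developed. The points requiring care are (i) that Proposition \ref{bundle2} needs \emph{simultaneously} constant $1$-rank and constant $(p-1)$-rank, which is exactly why the statement carries both hypotheses and the reason the bare condition $\cM^{[1]}=0$ cannot suffice on its own; (ii) the passage between the global sheaf $\cM^{[1]}$, its fiber at $x\in\PG$, and the local subquotient $M_{k(v)}^{[1]}$ attached to a representative $v$, which is supplied by Proposition \ref{sec}; and (iii) invoking from Theorem \ref{iso} and \cite{SFB2} the standard equivalence between projectivity of $M$ and triviality of its support variety, which Lemma \ref{simple} then converts into the desired fiberwise vanishing statement.
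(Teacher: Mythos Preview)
Your proof is correct and follows essentially the same strategy as the paper: both directions hinge on Proposition~\ref{bundle2} (for $j=1$) to pass between the global vanishing $\cM^{[1]}=0$ and the fiberwise vanishing $M_{k(x)}^{[1]}=0$, then invoke Lemma~\ref{simple} and the local projectivity criterion. The one minor difference is in the forward direction: the paper argues stalk-by-stalk, lifting a basis to show each $\cM_{(x)}$ is free as an $\cO_{X,x}[t]/t^p$-module, whence $(\cM^{[p-1]})_{(x)}=0$, and then passes to $\cM^{[1]}=0$ via Proposition~\ref{dual}; your route through Proposition~\ref{bundle2} and Nakayama on fibers is a bit more direct and avoids the detour through $\cM^{[p-1]}$ and duality.
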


\begin{proof}  Assume that $M$ is a projective $kG$-module.  Then $M$ has
 constant Jordan type (which is some multiple of $[p]$), and hence has constant
rank and constant $(p-1)$-rank.
For any $x \in \PG = X$,  $\theta_x^*(M_{k(x)})$ is a free $k(x)[t]/t^p$-module
of rank equal to $\frac{\dim(M)}{p}$.    If we lift a basis of this free
module to $\cM_{(x)} \equiv  \cM  \otimes_{\cO_X}\cO_{X,x} $, then an application of 
Nakayama's Lemma tells us that $\cM_{(x)}$ is free as an $\cO_{X,x}[t]/t^p$-module.
This readily implies that 
$(\cM_{(x)} )^{[p-1]} \equiv 
\Ker\{\wt\Theta_{G,(x)}^{p-1},\cM_{(x)}\}/\Im\{ \wt\Theta_{G,(x)},\cM_{(x)} \}$
vanishes.
Using the exactness of localization, we conclude that  
$(\cM^{[p-1]})_{(x)} = (\cM_{(x)} )^{[p-1]} $.   Consequently, $\cM^{[p-1]} = 0$.
By Proposition \ref{dual}, we conclude that $\cM^{[1]} = 0$.

Conversely, if $M$ has constant rank and constant $(p-1)$-rank and if
$\cM^{[1]} = 0$, then Proposition \ref{bundle2} tells us that $M_{k(x)}^{[1]} 
\equiv \Ker \theta_x/\Im \theta_x^{p-1}$ equals $0$ for all $x \in X$.  
 Lemma \ref{simple} thus implies that each $M_{k(x)}$ is projective, so
 that the local criterion for projectivity \cite{SFB1} implies that $M$
 is projective.
\end{proof}

One very simple invariant of the algebraic vector bundle $\Ker\{\wt \Theta_G^j,\cM \} $
is the dimension of its vector space of global sections.  The following proposition
gives some understanding of $\Gamma(\PG,\Ker\{\wt \Theta_G^j,\cM \}) \ \subset \ 
\Gamma(\PG,\cM) $.

\begin{prop}
\label{sub}
Let $G$ be an infinitesimal group scheme, and  assume that  $V(G)$ is reduced.  
Let $M$ be a $kG$-module and let $\cM =
M \otimes \cO_{\PG} $.  Then 
$$
\Gamma(\PG,\Ker\{\wt \Theta_G^j,\cM \}) \ \subset \ M
$$
consists of those $m \in M$ such that $\theta_x^j(m) = 0$ for all
$x \in \PG$.
\end{prop}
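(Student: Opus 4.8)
The plan is to identify global sections of the free sheaf $\cM = M \otimes \cO_{\PG}$ with $M$ itself, and then characterize which of these sections land in the subsheaf $\Ker\{\wt\Theta_G^j,\cM\}$. Since $\PG = \Proj k[V(G)]$ is a projective variety (here we use $\dim V(G) \geq 1$) and $M$ is finite dimensional, $\Gamma(\PG,\cM) = M \otimes \Gamma(\PG,\cO_{\PG}) = M$ (a power of $\cO$ has global sections equal to the finite-dimensional vector space). A section $m \in M = \Gamma(\PG,\cM)$ lies in $\Gamma(\PG,\Ker\{\wt\Theta_G^j,\cM\})$ if and only if its image under $\wt\Theta_G^j: \cM \to \cM(jp^{r-1})$ is the zero global section of $\cM(jp^{r-1})$.

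First I would translate the vanishing of $\wt\Theta_G^j(m)$ as a global section into a pointwise (fiberwise) condition. The natural approach is: $\wt\Theta_G^j(m) = 0$ in $\Gamma(\PG,\cM(jp^{r-1}))$ if and only if it vanishes in every stalk, and — using that $V(G)$ (hence $\PG$) is reduced — if and only if its image in each fiber $\cM(jp^{r-1}) \otimes_{\cO_{\PG}} k(x)$ vanishes for every $x \in \PG$. Concretely, cover $\PG$ by the standard affine opens $U_i = \PG \setminus Z(s_i)$ where $s_i$ are homogeneous generators of $k[V(G)]$ of degree dividing $p^{r-1}$ (available by Proposition \ref{grade}); on each $U_i$ one has the trivialization and the operator $\wt\Theta_G^j/s_i^{\,j p^{r-1}/\deg}$, so that the section $\wt\Theta_G^j(m)$ restricted to $U_i$ is $s_i$-times (a unit) times $(\wt\Theta_G^j/s_i)(m)$. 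Thus $\wt\Theta_G^j(m)$ vanishes as a global section if and only if $(\wt\Theta_G^j/s)(m) = 0$ on each affine piece, which by reducedness of $k[V(G)][1/s]_0$ is equivalent to vanishing at every point $x \in \PG$. By Proposition \ref{sec} (and its $j$-th power generalization), evaluating at $x$ gives precisely $\theta_x^j(m) = 0$ up to the nonzero scalar $s(v)^j$, for any $v \in V(G) \setminus \{0\}$ lying over $x$; in particular $\theta_x^j(m) = 0$ is an unambiguous, $s$-independent condition.

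Assembling these equivalences: $m \in \Gamma(\PG,\Ker\{\wt\Theta_G^j,\cM\})$ $\iff$ $\wt\Theta_G^j(m) = 0$ in $\Gamma(\PG, \cM(jp^{r-1}))$ $\iff$ $(\wt\Theta_G^j/s)(m) = 0$ on each $U_i$ $\iff$ $\theta_x^j(m) = 0$ for all $x \in \PG$, which is exactly the claimed description. I would also note that $\Gamma(\PG,\Ker\{\wt\Theta_G^j,\cM\})$ is indeed a subspace of $M = \Gamma(\PG,\cM)$ because $\Ker\{\wt\Theta_G^j,\cM\}$ is a subsheaf of $\cM$ and taking global sections is left exact.

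The main obstacle is the passage from "$\wt\Theta_G^j(m)$ vanishes in every fiber" to "$\wt\Theta_G^j(m)$ vanishes as a section of the coherent sheaf $\cM(jp^{r-1})$" — in general, a section of a coherent sheaf can vanish in all fibers without being zero (e.g.\ a nilpotent section), so this step genuinely requires the hypothesis that $V(G)$ is reduced. The clean way around it is to work on each affine open $U_i \simeq \Spec k[V(G)][1/s_i]_0$: there $\cM(jp^{r-1})|_{U_i}$ is a free module over a reduced ring, and a free module over a reduced ring injects into the product of its fibers (a local section $f$ with $f(x) = 0$ for all $x$ lies in $\bigcap_{\p} \p \cdot (\text{free})_{\p} = 0$), so fiberwise vanishing does imply vanishing. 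This is the same reducedness argument used in the proof of Proposition \ref{test}, so I would simply invoke that pattern. The rest is bookkeeping with the identifications already established in Propositions \ref{sec} and \ref{grade}.
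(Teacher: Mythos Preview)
Your argument is correct and follows essentially the same logic as the paper's proof: both identify $\Gamma(\PG,\cM)$ with $M$, then use reducedness to pass from ``vanishes at every point'' to ``vanishes as a section.'' The paper's version is a bit more direct: rather than covering $\PG$ by affine opens and trivializing $\cO_{\PG}(jp^{r-1})$, it simply works on the affine cone, observing that $m \in \Gamma(\PG,\Ker\{\wt\Theta_G^j,\cM\})$ if and only if $\Theta_G^j(m\otimes 1)=0$ in $M\otimes k[V(G)]$, and then uses reducedness of $k[V(G)]$ itself (so no local charts are needed). One small point: your identification $\Gamma(\PG,\cO_{\PG})=k$ requires that $\PG$ be connected, which the paper obtains by citing \cite[3.4]{CFP}; ``projective variety'' alone does not suffice.
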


\begin{proof}
Recall  that $\PG$ is
connected by \cite[3.4]{CFP} and thus $\Gamma( \PG,\cM) = M$. 
Under this identification, the global sections of $\Ker\{ \wt\Theta_G^j,\cM\}$   coincide with 
the subset 
$$ \{m  \in M \, | \, \Theta_G^j(m \otimes 1)  = 0 \}.$$ 
Since $V(G)$ is reduced, we have   $\Theta_G^j(m \otimes 1) = 0$ if and only if 
$\theta_v^j(m \otimes 1) = \Theta_G^j(m \otimes 1) \otimes_{k[V(G)]} k(v) = 0$ for 
every $v \in V(G)$.  
Hence,  $ m  \in \Gamma(\PG,\Ker\{ \wt\Theta_G^j,\cM\})$ if and  only if 
$m \in \Ker\{\theta_v^j,M_{k(v)}\}$ for every $v \in V(G)$ if and only if 
$\theta_x^j(m) = 0$ for all $x \in \PG$.
 \end{proof}
 
We make Proposition \ref{sub} more explicit in the case of a classical Lie algebra.
 
\begin{prop}
\label{generate}
Let $\fG$ be a (reduced, irreducible) algebraic group over $k$, let $G = \fG_{(1)}$,
and let $\fg = Lie(\fG)$.  If $\fg$ is generated by $p$-nilpotent elements and
if $M$ is a rational $\fG$-module, then
$\Gamma(\PG,\Ker\{\wt \Theta_G^j,\cM \})$ is a rational $\fG$-submodule of $M$.

Furthermore, 
$$
\Gamma(\PG,\Ker\{ \wt \Theta_G,\cM \}) \ = \ \HHH^0(G,M).
$$
 \end{prop}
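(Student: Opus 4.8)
The plan is to reduce the statement to an algebraic assertion about the operator $\Theta_G$ by means of Proposition \ref{sub}, exploiting that for $G=\fG_{(1)}$ one has $kG=u(\fg)$, $V(G)=N_p(\fg)$, and that the local operator $\theta_v$ is simply the action on $M_{k(v)}$ of the $p$-nilpotent element of $\fg_{k(v)}$ to which $v$ corresponds.

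First I would record, using (the part of the proof of) Proposition \ref{sub} that identifies global sections with the kernel of $m\mapsto\Theta_G^j(m\otimes 1)$, that $W_j:=\Gamma(\PG,\Ker\{\wt\Theta_G^j,\cM\})$ equals $\{m\in M:\Theta_G^j\cdot(m\otimes 1)=0\text{ in }M\otimes k[V(G)]\}$, where $\Theta_G^j\in u(\fg)\otimes k[N_p(\fg)]$ acts $k[V(G)]$-linearly through the $u(\fg)$-module structure on $M$ in the first tensor factor. (Only this identification is needed, and it does not require $V(G)$ to be reduced; if one prefers, the reducedness of $N_p(\fg)$ may simply be assumed.) To prove that $W_j$ is a rational $\fG$-submodule I would establish the $\Ad(\fG)$-equivariance of $\Theta_G$. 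Since $\Ad(g)$ is an automorphism of the restricted Lie algebra $\fg_R$ it commutes with the $p$-operation, hence preserves $N_p(\fg)_R$ and induces an $R$-algebra automorphism $\gamma_g$ of $k[V(G)]_R$; writing $c_g$ for the algebra automorphism of $u(\fg)_R$ induced by $\Ad(g)$, the claim is that $(c_g\otimes\gamma_g)(\Theta_G)=\Theta_G$ for all $g\in\fG(R)$. This follows from \eqref{lie}: $\Theta_G$ is the image of the canonical element $\id_\fg=\sum_i x_i\otimes\xi_i\in\fg\otimes\fg^\#$ (with $\{x_i\}$ a basis of $\fg$ and $\{\xi_i\}$ the dual basis) under $\fg\otimes\fg^\#\to u(\fg)\otimes\Sym(\fg^\#)\to u(\fg)\otimes k[N_p(\fg)]$, and $\id_\fg$ is fixed by the natural action ($\Ad(g)$ on $\fg$, contragredient on $\fg^\#$); alternatively this is the naturality of Proposition \ref{pull} applied to conjugation by $g$. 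Granting this, a short computation with the standard identity $a\cdot(gm)=g\bigl(c_{g^{-1}}(a)\cdot m\bigr)$ gives $\Theta_G^j\cdot(gm)=(g\otimes\gamma_g)\bigl(\Theta_G^j\cdot m\bigr)$, so $m\in W_j$ implies $gm\in W_j\otimes R$ (the formation of $W_j$ commutes with the flat base change $-\otimes_k R$). Thus $W_j$ is stable under $\fG(R)$ for every $R$; testing on the universal point $\id\in\fG(k[\fG])$ then shows $\nabla_M(W_j)\subseteq W_j\otimes k[\fG]$, i.e. $W_j$ is a subcomodule.

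For the final equality I would take $j=1$ and compute $\Theta_G\cdot(m\otimes 1)$ directly: with $\Theta_G=\sum_i x_i\otimes\bar\xi_i$, where $\bar\xi_i$ is the image of $\xi_i$ in $k[N_p(\fg)]$, this equals $\sum_i(x_im)\otimes\bar\xi_i$. Because $\fg$ is spanned by its $p$-nilpotent elements, no nonzero linear form on $\fg$ vanishes on $N_p(\fg)$, so $\Sym^1(\fg^\#)$ injects into $k[N_p(\fg)]$ and the $\bar\xi_i$ remain linearly independent; hence $\Theta_G\cdot(m\otimes 1)=0$ if and only if $x_im=0$ for all $i$, i.e. if and only if $\fg\cdot m=0$. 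Finally $\{m\in M:\fg\cdot m=0\}=M^G=\HHH^0(G,M)$, since $G$ has height one and the augmentation ideal of $kG=u(\fg)$ is generated by $\fg$.

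The hard part will be Step 1: obtaining the $\Ad$-equivariance of $\Theta_G$ in a clean form and keeping the variances straight (which of $c_g$, $c_{g^{-1}}$, $\gamma_g$, $\gamma_g^{-1}$ occurs where), together with the passage from ``$W_j$ is stable under all $R$-points of $\fG$'' to ``$W_j$ is a subcomodule''. The remaining ingredients — the description of $W_j$ via Proposition \ref{sub}, the linear independence of the $\bar\xi_i$, and the identification $M^{\fG_{(1)}}=\HHH^0(G,M)$ — are routine. It is worth noting that the equivariance argument of Step 1 uses nothing about $\fg$ being generated by $p$-nilpotent elements; that hypothesis enters only in the second assertion.
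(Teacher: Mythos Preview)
Your proof is correct and follows the same underlying idea as the paper: the $\Ad(\fG)$-equivariance of the construction forces $W_j$ to be $\fG$-stable.  The execution differs in a useful way, however.  The paper passes to the algebraic closure, works pointwise with the local operators $\theta_v$ via the identity $\theta_v^j(gm)=g\,\theta_{\Ad(g^{-1})v}^j(m)$, and invokes Proposition~\ref{sub} in its full form (so implicitly uses that $V(G)$ is reduced and that stability under closed points over $\bar k$ suffices).  You instead stay with the global operator $\Theta_G$, derive the invariance $(c_g\otimes\gamma_g)(\Theta_G)=\Theta_G$ directly from the description of $\Theta_G$ as the image of the canonical element $\id_\fg\in\fg\otimes\fg^\#$, and then run a clean functor-of-points argument to pass from $\fG(R)$-stability to the subcomodule condition.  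This avoids the reducedness hypothesis for the first assertion and makes the ``stable $\Rightarrow$ subcomodule'' step explicit.  For the second assertion your argument via linear independence of the $\bar\xi_i$ in $k[N_p(\fg)]$ is essentially equivalent to the paper's pointwise argument; one small remark is that your justification reads ``spanned'' where the hypothesis says ``generated'', but this causes no trouble: either observe that the defining equations of $N_p(\fg)$ are homogeneous of degree $p$ so $\Sym^1(\fg^\#)\hookrightarrow k[N_p(\fg)]$ automatically, or note (as the paper implicitly does) that the annihilator of $m$ in $\fg$ is a Lie subalgebra, so containing all $p$-nilpotent elements forces it to equal $\fg$.
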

 
 \begin{proof}

To prove that $\Gamma(\PG,\Ker\{\wt \Theta_G^j,\cM \})$ is a $\fG$-submodule 
of $M$, we may base change to the algebraic closure of $k$,
and thus we may assume $k$ is algebraically closed.  Let $g \in \fG$
be a $k$-rational point.
Then
\begin{equation}
\label{action1}
\theta_v^j(gm \otimes 1) = g \theta^j_{v^{g^{-1}}}(m\otimes 1),.
\end{equation} 
where the action of $\fG$ on $V(G) = \cN_p(\fg)$ (the $p$-nilpotent cone of $\fg$)
is via the adjoint action of $\fG$.

Hence, we   have the following   equalities:
$$\{ m \in M \, | \,  \Theta^j_G(m \otimes  1) = 0  \} \ = \
\bigcap \limits_{0 \not  = v \in V(G)} \{ m \in M \, | \,  
\theta^j_v(m \otimes  1) = 0  \} \ = 
$$
$$
\bigcap \limits_{0 \not  = \Ad(g^{-1})v \in V(G)} \{ m \in M \, | \,  
g \theta^j_{\Ad(g^{-1})v}(m\otimes 1) = 0  \} \ = \
\bigcap \limits_{0 \not  = v \in V(G)} \{ m \in M \, | \,  
\theta^j_v(gm \otimes  1) = 0  \} 
$$
$$
= \ \{ m \in M \, | \,  \Theta^j_G(gm \otimes  1) = 0  \},
$$
where  the  first and the last equality follow  from Proposition \ref{sub}, the second  
equality follows from the fact that $\Ad(g^{-1}): V(G) \to V(G)$ is a bijection,  and the third equality from (\ref{action1}). 
We conclude that $\{m \in M \, | \,  \Theta^j_G(m \otimes  1) = 0  \}$ 
is  a $\fG$-stable  subspace of $M$.  

The second assertion follows immediately from Proposition \ref{sub} and the 
facts that $v \in V(G)$ corresponds to a $p$-nilpotent element $X_v$ of $\fg$ and
that the action of $\theta_v$ is the action of $X_v$.

\end{proof}

Combining Proposition \ref{sub-rk} and Corollary \ref{generate} in the special case $j=1$ yields
the following criterion for the non-triviality of $\Ker\{ \wt\Theta_G,\cM\}$.

\begin{cor}
\label{subb}
Let $G$ be an infinitesimal group scheme such that $V(G)$ is 
reduced and positive dimensional. Assume  that for any field extension $K/k$, 
$KG$ is generated by $\theta_v \in k(v)G$,  for all $v \in V(G)$ such that $k(v) \subset K$.  
Let $M$ a finite dimensional  $kG$-module of constant Jordan type $\sum_i a_i[i]$.  If 
$$\dim \HHH^0(G,M) \ < \sum_{i = 1}^p a_i,$$  
then $\Ker\{ \wt \Theta_G,\cM\}$ is a non-trivial algebraic vector bundle 
over $\PG$
\end{cor}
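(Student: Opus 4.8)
The plan is to read the rank of the bundle off Proposition~\ref{sub-rk}, identify its global sections via Proposition~\ref{sub}, and then contradict freeness by a dimension count.

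First, since $M$ has constant Jordan type it has in particular constant $1$-rank, so Theorem~\ref{bundle} guarantees that $\Ker\{\wt\Theta_G,\cM\}$ is an algebraic vector bundle on $\PG$. Its rank equals $\dim_{k(v)}\Ker\{\theta_v\colon M_{k(v)}\to M_{k(v)}\}$ for $0\neq v\in V(G)$ by Theorem~\ref{equiv}; since a $k[u]/u^p$-module of Jordan type $\sum_{i=1}^p a_i[i]$ has $u$-kernel of dimension $\sum_{i=1}^p a_i$ (equivalently, one may subtract the rank formula of Proposition~\ref{sub-rk} from $\dim_k M=\sum_i ia_i$ using the exact sequence $0\to\Ker\{\wt\Theta_G,\cM\}\to\cM\to\Im\{\wt\Theta_G,\cM\}\to 0$), the bundle has rank $n:=\sum_{i=1}^p a_i$.

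Next I would compute global sections. Because $V(G)$ is reduced, Proposition~\ref{sub} identifies $\Gamma(\PG,\Ker\{\wt\Theta_G,\cM\})$ with the subspace $\{m\in M:\theta_v(m)=0\ \text{for all}\ v\in V(G)\}$ of $M=\Gamma(\PG,\cM)$, and I claim this subspace is contained in $\HHH^0(G,M)=M^G$. Fix an algebraic closure $K$ of $k$. By hypothesis $KG$ is generated as a $K$-algebra by the operators $\theta_v$ with $k(v)\subseteq K$; each of these lies in the augmentation ideal $I_{KG}$ of $KG$ (if $\varepsilon(\theta_v)=c$, then $c^p=\varepsilon(\theta_v^p)=0$ forces $c=0$), so $I_{KG}$ is spanned over $K$ by products $\theta_{v_1}\cdots\theta_{v_\ell}$ with $\ell\geq 1$. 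If $m\in M$ is annihilated by every $\theta_v$, then every such product annihilates $m\otimes 1\in M_K$, so $m\otimes 1\in(M_K)^G=(M^G)_K$ and hence $m\in M^G$. Therefore
\[
\dim_k\Gamma(\PG,\Ker\{\wt\Theta_G,\cM\})\ \leq\ \dim_k\HHH^0(G,M)\ <\ n.
\]

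Finally, if $\Ker\{\wt\Theta_G,\cM\}$ were a trivial vector bundle it would be isomorphic to $\cO_{\PG}^{\oplus n}$; since $\PG$ is connected, $k\hookrightarrow\Gamma(\PG,\cO_{\PG})$, so its space of global sections would have $k$-dimension at least $n$, contradicting the displayed inequality. Hence $\Ker\{\wt\Theta_G,\cM\}$ is non-trivial. The step I expect to be the main obstacle is the passage ``$\theta_v(m)=0$ for all $v$'' $\Rightarrow$ ``$m\in M^G$'': it rests on the hypothesis about $KG$, and the delicate point is that generation of $KG$ as a $K$-algebra by elements of the augmentation ideal forces those elements to generate $I_{KG}$ (so their products span it), after which the one-sided relations $\theta_v m=0$ propagate through arbitrary products; one also has to keep the base changes among $k$, $k(v)$ and $K$ straight when comparing $M^G$, $(M_K)^G$, and the various local operators $\theta_v$.
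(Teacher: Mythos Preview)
Your proof is correct and follows essentially the same approach as the paper: compute the rank of the kernel bundle as $\sum_i a_i$ via Proposition~\ref{sub-rk}, bound the global sections by $\HHH^0(G,M)$ via Proposition~\ref{sub} together with the generation hypothesis, and conclude non-freeness from the dimension mismatch. In fact you supply the detail the paper omits---namely, why the hypothesis that $KG$ is generated by the $\theta_v$ forces $\{m:\theta_v(m)=0\ \forall v\}\subseteq M^G$---whereas the paper simply asserts that the global sections \emph{equal} $\HHH^0(G,M)$; your containment suffices, and the reverse inclusion is immediate since each $\theta_v$ lies in the augmentation ideal.
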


\begin{proof}
By Proposition \ref{sub-rk}, the  dimension of the fibers of $\Ker\{ \wt \Theta_G,\cM\}$ is 
$\dim M - \sum_{i = 2}^p a_i(i-1) = \sum_{i = 1}^p a_i$. 
By Proposition \ref{sub},  the global sections  of $\Ker\{ \wt \Theta_G,\cM\}$ equal $\HHH^0(G,M)$. Hence, the inequality $\dim \HHH^0(G,M) \ < \sum_{i = 1}^p 
a_i$ 
implies that the dimension of the global sections  is less than the dimension of the fibers.  
Therefore, the sheaf is not free.  

\end{proof}

The following two lemmas will be applied to prove Proposition \ref{bracket2}.

\begin{lemma}
\label{nak}
Let $R$ be a local commutative ring with residue field $k$ and let 
$M$ be a finite $R[t]/t^p$-module which is free as an $R$-module.  
If $ M  \otimes_R k$ is a free $k[t]/t^p$-module, then $M$ is free as
an $R[t]/t^p$-module.
\end{lemma}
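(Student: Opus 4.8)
The plan is to choose a basis of $M \otimes_R k$ as a free $k[t]/t^p$-module, lift the free generators to $M$, and use Nakayama's Lemma to show the resulting map from a free $R[t]/t^p$-module is an isomorphism. Concretely, suppose $M \otimes_R k$ is free of rank $n$ over $k[t]/t^p$, with free generators $\bar m_1, \dots, \bar m_n$. Lift each $\bar m_i$ to an element $m_i \in M$, and consider the $R[t]/t^p$-linear map
$$
g: (R[t]/t^p)^n \longrightarrow M, \qquad e_i \longmapsto m_i.
$$
Reducing modulo the maximal ideal $\m \subset R$, the map $\bar g: (k[t]/t^p)^n \to M \otimes_R k$ sends $e_i$ to $\bar m_i$, hence is an isomorphism by the choice of generators. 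Since $M$ is a finite $R$-module (being free of finite rank as an $R$-module, as will follow from counting below, or by hypothesis that it is finitely generated), Nakayama's Lemma applied over the local ring $R$ shows $g$ is surjective.

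The next step is to show $g$ is injective, and here the freeness of $M$ over $R$ is the crucial input. Both source and target of $g$ are free $R$-modules: the source $(R[t]/t^p)^n$ has rank $pn$, and the target $M$ has some finite rank as a free $R$-module. Reducing mod $\m$, the map $\bar g$ is an isomorphism of $k$-vector spaces, so $\dim_k (M \otimes_R k) = pn$; since $M$ is $R$-free, $\operatorname{rk}_R M = pn$ as well. Thus $g$ is a surjection between free $R$-modules of the same finite rank $pn$. A surjective endomorphism (after choosing bases) of a finitely generated module over a commutative ring is an isomorphism, so $g$ is injective and hence an isomorphism of $R[t]/t^p$-modules. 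Therefore $M \simeq (R[t]/t^p)^n$ is free.

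I expect the main obstacle to be the bookkeeping needed to ensure $g$ is genuinely injective rather than merely surjective; the one-line appeal ``a surjective endomorphism of a finitely generated module is injective'' is what makes the argument work, and one must confirm it applies by arranging source and target to be free $R$-modules of equal rank, which is exactly where the hypothesis that $M$ is $R$-free (not merely that $M \otimes_R k$ is $k[t]/t^p$-free) is used. An alternative route avoiding this point is to observe directly that $\operatorname{Ker} g$ is an $R$-submodule of the free module $(R[t]/t^p)^n$ with $\operatorname{Ker} g \otimes_R k = 0$ (from the short exact sequence and the vanishing of $\operatorname{Tor}^R_1(M, k)$, which holds since $M$ is $R$-flat), so Nakayama forces $\operatorname{Ker} g = 0$ provided $\operatorname{Ker} g$ is finitely generated — which it is, being a submodule of a Noetherian-module situation or simply a submodule of a free module of finite rank over a local ring in the cases we apply this. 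Either way the heart of the matter is the same flatness hypothesis.
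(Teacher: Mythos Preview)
Your proposal is correct and matches the paper's approach almost exactly: lift a $k[t]/t^p$-basis of $M\otimes_R k$ to $M$, use Nakayama over $R$ for surjectivity, then argue injectivity. Your ``alternative route'' for injectivity---observing that $\operatorname{Tor}^R_1(M,k)=0$ (since $M$ is $R$-free) so $\Ker g \otimes_R k = 0$, then invoking Nakayama---is precisely the paper's argument; your primary route via rank-counting and the fact that a surjection $R^{pn}\to R^{pn}$ is an isomorphism is a clean variant that sidesteps the need to check $\Ker g$ is finitely generated (a point neither you nor the paper fully addresses in the non-Noetherian case, though it is harmless in the intended applications).
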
    

\begin{proof}
Let $m_1,\ldots,m_s \in M$ be such that $\ol m_1,\ldots,\ol m_s$ 
form a basis for $M  \otimes_R k$ as a $k[t]/t^p$-module.  Let
$Q$ be a free $R]t]/t^p$-module of rank $s$  with basis $q_1,
\ldots,q_s$ and consider the $R[t]/t^p$-module homomorphism
$f: Q \to M$ sending $q_i$ to $m_i$.

By Nakayama's Lemma, $f: Q\to M$ is surjective.  Because $M$ is
free as an $R$-module, applying $- \otimes_R k$ to the short exact
sequence $0 \to \Ker\{ f \} \to Q \to M \to 0$ determines the short exact sequence
$$
0 \to  \Ker\{ f \}  \otimes_R k\ \to \ Q  \otimes_R k\ \to\ M  \otimes_R k\to 0.
$$
Consequently, $ \Ker\{ f \}    \otimes_R k= 0$, so that another application
of Nakayama's Lemma implies that $\Ker \{ f \} = 0$.     Hence, $f$
is an isomorphism, and thus $M$ is free as an $R[t]/t^p$-module.  
\end{proof}  

\begin{lemma}
\label{emptyint}
Let $G$ be an infinitesimal group scheme and $M$ be a finite dimensional 
$kG$-module.  Set $A = k[V(G)]$; for any $f \in A$, set $A_f = A[1/f]$.  Assume
that $\Spec A_f \subset V(G)$ has empty intersection with $V(G)_M$.
Then $(\cU_G \circ \epsilon)^*(M \otimes A_f)$ is a projective 
$A_f[t]/t^p$ -module.
\end{lemma}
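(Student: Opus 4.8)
The plan is to deduce projectivity of $N:=(\cU_G\circ\epsilon)^*(M\otimes A_f)$ over $S:=A_f[t]/t^p$ from a pointwise freeness statement already built into the hypothesis $\Spec A_f\cap V(G)_M=\emptyset$, by localizing and applying Lemma~\ref{nak} at each local ring. First I would record the bookkeeping. Since $k[V(G)]=k[V_r(G)]$ is a finitely generated $k$-algebra (e.g.\ by Proposition~\ref{grade}), the rings $A=k[V(G)]$, $A_f$ and $S$ are Noetherian, and $N$ is the free $A_f$-module $M\otimes_k A_f$ equipped with the $t$-action given by multiplication by $\Theta_G\in kG\otimes A$ — indeed $\cU_{G,*}(\epsilon(t))=\cU_{G,*}(u_{r-1})=\Theta_G$ by $(\ref{Theta})$ — so $N$ is a finitely generated, hence finitely presented, $S$-module. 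It therefore suffices to show $N_{\mathfrak q}$ is free over $S_{\mathfrak q}$ for every prime $\mathfrak q\subset S$. Because $t$ is nilpotent, the primes of $S$ correspond bijectively to the primes $\mathfrak p$ of $A$ with $f\notin\mathfrak p$, and a short computation identifies $S_{\mathfrak q}\simeq A_{\mathfrak p}[t]/t^p$ and $N_{\mathfrak q}\simeq M\otimes_k A_{\mathfrak p}$, still with $t$ acting by $\Theta_G$.

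Next, for such a $\mathfrak p$ I would set $R=A_{\mathfrak p}$ and let $v\in V(G)$ be the corresponding point, so $R$ is local with residue field $k(v)$ and $v\in\Spec A_f$. The $R$-module $M\otimes_k R$ is free, and its reduction $(M\otimes_k R)\otimes_R k(v)=M_{k(v)}$ carries the $t$-action by $\theta_v=\Theta_G\otimes_A k(v)$; that is, it is exactly the $k(v)[t]/t^p$-module $(\mu_{v,*}\circ\epsilon)^*(M_{k(v)})$ of Definitions~\ref{rankvar} and \ref{local}. Since $v\notin V(G)_M$ by hypothesis, this reduction is free over $k(v)[t]/t^p$, so Lemma~\ref{nak} applies with this $R$ and shows that $M\otimes_k R$ is free over $R[t]/t^p$, i.e.\ $N_{\mathfrak q}$ is free over $S_{\mathfrak q}$. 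As $\mathfrak q$ was arbitrary, $N$ is a finitely presented, locally free, hence projective $S$-module, which is the assertion.

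The main obstacle is the passage from "$v\notin V(G)_M$" to freeness of $M_{k(v)}$ over $k(v)[t]/t^p$ for every scheme point $v$ of $\Spec A_f$, not merely the closed points where it is Definition~\ref{rankvar} verbatim. For this I would use that $V(G)_M$ is closed (\cite[6.2]{SFB2}) together with the lower semicontinuity of $v\mapsto\rk(\theta_v:M_{k(v)}\to M_{k(v)})$, whose maximal value is attained precisely where $M_{k(v)}$ is free over $k(v)[t]/t^p$; thus $V(G)\setminus V(G)_M$ is the open locus of free local Jordan type, and the hypothesis places the open set $\Spec A_f$ inside it. The remaining ingredients — the identification of the $t$-action with $\Theta_G$, the correspondence of primes under a quotient by a nilpotent, and the standard fact that a finitely presented, locally free module over a commutative ring is projective — are routine.
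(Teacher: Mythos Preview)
Your proof is correct and follows essentially the same approach as the paper: reduce to local rings of $A_f$, observe that the fiber at each point $v$ is $(\mu_{v,*}\circ\epsilon)^*(M_{k(v)})$, use $v\notin V(G)_M$ to get freeness of the fiber, apply Lemma~\ref{nak}, and conclude projectivity from local freeness. The paper treats your ``main obstacle'' as already handled by Definition~\ref{rankvar}, which is stated for all (scheme) points $v\in V(G)$ rather than just closed ones, so your semicontinuity argument, while correct, is not needed.
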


\begin{proof}
By definition, $V(G)_M$ consists of those points 
$v \in V(G)$ such that $\theta_v^*(M_{(k(v)})$ is not free as a 
$k(v)[t]/t^p$-module.  By the universal property of \, $\cU_G \circ \epsilon$, 
the assumption that $\Spec A_f \cap V(G)_M = \emptyset$ implies
for every point $v \in \Spec A_f $ that 
$\theta_v^*(M_{k(v)}) =  (\cU_G \circ \epsilon)^*(M \otimes A_f)  \otimes_{A_f} k(v)$ is free 
as a $k(v)[t]/t^p$-module.  Let $A_{(v)}$ denote the localization of $A$ 
at $v$.  Then Lemma \ref{nak} implies for every point $v \in \Spec A_f$ 
that the localization
$ (\cU_G \circ \epsilon)^*(M \otimes A_f)  \otimes_{A_f} A_{(v)}$
is free as a $A_{(v)}[t]/t^p$-module.  This implies that
$M \otimes A_f$ is projective (since projectivity of a finitely generated module over a 
$A$ is determined locally).  
\end{proof}

We conclude with a property of the (projectivized) rank variety 
$\PG_M$ of a $kG$-module $M$.

\begin{prop} 
\label{bracket2}
Let $G$ be an infinitesimal group scheme, $M$ be a finite dimensional 
$kG$-module, and set $\cM = M \otimes \cO_{\PG}$.   Then 
$$\Supp_{\cO_{\PG}}(\cM^{[1]}) \ \subset   \ \PG_M,$$ 
where $\Supp_{\cO_{\PG}}(\cM^{[1]})$ is the support of the coherent
sheaf   $\cM^{[1]}$ (the closed subset of points $x \in \PG$ at which
$\cM^{[1]}_{(x)} \not= 0$).                                                              
\end{prop}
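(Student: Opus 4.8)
The plan is to show that at any point $x \in \PG$ lying \emph{outside} the projectivized rank variety $\PG_M$, the fiber $\cM^{[1]}_{(x)}$ vanishes; this is precisely the assertion $\Supp_{\cO_{\PG}}(\cM^{[1]}) \subset \PG_M$. The key observation is that $\PG_M$ is a closed subset (it is $\Proj k[V(G)_M]$ by Theorem~\ref{PI}, and $V(G)_M$ is closed by \cite[6.2]{SFB2}), so its complement $\PG \setminus \PG_M$ is open, and it suffices to cover this complement by basic open affines on which $\cM^{[1]}$ restricts to zero.

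First I would fix $x \in \PG \setminus \PG_M$ and choose a basic open affine $\Spec A_f \subset V(G)$, for a suitable homogeneous $f \in A = k[V(G)]$, whose associated open subset of $\PG$ contains $x$ and is disjoint from $\PG_M$. By construction, the affine cone $\Spec A_f$ (minus the origin) then has empty intersection with $V(G)_M$. Now I would invoke Lemma~\ref{emptyint}: it says exactly that $(\cU_G \circ \epsilon)^*(M \otimes A_f)$ is a projective $A_f[t]/t^p$-module. Translating back to $\wt\Theta_G$: the global $p$-nilpotent operator $\Theta_G$ acting on $M \otimes A_f$ is (up to the identification $\epsilon$ of $u$ with $u_{r-1}$ and localization) this projective module structure, so $M \otimes A_f$ is projective, hence free, over the appropriate localized group-algebra-like ring. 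For a free module $F$ over $k[t]/t^p$ one has $\Ker\{t : F \to F\} = \Im\{t^{p-1} : F \to F\}$, and the same identity propagates through localization and the grading; consequently $\Ker\{\wt\Theta_G, \cM\} = \Im\{\wt\Theta_G^{p-1}, \cM\}$ after restriction to the open subscheme corresponding to $\Spec(A_f)_0$. Therefore $\cM^{[1]} = \Ker\{\wt\Theta_G,\cM\}/\Im\{\wt\Theta_G^{p-1},\cM\}$ restricts to zero on a neighborhood of $x$, so $x \notin \Supp_{\cO_{\PG}}(\cM^{[1]})$.

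The one point requiring a little care — and the step I expect to be the main obstacle — is the bookkeeping of gradings and the passage between the affine cone picture ($\Spec A_f$, where Lemma~\ref{emptyint} is phrased) and the projective picture ($\Proj$, where $\cM^{[1]}$ lives). Concretely, $\wt\Theta_G$ is homogeneous of degree $p^{r-1}$ by Proposition~\ref{homog}, so to localize to an affine chart of $\PG$ one must invert a homogeneous $f$ and pass to degree-zero pieces, exactly as in the proof of Proposition~\ref{sec}; I would mimic that argument to see that the kernel and image sheaves on the chart $U = \Spec(A_f)_0$ are computed by the corresponding operators on the free $(A_f)_0$-module $M \otimes (A_f)_0$, and that the projectivity established by Lemma~\ref{emptyint} over $A_f[t]/t^p$ descends to projectivity over $(A_f)_0[t]/t^p$ (a free module over a graded ring stays free after taking the degree-zero subring relative to an invertible homogeneous parameter). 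Once that identification is in hand, the vanishing $\Ker\{t\} = \Im\{t^{p-1}\}$ for free $k[t]/t^p$-modules finishes the argument, and no further calculation is needed.
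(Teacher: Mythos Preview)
Your proposal is correct and follows essentially the same line as the paper's proof: choose a homogeneous $F$ vanishing on $\PG_M$ with $F(x)\neq 0$, invoke Lemma~\ref{emptyint} on $\Spec A_F$ to get projectivity of $M\otimes A_F$ over $A_F[t]/t^p$, and conclude $\Ker\{\wt\Theta_G\}=\Im\{\wt\Theta_G^{p-1}\}$ there. The only minor difference is in the bookkeeping you flag: rather than descending projectivity from $A_F$ to $(A_F)_0$, the paper simply notes that $(\cM^{[1]})_{(x)} = ((M\otimes A_F)^{[1]})_{(v)}$ for a lift $v$ of $x$ (by exactness of localization) and works entirely on the cone, which sidesteps the graded descent you anticipated as the main obstacle.
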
                                                                       
                                                                                 
\begin{proof}
Let $A$ denote $k[V(G)]$ and let $X$ denote $\PG$.  
Consider some $x \notin X_M$ and choose some homogeneous
polynomial $F \in A$ vanishing on $X_M$
such that $F(x) \not= 0$.  Thus, $x \in \Spec (A_F)_0
\subset X$ and $\Spec (A_F)_0 \cap X_M = 
\emptyset$, where $(A_F)_0$ 
denote the elements of degree 0 in the localization 
$A_F = A[1/F]$.  It suffices to prove that $x \notin
\Supp_{\cO_X}(\cM^{[1]}) \cap \Spec (A_F)_0$.  Since localization is exact, this is equivalent to proving that
$v \notin \Supp_{A_F}((M \otimes A_F)^{[1]}) $
for some $v \in \Spec A_F$ mapping  to $x$.

The condition $\Spec (A_F)_0 \cap X_M = 
\emptyset$ implies that $\Spec (A_F) \cap V(G)_M = 
\emptyset$.  Hence, by Lemma \ref{emptyint}, 
$(\cU_G \circ \epsilon)^*(M \otimes A_F)$ is a projective 
$A_F[t]/t^p$ -module.  This implies that 
$(\cU_G \circ \epsilon)^*((M \otimes A_F)^{[1]}) = 0$, and thus that 
$v \notin \Supp_{A_F}((M \otimes A_F)^{[1]}) $.
\end{proof}

\begin{remark}
The reverse inclusion $\PG_M \subset \Supp_{\cO_{\PG}}(\cM^{[1]})$ 
seems closely
related to the condition that $ 
\Ker\{ \wt\Theta_G,\cM\} \otimes_{\cO_{\PG}}  k(x)\to \Ker\{ \theta_x,M_{k(x)} \}$ be surjective.
\end{remark}

%%%%%%%%%%%%%%%%%%%%%%%%
%%%%%%%%%%sec 6%%%%%%%%%%%%%
\section{Examples and calculations with bundles}
%%%%%%%%%%%%%%%%%%%%%%%%%%%

In this final section, we investigate numerous specific examples.  The case in 
which $G$ equals either $\bG_{a(1)}^{\times 2}$ or $\ul sl_2$ (the infinitesimal group scheme associated to the restricted Lie algebra $sl_2$) is particularly amenable
to computation for $\bP(G)$ is isomorphic to $\bP^1$.  Specifically, we do calculations for 
projective $kG$-modules, examples of modules of constant Jordan type which
are not distinguished by support varieties.  We also compute bundles obtained from``zig-zag modules" and
syzygies.

As we see in the following simple example, the isomorphism type of the vector bundles 
discussed in Theorem \ref{bundle} can be used to distinguish certain $kG$-modules
which have the same local Jordan type.   We remind the
reader that the local Jordan type of a finite dimensional $kG$-module $M$ of constant
Jordan type is the same as that of its linear dual $M^\#$.

\begin{ex}
\label{duals}
Let $G = \bG_{a(2)}$, so that  $k\bG_{a(2)} = k[u_0,u_1]/(u^p_0,u^p_1)$, 
$V(\bG_{a(2)}) = \bA^2$, $A = k[V(\bG_{a(2)})] = k[x_0,x_1]$
graded so that $x_0$ is given degree 1 and $x_1$ is given degree $p$.
Then 
$$\Theta_G = x_1u_0 + x_0^pu_1 \in A[u_0,u_1]/(u^p_0,u^p_1).
$$
(see (\ref{p-univ}(2)).
% and $\PG = \bP^1$.
We consider the 3-dimensional $kG$-module $M$ of constant Jordan type
$[2] + [1]$ and its linear dual $M^\#$, which we represent
 diagrammatically as follows:

$$
\UseComputerModernTips
 \xymatrix{
& m_1 \ar[dl]_{u_2}\ar[dr]^{u_1}  &&  & n_1 \ar[dr]^{u_1} && n_2 \ar[dl]_{u_2} \\
 m_2  &&  m_3 \  &\qquad  &&  n_3 &
}
$$
The $k\Ga2$-invariant subspace  of $M$ is two dimensional, and, hence, the global sections of $\Ker\{ \wt\Theta_G, M \otimes \cO_{\bP(G)} \}$  have dimension two by Prop.~\ref{sub} (in fact, an explicit calculation shows that this is a trivial bundle of rank 2). On the other hand, $M^\#$ has only one-dimensional invariant subspace, and, hence, the global sections of $\Ker\{ \wt\Theta_G, M^\# \otimes \cO_{\bP(G)} \})) $ have dimension one.
Thus, $$\Ker\{ \wt\Theta_G, M \otimes \cO_{\bP(G)} \} \ \ncong \ 
\Ker\{ \wt\Theta_G, M^\# \otimes \cO_{\bP(G)}\}. $$
\end{ex}
\vskip .1in

We next give a somewhat more interesting example of pairs of
modules of the same constant Jordan type with different associated
bundles.

\begin{ex}
As in Proposition \ref{sl2}, let $S_\lambda$ be the 
irreducible $\SL_2$-module of highest weight $\lambda, 0 \leq \lambda \leq p-1$, 
and consider $S^p(S_\lambda)$, the $p^{\rm th}$ symmetric power of $S_\lambda$.
 Since $S_\lambda$ is self-dual,
the dual of $S^p(S_\lambda)$ is $\Gamma^p(S_\lambda)$, the $p^{\rm th}$ 
divided power of $S_\lambda$.  We have a 
short exact sequence of rational $\SL_2$-modules
\sloppy{

}

$$
\UseComputerModernTips
\xymatrix{0 \ar[r]& S_\lambda^{(1)} \ar[r]& S^p(S_\lambda) \ar[r]& \Gamma^p(S_\lambda) \ar[r]& S_\lambda^{(1)} \ar[r]& 0.}$$
Here, $S_\lambda^{(1)}$ is the first Frobenius twist of $S_\lambda$,
thus trivial as a $\cu(sl_2)$-module.

Let $X=\Proj k[N(sl_2)]$. By Prop.~\ref{sub}, the space of global sections of $\Ker\{ \wt\Theta_{\ul sl_2}, S^p(S_\lambda) \otimes \cO_X \}$ equals the $sl_2$ invariants of $S^p(S_\lambda)$.  Hence, 
$$\Gamma(X, \Ker\{ \wt\Theta_{\ul sl_2}, S^p(S_\lambda) \otimes \cO_X \}) = S_\lambda^{(1)}.$$
On the other hand, $\Gamma^p(S_\lambda)$ does not have any $sl_2$-invariants, and, hence, $\Ker\{ \wt\Theta_{\ul sl_2}, \Gamma^p(S_\lambda) \otimes \cO_X \}$ does not have any global sections. We conclude that the kernel bundles associated to the dual modules $S^p(S_\lambda)$ and $\Gamma^p(S_\lambda)$ are non-isomorphic.
\sloppy{

}
\end{ex}
\vskip .1in

We continue our consideration of $\SL_{2(1)} \equiv \ul sl_2$ in the following proposition.

\begin{prop}
\label{pim} 
Let $G =  \ul sl_2$, let $S_\lambda$ be the irreducible $kG$-module of highest weight $\lambda$, $ 0 \leq \lambda \leq p-1$, and let 
$P_\lambda \to S_\lambda$ be the
projective cover of $S_\lambda$.    Then 

$$i^*(\Ker\{\widetilde \Theta_G, P_\lambda \otimes \cO_{\PG}\}) \simeq 
\begin{cases} \cO_{\bP^1}(1-p) \quad \text{ if } \lambda = p-1 \\
\cO_{\bP^1}(\lambda - 2(p-1)) \oplus \cO_{\bP^1}(-\lambda) \quad   \text{ if }  0 \leq \lambda \leq p-2\\
\end{cases}
$$

where $i: \bP^1 \to \PG$ is the isomorphism (\ref{conic}).
\end{prop}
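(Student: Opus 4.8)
The plan is to reduce Proposition~\ref{pim} to Proposition~\ref{sl2} by feeding in the module structure of the projective indecomposable $kG$-modules $P_\lambda$, rather than redoing the explicit matrix computation of Proposition~\ref{sl2}. The case $\lambda=p-1$ is immediate: $S_{p-1}$ is the Steinberg module, which is already projective, so $P_{p-1}=S_{p-1}=V_{p-1}$, and Proposition~\ref{sl2}(1) applied with $m=p-1$ gives $i^*(\Ker\{\wt\Theta_G, P_{p-1}\otimes\cO_{\PG}\})\simeq \cO_{\bP^1}(-(p-1))=\cO_{\bP^1}(1-p)$.

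For $0\le\lambda\le p-2$ the crucial input is the module-theoretic fact that $P_\lambda$ (of dimension $2p$) is the injective hull of its simple socle $S_\lambda$ and that its radical $\operatorname{rad}(P_\lambda)=\Omega(S_\lambda)$ is isomorphic, as a $kG$-module, to the restriction $V_{2p-2-\lambda}\!\downarrow_{\SL_{2(1)}}$ of the rational $\SL_2$-module $V_{2p-2-\lambda}=H^0(2p-2-\lambda)$. I would establish this by comparing the well-known Loewy structure of $P_\lambda$ for $\SL_{2(1)}$ (simple head and socle $S_\lambda$, heart $S_{p-2-\lambda}^{\oplus 2}$) with the structure of $V_{2p-2-\lambda}$ recalled in the proof of Proposition~\ref{sl2} (socle $S_\lambda$, head $S_{p-2-\lambda}^{\oplus 2}$); alternatively one restricts the good filtration of the tilting $\SL_2$-module $T(2p-2-\lambda)$, whose restriction to $\SL_{2(1)}$ is $P_\lambda$. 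Either way one obtains a short exact sequence of $kG$-modules
\[
0 \longrightarrow V_{2p-2-\lambda}\!\downarrow_{\SL_{2(1)}} \longrightarrow P_\lambda \longrightarrow S_\lambda \longrightarrow 0 .
\]

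The remaining argument is formal. Tensoring this sequence with $\cO_X$, $X=\PG$, and noting that the submodule inclusion $V_{2p-2-\lambda}\otimes\cO_X\subseteq P_\lambda\otimes\cO_X$ is compatible with $\wt\Theta_G$ (which preserves $kG$-submodules), one gets, writing $\cK_N:=\Ker\{\wt\Theta_G, N\otimes\cO_X\}$, that $\cK_{V_{2p-2-\lambda}}=\cK_{P_\lambda}\cap(V_{2p-2-\lambda}\otimes\cO_X)$ inside $P_\lambda\otimes\cO_X$, hence an injection
\[
\cK_{P_\lambda}/\cK_{V_{2p-2-\lambda}}\ \hookrightarrow\ (P_\lambda\otimes\cO_X)/(V_{2p-2-\lambda}\otimes\cO_X)\ =\ S_\lambda\otimes\cO_X .
\]
Since $P_\lambda$ is projective it has constant Jordan type, so by Theorem~\ref{bundle} the sheaf $\cK_{P_\lambda}$ is an algebraic vector bundle; as $\theta_v$ acts freely on $P_\lambda$ its rank is $2p-2(p-1)=2$. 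By Proposition~\ref{sl2}(2), which applies since $p\le 2p-2-\lambda\le 2p-2$, $\cK_{V_{2p-2-\lambda}}$ is also a vector bundle of rank $2$. Hence $\cK_{P_\lambda}/\cK_{V_{2p-2-\lambda}}$ has rank $0$, i.e.\ it is a torsion sheaf, while it embeds into the locally free sheaf $S_\lambda\otimes\cO_X$ on the integral scheme $\PG\cong\bP^1$, which is torsion-free; therefore the quotient vanishes and $\cK_{V_{2p-2-\lambda}}\xrightarrow{\ \sim\ }\cK_{P_\lambda}$. Applying $i^*$ and substituting $m=2p-2-\lambda$ into the formula of Proposition~\ref{sl2}(2) then gives
\[
i^*\bigl(\Ker\{\wt\Theta_G, P_\lambda\otimes\cO_{\PG}\}\bigr)\ \simeq\ \cO_{\bP^1}\!\bigl(-(2p-2-\lambda)\bigr)\oplus\cO_{\bP^1}\!\bigl((2p-2-\lambda)-2(p-1)\bigr)\ =\ \cO_{\bP^1}(\lambda-2(p-1))\oplus\cO_{\bP^1}(-\lambda),
\]
which is the asserted formula.

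I expect the main obstacle to be the module-theoretic identification $\operatorname{rad}(P_\lambda)\simeq V_{2p-2-\lambda}\!\downarrow_{\SL_{2(1)}}$: this requires either citing the precise structure of the principal block of $u(sl_2)$ or deriving it carefully (via baby Verma modules and BGG reciprocity, or via tilting modules), with attention to small-characteristic degeneracies. By contrast, once the short exact sequence is in hand the bundle-theoretic part — the $\wt\Theta_G$-compatibility of the inclusion, the rank count, and the torsion-freeness of a subsheaf of a locally free sheaf on the integral curve $\PG$ — is routine.
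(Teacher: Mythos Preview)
Your proposal is correct and follows essentially the same approach as the paper: both handle $\lambda=p-1$ via the Steinberg module, and for $0\le\lambda\le p-2$ both invoke the short exact sequence $0\to V_{2p-2-\lambda}\to P_\lambda\to S_\lambda\to 0$ and reduce to Proposition~\ref{sl2}(2) by showing that the inclusion induces an isomorphism on kernel bundles. The only difference is in the last step: the paper uses Theorem~\ref{equiv}(8) and Nakayama to reduce to fibers, observing that both $\theta_x$-kernels are two-dimensional, whereas you argue that the quotient $\cK_{P_\lambda}/\cK_{V_{2p-2-\lambda}}$ is a rank-zero subsheaf of the torsion-free sheaf $S_\lambda\otimes\cO_X$; these are equivalent rank-counting arguments.
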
 

\begin{proof}
For $\lambda=p-1$, $P_\lambda = \St$ is the Steinberg module for $sl_2$, and is irreducible.  Hence, in this case the statement follows from Proposition~\ref{sl2}. 

The decomposition series of $P_\lambda$ for $0 \leq \lambda < p-1$
 is represented by the following diagram (see \cite[2.4]{FPa2}): 
\begin{equation}
\label{diag}
\UseComputerModernTips
 \xymatrix{
& S_\lambda \ar[dl]\ar[dr]& \\
S_{p-2-\lambda}\ar[dr] && S_{p-2-\lambda}\ar[dl]\\ 
& S_\lambda &
}
\end{equation}
Thus, we have a short exact sequence of $\SL_2$-modules
\begin{equation}
0 \to V_{2p-2-\lambda} \ \to \ P_\lambda \ \to \ S_\lambda \to 0.
\end{equation}
By Proposition \ref{sl2}, it suffices to prove that $V_{2p-2-\lambda} \subset P_\lambda $ 
induces an isomorphism
$$
\Ker\{\wt \Theta_G,V_{2p-2-\lambda} \otimes \cO_{\PG}\}\ \simeq \ 
\Ker\{\wt \Theta_G,P_\lambda \otimes \cO_{\PG}\}.
$$
By Theorem \ref{equiv}(8) and Nakayama's Lemma, it suffices to prove that 
$$\Ker\{\theta_x,V_{2p-2-\lambda} \otimes k(x) \} \to \Ker\{\theta_x,P_\lambda \otimes k(x) \}$$
is an isomorphism for all $x \in \PG$. 
This last statement follows from the observation that the Jordan decomposition of $\theta_x$ 
on both  $V_\lambda$ and $P_\lambda$ consists of two blocks: on $P_\lambda$, because
$P_\lambda$ is projective of dimension $2p$; and on $V_{2p-2-\lambda}$ as discussed in Proposition~\ref{sl2}.
\end{proof}   

If $X$ is an algebraic variety over $k$ (for example, $X = \bP(G)$), then $K_0(X)$ 
denotes the Grothendieck group of algebraic vector bundles over $X$ (i.e., finitely generated, 
locally free $\cO_X$-modules) defined as the free abelian group on the set of isomorphism
classes of such vector bundles modulo relations given by short exact sequences.  We shall also consider
$K_0^{\oplus}(X)$ defined as the free abelian group on the same set of generators modulo
relations given by split short exact sequences.  Thus, there is a canonical surjective homomorphism
$$K_0^{\oplus}(X) \ \to \ K_0(X).$$

\begin{defn}
\label{kappa}
Let $G$ be an infinitesimal group scheme and let $K_0(kG)$ denote the Grothendieck group
of finitely generated projective $kG$-modules.
For any $j, \ 0 \leq j \leq p-1$, the homomorphism
$$\kappa_{G,j}^{\oplus}: K_0(kG) \ \to \ K_0^{\oplus}(\bP(G))$$
is defined by sending a projective $kG$-module $Q$ to 
$\Ker \{ \wt \Theta_G^j,Q\otimes \cO_{\bP(G)} \}$.    We define $ \ul { \kappa}_G^{\oplus}$ 
by 
$$
 \ul { \kappa}_G^\oplus  = ( \kappa_{G,1}^{\oplus},\ldots  \kappa_{G,p}^{\oplus}): 
K_0(kG) \ \to \ K_0^{\oplus}(\bP(G))^{\oplus p}.$$

Moreover, the homomorphism
$$ \kappa_{G,j}: K_0(kG) \ \to \ K_0(\bP(G))$$
is defined to be the composition of $\tilde \kappa_{G,j}^{\oplus}$ with the canonical projection
$K_0^{\oplus}(\bP(G)) \to K_0(\bP(G))$, and the homomorphism
$$ \ul \kappa_G: K_0(kG) \ \to \ K_0(\bP(G))^{\oplus p}$$
is defined to be the composition of $\ul \kappa_G$ with the canonical projection
$K_0^{\oplus}(\bP(G))^{\oplus p} \to K_0(\bP(G))^{\oplus p}.$
\end{defn}

We shall omit the subscript $G$ is $\kappa_G$ when the group scheme is clear from the context.  Note that since $\wt \Theta^p=0$, $\kappa_p$ returns the entire class $[Q \otimes \cO_{\PG}]$. 

The {\it dimension of the global sections} function: 
$$ \cE \mapsto \dim \Gamma(\PG, \cE)$$ 
extends to a homomorphism
$$\rho: K_0^{\oplus}(\bP(G)) \  \to \bZ$$ 
Observe that since $\Gamma(\bP(G),-)$ is not right exact for non-split exact sequences of 
algebraic vector bundles on $\bP(G)$, $\rho$ does {\it not factor} through $K_0(\bP(G))$.

\begin{prop}
\label{sl2-proj}  Let $G = \ul sl_2$.
The composition
$$ \rho \circ \ul {\kappa}^{\oplus}: K_0(kG) \ \to \ 
K_0^\oplus(\PG)^{\oplus p} \ \to \bZ^{\oplus p}$$
 is a rational isomorphism.
\end{prop}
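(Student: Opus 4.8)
The plan is to turn the statement into a finite determinant computation. Since $G=\ul{sl_2}$ is infinitesimal, $kG=\cu(sl_2)$ is a finite dimensional self-injective algebra with exactly $p$ simple modules $S_0,\dots,S_{p-1}$, so $K_0(kG)$ is free abelian of rank $p$ on the classes $[P_0],\dots,[P_{p-1}]$ of the projective covers $P_\lambda\twoheadrightarrow S_\lambda$. In this basis, and in the basis of $\bZ^{\oplus p}$ indexed by $j=1,\dots,p$, the homomorphism $\rho\circ\ul\kappa^{\oplus}$ is given by the $p\times p$ integer matrix
$$M_{j,\lambda}\ =\ \dim_k\Gamma\big(\PG,\,\Ker\{\wt\Theta_G^j,\,P_\lambda\otimes\cO_{\PG}\}\big),\qquad 1\le j\le p,\ \ 0\le\lambda\le p-1 ,$$
and since a homomorphism of free abelian groups of equal finite rank is a rational isomorphism exactly when its matrix has nonzero determinant, it suffices to prove $\det M\neq 0$. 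Two rows are immediate: because $\wt\Theta_G^{p}=0$ and $\PG$ is connected we get $M_{p,\lambda}=\dim_k P_\lambda$, so the last row is $(2p,\dots,2p,p)$; and Proposition~\ref{pim} with $j=1$ gives at once $M_{1,0}=1$ and $M_{1,\lambda}=0$ for $\lambda\ge1$.

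For the remaining entries I would work on $\bP^1$ via the isomorphism $i:\bP^1\xrightarrow{\sim}\PG$ of (\ref{conic}), under which $i^*\cO_{\PG}(1)\simeq\cO_{\bP^1}(2)$. By Grothendieck's theorem each $i^*\Ker\{\wt\Theta_G^j,P_\lambda\otimes\cO_{\PG}\}$ is a direct sum of line bundles, and then $M_{j,\lambda}=\sum_t\max(d_t^{(j,\lambda)}+1,0)$ once the summand degrees $d_t^{(j,\lambda)}$ are known. For $j=1$ these are supplied by Proposition~\ref{pim} (and behind it the explicit matrices $B_m(s,t)$ of Proposition~\ref{sl2}): $\cO(-\lambda)\oplus\cO(\lambda-2(p-1))$ for $0\le\lambda\le p-2$, and $\cO(1-p)$ for $\lambda=p-1$.

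For $j>1$ I would combine three ingredients. First, since $P_\lambda$ is projective, $P_\lambda|_x$ is free over $k(x)[u]/u^p$ at every point, so $P_\lambda^{[j]}=0$ by Lemma~\ref{simple} and Proposition~\ref{bundle2}; hence $\Ker\{\wt\Theta_G^j,\cM\}=\Im\{\wt\Theta_G^{p-j},\cM\}$, and (as in the proof of Proposition~\ref{char}) the filtration of $\cM$ by the subbundles $\Im\{\wt\Theta_G^i,\cM\}$ sits in short exact sequences
$$0\ \longrightarrow\ \Ker\{\wt\Theta_G,\cM\}\ \longrightarrow\ \Im\{\wt\Theta_G^i,\cM\}\ \longrightarrow\ \big(\Im\{\wt\Theta_G^{i+1},\cM\}\big)(1)\ \longrightarrow\ 0 ,$$
which iterate to give $\big[\Ker\{\wt\Theta_G^j,P_\lambda\otimes\cO_{\PG}\}\big]=\sum_{s=0}^{j-1}\big[(\Ker\{\wt\Theta_G,P_\lambda\otimes\cO_{\PG}\})(s)\big]$ in $K_0(\PG)$; this pins down the rank and the degree of each kernel bundle. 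Second, each $\Ker\{\wt\Theta_G^j,P_\lambda\otimes\cO_{\PG}\}$ is a subbundle of the trivial bundle $\cO^{\dim_k P_\lambda}$, so all its line-bundle summands have degree $\le0$; and by Proposition~\ref{duality} its dual is $\Coker\{\wt\Theta_G^j,P_\lambda\otimes\cO_{\PG}\}$ (using $P_\lambda^{\#}\simeq P_\lambda$), a quotient of $\cO(2j)^{\dim_k P_\lambda}$, so every summand has degree $\ge-2j$. Third, where the first two constraints still leave ambiguity, one computes $\Ker\big(B_m(s,t)^j\big)$ directly from the matrices of Proposition~\ref{sl2}, using the short exact sequence $0\to V_{2p-2-\lambda}\to P_\lambda\to S_\lambda\to0$ from the proof of Proposition~\ref{pim}. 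This yields an explicit list of the $d_t^{(j,\lambda)}$.

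Feeding the degrees into $M_{j,\lambda}=\sum_t\max(d_t^{(j,\lambda)}+1,0)$, one finds that $M$ becomes lower triangular with nonvanishing diagonal after an appropriate reordering of the columns $\lambda$ (the entry $M_{j,\lambda}$ vanishes for $j$ small relative to $\lambda$ and is positive just above the threshold), so $\det M\neq 0$ and $\rho\circ\ul\kappa^{\oplus}$ is a rational isomorphism. The main obstacle is the third ingredient above: unlike the $j=1$ case, the filtration of $\cM$ need not split as a sum of line bundles, and a nonsplit extension ``balances'' two summands and thereby changes $\dim_k\Gamma$, so the individual summand degrees must genuinely be extracted from the $K_0$-data, the sub/quotient-of-trivial constraints, and the explicit nilpotent matrices $B_m(s,t)$ and their powers — this is where the real work lies. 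A minor secondary point: for the small primes $p=2,3$ several of the degrees above turn nonnegative and the triangular pattern degenerates, so these very low-dimensional cases should simply be checked by hand.
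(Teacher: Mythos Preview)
Your strategy is sound in outline but leaves the decisive step unfinished, and it overlooks a much shorter argument that the paper uses.

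The paper's proof never computes a single line-bundle degree beyond what is in Proposition~\ref{pim}. Instead it invokes Proposition~\ref{generate}: because $P_\lambda$ is the restriction of a rational $\SL_2$-module and $\fg=sl_2$ is generated by $p$-nilpotents, $\Gamma(\PG,\Ker\{\wt\Theta_G^j,P_\lambda\otimes\cO_{\PG}\})$ is a rational $\SL_2$-submodule of $P_\lambda$. Since $P_\lambda$ has simple socle $S_\lambda$, this space is nonzero iff $S_\lambda$ lies in it, i.e.\ iff $\theta_v^j$ kills $S_\lambda$ for every $v$; as $S_\lambda$ has constant Jordan type $[\lambda+1]$, this happens exactly when $j>\lambda$. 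Moreover, any rational submodule of $P_\lambda$ strictly larger than $S_\lambda$ acquires a Jordan block of size $p$ (from the composition series (\ref{diag})), so cannot lie in $\Ker\wt\Theta_G^{j}$ for $j<p$; hence the diagonal entry $M_{\lambda+1,\lambda}$ equals $\dim S_\lambda=\lambda+1$. The matrix is therefore upper triangular with diagonal $(1,2,\dots,p)$, and the proposition follows.

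By contrast, your plan requires identifying the Grothendieck splitting of $\Ker\{\wt\Theta_G^j,P_\lambda\otimes\cO_{\PG}\}$ for \emph{all} $j$. You correctly derive the $K_0$-relation $[\Ker\wt\Theta_G^j]=\sum_{s=0}^{j-1}[(\Ker\wt\Theta_G)(s)]$ from projectivity, and the degree bounds $-2j\le d_t\le 0$ from the sub/quotient constraints; but, as you yourself note, these do not determine the individual summand degrees, and you defer the actual determination to direct computation with the matrices $B_m(s,t)^j$. That computation is never carried out, so the triangularity claim is asserted rather than proved. This is the genuine gap: without the $\SL_2$-submodule observation, showing $M_{j,\lambda}=0$ for $j\le\lambda$ and $M_{\lambda+1,\lambda}\ne0$ requires exactly the case-by-case work you postpone. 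Using Proposition~\ref{generate} eliminates all of it.
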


\begin{proof}
Recall that $K_0(kG) = K_0(u(sl_2)) \simeq \bZ^{\oplus p}$, spanned by the projective indecomposable
$u(sl_2)$-modules $P_\lambda, \ 0 \leq \lambda \leq p-1$.  
By  Proposition \ref{generate}, the global sections of   
$\Ker \{\wt\Theta_G^j, P_\lambda \otimes \cO_{\PG}  \})$ is a rational $\SL_2$-submodule
of $P_\lambda$.  Hence, 
$\Gamma(\PG, \Ker \{\wt\Theta_G^j, P_\lambda \otimes \cO_{\PG}  \}) \not = 0$ if and only if $S_\lambda$, which is the socle of $P_\lambda$,  belongs to
the global sections.  Hence,  
$$\Gamma(\PG, \Ker \{\wt\Theta_G^j, P_\lambda \otimes \cO_{\PG}  \}) \not = 0 \quad \Leftrightarrow \quad \wt \Theta^j(S_\lambda \otimes \cO_{\PG}) =0 \quad \Leftrightarrow \quad \theta^j_v(S_\lambda) = 0 $$ 
for any $v \in V(G)\simeq \cN(sl_2)$.  Since $S_\lambda$ is a module of constant  Jordan type $[\lambda+1]$, $\theta_v^j$ annihilates $S_\lambda$ if and only if $j>\lambda$. Thus, 
$$\Gamma(\PG, \Ker \{\wt\Theta_G^j, P_\lambda \otimes \cO_{\PG}  \}) \not = 0 \text{ if and only if } j>\lambda.$$
Moreover, the decomposition series for $P_\lambda$ (\ref{diag}) implies that the Jordan type of any rational submodule of $P_\lambda$ that is bigger than the socle $S_\lambda$  has a Jordan block of size $p$. Hence, 
$$\Gamma(\PG, \Ker \{\wt\Theta_G^{\lambda+1}, P_\lambda \otimes \cO_{\PG}  \}) = S_\lambda.$$ 
Note that the last equality holds trivially for $\lambda =p-1$, since in this case the kernel bundle is the entire free sheaf $P_{p-1} \otimes \cO_{\PG}$, where $P_{p-1} = S_{p-1}$ is the Steinberg module for $sl_2$. 

We conclude that the homomorphism 
$$ \rho \circ \ul {\kappa}^{\oplus}: K_0(kG)\simeq  \bZ^{\oplus p}\ \to \bZ^{\oplus p}$$
is given by a non-singular upper-triangular matrix
 
$$\left[\begin{array}{ccccc} 1 & \cdots &  &  & \cdot\\
0& 2& \cdots & &\cdot \\
0& 0& 3&  \cdots  &\cdot \\ 
\vdots&\vdots& \vdots& \ddots&\vdots\\
0&0&0& 0 & p
\end{array}\right] 
$$
Hence, $\rho \circ \ul {\kappa}^{\oplus}$ is a rational isomorphism. 
\end{proof}

In contrast with Proposition \ref{sl2-proj}, we have the following computations for 
$\kappa_1: K_0(kG) \to K_0(\PG)$ and $\ul \kappa:K_0(kG) \to K_0(\PG)^{\oplus p}$ for $G=\ul sl_2$.

\begin{lemma}
\label{kappa_sl2} Let $G=\ul sl_2$, and denote by $\St$ the Steinberg module for $sl_2$.  The image of the homomorphism
$$\kappa_1: K_0(kG) \to K_0(\PG)$$
is generated by $\kappa_1(\St)$. Consequently, $\rk \kappa_1 =1$.
\end{lemma}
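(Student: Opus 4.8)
## Proof Proposal for Lemma \ref{kappa_sl2}

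The plan is to compute $\kappa_1(P_\lambda) = [\Ker\{\wt\Theta_G, P_\lambda \otimes \cO_{\PG}\}]$ in $K_0(\PG) = K_0(\bP^1)$ for each projective indecomposable $P_\lambda$, $0 \le \lambda \le p-1$, and observe that all these classes are rational multiples of a single class, namely $\kappa_1(\St) = \kappa_1(P_{p-1})$. Since the $[P_\lambda]$ span $K_0(kG)$, this computes the image. The key computational input is Proposition \ref{pim}, which already gives the isomorphism type of $i^*(\Ker\{\wt\Theta_G, P_\lambda \otimes \cO_{\PG}\})$ as a sum of line bundles on $\bP^1$, together with the fact that $i: \bP^1 \to \PG$ is an isomorphism (so $i^*$ induces an isomorphism on $K_0$).

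First I would recall that $K_0(\bP^1) \simeq \bZ \oplus \bZ$, with basis the classes of $\cO_{\bP^1}$ and $\cO_{\bP^1}(1)$; equivalently, a vector bundle is determined in $K_0(\bP^1)$ by its rank and its degree (first Chern class). So it suffices to compute the rank and degree of $i^*\kappa_1(P_\lambda)$ for each $\lambda$. From Proposition \ref{pim}: for $\lambda = p-1$, $i^*\kappa_1(\St) = [\cO_{\bP^1}(1-p)]$, which has rank $1$ and degree $1-p$; for $0 \le \lambda \le p-2$, $i^*\kappa_1(P_\lambda) = [\cO_{\bP^1}(\lambda - 2(p-1))] + [\cO_{\bP^1}(-\lambda)]$, which has rank $2$ and degree $(\lambda - 2(p-1)) + (-\lambda) = -2(p-1) = 2(1-p)$. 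The crucial numerical coincidence is that for $0 \le \lambda \le p-2$ the degree is exactly twice the degree of $\cO_{\bP^1}(1-p)$ and the rank is exactly twice the rank of $\cO_{\bP^1}(1-p)$. Hence in $K_0(\bP^1)$ we have the identity
$$
i^*\kappa_1(P_\lambda) \;=\; 2\,[\cO_{\bP^1}(1-p)] \;=\; 2\, i^*\kappa_1(\St), \qquad 0 \le \lambda \le p-2,
$$
and of course $i^*\kappa_1(\St) = i^*\kappa_1(\St)$ trivially. Since $i^*: K_0(\PG) \to K_0(\bP^1)$ is an isomorphism, it follows that $\kappa_1(P_\lambda) = 2\kappa_1(\St)$ for $\lambda \le p-2$ and $\kappa_1(P_{p-1}) = \kappa_1(\St)$. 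Therefore the image of $\kappa_1$ is the cyclic subgroup generated by $\kappa_1(\St)$, and since $\kappa_1(\St) = [\cO_{\PG}(1-p)]$ is nonzero (it has rank $1$), this subgroup is infinite cyclic, so $\rk \kappa_1 = 1$.

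I do not anticipate a genuine obstacle here, since Proposition \ref{pim} does the real work; the only points requiring a sentence of justification are the identification $K_0(\PG) \cong K_0(\bP^1)$ via $i^*$ (immediate since $i$ is an isomorphism of varieties) and the standard fact that a class in $K_0(\bP^1)$ is determined by rank and degree (which follows from the structure of $K_0(\bP^1)$, or one can avoid even this by noting directly from Proposition \ref{pim} that $i^*\kappa_1(P_\lambda)$ and $2\, i^*\kappa_1(\St)$ are classes of locally free sheaves of the same rank and degree, hence equal in $K_0(\bP^1)$). One small care: the statement "$\St$" should be identified with $P_{p-1} = S_{p-1}$, which is exactly the $\lambda = p-1$ case of Proposition \ref{pim} and is consistent with the notation $\St$ used there. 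The conclusion $\rk\kappa_1 = 1$ then follows, and the phrase "generated by $\kappa_1(\St)$" is justified since every generator $[P_\lambda]$ of $K_0(kG)$ maps into $\bZ\cdot\kappa_1(\St)$.
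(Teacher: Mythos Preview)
Your proposal is correct and follows essentially the same approach as the paper: both use Proposition \ref{pim} as the key input, transfer the computation to $K_0(\bP^1)$ via the isomorphism $i$, and verify that $\kappa_1(P_\lambda) = 2\kappa_1(\St)$ for $0 \le \lambda \le p-2$. The only cosmetic difference is that the paper derives the relation $[\cO_{\bP^1}(-n)] = n[\cO_{\bP^1}(-1)] - (n-1)[\cO_{\bP^1}]$ via a recurrence from the twisted Euler sequence, whereas you invoke the equivalent rank--degree description of $K_0(\bP^1)$ directly.
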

\begin{proof}
The isomorphism $i : \PG \simeq \bP^1 $ (\ref{conic})  induces an isomorphism $K_0(\PG) \simeq K_0(\bP^1)$. We denote by the letter $\kappa_1$ the composition 
$K_0(kG) \to K_0(\PG) \stackrel{\sim}{\to} K_0(\bP^1)$.  Clearly, it suffices to prove the statement of the Lemma for this composition.

Let $a_n = [\cO_{\bP^1}(-n)] \in K_0(\bP^1)$, $n\in \Z$. Then $a_0 = [\cO_{\bP^1}]$, $a_1 = [\cO_{\bP^1}(-1)]$ generate
$K_0(\bP^1) \simeq  \Z^{\oplus 2}$.  
Using the short exact sequence of $\cO_{\bP^1}$-modules
$$
0 \to \cO_{\bP^1}(-(n+1)) \to \cO_{\bP^1}(-n) \oplus \cO_{\bP^1}(-n)\to \cO_{\bP^1}(-(n-1)) \to 0,
$$ 
we obtain the recurrence relation $a_{n+1} \ = \ 2a_n - a_{n-1}. $
By induction, $a_n \ = \ n a_1 - (n-1)a_0.$

By Proposition \ref{pim}, for $0 \leq \lambda < p-1$, 
$$\kappa_1(P_\lambda) \ = [\cO_{\bP^1}(-\lambda)] + [\cO_{\bP^1}(\lambda - 2(p-1)].$$ 
Hence, 
$\kappa_1(P_\lambda)=a_\lambda+ a_{2(p-1)-\lambda} =  \lambda a_1 - (\lambda-1)a_0 + (2(p-1)-\lambda)a_1 - (2(p-1)-\lambda-1)a_0 = 
\ 2(p-1)a_1 - 2(p-2)a_0$.
Moreover, 
$$\kappa_1(\St) = \ [\cO_{\bP^1}(1-p)] \ = \ (p-1)a_1 - (p-2)a_0$$
by Proposition \ref{sl2}.  Hence, 
$$\kappa_1(P_\lambda) = 2\kappa_1(\St)
$$
for $0 \leq \lambda\leq p-2$ which proves the statement.
\end{proof}

The second conclusion of the following Proposition  is in sharp contrast with Proposition~\ref{sl2-proj}.
\begin{prop}
\label{unexpected}  
Let $G=\ul sl_2$, and denote by $\St$ the Steinberg module for $sl_2$.  
\begin{enumerate}
  \item  The image of 
$\kappa_j: K_0(kG) \to K_0(\PG)$  is generated by $\kappa_j(\St)$.
\item
The image of $$\ul \kappa: K_0(kG) \ \to \ K_0(\PG)^{\oplus p}$$  is generated by $\ul\kappa(\St)$ and, consequently, has rank 1.
\end{enumerate} 
\end{prop}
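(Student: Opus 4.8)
The plan is to reduce part (2) to part (1), and to prove part (1) by the same bookkeeping used in Lemma~\ref{kappa_sl2}, now carried out for all $j$ simultaneously. Part (1) asserts that for each fixed $j$, $1 \le j \le p$, the image of $\kappa_j$ is the cyclic subgroup of $K_0(\PG)$ generated by $\kappa_j(\St)$. Since $K_0(kG) = K_0(u(sl_2))$ is free of rank $p$ on the projective indecomposables $P_0, \dots, P_{p-1}$ (with $P_{p-1} = \St$), it suffices to show that $\kappa_j(P_\lambda)$ is an integer multiple of $\kappa_j(\St)$ for every $\lambda$, and in fact we expect the clean relation $\kappa_j(P_\lambda) = 2\,\kappa_j(\St)$ for $0 \le \lambda \le p-2$, exactly paralleling the $j=1$ case.

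First I would fix the isomorphism $i\colon \bP^1 \xrightarrow{\sim} \PG$ of \eqref{conic}, identify $K_0(\PG) \simeq K_0(\bP^1) \simeq \bZ a_0 \oplus \bZ a_1$ with $a_n = [\cO_{\bP^1}(-n)] = n a_1 - (n-1) a_0$ as in Lemma~\ref{kappa_sl2}, and then compute $\kappa_j(P_\lambda)$ for each $j$. The key input is a description of $i^*\Ker\{\wt\Theta_G^j, P_\lambda \otimes \cO_{\PG}\}$ as a sum of line bundles on $\bP^1$. For $\lambda = p-1$ the module $P_{p-1} = \St$ is irreducible of constant Jordan type $[p]$, so $\wt\Theta_G^j$ has rank $p-j$ on each fiber and the kernel bundle is a rank-$j$ bundle; splitting it into line bundles gives $\kappa_j(\St)$ as an explicit element of $\bZ a_0 \oplus \bZ a_1$. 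For $0 \le \lambda \le p-2$ I would use the short exact sequence $0 \to V_{2p-2-\lambda} \to P_\lambda \to S_\lambda \to 0$ of rational $\SL_2$-modules from the proof of Proposition~\ref{pim}, together with the fact that $S_\lambda$ has constant Jordan type $[\lambda+1]$ and $V_{2p-2-\lambda}$ has the two-block constant Jordan type $[p] + [p-1-\lambda]$ computed in Proposition~\ref{sl2}: on fibers, $\theta_v^j$ on $P_\lambda$ and on $V_{2p-2-\lambda}$ have the same rank for every $j$ (since the "extra" block $S_\lambda$ contributes a size-$\le p$ block that, together with the $[p-1-\lambda]$ block of $V_{2p-2-\lambda}$, fills out to the pair of size-$p$ blocks of the projective module $P_\lambda$), so by Theorem~\ref{equiv}(8) and Nakayama the inclusion induces $\Ker\{\wt\Theta_G^j, V_{2p-2-\lambda} \otimes \cO_{\PG}\} \xrightarrow{\sim} \Ker\{\wt\Theta_G^j, P_\lambda \otimes \cO_{\PG}\}$. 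Thus $\kappa_j(P_\lambda) = \kappa_j(V_{2p-2-\lambda})$, and Proposition~\ref{sl2}(2) gives $i^*\Ker\{\wt\Theta_G, V_{2p-2-\lambda} \otimes \cO_{\PG}\} \simeq \cO_{\bP^1}(-(2p-2-\lambda)) \oplus \cO_{\bP^1}((2p-2-\lambda) - 2(p-1)) = \cO_{\bP^1}(\lambda-2p+2)\oplus\cO_{\bP^1}(-\lambda)$; for general $j$ one feeds the analogous (block-wise) kernel computation into the recurrence $a_{n+1} = 2a_n - a_{n-1}$. A direct linear-algebra check — the rank of $u^j$ on $[p] + [m]$ is $(p-j) + \max(m-j,0)$, so the kernel of $u^j$ on $[p]+[m]$ has a size-$j$ block plus a size-$\min(m,j)$ block — then yields $\kappa_j(V_{2p-2-\lambda})$ as an explicit $\bZ$-combination of $a_0, a_1$, and comparison with $\kappa_j(\St)$ should give the multiple (we expect $2$). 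This establishes (1).

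For part (2): by definition $\ul\kappa = (\kappa_1, \dots, \kappa_p)\colon K_0(kG) \to K_0(\PG)^{\oplus p}$, so $\ul\kappa(P_\lambda) = (\kappa_1(P_\lambda), \dots, \kappa_p(P_\lambda))$. Part~(1) shows each coordinate $\kappa_j(P_\lambda)$ is an integer multiple $c_{j,\lambda}\,\kappa_j(\St)$; if the multiple $c_{j,\lambda}$ is independent of $j$ (which is what the uniform relation $\kappa_j(P_\lambda) = 2\kappa_j(\St)$ for $\lambda \le p-2$ gives — and $c_{j,p-1} = 1$ trivially), then $\ul\kappa(P_\lambda) = c_\lambda\,\ul\kappa(\St)$, so the image of $\ul\kappa$ is the cyclic group generated by $\ul\kappa(\St)$, hence has rank $\le 1$; and it has rank exactly $1$ because $\kappa_p(\St) = [\St \otimes \cO_{\PG}] = p[\cO_{\PG}] \ne 0$ in $K_0(\PG)$. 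The main obstacle is the bookkeeping in the general-$j$ kernel-bundle computation for $V_{2p-2-\lambda}$: Proposition~\ref{sl2} is stated only for $j=1$, so I would need to either extract the higher-$j$ kernel bundles directly from the explicit matrix $B_m(s,t)$ of \eqref{matrix4} (whose block-triangular structure makes the kernels of powers tractable), or argue abstractly via Theorem~\ref{equiv}(8) that these kernel bundles are sums of line bundles on $\bP^1$ and pin down the degrees by computing ranks on fibers and global sections — the latter route is cleaner but still requires care to identify which twists $\cO_{\bP^1}(d)$ appear. Everything else is formal manipulation in $K_0(\bP^1)$ via the relation $a_n = n a_1 - (n-1) a_0$.
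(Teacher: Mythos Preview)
Your overall strategy is right and part (2) does reduce to part (1) exactly as you say. The problem is the reduction step you use to get at $\kappa_j(P_\lambda)$ for $\lambda \le p-2$: the claim that the inclusion $V_{2p-2-\lambda} \hookrightarrow P_\lambda$ induces an isomorphism
\[
\Ker\{\wt\Theta_G^j,\, V_{2p-2-\lambda}\otimes\cO_{\PG}\}\ \xrightarrow{\ \sim\ }\ \Ker\{\wt\Theta_G^j,\, P_\lambda\otimes\cO_{\PG}\}
\]
is false for $j > p-1-\lambda$. The Jordan type of $V_{2p-2-\lambda}$ is $[p]+[p-1-\lambda]$, so on fibers $\dim\Ker\theta_v^j = j + \min(j,\,p-1-\lambda)$, whereas for $P_\lambda$ (type $2[p]$) one has $\dim\Ker\theta_v^j = 2j$. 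These agree only for $j \le p-1-\lambda$; for instance when $\lambda = p-2$ they already differ at $j=2$. Your parenthetical justification (``the extra block \ldots\ fills out to the pair of size-$p$ blocks'') confuses dimension count with rank of powers: two blocks $[m]$ and $[p-m]$ do \emph{not} have the same $j$-kernel as $[p]$. Since $\kappa_j(V_{2p-2-\lambda}) \ne \kappa_j(P_\lambda)$ in general, your downstream computation via $B_m(s,t)$ or global sections, even if carried out correctly, would be computing the wrong thing.

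The paper sidesteps this by never leaving the projective module. For any projective $P$ one has a short exact sequence of bundles
\[
0 \to \Ker\{\wt\Theta,\, P\otimes\cO_{\PG}\} \to \Ker\{\wt\Theta^j,\, P\otimes\cO_{\PG}\} \xrightarrow{\ \wt\Theta\ } \Ker\{\wt\Theta^{j-1},\, P\otimes\cO_{\PG}\}(2) \to 0,
\]
the surjectivity on the right coming precisely from projectivity (on fibers, $\theta$ maps $\Ker\theta^j$ onto $\Ker\theta^{j-1}$ when the Jordan type is a multiple of $[p]$). This gives the recursion $\kappa_j(P) = \kappa_1(P) + \kappa_{j-1}(P)(2)$ in $K_0(\bP^1)$, and then Lemma~\ref{kappa_sl2} plus induction immediately yield $\kappa_j(P_\lambda) = 2\kappa_j(\St)$ for $\lambda \le p-2$, with no further bundle identification needed. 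If you want to rescue your approach, the cleanest fix is to use this recursion in place of the $V_{2p-2-\lambda}$ comparison.
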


\begin{proof} As in the proof of Lemma~\ref{kappa_sl2}, we identify $K_0(\PG)$ with $K_0(\bP^1)$. 
We have a short exact sequence of bundles:

\begin{equation}
\label{exact_ker}
\xymatrix{
0\ar[r]&  \Ker\{ \wt\Theta, P_\lambda \otimes \cO_{\PG} \} \ar[r]& \Ker\{ \wt\Theta^j, P_\lambda \otimes \cO_{\PG} \} \ar[r]^-{\wt \Theta} & }
\end{equation}
$$\xymatrix{
\Ker\{ \wt\Theta^{j-1}, P_\lambda \otimes \cO_{\PG} \}(2) \ar[r] & 0 }
$$
Indeed, the composition is clearly zero and the first map is an embedding. Moreover, by Theorem~\ref{equiv}, the specialization of this sequence at any point $x \in \PG$ looks as follows:
$$\xymatrix{ 
0 \ar[r]& \Ker\{\theta_x, P_\lambda \otimes k(x) \} \ar[r]& \Ker\{\theta^j_x, P_\lambda \otimes k(x) \} \ar[r]^{\theta_x} & \Ker\{\theta^{j-1}_x, P_\lambda \otimes k(x) \} \ar[r]& 0
}
$$
The projectivity of $P_\lambda$ implies that this sequence is exact. Hence, (\ref{exact_ker}) is exact by Nakayama's lemma.  

We conclude that in $K_0(\bP^1)$, 
\begin{equation}
\label{recursion}\kappa_j(P_\lambda) = \kappa_1(P_\lambda) + \kappa_{j-1}(P_\lambda)(2).
\end{equation}
By Lemma~\ref{kappa_sl2}, $\kappa_1(P_\lambda) = 2\kappa_1(\St)$ for $ 0 \leq \lambda\leq p-2$.
Applying induction, we  get  $ \kappa_j(P_\lambda) = 2\kappa_1(\St) + 2\kappa_{j-1}(\St)(2)$. Now applying the formula (\ref{recursion}) to the Steinberg module, we get $2\kappa_1(\St) + 2\kappa_{j-1}(\St)(2) = 2\kappa_j(\St)$.
To summarize,
$$\kappa_j(P_\lambda) = 
\begin{cases}\kappa_j(\St) \quad \lambda =p-1 \\
2\kappa_j(\St)  \quad 0 \leq \lambda \leq p-2
\end{cases}
$$ 
which implies (1). Moreover, we conclude that $\ul \kappa: K_0(kG) \to K_0(\bP^1)^{\oplus p}$ is given by the formula
$$\ul \kappa(P_\lambda) = \begin{cases} 
(\kappa_1(\St), \kappa_2(\St), \ldots, \kappa_{p}(\St))   \quad \lambda=p-1\\ 
(2\kappa_1(\St), 2\kappa_1(\St), \ldots, 2\kappa_{p}(\St)) = 2 \ul \kappa(\St) \quad 0 \leq \lambda \leq p-2
\end{cases} 
$$
which proves (2).
\end{proof} 

The Proposition~\ref{prod_rank2} gives us some information about the behavior of $\kappa$ 
and $\kappa^{\oplus}$ with respect to products.  First, we need a trivial linear algebra lemma.

\begin{lemma}
\label{linear}
Let $V$, $W$  be vector spaces over a field $k$.  Let $\{ v_1, \ldots, v_r \}$ be a basis  of $V$, 
and $\{w_1, \ldots, w_s\}$ be a basis of $W$.  Let $v \in V$, $w \in W$, and  let    
$X \ \subset V\oplus W$ be the
 span of $\{ (v_1,w),\ldots,(v_r,w),(v,w_1),\ldots,(v,w_s) \}$. Then the dimension of $X$ 
 is at least  $r+s-1$.
\end{lemma}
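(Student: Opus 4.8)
The plan is to bound the dimension of the space of linear relations among the $r+s$ given spanning vectors $(v_1,w),\ldots,(v_r,w),(v,w_1),\ldots,(v,w_s)$ of $X$. Since these $r+s$ vectors span $X$, showing that the space of relations among them is at most one-dimensional immediately gives $\dim X \ge (r+s) - 1$.

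First I would expand $v$ and $w$ in the given bases, say $v = \sum_{i=1}^r c_i v_i$ and $w = \sum_{j=1}^s d_j w_j$, and consider an arbitrary relation
\[
\sum_{i=1}^r a_i (v_i,w) + \sum_{j=1}^s b_j (v,w_j) = 0 .
\]
Projecting onto the $V$-component and the $W$-component of $V\oplus W$ and setting $\alpha = \sum_i a_i$ and $\beta = \sum_j b_j$, the equation splits as $\sum_i a_i v_i + \beta v = 0$ in $V$ and $\alpha w + \sum_j b_j w_j = 0$ in $W$. Linear independence of the $v_i$ then forces $a_i = -\beta c_i$ for all $i$, and linear independence of the $w_j$ forces $b_j = -\alpha d_j$ for all $j$. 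Thus every relation is completely determined by the single scalar $\beta$ (equivalently $\alpha$, the two being linked by $\alpha = -\beta\sum_i c_i$ and $\beta = -\alpha\sum_j d_j$), so the relation space is at most one-dimensional.

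To finish, I would split into two cases. If $\bigl(\sum_i c_i\bigr)\bigl(\sum_j d_j\bigr) \ne 1$, substituting one linking identity into the other gives $\alpha = 0$, hence $\beta = 0$, hence all $a_i = b_j = 0$: the $r+s$ vectors are linearly independent and $\dim X = r+s$. Otherwise the relation space is spanned by the single vector corresponding to $\beta = 1$, so $\dim X = r+s-1$. In either case $\dim X \ge r+s-1$. I do not anticipate any genuine obstacle here; the only point requiring care is the bookkeeping of the two coefficient sums $\alpha,\beta$ and verifying that the a priori two-parameter family of relations collapses to a single parameter because of the constraints relating $\alpha$ and $\beta$.
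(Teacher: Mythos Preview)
Your proof is correct. The key observation that the relation space injects into the set of pairs $(\alpha,\beta)$ satisfying $\alpha=-\beta C$ and $\beta=-\alpha D$ (with $C=\sum_i c_i$, $D=\sum_j d_j$) immediately bounds its dimension by $1$, which is all that is needed; your case analysis at the end is a pleasant bonus that identifies exactly when equality $\dim X=r+s-1$ occurs.

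Your route differs from the paper's. The paper argues by projecting $X$ onto the quotient $(V\oplus W)/\langle (v,w)\rangle$ and exhibiting a nonzero element of $X$ in the kernel of that projection, rather than by counting relations. Your approach is more direct and entirely self-contained: it never leaves the span $X$ itself and avoids having to verify surjectivity of an auxiliary map. The paper's approach, on the other hand, is more geometric in flavor and would generalize readily to situations where one already has a natural quotient to project to. In this particular instance your relation-counting argument is arguably cleaner, and it has the advantage of giving the exact dimension of $X$ (namely $r+s$ unless $CD=1$, in which case $r+s-1$), which the paper's argument does not extract.
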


\begin{proof} If $v=0$ and $w=0$ then we obviously have $\dim X = r+s$. 
We assume $v \not  = 0$. Observe that 
\begin{equation}
\label{projection}
\xymatrix{X \ar@{->>}[r] & (V\oplus W)/\langle (v,0)+(0,w)\rangle}
\end{equation} 
is surjective (thus has image of dimension $r+s-2$).  Let $v = a_1v_1 + \ldots + a_rv_r$. 
Then the vector $a_1(v_1,w) + \ldots + a_r(v_r,w) = (v,w)$ is non-trivial since $v \not = 0$ 
and belongs to the kernel of the projection (\ref{projection}). Hence, $\dim X \geq r+s-1$.
\end{proof}

\begin{prop}
\label{prod_rank2}
Let $G_1$ and $G_2$ be infinitesimal group schemes, and let $G = G_1\times G_2$.    Then 
$$
\rk \kappa^\oplus_G \geq \rk \kappa^\oplus_{G_1} + \rk \kappa^\oplus_{G_2} -1,
$$ 
where $\kappa^\oplus = \kappa^\oplus_j$ for any $j$, $1 \leq j\leq p-1$.
\end{prop}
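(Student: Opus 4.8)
The plan is to reduce the statement about ranks of the homomorphisms $\kappa^\oplus_{G}$ to a purely linear-algebra statement via the functoriality of the kernel-bundle construction under the two projections $G \twoheadrightarrow G_i$ (equivalently, under the embeddings $G_i \hookrightarrow G$). First I would recall from Corollary \ref{ext-prod} that if $Q_i$ is a projective $kG_i$-module of dimension $m_i$, then $Q_1 \otimes Q_2$ is a projective $kG$-module, and under the natural embedding $f_i : \bP(G_i) \hookrightarrow \bP(G)$ we have
$$
f_i^*\bigl(\Ker\{\wt\Theta_G^j, (Q_1 \otimes Q_2)\otimes\cO_{\bP(G)}\}\bigr) \ \simeq \ \Ker\{\wt\Theta_{G_i}^j, Q_i\otimes\cO_{\bP(G_i)}\}^{\oplus m_{3-i}},
$$
i.e.\ pulling back along $f_i$ and forgetting the multiplicity recovers $\kappa^\oplus_{G_i, j}(Q_i)$ up to the scalar $m_{3-i}$. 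Since projective $kG$-modules are spanned (rationally) by the external tensor products $Q_1 \otimes Q_2$ of the projective indecomposables $Q_1$ of $kG_1$ and $Q_2$ of $kG_2$ (because $kG = kG_1 \otimes kG_2$ and projective indecomposables of a tensor product of algebras are tensor products of projective indecomposables), it suffices to understand the image of $\kappa^\oplus_G$ on these generators.

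Next I would set up the linear-algebra bookkeeping. Let $r_i = \rk \kappa^\oplus_{G_i}$; choose projective $kG_i$-modules $P_{i,1},\dots$ whose classes in $K_0(kG_i)$ form a basis, so that the vectors $\kappa^\oplus_{G_i}([P_{i,\bullet}])$ span an $r_i$-dimensional subspace of the rationalized $K_0^\oplus(\bP(G_i))$ (tensored with $\bQ$). The pullback maps $f_i^* : K_0^\oplus(\bP(G)) \to K_0^\oplus(\bP(G_i))$ are group homomorphisms, and the displayed isomorphism shows that $f_i^*$ carries $\kappa^\oplus_G([Q_1\otimes Q_2])$ to $(\dim Q_{3-i})\cdot \kappa^\oplus_{G_i}([Q_i])$. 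Therefore the composite $(f_1^*, f_2^*)$ applied to the image of $\kappa^\oplus_G$ lands in (a subspace mapping onto) $V_1 \oplus V_2$ where $V_i = \Im(\kappa^\oplus_{G_i})\otimes\bQ$ has dimension $r_i$. The key point is that the restriction of $(f_1^*, f_2^*)$ to the image of $\kappa^\oplus_G$ has image of dimension at least $r_1 + r_2 - 1$: this is exactly the situation of Lemma \ref{linear}. Indeed, fix a projective indecomposable $P_{1,0}$ of $kG_1$ and $P_{2,0}$ of $kG_2$; the classes $[P_{1,a} \otimes P_{2,0}]$ produce, under $(f_1^*, f_2^*)$, the vectors $\bigl((\dim P_{2,0})\kappa^\oplus_{G_1}(P_{1,a}),\ (\dim P_{1,a})\kappa^\oplus_{G_2}(P_{2,0})\bigr)$, i.e.\ vectors of the form $(v_a, w)$ with $w$ fixed and the $v_a$ spanning $V_1$; symmetrically the classes $[P_{1,0}\otimes P_{2,b}]$ give vectors $(v', w_b)$ with $v'$ fixed and $w_b$ spanning $V_2$. (One rescales the $P_{i,a}$ by the nonzero scalars $\dim P_{3-i,0}$, $\dim P_{i,a}$, which does not affect spans over $\bQ$.) Lemma \ref{linear} with $V = V_1$, $W = V_2$ then gives that these vectors span a subspace of $V_1 \oplus V_2$ of dimension $\geq r_1 + r_2 - 1$.

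Finally I would assemble the inequality: since $(f_1^*, f_2^*)$ applied to $\Im(\kappa^\oplus_G)\otimes\bQ$ already has dimension $\geq r_1 + r_2 - 1$, and a linear map cannot increase dimension, we get
$$
\rk \kappa^\oplus_G \ = \ \dim_\bQ \bigl(\Im(\kappa^\oplus_G)\otimes\bQ\bigr) \ \geq \ r_1 + r_2 - 1 \ = \ \rk \kappa^\oplus_{G_1} + \rk \kappa^\oplus_{G_2} - 1,
$$
which is the claim. The main obstacle I anticipate is the bookkeeping in the reduction step: one must be careful that (a) $kG$-projectives really are spanned by external tensor products $Q_1 \otimes Q_2$ with both factors projective (this uses that $kG = kG_1 \otimes kG_2$ as Hopf algebras and the standard structure of projectives over a tensor product of finite-dimensional algebras), and (b) that the identification of $\bP(G_1 \times G_2)$ containing $\bP(G_1)$ and $\bP(G_2)$ as the two "coordinate" closed subvarieties is compatible with the grading and with Corollary \ref{ext-prod} for the correct value of $j$ — one needs Corollary \ref{ext-prod} applied to $\wt\Theta_G^j$ rather than just $\wt\Theta_G$, which it states. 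Everything else is the formal combination of functoriality with the elementary Lemma \ref{linear}.
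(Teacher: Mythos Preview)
Your proposal is correct and follows essentially the same route as the paper: pull back along the two embeddings $\bP(G_i)\hookrightarrow\bP(G)$, use Corollary~\ref{ext-prod} to compute $(f_1^*,f_2^*)\circ\kappa^\oplus_G$ on external tensor products, rescale by the positive dimensions, and apply Lemma~\ref{linear}. One small simplification: you do not actually need that \emph{all} projective $kG$-modules are rationally spanned by external tensor products --- since you are only proving a lower bound on $\rk\kappa^\oplus_G$, it suffices to exhibit the specific classes $[P_{1,a}\otimes P_{2,0}]$ and $[P_{1,0}\otimes P_{2,b}]$ in $K_0(kG)$ and bound the dimension of their image, exactly as the paper does.
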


\begin{proof} Let $i^*_\ell:K^\oplus_0(\PG) \to K^\oplus_0(\bP(G_\ell))$ be the map induced by the pull-back of vector bundles along the embedding $\bP(G_\ell) \hookrightarrow \PG$, for $\ell =1,2$.  We consider the composition 
$$\xymatrix{K_0(kG) \ar[r]^{\kappa^\oplus}& K_0^\oplus(\PG) \ar[rr]^-{i_1^* + i_2^*} && K^\oplus_0(\bP(G_1)) \oplus K^\oplus_0(\bP(G_2))}.$$

Let $P$ be a projective $kG_1$-module of dimension $m$, and $Q$ be a projective $kG_2$-module of dimension $n$. Then $P \otimes Q$ is a projective $kG$-module. By Corollary~\ref{ext-prod}, 
$$
(i_1^* + i_2^*)\circ \kappa^\oplus_G (P \otimes Q) = (i_1^* \circ \kappa^\oplus_G (P\otimes Q), i_2^* \circ \kappa^\oplus_G (P\otimes Q))  = 
$$
\begin{equation}
\label{decomp}
(n\, \kappa^\oplus_{G_1}(P), m\, \kappa^\oplus_{G_2}(Q)).
\end{equation} 

Let $\{P_1, \ldots, P_r \}$ be projective $kG_1$-modules of dimensions $\{p_1, \ldots, p_r\}$ such that 
$$
\{\kappa^\oplus_{G_1}(P_1), \ldots, \kappa^\oplus_{G_1}(P_r)\} 
$$  are linearly independent generators of $\Im \kappa^\oplus_{G_1} \subset K^\oplus_0(\bP(G_1))$, so that $\rk \kappa_{G_1}^\oplus = r$.  
Similarly, let   
$\{Q_1, \ldots, Q_r \} $ be projective $kG_2$-modules such that
$$\{\kappa^\oplus_{G_2}(Q_1), \ldots, \kappa^\oplus_{G_2}(Q_s)\} $$  are linearly independent generators of $\Im \kappa^\oplus_{G_2} \subset K^\oplus_0(\bP(G_2))$, so that $\rk \kappa_{G_2}^\oplus = s$.  
Finally,  let $P$ be any projective $kG_1$-module, $m =\dim_k P$, and $Q$ be any projective $kG_2$-module, $n = \dim_k Q$.  
Consider
$$S = \Span \{P_1 \otimes Q, P_2 \otimes Q, \ldots, P_r \otimes Q, 
P \otimes Q_1, P \otimes Q_2, \ldots,  P \otimes Q_s \}  \subset K_0(kG)
$$
Then the image of $(i_1^* + i_2^*) \circ \kappa_G$ contains 
$$\{ (n\, \kappa^\oplus_{G_1}(P_1), p_{_1}\kappa^\oplus_{G_2}(Q)), (n\, \kappa^\oplus_{G_1}(P_{_2}), p_{_2}\kappa^\oplus_{G_2}(Q)), \ldots, (n\,\kappa^\oplus_{G_1}(P_r), p_r\kappa^\oplus_{G_2}(Q)) \}$$
and 
$$\{ (q_{_1} \kappa^\oplus_{G_1}(P), m\, \kappa^\oplus_{G_2}(Q_1)), (q_{_2}\kappa^\oplus_{G_1}(P), m\, \kappa^\oplus_{G_2}(Q_2)), \ldots, (q_s\kappa^\oplus_{G_1}(P), m\, \kappa^\oplus_{G_2}(Q_s)) \}$$
Since all the coefficients $m, n, p_i, q_j$ are positive, Lemma~\ref{linear} implies that the dimension of the image of $(i_1^* + i_2^*) \circ \kappa^\oplus_G$ is at least $r+s-1$.

\end{proof}

\begin{remark}
As the reader can easily check,
 Proposition  \ref{prod_rank2} and its proof hold for $ \kappa: K_0(kG) \to K_0(\bP(G))$ in
 place of $\kappa^{\oplus} : K_0(kG)  \to K_0^\oplus(\bP(G))$. 
 \end{remark}

Observe that $\bP(\ul{ sl_2^{\oplus r}}) \simeq \bP^{2r-1}$, the join of $r$ copies of 
$\bP(\ul sl_2) \simeq \bP^1$.

\begin{cor}
\label{times_sl2}
Let $G=\ul{sl}_2^{\times r}$ be the infinitesimal group scheme corresponding to the restricted Lie algebra $sl_2^{\oplus r}$.  Then
$$\kappa^\oplus_1: K_0(kG) \to K_0^\oplus(\bP^{2r-1})$$
has rank at least $r(p-1)+1$. 
\end{cor}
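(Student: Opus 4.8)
The plan is to deduce Corollary~\ref{times_sl2} from Proposition~\ref{prod_rank2} by an easy induction on $r$, using Proposition~\ref{unexpected}(1) (more precisely Lemma~\ref{kappa_sl2}, which computes $\kappa_1^{\oplus}$ for a single copy of $\ul{sl}_2$) as the base case. First I would recall that $\bP(\ul{sl}_2^{\times r}) \simeq \bP^{2r-1}$ (the join of $r$ copies of $\bP(\ul{sl}_2) \simeq \bP^1$), so the target groups in the statement make sense and the maps $i^*_\ell$ of Proposition~\ref{prod_rank2} are as described there.

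For the base case $r=1$, I would invoke Lemma~\ref{kappa_sl2}: for $G=\ul{sl}_2$ the image of $\kappa_1$ (equivalently, by tracking through the split versus non-split versions, of $\kappa_1^{\oplus}$ applied to the projective indecomposables) is large enough that $\rk \kappa_1^{\oplus}\geq p$. Indeed, the classes $\kappa_1^{\oplus}(P_\lambda)=\Ker\{\wt\Theta_G,P_\lambda\otimes\cO_{\bP^1}\}$ for $0\leq\lambda\leq p-1$ are, by Proposition~\ref{pim}, the bundles $\cO_{\bP^1}(-\lambda)\oplus\cO_{\bP^1}(\lambda-2(p-1))$ (and $\cO_{\bP^1}(1-p)$ for $\lambda=p-1$); since in $K_0^{\oplus}(\bP^1)$ the classes of $\cO_{\bP^1}(n)$ are all linearly independent, these $p$ classes span a subgroup of rank $p = (p-1)\cdot 1 + 1$. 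So $\rk\kappa_1^{\oplus}\geq (p-1)r+1$ holds for $r=1$.

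For the inductive step, write $\ul{sl}_2^{\times r} = \ul{sl}_2^{\times(r-1)}\times \ul{sl}_2$ and apply Proposition~\ref{prod_rank2} with $G_1 = \ul{sl}_2^{\times(r-1)}$, $G_2=\ul{sl}_2$, and $\kappa^{\oplus}=\kappa_1^{\oplus}$:
\[
\rk\kappa^{\oplus}_{\ul{sl}_2^{\times r}} \ \geq\ \rk\kappa^{\oplus}_{\ul{sl}_2^{\times(r-1)}} + \rk\kappa^{\oplus}_{\ul{sl}_2} - 1 \ \geq\ \bigl((p-1)(r-1)+1\bigr) + p - 1 - 1 \ = \ (p-1)r + 1,
\]
using the inductive hypothesis for $G_1$ and the base case for $G_2$. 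This completes the induction and proves the Corollary.

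The only real point requiring care — and hence the step I would write out most carefully — is the base case, i.e.\ confirming that $\rk\kappa_1^{\oplus}$ for a single $\ul{sl}_2$ is at least $p$ and not merely $1$: note that Lemma~\ref{kappa_sl2} and Proposition~\ref{unexpected}(1) assert that the image of $\kappa_1$ in $K_0(\bP(G))$ (the non-split Grothendieck group) has rank $1$, but here we need the split version $\kappa_1^{\oplus}$ landing in $K_0^{\oplus}(\bP^1)$, where the bundles $\Ker\{\wt\Theta_G,P_\lambda\otimes\cO_{\bP^1}\}$ of Proposition~\ref{pim} are distinguished by their splitting types. Since a direct sum $\cO_{\bP^1}(a)\oplus\cO_{\bP^1}(b)$ determines the multiset $\{a,b\}$ and the classes $[\cO_{\bP^1}(n)]$ are a basis of a free abelian group inside $K_0^{\oplus}(\bP^1)$, the $p$ classes $\kappa_1^{\oplus}(P_\lambda)$ are readily checked to be linearly independent (for instance by evaluating the "sum of twisting degrees" and "maximal twisting degree" linear functionals, or simply by noting the distinct pairs $\{-\lambda,\ \lambda-2(p-1)\}$). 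Everything else is bookkeeping with Proposition~\ref{prod_rank2}, which does the heavy lifting.
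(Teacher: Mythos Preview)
Your proposal is correct and follows essentially the same approach as the paper: the base case $r=1$ uses Proposition~\ref{pim} to see that the $p$ classes $\kappa_1^{\oplus}(P_\lambda)$ are linearly independent in $K_0^{\oplus}(\bP^1)$ (the paper phrases this simply as ``$\kappa_1^{\oplus}$ is injective''), and the inductive step is Proposition~\ref{prod_rank2}. One cosmetic point: your initial invocation of Lemma~\ref{kappa_sl2} is misleading, since that lemma concerns the \emph{non-split} $\kappa_1$ and gives rank~$1$, the opposite of what you need --- you correctly catch this yourself in the final paragraph, so just drop the reference to Lemma~\ref{kappa_sl2} from the base case and go straight to Proposition~\ref{pim}.
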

\begin{proof}
For $r=1$, $\kappa^\oplus_{\ul{sl}_2}: K_0(u(sl_2)) \to K^\oplus_0(\bP^1)$ is injective by Proposition~\ref{pim} and, hence, has rank $p$. 
The statement now follows by induction and Proposition~\ref{prod_rank2}.
\end{proof}

Recall that if $H \hookrightarrow G$ is a subgroup scheme of a finite group scheme 
$G$, then $kG$ is free as a $kH$--module \cite[2.4]{OS}.  Hence, the restriction  
functor $\res: (kG-{\rm mod}) \to (kH-{\rm mod})$  
induces a well-defined map on $K$-groups: 
$\res^*: K_0(kG) \to K_0(kH)$. The commutativity of the diagram (\ref{commutative}) implies 
that restriction commutes with $\kappa_j$. 
That is, we have a commutative diagram: 
\begin{equation}
\label{kappa_diag}
\UseComputerModernTips
 \xymatrix{ K_0(kG) \ar[d]^{\res^*} \ar[r]^-{\kappa_{G,j}} \quad  & 
 K_0(\bP(G)) \ar[d]^{i^*} \\
 K_0(kH) \ar[r]^-{\kappa_{H,j}} \quad  & K_0(\bP(H)) 
, }
 \end{equation} 
where $i: \bP(H) \hookrightarrow \PG$ 
 is the closed embedding of projective varieties  induced by the embedding of group schemes and $i^*: K_0(\bP(G))
 \to K_0(\bP(H))$ is the pull-back via $i$ of locally free $\cO_{P(G)}$-modules. 
An analogous commutative diagram  holds for $\kappa^{\oplus}$.

\begin{prop}
\label{r-times}
Let $G$ be an infinitesimal group scheme, and let $H \simeq \ul sl_2^{\times r} \subset G$
be a closed subgroup scheme with the property that $\res^*: K_0(kG) \to K_0(kH)$ is
rationally surjective.  
Then the composition 
$$\xymatrix{K_0(kG) \ar[r]^{\kappa_1^\oplus} & K_0^{\oplus}(\PG) \ar[r]^-{i^*} & K^\oplus_0(\bP(H)) \simeq K_0^\oplus(\bP^{2r-1})}$$
has rank at least $r(p-1)+1$.
\end{prop}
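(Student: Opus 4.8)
The plan is to derive this statement directly from the previously established results by composing commutative diagrams. First I would invoke the commutative diagram (\ref{kappa_diag}) (or rather its $\oplus$-analogue) for the closed embedding $H \simeq \ul{sl}_2^{\times r} \hookrightarrow G$, which gives a commutative square relating $\kappa_{G,1}^\oplus$ and $\kappa_{H,1}^\oplus$ via $\res^*: K_0(kG) \to K_0(kH)$ and $i^*: K_0^\oplus(\bP(G)) \to K_0^\oplus(\bP(H))$. Since we identify $\bP(H) = \bP(\ul{sl}_2^{\times r}) \simeq \bP^{2r-1}$, the composition in the statement, $i^* \circ \kappa_{G,1}^\oplus$, equals $\kappa_{H,1}^\oplus \circ \res^*$ by the commutativity of that square.

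Next I would exploit the hypothesis that $\res^*: K_0(kG) \to K_0(kH)$ is rationally surjective. This means the image of $\res^*$ spans $K_0(kH) \otimes \bQ$, so that the rank of $\kappa_{H,1}^\oplus \circ \res^*$ equals the rank of $\kappa_{H,1}^\oplus: K_0(kH) \to K_0^\oplus(\bP(H))$ itself. Concretely, if $\res^*$ has image of finite index (up to torsion) in $K_0(kH)$, then $\Im(\kappa_{H,1}^\oplus \circ \res^*)$ and $\Im(\kappa_{H,1}^\oplus)$ have the same rank. Finally I would apply Corollary \ref{times_sl2}, which asserts precisely that $\kappa_1^\oplus: K_0(k\,\ul{sl}_2^{\times r}) \to K_0^\oplus(\bP^{2r-1})$ has rank at least $r(p-1)+1$. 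Chaining these observations yields
$$
\rk\bigl(i^* \circ \kappa_{G,1}^\oplus\bigr) \;=\; \rk\bigl(\kappa_{H,1}^\oplus \circ \res^*\bigr) \;=\; \rk\bigl(\kappa_{H,1}^\oplus\bigr) \;\geq\; r(p-1)+1.
$$

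The step I expect to require the most care is the passage from ``rational surjectivity of $\res^*$'' to ``$\rk(\kappa_{H,1}^\oplus \circ \res^*) = \rk(\kappa_{H,1}^\oplus)$''. One must be slightly careful because we are working with $K_0^\oplus(\bP(H))$, which is a large, possibly complicated abelian group, and ranks here mean ranks of images as abelian groups (equivalently, dimensions after tensoring with $\bQ$). The argument is: tensoring everything with $\bQ$, $\res^* \otimes \bQ$ is surjective, hence $\Im((\kappa_{H,1}^\oplus \circ \res^*) \otimes \bQ) = \Im(\kappa_{H,1}^\oplus \otimes \bQ)$, and the rank of an integral homomorphism equals the $\bQ$-dimension of its rationalized image. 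This is essentially formal, but worth spelling out so the reader sees why the hypothesis on $\res^*$ feeds in cleanly. Everything else — the commutativity of (\ref{kappa_diag}), the identification $\bP(H) \simeq \bP^{2r-1}$, and the rank bound from Corollary \ref{times_sl2} — is quoted directly from the preceding material, so no new geometry or representation theory is needed.
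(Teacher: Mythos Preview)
Your proposal is correct and follows exactly the same approach as the paper: invoke the $\oplus$-analogue of the commutative square (\ref{kappa_diag}), use rational surjectivity of $\res^*$ to conclude $\rk(i^*\circ\kappa_{G,1}^\oplus)=\rk(\kappa_{H,1}^\oplus)$, and then apply Corollary \ref{times_sl2}. If anything, you are more careful than the paper in spelling out why rational surjectivity yields equality of ranks.
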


\begin{proof}  We apply the diagram (\ref{kappa_diag}) for $\kappa^\oplus_1$: 
$$
\UseComputerModernTips
 \xymatrix{ K_0(kG) \ar[d]^{\res^*} \ar[r]^-{\kappa^\oplus_{G,1}} \quad  & 
 K_0^\oplus(\bP(G)) \ar[d]^{i^*} \\
 K_0(kH) \ar[r]^-{\kappa^\oplus_{H,1}} \quad  & K_0^\oplus(\bP(H)) 
, }
$$
Since $\res^*$ is assumed to be rationally surjective,  $\rk \left(i^* \circ \kappa^\oplus_{G,1} \right) = \rk \kappa^\oplus_{H,1}$. Since $H = \ul{sl}_2^{\times r}$, $ \rk \kappa^\oplus_{H,1} \geq r(p-1)+1$ by Corollary~\ref{times_sl2}.  This proves the statement. 
\end{proof}

\vspace{0.1in}
For the rest of this computational section, we calculate some examples of bundles for $E = \bG_{a(1)} \times  \bG_{a(1)}$.  We have 
$kE \simeq k[x,y]/(x^p, y^p)$. 
Let $X_n$  be  $(2n+1)$--dimensional 
``zig-zag" module.  Pictorially,  we  represent  $X_n$  
by the following  diagram: 

$$
\begin{xy}*!C\xybox{%
\xymatrix{ \stackrel{\langle v_0 \rangle }{\bu} 
\ar[dr]|y &&
\stackrel{\langle v_1 \rangle }{\bu} \ar[dl]|x \ar[dr]|y
&&\dots &&\stackrel{\langle v_n \rangle }{\bu} \ar[dl]|x\\
& \bu && \bu &\dots &  \bu &}}
\end{xy}
$$
It is straightforward to check that $X_n$ has constant  Jordan type $n[2] +  [1]$  
(see \cite[\S 2]{CFP}).  We proceed to prove that for any integer $m$ we can obtain 
the line bundle $\cO_{\bP^1}(m)$ on $\bP(E) = \bP^1$ by applying our constructions 
to some $X_n$ or its linear dual $X_n^\#$.  

Note that for $X_n$, the  map
$$
\xymatrix{
\wt\Theta_E:    X_n \otimes \cO_{\bP^1}   \ar[r] & X_n \otimes \cO_{\bP^1}(1)  
}
$$ as defined in \ref{theta-tilde} has nilpotentcy degree $2$. Hence, 
there  is an inclusion $\Im \{\wt\Theta_E, X_n \otimes \cO_{\bP^1}   \}  
\subset \Ker \{ \wt\Theta_E,     X_n \otimes\cO_{\bP^1}(1)   \}$. 
We, therefore, may define a subquotient sheaf of the free sheaf $X_n\otimes   \cO_{\bP^1}  $ as
$$\cX_n : =   \Ker \{ \wt\Theta_E,    X_n \otimes \cO_{\bP^1} \} / 
\Im \{\wt\Theta_E(-1), X_n \otimes \cO_{\bP^1}(-1)   \}.$$ 
Arguing as in the  proof of Proposition \ref{bundle2},  one verifies that $\cX_n$ is  
locally free with the fiber at a point $t \in \bP^1$  isomorphic to the 1-dimensional
vector space
 $\frac{\Ker  \{\theta_t: X_{n,k(t)} \to X_{n,k(t)} \}}{\Im \{\theta_t: X_{n,k(t)} 
\to X_{n,k(t)} \}}$.    Hence, $\cX_n$  is a   line bundle. 
The linear dual $X^\#_n$ of $X_n$ is represented by the diagram:
$$
\begin{xy}*!C\xybox{%
\xymatrix{ &{\bu} 
\ar[dl]|x \ar[dr]|y &&
{\bu} \ar[dl]|x 
&\dots &{\bu}  \ar[dr]|y\\
\stackrel{\bu}{\langle  w_0 \rangle } && \stackrel{\bu}{\langle  w_1 \rangle }  && \dots && \stackrel{\bu}{\langle w_n \rangle }}}.
\end{xy}
$$
Define  a subquotient sheaf of $   X^\#_n \otimes \cO_{\bP^1}  $ as
$$\cY_n : = \Ker \{ \wt\Theta_E,       X^\#_n \otimes \cO_{\bP^1}   \} / \Im \{\wt\Theta_E(-1),   X^\#_n  \otimes \cO_{\bP^1}(-1)  \}.$$

\begin{prop}
\label{zigzag}
$\cX_n \simeq \cO_{\bP^1} (-n)$, $\cY_n \simeq \cO_{\bP^1} (n)$.  
\end{prop}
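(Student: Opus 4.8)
The plan is to extract both line bundles by computing $\wt\Theta_E$ explicitly on the zig-zag module and its dual, working on the standard affine cover $\bP^1 = \bP(E) = U_s\cup U_t$ in the manner of Proposition~\ref{sec} (that is, with the honest endomorphisms $\wt\Theta_E/s$ and $\wt\Theta_E/t$). Write $kE = k[x,y]/(x^p,y^p)$, $k[V(E)] = k[s,t]$ with $s,t$ in degree $1$, so that $\Theta_E = x\otimes s + y\otimes t$ is homogeneous of degree $1 = p^{r-1}$ (Example~\ref{exp-elem}). Label the basis of $X_n$ by the upper vertices $v_0,\dots,v_n$ and the lower vertices $b_1,\dots,b_n$, so that $x v_i = b_i$ for $1\le i\le n$ and $x v_0 = 0$, $y v_i = b_{i+1}$ for $0\le i\le n-1$ and $y v_n = 0$, and each $b_j$ is killed by $x$ and $y$. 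Then the $k[s,t]$-linear operator $\Theta_E$ on $X_n\otimes k[s,t] = \bigl(\bigoplus_i k[s,t]\,v_i\bigr)\oplus\bigl(\bigoplus_j k[s,t]\,b_j\bigr)$ sends $v_0\mapsto t\,b_1$, $v_i\mapsto s\,b_i + t\,b_{i+1}$ $(1\le i\le n-1)$, $v_n\mapsto s\,b_n$, and $b_j\mapsto 0$; in particular $\Theta_E^2 = 0$, consistent with the nilpotency degree $2$.

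First I would compute $\Ker\{\wt\Theta_E,\, X_n\otimes\cO_{\bP^1}\}$. An element $\sum f_i v_i + \sum g_j b_j$ is annihilated by $\Theta_E$ iff $\sum_{i=1}^n (s f_i + t f_{i-1})\,b_i = 0$, i.e. iff $s f_i + t f_{i-1} = 0$ for $i = 1,\dots,n$, with the $g_j$ unconstrained. Since $k[s,t]$ is a UFD, this bidiagonal system has solution module free of rank one, generated by $\phi := \sum_{i=0}^n (-1)^i s^{\,n-i}\,t^{\,i}\,v_i$, which is homogeneous of degree $n$ (given a solution $(f_i)$, divisibility forces $f_n = t^n c$ and then $f_i = (-1)^{n+i}s^{\,n-i}t^{\,i}c$). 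Hence $\Ker\{\wt\Theta_E,\, X_n\otimes\cO_{\bP^1}\} \simeq \cO_{\bP^1}(-n)\oplus\bigl(\bigoplus_j\cO_{\bP^1}\,b_j\bigr) \simeq \cO_{\bP^1}(-n)\oplus\cO_{\bP^1}^{\,n}$. Next, $\Im\{\wt\Theta_E(-1),\,X_n\otimes\cO_{\bP^1}(-1)\}$ lies inside $\bigoplus_j\cO_{\bP^1}\,b_j$; and on $U_s$ one has $b_n = (\wt\Theta_E/s)(v_n)$ and $b_j = (\wt\Theta_E/s)(v_j) - (t/s)\,b_{j+1}$ for $j = n-1,\dots,1$, while on $U_t$ one has $b_1 = (\wt\Theta_E/t)(v_0)$ and $b_j = (\wt\Theta_E/t)(v_{j-1}) - (s/t)\,b_{j-1}$ for $j = 2,\dots,n$. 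Thus every $b_j$ is a local section of the image over both charts, so $\Im\{\wt\Theta_E(-1),\,X_n\otimes\cO_{\bP^1}(-1)\} = \bigoplus_j\cO_{\bP^1}\,b_j$, and quotienting the kernel by the image gives $\cX_n \simeq \cO_{\bP^1}(-n)$. (That $\cX_n$ is a priori locally free of rank one also follows from $X_n$ having constant Jordan type $n[2]+[1]$, via Theorem~\ref{bundle} and the argument of Proposition~\ref{bundle2}.)

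The bundle $\cY_n$ is handled by the mirror computation on $X_n^\#$, whose basis I label by upper vertices $z_1,\dots,z_n$ and lower vertices $w_0,\dots,w_n$, with $\Theta_E(z_i) = s\,w_{i-1} + t\,w_i$ and $\Theta_E(w_j) = 0$. Now the homogeneous system for the kernel reads $s g_1 = 0$, $s g_{i+1} + t g_i = 0$, $t g_n = 0$, which forces all $g_i = 0$; hence $\Ker\{\wt\Theta_E,\, X_n^\#\otimes\cO_{\bP^1}\} = \bigoplus_j\cO_{\bP^1}\,w_j \simeq \cO_{\bP^1}^{\,n+1}$. On the other hand the generators $\Theta_E(z_i)$ assemble into the bidiagonal $(n+1)\times n$ matrix over $k[s,t]$ whose maximal minors are, up to sign, $s^n$ and $t^n$, nonvanishing on $U_s$ and $U_t$ respectively; so the presenting map $\cO_{\bP^1}(-1)^n \to \cO_{\bP^1}^{\,n+1}$ is fibrewise injective, its image $\Im\{\wt\Theta_E(-1),\,X_n^\#\otimes\cO_{\bP^1}(-1)\}$ is a subbundle isomorphic to $\cO_{\bP^1}(-1)^n$, and it has no polynomial syzygies. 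Therefore $\cY_n = \cO_{\bP^1}^{\,n+1}/\cO_{\bP^1}(-1)^n$ is a line bundle of degree $0 - n\cdot(-1) = n$, i.e. $\cY_n \simeq \cO_{\bP^1}(n)$. Alternatively, once $\cX_n$ is known to be a line bundle, one obtains $\cY_n \simeq \cX_n^\vee \simeq \cO_{\bP^1}(n)$ from the perfect pairing between the subquotients $\Ker/\Im$ of the complex $\cM(-1)\to\cM\to\cM(1)$ and of its $\cO_{\bP^1}$-dual, exactly as in the proof of Proposition~\ref{dual}.

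The routine-but-essential obstacle is the passage from the image submodule of $\Theta_E$ to its associated subsheaf: for $\cX_n$ one must verify that this image saturates to the entire lower summand $\bigoplus_j\cO_{\bP^1}b_j$ (the explicit local sections above), and for $\cY_n$ that it has no polynomial syzygies and that the presenting bidiagonal matrix has everywhere-maximal rank, so that the quotient is genuinely a locally free sheaf of rank one with the asserted degree. None of this is deep, but it is precisely what pins down the twists $-n$ and $n$.
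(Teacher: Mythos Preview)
Your proof is correct and, for $\cX_n$, follows essentially the same line as the paper: compute the kernel of $\Theta_E$ on $X_n\otimes k[s,t]$ (the bottom row together with the single degree-$n$ vector $\phi$), identify the image sheaf with the bottom-row summand, and read off $\cO_{\bP^1}(-n)$. Your treatment is in fact slightly more careful than the paper's: you include the alternating signs on $\phi$ (the paper's $s^nv_0+s^{n-1}tv_1+\cdots+t^nv_n$ is not literally in the kernel for $p>2$), and you explicitly verify on both affine charts that the image sheaf saturates to all of $\bigoplus_j\cO_{\bP^1}b_j$, whereas the paper passes over this point.

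For $\cY_n$ the two arguments diverge in a minor but genuine way. The paper computes the quotient on each chart, tracks the relation $w_0=(-t/s)^n w_n$, and packages this as a Cartier divisor to identify $\cO_{\bP^1}(n)$. You instead observe that the presenting map $\cO_{\bP^1}(-1)^n\to\cO_{\bP^1}^{\,n+1}$ is fibrewise injective (via the maximal minors $s^n,t^n$) and conclude by a degree count on the resulting short exact sequence. Both are standard ways to pin down a line bundle on $\bP^1$; yours is marginally quicker, while the paper's makes the generator and transition function visible. Your alternative via the duality pairing of Proposition~\ref{dual} is also valid, though note that $\cX_n$ and $\cY_n$ are not literally of the form $\cM^{[j]}$ for $p>2$, so you are (correctly) invoking the \emph{method} of that proof rather than the statement itself.
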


 \begin{proof} 
 Let $k[s,t] = k[\mathbb A^2] \simeq k[V(E)]$. The   universal $p$-nilpotent operator $\Theta_E  \in  k[x,y]/(x^p, y^p) \otimes k[s,t]$  is given by 
 $$\Theta_E = xs+yt,$$
(see, for example, Ex. \ref{exp-elem}). We identify the graded $k[s,t]$-module 
 $\Ker \{\Theta_E,       X_n \otimes k[s,t] \} / \Im \{\Theta_E,       X_n  \otimes  k[s,t] \}$, 
thereby  determining the vector bundle  $\cX_n$.
  It is easy to see that  $\Im  \{\Theta_E,    X_n \otimes  k[s,t] \}$  is generated by  the bottom row of the diagram representing $X_n$   
as a $k[s,t]$--module and  $\Ker \{\Theta_E,       X_n \otimes  k[s,t] \}$ is generated by the same bottom row and the vector  
$s^nv_0 + s^{n-1} t v_1 + \cdots t^{n} v_n$.
Hence, $\Ker \{\Theta_E,     X_n \otimes  k[s,t]\} / 
\Im \{\Theta_E,    X_n  \otimes  k[s,t]\}$  is generated by 
$s^nv_0 + s^{n-1} t v_1 + \cdots t^{n} v_n$ as 
a $k[s,t]$-module.   Since the generator is in degree $n$, we conclude that  the corresponding     
 locally free sheaf  of rank $1$  is $\cO_{\bP^1}(-n)$. 

We now  compute  $\cY_n$. The graded $k[s,t]$-module $\Ker \{\Theta_E,    X^\#_n \otimes  k[s,t]\}$   is 
generated by $\langle w_0, \ldots, w_n \rangle$  in degree 0, 
and $\Im \{\Theta_E,    X^\#_n \otimes  k[s,t](-1)\}$  is generated by  
$\langle sw_0 + tw_1, sw_1+tw_2, \ldots, sw_{n-1} + tw_n \rangle $, also  in degree $0$. 
Hence,  on $U_0 = \bP^1 - Z(s)$,  the restriction of $\cY_n$ is generated by $w_n$, with $w_0 = (-\frac{t}{s})^{n} w_n$.  We map  
$\cY_n(U_0)$ to $K = k(t/s)$, the residue field at the generic   point  of $\bP^1$,  by sending $w_n$ to $1$.  The image of $w_0$ is  $(-\frac{t}{s})^{n}$. 
On the  other affine piece, $U_1 = \bP^1 - Z(t)$, the restriction  of $\cY_n$ is generated by $w_0$, with the relation $w_n = (-\frac{s}{t})^{n}w_0$.  
We map this to $K= k(s/t)$ by sending $w_0$ to $(-\frac{t}{s})^{n}$. 
Hence,  the vector bundle  is given by the Cartier divisor 
$(U_0, 1), (U_1, (-\frac{t}{s})^{n})$. This divisor  is equivalent to  the Cartier divisor $(U_0, 1), (U_1, (\frac{t}{s})^{n})$  which correspond to 
the line bundle $\cO_{\bP^1}(n)$.  Hence, $\cY_n \simeq \cO_{\bP^1}(n)$.   
 \end{proof}

In the next Proposition we  calculate explicitly the line bundles corresponding to the syzygies of the trivial  modules, $\Omega^n k$.  
For convenience,  we use the notation $\cH^{[1]}(M)$ for the  bundle $\cM^{[1]}$ associated  to $M$  as defined  in (\ref{bracket}). 

\begin{prop}   Let $E = \bG_{a(1)}^{\times r}$. Then   
$$
\cH^{[1]}(\Omega^n k) \simeq 
\begin{cases}
\cO_{\bP^{r-1}}(-\frac{np}{2}) & \text{if $n$  is even,}\\
\cO_{\bP^{r-1}}(-\frac{n+1}{2}p + 1) & \text{if $n$  is odd} \\
\end{cases}
$$
for $p$  odd and 
$$
\cH^{[1]}(\Omega^n k) \simeq \cO_{\bP^{r-1}}(-n)
$$ 
for $p=2$. 
\end{prop}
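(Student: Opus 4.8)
The plan is to reduce the computation of $\cH^{[1]}(\Omega^n k)$ to the two base cases $n=0$ and $n=1$, and then to propagate along the $\Omega$-periodicity of the module category using the behaviour of $\cH^{[1]}$ under syzygies. First I would recall from Example~\ref{cJ}(3) that $\Omega^n k$ is a module of constant Jordan type for every $n$, with type $(a,0,\ldots,0,1)$ when $n$ is even and $(b,p-1,0,\ldots,0)$ when $n$ is odd; in particular $\Omega^n k$ has constant $1$-rank and constant $(p-1)$-rank, so Proposition~\ref{bundle2} applies and $\cH^{[1]}(\Omega^n k)$ is a genuine line bundle on $\bP^{r-1} = \bP(E)$ (its rank is $\sum_{i=1}^{p-1} a_i = 1$ in the even case, and one checks the same in the odd case because there the unique non-projective block has size $p-1$, whence $\dim V^{[1]} = 1$ by Lemma~\ref{simple}). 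Since $\Pic(\bP^{r-1}) = \bZ$, the line bundle is determined by its degree, and the whole problem becomes the bookkeeping of a single integer.

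The engine of the argument is the following. For a short exact sequence $0 \to \Omega^{n+1}k \to P \to \Omega^n k \to 0$ with $P$ projective, tensoring with $\cO_{\bP(E)}$ and applying the operator $\wt\Theta_E$, the projectivity of $P$ (hence $\cH^{[1]}(P) = 0$, as $P\otimes\cO_{\bP(E)}$ is locally a free $\cO_{\bP(E)}[t]/t^p$-module) yields, after chasing kernels and images and using $\wt\Theta_E$ is homogeneous of degree $1$ (since $E$ has height $1$, $p^{r-1}=1$ here — careful: $E = \bG_{a(1)}^{\times r}$ has height $1$, so $\Theta_E$ has degree $1$; for general $p$ one gets a shift whose size I compute below), a relation in $\Pic$ of the shape
$$
\cH^{[1]}(\Omega^{n+1}k) \ \simeq \ \cH^{[1]}(\Omega^n k)^\vee \otimes \cO_{\bP^{r-1}}(c)
$$
for an explicit constant $c$ depending only on $p$. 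Concretely, from $0\to \Ker \to P\otimes\cO \to \Im \to 0$ and $\Im \subset \Ker$ one identifies $\cH^{[1]}(P) = \Ker/\Im$; threading $\Omega^{n+1}k \subset P$ through and using the duality of Proposition~\ref{dual} together with the fact that the dual of $\Omega^n k$ is $\Omega^{-n}k$ (up to a projective summand, which does not affect the associated bundle), one pins down $c$. The precise value: the map $\wt\Theta_E: P\otimes\cO \to P\otimes\cO(p^{r-1})$ forces the Euler-characteristic shift to be $p^{r-1} = p^0 = 1$ per application of $\wt\Theta_E$ — but one must apply $\wt\Theta_E^{p-1}$ in the denominator of $\cM^{[1]} = \Ker\{\wt\Theta_E\}/\Im\{\wt\Theta_E^{p-1}\}$, contributing a shift proportional to $(p-1)$, and chasing this through the syzygy sequence gives $\cH^{[1]}(\Omega^{n+2}k) \simeq \cH^{[1]}(\Omega^n k)\otimes \cO(-p)$ (the two-step periodicity, since $E$ is not periodic of period $1$), which already matches the claimed formula: each increase of $n$ by $2$ drops the degree by $p$, and the even/odd base cases are $\cH^{[1]}(k) = \cH^{[1]}(\Omega^0 k)$ and $\cH^{[1]}(\Omega k)$.

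For the base cases: $\Omega^0 k = k$ has trivial action, so $\wt\Theta_E$ is the zero map on $k\otimes\cO_{\bP^{r-1}}$, hence $\cH^{[1]}(k) = \Ker/\Im = \cO_{\bP^{r-1}}/0 = \cO_{\bP^{r-1}} = \cO_{\bP^{r-1}}(0)$, agreeing with $-\frac{np}{2}$ at $n=0$. For $\Omega k$: one has the explicit presentation $0 \to \Omega k \to (kE)^{\oplus r}$-ish $\to k \to 0$ (more precisely $\Omega k$ is the augmentation ideal $I$ of $kE$ when... — actually for $E = \bG_{a(1)}^{\times r}$, $\Omega k = \Ker(kE \to k)$), and one computes directly, as in the proof of Proposition~\ref{zigzag}, that $\Ker\{\Theta_E, I\otimes k[X_0,\ldots,X_{r-1}]\}/\Im\{\Theta_E^{p-1},\ldots\}$ is the free module on a single generator sitting in degree $\frac{p+1}{2} - 1 = \frac{p-1}{2}$ (the generator being built from the socle of $I$, which is the span of the $u_i^{p-1}$ — degree-$(p-1)$ elements — balanced against the image of $\Theta_E^{p-1}$ which lands in degree... one tracks the grading); this yields $\cH^{[1]}(\Omega k) = \cO_{\bP^{r-1}}(-\frac{p-1}{2})$, which is the claimed $-\frac{n+1}{2}p + 1$ at $n=1$. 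Finally the $p=2$ case is separate because then $\Omega^n k$ has genuinely constant Jordan type of period-$1$ type (blocks of size $1$ and $2$ only), $\wt\Theta_E^{p-1} = \wt\Theta_E$, so the recursion is one-step: $\cH^{[1]}(\Omega^{n+1}k) \simeq \cH^{[1]}(\Omega^n k)\otimes\cO(-1)$, giving $\cO_{\bP^{r-1}}(-n)$ from the base case $\cH^{[1]}(k) = \cO$.

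**Main obstacle.** The delicate point is not the inductive step per se but the exact tracking of the internal grading shift — i.e. getting the constant in $\cH^{[1]}(\Omega^{n+2}k)\simeq\cH^{[1]}(\Omega^n k)\otimes\cO(-p)$ right, and correctly handling the fact that $\cM^{[1]}$ involves $\wt\Theta_E$ in the numerator but $\wt\Theta_E^{p-1}$ in the denominator (so the two homogeneities differ by a factor $p-1$). This requires care with the degrees in the short exact sequence of graded modules associated to a minimal projective resolution of $k$ over $kE$, where one must know that the generators of $\Omega^{n}k$ appear in a single internal degree and compute that degree. I would handle this by writing out the minimal resolution explicitly in the Koszul-type form available for $kE = k[u_0,\ldots,u_{r-1}]/(u_i^p)$ and reading off the degrees, exactly as the authors do for the zig-zag modules in Proposition~\ref{zigzag}; the duality statement Proposition~\ref{dual} then removes any sign/orientation ambiguity by forcing the answer to be self-consistent under $n \mapsto -n$.
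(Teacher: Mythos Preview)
Your strategy is genuinely different from the paper's. The paper does not set up a two-step recursion on $n$; instead it fixes $r=2$, writes down the explicit minimal $kE$-resolution of $k$, identifies the ``middle layer'' generators $v_1,\ldots,v_{np/2+1}$ of $\Omega^n k$, and shows by hand that $\Ker\Theta_E/\Im\Theta_E^{p-1}$ is generated over $k[s,t]$ by the single homogeneous vector $s^{np/2}v_1 - s^{np/2-1}t\,v_2 + \cdots \pm t^{np/2}v_{np/2+1}$ (even case), of degree $np/2$. The passage to general $r$ is then handled by restricting to a rank-$2$ subgroup $F\subset E$: one has $(\Omega_E^n k)\!\downarrow_F \simeq \Omega_F^n k \oplus \text{proj}$, Proposition~\ref{funct} gives $f^*\cH^{[1]}(\Omega_E^n k)\simeq \cH^{[1]}(\Omega_F^n k)$, and the embedding $f:\bP^1\hookrightarrow\bP^{r-1}$ induces an isomorphism on Picard groups. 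So the paper never proves a recursion; it computes once and pulls back.

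Your inductive route has a real gap. The claimed identity $\cH^{[1]}(\Omega^{n+2}k)\simeq\cH^{[1]}(\Omega^n k)\otimes\cO(-p)$ is numerically consistent with the answer, but you have not proved it: $\cH^{[1]}(-)=\Ker\wt\Theta/\Im\wt\Theta^{p-1}$ is not exact on short exact sequences, so ``chasing kernels and images'' through $0\to\Omega^{n+1}k\to P\to\Omega^n k\to 0$ does not by itself produce a relation in $\Pic$; nor does Proposition~\ref{dual} (which relates $\cM^{[1]}$ to $(\cM^\#)^{[p-1]}$, not to $(\cM^\#)^{[1]}$) close the loop without an additional twist comparison you never supply. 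You yourself flag this in the ``Main obstacle'' and propose to resolve it by writing out the minimal resolution explicitly --- but that is exactly the paper's computation, so the induction buys nothing. Separately, your base case $n=1$ is miscomputed: the formula gives $-\tfrac{n+1}{2}p+1 = 1-p$ at $n=1$, not $-\tfrac{p-1}{2}$; the generator of $\cH^{[1]}(\Omega k)$ sits in degree $p-1$, not $\tfrac{p-1}{2}$.
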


\begin{proof}  
Let $r=2$, and assume $n \geq 0$.   As  in the proof of Prop. \ref{zigzag}, the universal operator $\Theta_E = sx+ty$ where $k[V(E)] = k[s,t]$. 
The structure of a
minimal $kE \simeq k[x,y]/(x^p, y^p)$-projective resolution $P_\bu \to k$ is well known \cite{CTVZ}, 
with $P_{n-1} = kG^{\times n}$. A set of generators $a_1, \ldots, 
a_n$ for $P_{n-1}$ can be chosen so that $\Omega^n(k)$ 
is the submodule generated by the elements 
$$
x^{p-1}a_1, \ \ ya_1 -xa_2, \ \ y^{p-1}a_2- x^{p-1}a_3, \ \ 
ya_3-xa_4, \ \ \ldots, \ \ ya_{n-1} -xa_n, \ \ y^{p-1}a_n$$
for  $n$  even, and 
$$xa_1, \ \ ya_1-x^{p-1}a_2, \ \ y^{p-1}a_2 - xa_3, \ \ ya_3  - x^{p-1}a_4, \ \ \ldots, \ \ y^{p-1}a_{n-1} - xa_n, \ \ ya_n $$
for $n$ odd. 

Let $n$  be even.  For  illustrational purposes,  we  include a  picture of  $\Omega^4 k$ for $p=3$,
\vspace{0.2in}

\begin{changemargin}{-1.5cm}{-1cm} 
\noindent
$
\begin{xy}*!C\xybox{%
\xymatrix{ &&& \bu \ar[dr]\ar[dl] &&&&&& \bu \ar[dr]\ar[dl] &&& \\
\stackrel{v_1}{\bu} \ar[dr] && \stackrel{v_2}{\bu} \ar[dl] && \stackrel{v_3}{\bu} \ar[dr] && \stackrel{v_4}{\bu} \ar[dl]\ar[dr] && \stackrel{v_5}{\bu} 
\ar[dl] && \stackrel{v_6}{\bu} \ar[dr] && \stackrel{v_7}{\bu} \ar[dl]\\
&\bu \ar[dr] && \bu\ar[dr]\ar[dl] && \bu\ar[dl] &&\bu \ar[dr] && \bu\ar[dr]\ar[dl] && \bu \ar[dl] & \\
&& \bu && \bu &&&& \bu && \bu && \\ 
}}
\end{xy}
$
\end{changemargin}
\vspace{0.1in}

The kernel  of $\Theta_E = sx+ty$  on $ \Omega^n k \otimes  k[s,t]$  is a submodule of a free  $k[s,t]$-module  generated by the ``middle layer"  of 
$\Omega^n k$, that  is, 
by $v_1 = x^{p-1}a_1, v_2 = x^{p-2}(ya_1 -xa_2), v_3 = x^{p-3}y(ya_1 -xa_2), \ldots, v_p = y^{p-2}(ya_1 -xa_2), v_{p+1} = y^{p-1}a_2- x^{p-1}a_3, 
\ldots, v_{\frac{np}{2}+1} = y^{p-1}a_n$.
Everything below the middle layer which  is in $\Ker \Theta_E$  also  lies in $\Im \Theta^{p-1}_E$.  One verifies that the quotient 
$$\Ker \{\Theta_E: \Omega^n k \otimes  k[s,t] \to   \Omega^n k \otimes  k[s,t] \} / \Im \{\Theta^{p-1}_E:  \Omega^n k \otimes  k[s,t] \to 
\Omega^n k \otimes  k[s,t]\}$$  is generated by 
$$
s^{\frac{np}{2}} v_1 - s^{\frac{np}{2}-1}t v_2 + \ldots \pm t^{\frac{np}{2}}v_{\frac{np}{2}+1}.
$$      Arguing as   in the proof of Prop. \ref{zigzag},  we conclude 
that the corresponding  locally  free sheaf of rank 1 is $\cO_{P^1}(-\frac{np}{2})$.  
The calculation for odd   positive $n$ is similar.     For negative values of $n$,   one  verifies the formula  by doing again a similar calculation with dual modules.  

Now let $r>2$,  let $i: F \subset E$    be a subgroup scheme isomorphic to $\bG_{a(1)}^{\times 2}$, and let $f: \bP(E) \to \bP(F)$  be the map  induced by the embedding $i$. 
Since  $(\Omega^n_E k) \downarrow_F  \simeq  \Omega^n_F k \oplus \rm proj$,  we conclude  that 
$\cH^{[1]}((\Omega_E^n k)\downarrow_F)  \simeq \cH^{[1]}(\Omega_F^n k) $.   
Proposition  \ref{funct}   implies   an isomorphism  $f^* (\cH^{[1]}(\Omega_E^n k)) \simeq  \cH^{[1]}((\Omega_E^n k)\downarrow_F)$.  Hence, 
$$f^* (\cH^{[1]}(\Omega_E^n k)) \simeq \cH^{[1]}(\Omega_F^n k).$$

The proposition now follows from the observation that  
$$\xymatrix{f:  \bP(F) \simeq  \bP^1 \ar[r] & \bP(E) \simeq \bP^{r - 1}}$$  induces an isomorphism  on Picard groups   via $f^*$.
\end{proof}

%%%%%%%%%%%%%REFERENCES%%%%%%%%%%%%%


\begin{thebibliography}{20}

\bibitem{BBC} P. Balmer, D. Benson, J. Carlson, {\em Gluing representations via idempotent modules 
and constructing endotrivial modules}, J. of Pure and Applied Algebra {\bf 213}, no 2 (2009), pp. 173-193. 


\bibitem{BO} G. Benkart, J.M. Osborn, {\em Representations of rank one Lie algebras of characteristic $p$},  
Lie algebras and related topics (New Brunswick, N.J., 1981), pp. 1--37, Lecture Notes in Math., 933, Springer, Berlin-New York, (1982). 
 
\bibitem{Ben}  D.J. Benson, {\em Representations and cohomology},
Volume I and II, Cambridge University Press, (1991).

\bibitem{Ben2}  D.J. Benson, {\em Modules of constant Jordan type with one non-projective block}, to appear in Algebras and Representation Theory.

\bibitem{BP} D. Benson and J. Pevtsova, {\em Modules of constant Jordan type and
vector bundles}.  In preparation.

\bibitem{C} J. Carlson, {\em The varieties and the cohomology ring of a
module}, J. Algebra {85} (1983) 104-143.


\bibitem{CF} J. Carlson, E. Friedlander, {\em Exact category of modules of
constant Jordan type}.  To appear in the Manin Festschift, Progress in Mathematics, Birkhauser.


\bibitem{CFP}  J. Carlson, E. Friedlander, J. Pevtsova,  {\em Modules of 
Constant  Jordan type}, Journal f\'ur die Reine und Angewandte 
Mathematik {\bf 614} (2008), 191--234.

\bibitem{CFS} J. Carlson, E. Friedlander, A. Suslin, {\em  Modules over $\Z/p
 \times \Z/p$},  to appear in Commentarrii Mathematici Helvetici.

\bibitem{CTVZ} J. Carlson, L. Townsley, L. Valero-Elizondo, M. Zhang,
{\em Cohomology rings of finite groups}, Kluwer, 2003.


\bibitem{DS} M. Duflo, V. Serganova, {\em On associated variety for Lie superalgebras}.

\bibitem{Eis} D. Eisenbud, {\em Commutative Algebra with a View Towards
Algebraic Geometry}, Springer-Verlag, 1995.


\bibitem{FPa}  E. Friedlander, B. Parshall, {\em Support varieties 
for restricted Lie
algebras}, Invent. Math. {\bf 86} (1986) 553-562.

\bibitem{FPa2}  E. Friedlander, B. Parshall, {\em Modular representation theory of Lie algebras}, 
Amer. J Math., {\bf 110} (1988), 1055--1094.

\bibitem{FP1}  E. Friedlander, J. Pevtsova, {\em Representation-theoretic 
support spaces for finite group schemes}, Amer. J. Math. {\bf 127} (2005), 
379-420.

\bibitem{err} E. Friedlander, J. Pevtsova, {\em Erratum: 
Representation-theoretic support spaces for finite group schemes}, 
Amer. J. Math. {\bf 128} (2006), 1067-1068. 


\bibitem{FP2} E. Friedlander, J. Pevtsova, {\em $\Pi$-supports for modules 
for finite group schemes},  Duke. Math. J. {\bf 139} (2007), 317--368.

\bibitem{FPS}  E. Friedlander, J. Pevtsova, A. Suslin, {\em Generic and 
Maximal Jordan types}, Invent. Math. {\bf 168} (2007), 485--522. 

\bibitem{FS}  E. Friedlander, A. Suslin, {\em Cohomology of finite group scheme over a field}, 
Invent. Math. {\bf 127} (1997), 235--253. 

\bibitem{Har} R. Hartshorne, {\em Algebraic Geometry},  Springer, 1977.

\bibitem{Hum} J. Humphreys, {\em Introduction to Lie Algebras and Representation Theory}, 
Springer, 1972. 

\bibitem{Jan} J. Jantzen, {\em Representations of Algebraic Groups}, 
American Mathematical Society, 2003.


\bibitem{Mc} G. McNinch, {\em Abelian Unipotent Subgroups of Reductive Groups}, 
J. of Pure $\&$ Applied Algebra,  {\bf 167} (2002) 269-300.

\bibitem{OS}  U. Oberst, H.-J. Schneider, {\em \"Uber Untergruppen endlicher algebraischer 
Gruppen}, Manuscripta Math. {\bf 8} (1973), 217-241. 




\bibitem{SFB1} A. Suslin, E. Friedlander, C. Bendel,
{\em Infinitesimal 1-parameter subgroups and cohomology},
J. Amer. Math. Soc. {\bf 10} (1997) 693-728.



\bibitem{SFB2} A. Suslin, E. Friedlander, C. Bendel, {\em Support
varieties for infinitesimal group schemes},  J. Amer. Math. Soc.
{\bf 10} (1997) 729-759.

\bibitem{W} W. Waterhouse, {\em Introduction to affine group schemes}, 
Graduate Texts in
Mathematics, {\bf 66} Springer-Verlag, New York-Berlin, 1979.


\end{thebibliography}
\end{document}